\theoremstyle{definition}
\newtheorem{theorem}{Theorem}[section]
\newtheorem{lemma}[theorem]{Lemma}
\newtheorem{proposition}[theorem]{Proposition}
\newtheorem{mydef}[theorem]{Definition}
\newtheorem{corollary}[theorem]{Corollary}
\newtheorem{remark}[theorem]{Remark}
\newtheorem{example}[theorem]{Example}
\tikzset{>=latex}
\newtheoremstyle{break}
  {\topsep}{\topsep}%
  {\itshape}{}%
  {\bfseries}{}%
  {\newline}{}%
\theoremstyle{break}
\newcommand{\R}{\mathbb{R}}
\newcommand{\C}{\mathbb{C}}
\newcommand{\mf}{\mathfrak}
\newcommand{\Ad}{\text{Ad}}
\newcommand{\ad}{\text{ad}}
\newcommand{\vol}{\text{Vol}}
\newcommand\blfootnote[1]{%
  \begingroup
  \renewcommand\thefootnote{}\footnote{#1}%
  \addtocounter{footnote}{-1}%
  \endgroup
}
\newcommand{\Cl}{\text{C}l}
\DeclarePairedDelimiter{\norm}{\lVert}{\rVert}
\newcommand*\diff{\mathop{}\!\mathrm{d}}
\newcommand{\ra}[1]{\renewcommand{\arraystretch}{#1}}
\tikzset{->-/.style={decoration={
  markings,
  mark=at position #1 with {\arrow{>}}},postaction={decorate}}}
\newcommand{\extp}{\@ifnextchar^\@extp{\@extp^{\,}}}
\def\@extp^#1{\mathop{\bigwedge\nolimits^{\!#1}}}
\title{Deformations of Asymptotically Conical $G_2$-Instantons}
\author{Joe Driscoll*}
\affil{ \small Imperial College London, U.K. Email: \href{mailto:j.driscoll@imperial.ac.uk}{j.driscoll@imperial.ac.uk}}
\date{}
\begin{document}

\maketitle
\blfootnote{ * ORCID: 0000-0002-0703-2975}
\begin{abstract}We develop the deformation theory of instantons on asymptotically conical $G_2$-manifolds, where an asymptotic connection at infinity is fixed. A spinorial approach is adopted to relate the space of  deformations to the kernel of a twisted Dirac operator on the $G_2$-manifold and to the eigenvalues of a twisted Dirac operator on the nearly K\"ahler link. This framework is then used to calculate the virtual dimension  of the moduli spaces of $G_2$-instantons on which several known examples live. One such example considered is the $G_2$-instanton of G\"unaydin-Nicolai, which lives on $\R^7.$ As an application of the deformation theory, we show how knowledge of the virtual dimension of the moduli space allows  us to prove that unobstructed connections in the moduli space are $G_2$-invariant. By classifying such connections we prove a uniqueness result for unobstructed $G_2$-instantons on the principal $G_2$-bundle over $\mathbb{R}^7.$
\end{abstract}
\tableofcontents
\section{Introduction}
Instantons are connections whose curvature satisfies a certain algebraic equation. 
In this article we study the deformation theory of instanton connections on manifolds with holonomy $G_2$ which asymptote to a cone. The physical motivation for studying instantons on $G_2$-manifolds stems from the fact that such connections automatically satisfy the Yang-Mills equation. Furthermore on a compact manifold $G_2$-instantons are absolute minima of the Yang-Mills energy functional.  Interest in these connections has grown significantly in recent years since the suggestion of Donaldson-Thomas \cite{donaldson1998gauge} and Donaldson-Segal \cite{donaldson2009gauge} that it may be possible to define invariants from their moduli spaces.    \par
The first example of a $G_2$-instanton was constructed on the principal $G_2$ bundle over $\R^7$ by G\"unaydin-Nicolai in 1995 \cite{gunaydin1995seven} and we shall refer to this example as the standard $G_2$ instanton. Other examples of instantons on non-compact manifolds have been found in more recent years. Firstly, Clarke \cite{clarke2014instantons} found a family of instanton connections on the manifold $\slashed{S}(S^3)$ which was shown by Bryant-Salamon to carry a $G_2$-metric \cite{bryant1989construction}. The other $G_2$-manifolds constructed by Bryant-Salamon  are $\Lambda^2_-(S^4)$ and $\Lambda^2_-(\C P^2)$ and examples of instantons have been constructed on these spaces by Oliveira \cite{oliveira2014monopoles}. Recently Lotay-Oliveira \cite{lotay2018su2insantons} studied the moduli spaces of  instantons on non-compact $G_2$-manifolds where the connections are required to be invariant under a group action. In particular, they found a limiting connection of Clarke's family of instantons. The important observation here is that all of these examples are defined on asymptotically conical manifolds and the  $G_2$-instantons converge to instanton connections on the nearly K\"ahler 6-manifold at infinity.\par 
Since the deformation space is the kernel of an elliptic operator we can develop a deformation theory that relies on the analytic framework  for asymptotically conical manifolds that has been developed by Lockhart-McOwen  \cite{lockhart1985elliptic} and Marshall \cite{marshal2002deformations}. These tools enable  one to determine when elliptic operators on non-compact manifolds are Fredholm, and hence a Kuranishi model is applicable to study the moduli space of solutions.  In the holonomy $G_2$-setting, this framework has been used to study the moduli space of associative and coassociative submanifolds by Lotay \cite{lotay2009asymptotically}\cite{lotay2009deformation} and the moduli space of $G_2$ structures by Karigiannis-Lotay \cite{karigiannis2012deformation}. In the gauge theory setting  Nakajima \cite{nakajima1990moduli} has used a similar analysis to study the moduli space of ASD-instantons on asymptotically locally Euclidean manifolds where a flat connection at infinity is fixed.\par
Since the asymptotic connection in our setting is a nearly K\"ahler instanton it is interesting to compare deformations of the $G_2$ instanton with deformations of this asymptotic connection. In particular there is  a projection  between the moduli spaces of these two instantons and one can try to understand the properties of this map. The deformation theory of nearly K\"ahler instantons has been developed by Charbonneau-Harland \cite{charbonneau2016deformations} and we use many of the ideas and techniques they develop to analyse the asymptotic connection. \par

This paper studies the deformation theory of $G_2$-instantons on asymptotically conical manifolds by prescribing a fixed rate of decay at infinity. We take a spinorial approach and relate deformations of the instanton to the kernel of a Dirac operator on a Hilbert space of spinors with fixed decay rate. In contrast to the compact case the index of the Dirac operator controlling the deformation theory  is not expected to be 0.
This can be seen in Section~\ref{sec:gtonac} where we  show that the dimension of the space of solutions to the linearised $G_2$-instanton equation is determined by the spectrum of a twisted Dirac operator on a nearly K\"ahler 6-manifold and apply the implicit function theorem to show the moduli space is a smooth manifold when the deformation theory is unobstructed. \par
In order to calculate the virtual dimension of a given moduli space in practice, one must be able to calculate some eigenvalues of certain twisted Dirac operators on nearly K\"ahler 6-manifolds. In Section~\ref{sec:lichnerowicz} we develop methods suitable for studying eigenvalues of Dirac operators on homogeneous nearly K\"ahler manifolds where a twisting is provided by the canonical connection. The assumptions made here are appropriate, since this framework is applicable to study every known asymptotically conical $G_2$-instanton.  We develop a suitable Lichnerowicz formula which allows us to reduce the calculation of the virtual dimension to the calculation of eigenvalues from certain Hermitian matrices.\par
Once the tools from Section~\ref{sec:lichnerowicz} have been developed, we are ready to study some examples of $G_2$-instantons. We begin this work in Section~\ref{sec:clarke}, introducing Clarke's $G_2$-instantons \cite{clarke2014instantons} before performing the necessary calculation to find the virtual dimension of the moduli space. Similar work is then carried out for the $G_2$-instanton of Lotay-Oliveira \cite{lotay2018su2insantons}, which also lives on $\R^4\times S^3,$ and certain $G_2$-instantons of Oliveira \cite{oliveira2014monopoles} on $\Lambda^2_-(\mathbb{CP}^2)$ and $\Lambda^2_- (S^4).$  \par
The final example, considered in Section~\ref{sec:stdinstsection}, is the $G_2$-instanton of G\"unaydin Nicolai \cite{gunaydin1995seven}, which lives on $\R^7.$ To perform the necessary calculation of eigenvalues we first present a formula for the (twisted) Dirac operator on $S^6$ which is, to the author's knowledge, new. 
As an application of the work to this point, we show in Section~\ref{sec:invariance} how to prove a uniqueness theorem for unobstructed connections in the moduli space. We show that such a connection is necessarily invariant under an action of $G_2$ and then use the algebraic framework of Wang's theorem \cite{wang1958invariant} to classify such connections. 

\section*{Acknowledgement}
This research was carried out at the University of Leeds, under Engineering and Physical Sciences Research Council (EPSRC) grant 1801930. I would like to thank EPSRC for providing the opportunity to carry out such a stimulating project, and to thank the university for providing the platform from which to do so.
Special thanks must go to my supervisor Derek Harland for his guidance and expertise on this project.  Finally, I thank the anonymous referee for numerous comments and suggestions which greatly improved an earlier version of this paper.
\section{Preliminaries}
\subsection{\texorpdfstring{$G_2$}{G2}-Manifolds}

A $G_2$-structure on a  7-manifold is a choice of 3-form $\varphi$ that at each point can be identified with the standard positive 3-form $\varphi_0$ on $\R^7$ which we take to be
\begin{equation}
\varphi_0=\diff x^{127}+\diff x^{347} + \diff x^{567} + \diff x^{145} + \diff x^{136} + \diff x^{235} - \diff x^{246}
\end{equation}
where $\diff x^{ijk}=\diff x^i \wedge \diff x^j \wedge \diff x^k.$

Such a 3-form $\varphi$ determines  an orientation $\text{Vol}_7$ and hence a metric $g=g_\varphi$ via the formula $(\iota_u \varphi)\wedge (\iota_v\varphi)\wedge \varphi=6g(u,v)\text{Vol}_7 $ (the notation ``positive'' 3-form refers to this associated quadratic form being positive definite).
This allows us to  define a 4-form $\psi:=*_\varphi \varphi.$ 
\begin{mydef}
A $G_2$-manifold is a 7-manifold $M$ together with a $G_2$-structure $\varphi$ such that $\nabla \varphi=0,$ where $\nabla$ is the Levi-Civita connection of the metric determined by $\varphi.$ Such a $G_2$-structure is called \emph{torsion free}.
\end{mydef}
The following result of Fernandez-Gray \cite{fernandez1982riemannian} gives an alternate characterisation of $G_2$ manifolds:
\begin{theorem}
Let $(M,\varphi)$ be a $G_2$-structure manifold. Then the following are equivalent:
\begin{enumerate}
    \item $\nabla \varphi=0$ 
    \item $\text{\normalfont Hol}(g_\varphi) \subseteq G_2$
    \item $ \diff \varphi=\diff \psi=0.$
\end{enumerate}
\end{theorem}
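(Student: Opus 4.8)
The plan is to prove the equivalence $\nabla\varphi = 0 \iff \mathrm{Hol}(g_\varphi)\subseteq G_2 \iff \diff\varphi = \diff\psi = 0$ by going around the cycle $(1)\Rightarrow(2)\Rightarrow(3)\Rightarrow(1)$, exploiting at each stage the fact that $G_2$ is precisely the stabiliser of $\varphi_0$ in $GL(7,\R)$ (equivalently in $SO(7)$, since $\varphi_0$ also determines the metric and orientation).

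\medskip

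\textbf{Step 1: $(1)\Rightarrow(2)$.} Suppose $\nabla\varphi = 0$. Then parallel transport along any loop based at $p\in M$ preserves $\varphi_p$, hence the holonomy group $\mathrm{Hol}_p(g_\varphi)\subseteq GL(T_pM)$ lies in the stabiliser of $\varphi_p$. Since $\varphi_p$ is by hypothesis identified with $\varphi_0$ under a suitable frame, and $\mathrm{Stab}_{GL(7,\R)}(\varphi_0) = G_2$, we conclude $\mathrm{Hol}(g_\varphi)\subseteq G_2$. This direction is essentially immediate once one knows the stabiliser fact; I would cite or recall that $G_2 = \{A\in GL(7,\R): A^*\varphi_0 = \varphi_0\}$ and note $G_2\subseteq SO(7)$ since $\varphi_0$ determines $g_0$ and $\mathrm{Vol}_0$.

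\medskip

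\textbf{Step 2: $(2)\Rightarrow(1)$, and the reduction to the torsion.} Conversely, if $\mathrm{Hol}(g_\varphi)\subseteq G_2$ then the reduction of the frame bundle to $G_2$ is preserved by the Levi-Civita connection, so $\varphi$ — being the parallel section of $\extp^3 T^*M$ corresponding to the fixed tensor $\varphi_0$ on the model — satisfies $\nabla\varphi = 0$. So $(1)\Leftrightarrow(2)$ is formal holonomy theory. The substantive content is $(1)\Leftrightarrow(3)$. One direction is trivial: if $\nabla\varphi = 0$ then also $\nabla\psi = \nabla(*_\varphi\varphi) = 0$ (the Hodge star commutes with $\nabla$ for the Levi-Civita connection), and since $\diff = \sum e^i\wedge\nabla_{e_i}$ on forms, $\diff\varphi = \diff\psi = 0$. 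Thus $(1)\Rightarrow(3)$.

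\medskip

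\textbf{Step 3: $(3)\Rightarrow(1)$, the hard direction.} The real work is showing $\diff\varphi = \diff\psi = 0$ implies $\nabla\varphi = 0$. The strategy is representation-theoretic: the intrinsic torsion of a $G_2$-structure is a section of $T^*M\otimes\mf{g}_2^\perp$, where $\mf{g}_2^\perp\subseteq\mf{so}(7)$ is the orthogonal complement of $\mf{g}_2$. As a $G_2$-representation, $T^*M\cong\R^7$ is irreducible of dimension $7$, and $\mf{so}(7) = \extp^2\R^7 \cong \mf{g}_2\oplus\R^7$, so $\mf{g}_2^\perp\cong\R^7$. Hence the torsion lives in $\R^7\otimes\R^7 \cong \R\oplus\extp^2\R^7\oplus S^2_0\R^7 \cong \R \oplus (\mf{g}_2\oplus\R^7)\oplus S^2_0\R^7$, i.e. in four irreducible components usually denoted $\tau_0,\tau_1,\tau_2,\tau_3$ (the ``torsion forms''). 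The key computation is to express $\diff\varphi$ and $\diff\psi$ in terms of these components: one shows $\diff\varphi = \tau_0\,\psi + 3\,\tau_1\wedge\varphi + *\tau_3$ and $\diff\psi = 4\,\tau_1\wedge\psi + \tau_2\wedge\varphi$ (up to normalisation conventions). Since $\wedge\varphi: \extp^2\to\extp^5$ and $\wedge\psi:\extp^2\to\extp^6$ have controlled kernels (the map $\omega\mapsto\omega\wedge\varphi$ is injective on the $\mf{g}_2$-part and has the $\R^7$-part in... — more precisely one uses that $*(\alpha\wedge\varphi)$ acts as a scalar on each isotypic piece), the vanishing $\diff\varphi = \diff\psi = 0$ forces $\tau_0 = \tau_1 = \tau_2 = \tau_3 = 0$, hence the full intrinsic torsion vanishes, which is equivalent to $\nabla\varphi = 0$.

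\medskip

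\textbf{Main obstacle.} The crux — and the step I would spend the most care on — is the algebraic identification of $\diff\varphi$ and $\diff\psi$ with the torsion components and then verifying that the relevant wedge maps are injective on the pieces that appear, so that their vanishing kills all torsion. This requires the decomposition of $\extp^*\R^7$ into $G_2$-irreducibles and explicit knowledge of how $\wedge\varphi_0$ and $\wedge\psi_0$ act on the $7$- and $14$-dimensional summands of $\extp^2\R^7$. I would organise this as a lemma on the model space $\R^7$ (pure representation theory / linear algebra with $\varphi_0$), then globalise pointwise. Everything else — the holonomy equivalences and the easy direction $(1)\Rightarrow(3)$ — is short. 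Since this is a cited result of Fernández–Gray, in the paper I would likely give the cycle $(1)\Rightarrow(2)\Rightarrow(1)$ in full and sketch $(3)\Rightarrow(1)$ via the torsion-form identities, referring to Fernández–Gray \cite{fernandez1982riemannian} (or a modern account such as Karigiannis) for the detailed representation-theoretic computation.
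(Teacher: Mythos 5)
The paper itself offers no proof of this statement: it is quoted directly from Fernández--Gray \cite{fernandez1982riemannian}, so there is no internal argument to compare yours against. Your outline is the standard modern route (holonomy principle for $(1)\Leftrightarrow(2)$, the easy implication $(1)\Rightarrow(3)$, and the intrinsic-torsion/torsion-form decomposition for $(3)\Rightarrow(1)$), and in spirit it is the right one; deferring the detailed representation-theoretic computation to Fernández--Gray or Karigiannis is consistent with what the paper does. However, as a proof there are two genuine gaps.

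First, your Step 2 is not correct as written. The hypothesis $\mathrm{Hol}(g_\varphi)\subseteq G_2$ only guarantees, via the holonomy principle, the existence of \emph{some} parallel positive $3$-form $\tilde\varphi$ compatible with $g_\varphi$; it does not say that the $G_2$-reduction defined by $\varphi$ itself is preserved by the Levi-Civita connection, which is what you silently assume when you call $\varphi$ ``the parallel section corresponding to $\varphi_0$''. Indeed, the implication $(2)\Rightarrow(1)$ is false as literally stated: on $(\R^7,g_0)$ take a smooth non-constant map $A\colon\R^7\to SO(7)$ whose values vary modulo $G_2$ and set $\varphi_x=A(x)^*\varphi_0$; then $g_\varphi=g_0$, so $\mathrm{Hol}(g_\varphi)=\{1\}\subseteq G_2$, yet $\nabla\varphi\neq0$ (and $\diff\varphi\neq 0$). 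The statement must be read, as in Fernández--Gray and Joyce, with $\varphi$ the $3$-form furnished by the holonomy reduction, or else $(2)\Rightarrow(1)$ must be replaced by the chain $(3)\Rightarrow(1)\Rightarrow(2)$ with $(2)$ only recovered in that qualified sense; your cycle $(1)\Rightarrow(2)\Rightarrow(3)\Rightarrow(1)$ breaks at the second arrow. Second, the crux $(3)\Rightarrow(1)$ remains unproved in your write-up: the identities $\diff\varphi=\tau_0\psi+3\tau_1\wedge\varphi+*\tau_3$ and $\diff\psi=4\tau_1\wedge\psi+\tau_2\wedge\varphi$, together with the check that no torsion component can hide or cancel (note $\tau_1$ appears in both equations, so one must verify that vanishing of the two exterior derivatives really kills all four components), are exactly the content of the cited theorem, and your injectivity argument trails off mid-sentence. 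That is acceptable if you intend to cite the computation, but it should be flagged as such rather than presented as a step you have verified.
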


On any manifold with a $G_2$-structure there is a decomposition of the exterior bundles   determined by the irreducible representations of the group $G_2$ (a more detailed coverage can be found in \cite{bryant2003some} for example). We denote this splitting $\Lambda^k(T^*M)=\bigoplus_d\Lambda^k_d$ where $\Lambda^k_d(T^*M)$ is a rank $d$ vector bundle, fiberwise isomorphic to an irreducible representation of $G_2$ of dimension $d.$ The splitting is
\begin{align*}
\Lambda^1(T^*M)&=\Lambda^1_7 \\
\Lambda^2(T^*M)&=\Lambda^2_7\oplus \Lambda^2_{14} \\
\Lambda^3(T^*M)&=\langle \varphi \rangle_\R\oplus \Lambda^3_{7}\oplus \Lambda^3_{27}.
\end{align*}
Furthermore, the Hodge star operator yields isomorphic splittings $\Lambda^{7-k}(T^*M)=\bigoplus_d *\left( \Lambda^{k}_d(T^*M) \right).$
We have explicit models for these spaces as follows:
\begin{align}
\Lambda^2_7&=\{ u\lrcorner \varphi \, ; \, u \in \Lambda^1(T^*M) \} \\
\Lambda^2_{14}&=\{ \alpha \in \Lambda^2(T^*M)\, ; \, \alpha \wedge \psi=0\}=\{ \alpha \in \Lambda^2(T^*M) \, ; *(\alpha \wedge \varphi)=-\alpha\} \label{eq:g2subspace} \\
\Lambda^3_{7}&=\{ u\lrcorner \psi \, ; \, u \in \Lambda^1(T^*M) \} \\
\Lambda^3_{27}&=\{ \eta \in \Lambda^3(T^*M) \, ; \, \eta \wedge \varphi = \eta \wedge \psi =0\}.
\end{align}
We call the seven dimensional irreducible representation the standard representation and denote it $V.$ The 14-dimensional irreducible representation  is isomorphic to the adjoint representation $\mf{g}_2$ and the 27-dimensional representation is isomorphic to $\text{Sym}^2_0(V)$ \cite{bryant2003some}.\par
Since $G_2$ is simply connected, manifolds with a  $G_2$-structure are spin. The spin bundle is constructed from an irreducible representation of Spin(7) arising by restricting  a representation of the Clifford algebra $\Cl(\R^7)\cong\text{Mat}_{\R}(8)\oplus\text{Mat}_{\R}(8)$. There are two  choices of representation  $W^+$ and $W^-,$they are both $8$ dimensional and  distinguished by the fact that the volume form $\text{Vol}_7$ acts as $\pm1$ on $W^\pm.$ The resulting spin bundle is independent of this choice \cite{lawson2016spin}. We make that choice that the volume form acts as $+1$ since this will ensure our formula for Clifford multiplication is the standard one in the literature. Since $G_2$-manifolds are Ricci flat, they come with a unique non-zero parallel spinor which we denote $s_7.$ The stabiliser of $s_7$ is $G_2$ and this leads to another description 
$$\Lambda^2_{14}=\{\alpha \in \Lambda^2(T^*M) \, ; \, \alpha \cdot s_7=0\}$$
of the bundle $\Lambda^2_{14}$ corresponding to the adjoint representation of $\mf{g}_2.$
The map $\Lambda^0\oplus \Lambda^1 \to \slashed{S}(M), (f+v)\mapsto (f+v)\cdot s_7$ is an isomorphism \cite{agricola2015spinorial} so that $\slashed{S}(M)\cong \Lambda^0\oplus \Lambda^1.$
\begin{lemma}\thlabel{evalsg2}
The 3-form $\varphi$ and 4-form $\psi$ act with the following eigenvalues on the subspaces of $\slashed{S}(M)$:
\begin{center}
    \begin{tabular}{c|cc}
    & $\Lambda^0$ & $\Lambda^1$  \\ \hline
    $\varphi $& -7 & 1 \\
    $\psi$ & -7 & 1
    
    \end{tabular}.
\end{center}
\end{lemma}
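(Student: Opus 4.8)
The plan is to use the $G_2$-equivariance of Clifford multiplication together with the decomposition $\slashed{S}(M)\cong\Lambda^0\oplus\Lambda^1$ from \cite{agricola2015spinorial} into the trivial and standard representations of $G_2$. Because $\varphi$ is a parallel $G_2$-invariant form and Clifford multiplication is equivariant, the operator $\xi\mapsto\varphi\cdot\xi$ commutes with the $G_2$-action on $\slashed{S}(M)$; since $\Lambda^0$ and $\Lambda^1\cong V$ are non-isomorphic absolutely irreducible $G_2$-modules, Schur's lemma shows that $\varphi\cdot$ preserves each summand and acts there by a real scalar, say $c_0$ on $\Lambda^0$ and $c_1$ on $\Lambda^1$; the same reasoning applies to $\psi$. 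Thus the lemma amounts to computing four numbers, and I would first cut this down to two by relating $\psi\cdot$ to $\varphi\cdot$.

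Since $\dim M=7$ is odd, the Clifford volume element $\vol_7$ is central and satisfies $\vol_7\cdot\vol_7=1$, and for a $3$-form $\alpha$ one has $(*\alpha)\cdot=(-1)^{3\cdot 4/2}\,\alpha\cdot\vol_7\cdot=\alpha\cdot\vol_7\cdot$ as operators on $\slashed{S}(M)$; this is a one-line check on a decomposable $3$-form such as $e^1\wedge e^2\wedge e^3$, using $(e_1e_2e_3)^2=1$ in $\Cl(\R^7)$. We have chosen the spin representation on which $\vol_7$ acts as $+1$, so this yields $\psi\cdot=(*\varphi)\cdot=\varphi\cdot$ identically on $\slashed{S}(M)$. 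Hence the $\psi$-row of the table coincides with the $\varphi$-row, and it remains only to determine $c_0$ and $c_1$.

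For $c_0$ I would invoke the spinorial characterisation of the $G_2$-structure compatible with $\Lambda^2_{14}=\{\alpha:\alpha\cdot s_7=0\}$, namely $\varphi_{ijk}=-\langle e_i\cdot e_j\cdot e_k\cdot s_7,\,s_7\rangle$ with $|s_7|=1$. Writing $\varphi\cdot=\tfrac16\sum_{i,j,k}\varphi_{ijk}\,e_ie_je_k$ and pairing with $s_7$ gives $c_0=\langle\varphi\cdot s_7,s_7\rangle=-\tfrac16\sum_{i,j,k}\varphi_{ijk}^2=-|\varphi|^2=-7$, using $\sum_{i,j,k}\varphi_{ijk}^2=3!\,|\varphi|^2$ and $|\varphi|^2=7$. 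For $c_1$ I would take the trace over $\slashed{S}(M)$: for distinct $i,j,k$ there is an index $l\notin\{i,j,k\}$ (as $7>3$) with which $e_ie_je_k$ anticommutes, forcing $\tr(e_ie_je_k)=0$ and hence $\tr(\varphi\cdot)=0$; on the other hand $\tr(\varphi\cdot)=c_0\dim\Lambda^0+c_1\dim\Lambda^1=-7+7c_1$, so $c_1=1$. Combining with the previous paragraph fills in both rows of the table.

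The only genuinely delicate point is the bookkeeping of signs and orientation conventions: the value $-7$ rather than $+7$ depends on the sign in the spinorial definition of $\varphi$ (which is fixed by the requirement that $\Lambda^2_{14}$ be the kernel of Clifford multiplication by $s_7$), and the degree-$3$ identity $(*\alpha)\cdot=\alpha\cdot\vol_7\cdot$ must be verified with precisely the Clifford and orientation conventions used throughout the paper. Once these are pinned down consistently, the remainder is Schur's lemma plus the two short computations above.
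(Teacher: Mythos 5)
Your overall skeleton is sound and partially coincides with the paper's: Schur's lemma to reduce to two scalars $c_0,c_1$, the relation $\psi\cdot=(*\varphi)\cdot=\varphi\cdot\text{Vol}_7\cdot=\varphi\cdot$ on the chosen spin bundle (correct in the paper's convention $e_i\cdot e_i=-1$, where indeed $(e_1e_2e_3)^2=+1$), and the tracelessness of $\varphi\cdot$ to deduce $c_1$ from $c_0$ via $c_0+7c_1=0$. The gap is in your determination of $c_0$. You obtain $c_0=-|\varphi|^2=-7$ from the spinorial formula $\varphi_{ijk}=-\langle e_i\cdot e_j\cdot e_k\cdot s_7,s_7\rangle$, and you claim its sign "is fixed by the requirement that $\Lambda^2_{14}$ be the kernel of Clifford multiplication by $s_7$." That justification cannot work: $\Lambda^2_{14}$ is defined by $\alpha\wedge\psi=0$, and $\psi$ (hence $\Lambda^2_{14}$, hence the kernel condition) is unchanged under $\varphi\mapsto-\varphi$, since $-\varphi$ induces the same metric with the reversed orientation and $*_{-\text{or}}(-\varphi)=*\varphi=\psi$. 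Equivalently, $\Lambda^2_{14}$ is just the stabiliser algebra of $s_7$, a statement blind to the sign of $\varphi$. But that sign is exactly the content of the lemma: by invariance the antisymmetrisation of $\langle e_ie_je_k s_7,s_7\rangle$ equals $t\,\varphi_{ijk}$ for some $t=\pm1$, and $t=-1$ is precisely equivalent to $c_0=-7$ rather than $+7$ (and, as the paper's Remark notes, the sign genuinely flips if one uses the spin representation on which $\text{Vol}_7$ acts as $-1$). So as written, your argument assumes the nontrivial half of the statement under the guise of a "known" formula, with an incorrect reason for its sign.

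To close the gap you need a computation that actually engages the orientation/representation convention ($\text{Vol}_7\cdot s_7=+s_7$). The paper does this by verifying the algebra identity $\varphi\cdot\psi=7\,\text{Vol}_7-6\varphi$ and using $\psi=\varphi\cdot\text{Vol}_7$, which gives $\lambda^2=7-6\lambda$ for the eigenvalue on each irreducible summand, i.e. $\lambda\in\{-7,1\}$, and then tracelessness forces $c_0=-7$, $c_1=1$; alternatively you could verify your spinorial formula (including its sign) in an explicit model of $W^+$ with the standard $\varphi_0$. Either way, some such model computation is unavoidable, and once it is supplied your remaining steps (the identification $\psi\cdot=\varphi\cdot$, $|\varphi|^2=7$, and the trace argument $\mathrm{tr}(e_ie_je_k)=0$ giving $c_1=1$) are correct and give a valid proof, differing from the paper only in how the key scalar $c_0$ is pinned down.
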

\begin{proof}
Since $\varphi$ is $G_2$-invariant Schur's lemma says that it preserves this decomposition since $\Lambda^0$ and $\Lambda^1$ are irreducible representations of $G_2.$ Furthermore it must act as a constant on each space and the action is traceless.
We first consider the case $\Lambda^0$: We have that $\psi=*\varphi=\varphi\cdot \text{Vol}_7$ and a direct calculation shows that $\varphi \cdot \psi=7\text{Vol}_7-6\varphi.$ Let $\varphi\cdot s_7=\lambda s_7$ then we find 
\begin{align*}
    \varphi\cdot \psi \cdot s_7&=\varphi^2\cdot \text{Vol}_7 \cdot s_7=\lambda^2s_7 \\
    &= (7\text{Vol}_7 - 6\varphi)\cdot s_7 =(7-6\lambda)s_7.
\end{align*}
Therefore $\lambda=-7$ or $\lambda=1.$ The eigenvalues of $\varphi$ acting on $\Lambda^1$ satisfy the same equation and since $\varphi$ is traceless we must have  that $\varphi$ acts as $-7$ on $\Lambda^0$ and as $1$ on $\Lambda^1.$
\end{proof}
\begin{remark}
Had we instead chosen $\text{Vol}_7$ to act as -1, the eigenvalues of $\varphi$ would differ from those above by a minus sign, while the eigenvalues of $\psi$ are independent of this choice.
\end{remark}
An argument similar to those of \cite{lotay2009deformation} and \cite{charbonneau2016deformations} yields the following corollary:
\begin{corollary}
Let $\alpha \in \Omega^2(M),$ then 
$$ \alpha \cdot s_7=*(\alpha \wedge \psi)\cdot s_7.$$

\end{corollary}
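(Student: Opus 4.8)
The plan is to test the identity against the $G_2$-splitting $\Lambda^2(T^*M)=\Lambda^2_7\oplus\Lambda^2_{14}$. Both sides depend $\R$-linearly and algebraically (no derivatives) on $\alpha$ — note that $\alpha\wedge\psi\in\Lambda^6$, so $*(\alpha\wedge\psi)\in\Lambda^1$ — hence it suffices to treat each summand. On $\Lambda^2_{14}$ there is nothing to prove: by the spinorial description $\Lambda^2_{14}=\{\alpha:\alpha\cdot s_7=0\}$ the left-hand side vanishes, and by $\Lambda^2_{14}=\{\alpha:\alpha\wedge\psi=0\}$ the right-hand side vanishes. So all the content sits on $\Lambda^2_7=\{v\lrcorner\varphi : v\in\Lambda^1\}$, and I would reduce the corollary to the chain of equalities
\[
(v\lrcorner\varphi)\cdot s_7 \;=\; 3\,v\cdot s_7 \;=\; *\!\big((v\lrcorner\varphi)\wedge\psi\big)\cdot s_7 \qquad\text{for every } v\in\Lambda^1 .
\]

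The second equality is purely a $G_2$-identity of forms, namely $*((v\lrcorner\varphi)\wedge\psi)=3\,v$. One route is to contract $\varphi\wedge\psi=7\,\text{Vol}_7$ by $v$, obtaining $v\lrcorner(\varphi\wedge\psi)=(v\lrcorner\varphi)\wedge\psi-\varphi\wedge(v\lrcorner\psi)=7*v$, and then to use the standard identity $\varphi\wedge(v\lrcorner\psi)=-4*v$ (equivalently the trace relation $\varphi_{ikl}\varphi_j{}^{kl}=6\delta_{ij}$), which gives $(v\lrcorner\varphi)\wedge\psi=3*v$. Another route, in the spirit of this paper, is to note that $v\mapsto *((v\lrcorner\varphi)\wedge\psi)$ is a $G_2$-equivariant endomorphism of the standard representation $\Lambda^1$, hence multiplication by a scalar, and to fix that scalar by a single evaluation in the flat model $(\R^7,\varphi_0)$.

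The first equality is where the spinors enter, and I would prove it by Schur's lemma. Since $\slashed{S}(M)\cong\Lambda^0\oplus\Lambda^1$ is the trivial plus the standard representation $V$, and $\Lambda^2_7\cong\Lambda^3_7\cong V$ is irreducible, Clifford multiplication on $s_7$ furnishes $G_2$-equivariant maps $\Lambda^2_7\to\Lambda^1\cdot s_7$ and $\Lambda^3_7\to\Lambda^1\cdot s_7$; hence there are universal constants $\lambda,\nu$ with $(v\lrcorner\varphi)\cdot s_7=\lambda\,v\cdot s_7$ and $(v\lrcorner\psi)\cdot s_7=\nu\,v\cdot s_7$ for all $v\in\Lambda^1$. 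To determine them I would use the Clifford product rules $v\cdot\psi=v\wedge\psi-v\lrcorner\psi$ and $\psi\cdot v=v\wedge\psi+v\lrcorner\psi$, the eigenvalue facts $\psi\cdot s_7=-7\,s_7$ and that $\psi$ acts as $+1$ on $\Lambda^1\cdot s_7$ (both from \thref{evalsg2}), and the Hodge/Clifford relation $*(v\wedge\psi)=v\lrcorner\varphi$, so that $v\wedge\psi=*(v\lrcorner\varphi)=-(v\lrcorner\varphi)\cdot\text{Vol}_7$, which together with $\text{Vol}_7\cdot s_7=s_7$ gives $(v\wedge\psi)\cdot s_7=-\lambda\,v\cdot s_7$. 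Applying the two Clifford rules to $s_7$ then produces the linear system $-\lambda-\nu=-7$ and $\nu-\lambda=1$, so $\lambda=3$ (and $\nu=4$). Combined with the previous paragraph this yields $(v\lrcorner\varphi)\cdot s_7=3\,v\cdot s_7=*((v\lrcorner\varphi)\wedge\psi)\cdot s_7$, and with the vanishing on $\Lambda^2_{14}$ the corollary follows for every $\alpha\in\Omega^2(M)$.

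The only real obstacle I foresee is keeping all the sign conventions mutually consistent: the interior-product form of the Hodge star, the sign $c_k$ in $\omega\cdot\text{Vol}_7=c_k*\omega$ (with $c_2=-1$, $c_3=1$), and the left/right Clifford contraction formulas. These feed directly into the linear system that pins down $\lambda$, so an error in any of them changes the answer; but once the conventions fixed above are held in place — in particular $\text{Vol}_7$ acting as $+1$ on $\slashed{S}(M)$, which is what makes \thref{evalsg2} usable — the rest of the computation is routine.
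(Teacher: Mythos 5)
Your proposal is correct and follows essentially the same route as the paper: project away the $\Lambda^2_{14}$ component (which kills both sides), and identify $(v\lrcorner\varphi)\cdot s_7$ and $*\bigl((v\lrcorner\varphi)\wedge\psi\bigr)\cdot s_7$ with $3v\cdot s_7$ using \thref{evalsg2} together with the standard identity $*\bigl((v\lrcorner\varphi)\wedge\psi\bigr)=3v$. The only cosmetic differences are that the paper pins down the Clifford constant via $v\lrcorner\varphi=-\tfrac{1}{2}\{v,\varphi\}$ and the eigenvalues of $\varphi$, whereas you solve a small linear system coming from $\psi$ and the volume element, and the paper cites the wedge identity rather than rederiving it from $\varphi\wedge\psi=7\,\text{Vol}_7$ and $\varphi\wedge(v\lrcorner\psi)=-4*v$; both of your computations are consistent with the paper's conventions.
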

\begin{proof}
Since the $\Lambda^2_{14}$ component of $\alpha$ annihilates $s_7$ we have that 
$$ \alpha \cdot s_7=\pi_7(\alpha) \cdot s_7.$$
Now $\pi_7(\alpha)=v\lrcorner \varphi$ for some $v\in \Omega^1(M)$ so \thref{evalsg2} says
\begin{align*}
\alpha \cdot s_7&= (v\lrcorner \varphi )\cdot s_7 = -\frac{1}{2}\{v,\varphi\}\cdot s_7 \\
&= 3v\cdot s_7.
\end{align*}
To find $v,$ note that for $v\in\Lambda^1$ we have $*(*(v\wedge \psi)\wedge \psi)=3v$ (see \cite{bryant2003some} for details)  and thus we can calculate
\begin{align*}
    *(\alpha \wedge \psi)&=*(\pi_7(\alpha)\wedge \psi)=*((v\lrcorner \varphi)\wedge \psi) \\
    &=3v
\end{align*}
so that $v=\frac{1}{3}*(\alpha \wedge \psi)$ and the result follows. 
\end{proof}
\begin{corollary}
Let $f\in\Omega^0(M)$ and $u,v \in \Omega^1(M).$ Then Clifford multiplication of the spinor $(f+v)\cdot s_7$ by $u$ is 
$$ \text{\normalfont cl} (u)  (f+v) \cdot s_7=(- u\lrcorner v  +fu +*(u\wedge v \wedge \psi))\cdot s_7.$$
\end{corollary}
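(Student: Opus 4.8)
The plan is to reduce everything to the Clifford relation and the preceding corollary. First I would split off the scalar part: since Clifford multiplication is linear, $u\cdot(f+v)\cdot s_7 = f\,(u\cdot s_7) + u\cdot v\cdot s_7$. Under the isomorphism $\slashed{S}(M)\cong\Lambda^0\oplus\Lambda^1$, the term $f\,(u\cdot s_7)$ is exactly $(fu)\cdot s_7$ with $fu\in\Lambda^1$, so this contributes the $fu$ summand on the nose. It remains to handle $u\cdot v\cdot s_7$ for two $1$-forms.

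For that term I would invoke the decomposition of the Clifford product of two $1$-forms inside the Clifford algebra: $u\cdot v = -\langle u,v\rangle + u\wedge v$, where $u\wedge v$ is the degree-$2$ element acting on $\slashed{S}(M)$ by Clifford multiplication (this is the standard quantisation identity; it follows from the Clifford relation $u\cdot v + v\cdot u = -2\langle u,v\rangle$ together with $u\cdot v = \tfrac12(u\cdot v - v\cdot u)$ on the antisymmetric part, and agrees with the convention fixed earlier in which $\mathrm{Vol}_7$ acts as $+1$). Applying this to $s_7$ gives $u\cdot v\cdot s_7 = -\langle u,v\rangle\, s_7 + (u\wedge v)\cdot s_7$, where now $u\wedge v\in\Omega^2(M)$.

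Now I would feed the $2$-form $\alpha = u\wedge v$ into the previous corollary, which states $\alpha\cdot s_7 = *(\alpha\wedge\psi)\cdot s_7$; this yields $(u\wedge v)\cdot s_7 = *(u\wedge v\wedge\psi)\cdot s_7$, and since $u\wedge v\wedge\psi$ is a $6$-form its Hodge dual is a genuine $1$-form, landing in the $\Lambda^1$ summand. Assembling the three pieces gives
\[
u\cdot(f+v)\cdot s_7 = \bigl(-\langle u,v\rangle + fu + *(u\wedge v\wedge\psi)\bigr)\cdot s_7,
\]
with $-\langle u,v\rangle\in\Lambda^0$ and $fu + *(u\wedge v\wedge\psi)\in\Lambda^1$, as claimed.

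I do not expect a serious obstacle here; the only point requiring care is bookkeeping of sign conventions — specifically, verifying that the antisymmetric part of $u\cdot v$ in the Clifford algebra really is $+\,u\wedge v$ (not $-\,u\wedge v$) under the stated normalisation, so that the corollary applies without an extra sign, and checking that the Clifford relation used is $u\cdot v + v\cdot u = -2\langle u,v\rangle$ consistent with the earlier computations (e.g. the $3v\cdot s_7$ identity in the proof of the preceding corollary, where $v\lrcorner\varphi = -\tfrac12\{v,\varphi\}$ was used). Once those conventions are pinned down, the identity is immediate.
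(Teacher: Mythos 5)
Your argument is correct and is precisely the intended one: the paper leaves this corollary's proof implicit, and its proof of the analogous six-dimensional statement (\thref{nkclif}) proceeds exactly as you do, splitting $u\cdot(f+v)\cdot s$ via the Clifford identity $u\cdot v=-\langle u,v\rangle+u\wedge v$ and then handling the $2$-form term with the preceding corollary $\alpha\cdot s_7=*(\alpha\wedge\psi)\cdot s_7$. Your sign bookkeeping is consistent with the paper's conventions, so nothing further is needed.
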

Recall the Dirac operator $D\colon \Gamma(\slashed{S}(M))\to \Gamma(\slashed{S}(M))$ is given in a local orthonormal frame $e^i$ of $T^*M$ by the formula $D(s)=e^i\cdot \nabla_is.$ It is easily verified that 
$$D((f+v)\cdot s)= (\diff^*v+\diff f+*(\diff v\wedge \psi))\cdot s_7$$
so we can write the Dirac operator as the $2\times 2$ matrix 
\begin{equation}\label{eq:diracop7}
   D=\begin{pmatrix}
    0&\diff^* \\
    \diff &*(\psi\wedge \diff \, \cdot )
    \end{pmatrix}.
\end{equation}
\subsection{Nearly K\"ahler Manifolds}

Let $(\Sigma,g)$ be a Riemannian spin manifold and let $\slashed{S}(\Sigma)$ denote the real spinor bundle associated to $\Sigma.$ A spinor $s \in \Gamma(\slashed{S}(\Sigma))$ is called a \emph{real Killing spinor} if there exists a non-zero real constant $\lambda$ such that 
\begin{equation}\label{eq:killingeqn}
\nabla_Xs=\lambda X\cdot s
\end{equation}
for all $X\in\Gamma(T\Sigma)$ and where $\nabla $ is the Levi-Civita connection acting on the spin bundle. From here on we fix $\Sigma$ to be  6-dimensional.\\
\begin{mydef}
A $6$-manifold $(\Sigma,g)$ together with a real Killing spinor $s_6\in \Gamma(\slashed{S}(\Sigma))$ is called a \emph{nearly K\"ahler $6$-manifold}.
\end{mydef}
By scaling the metric, we can also scale the constant $\lambda.$ In order that the Killing spinor lifts to a parallel spinor on the cone we shall fix 
$$\lambda =\frac{1}{2}.$$
Such a manifold is Einstein with $\text{Ric}=5g$ \cite{baum1990twistor}. The group fixing a Killing spinor on a 6-manifold is $ \text{SU}(3) $ and the spinor defines an $ \text{SU}(3) $-structure on $\Sigma$ as described in \cite{charbonneau2016deformations}.
Thus we have a holomorphic $(3,0)$-form $\Omega$ and an almost complex structure $J$ which allows us to define a fundamental 2-form $\omega=g(J\cdot, \cdot). $  In a suitable local orthonormal frame these take the form
\begin{align*}
    \Omega&=(e^1+ie^2)\wedge (e^3+ie^4)\wedge (e^5+ie^6) \\
    \omega &= e^{12} + e^{34} + e^{56}. 
\end{align*}
On a nearly K\"ahler manifold $\omega$ is not closed but the following equations are satisfied:
\begin{equation}\label{eq:nkforms}
\diff \omega =3\text{Im}\Omega, \qquad \diff \text{Re}\Omega=2\omega^2.
\end{equation}
In fact, a 6-manifold is nearly K\"ahler if and only if it is an $ \text{SU}(3) $-structure manifold such that \eqref{eq:nkforms} holds \cite{friedrich1985first}. The $ \text{SU}(3) $-structure has non-vanishing torsion and therefore the holonomy group of the Levi-Civita connection $\nabla$ need not be an $ \text{SU}(3) $ subgroup. There is however a distinguished connection on the tangent bundle with skew parallel torsion and holonomy $ \text{SU}(3) .$
This connection is known as the \emph{canonical connection} and is defined via the formula 
\begin{equation}\label{eq:canonicalconnectiontm}
    g(\nabla^{\text{can}}_X,Y,Z)=g(\nabla_XY,Z) + \frac{1}{2}\text{Re}\Omega(X,Y,Z)
\end{equation}
for all $X,Y,Z\in\Gamma(T\Sigma).$
It proves useful to define a one parameter family of connections interpolating between the Levi-Civita connection and the canonical connection by setting
\begin{equation}
    g(\nabla^{t}_XY,Z)=g(\nabla_XY,Z) + \frac{t}{2}\text{Re}\Omega(X,Y,Z)
\end{equation}
for $t\in\R.$
The torsion tensor $T^t$ of the connection $\nabla^t$ is 
$$g(X,T^t(Y,Z))=t\text{Re}\Omega(X,Y,Z).$$
The fact that the holonomy group of the canonical connection $\nabla^{\text{can}}=\nabla^1$ is a subgroup of $ \text{SU}(3) $ follows from the equation
$$\nabla^t_Xs_6=\frac{1-t}{2}X\cdot s_6.$$\par
The first examples of nearly K\"ahler manifold were given in \cite{wolf1968homogeneous1} whilst in recent years the first compact, non-homogeneous examlpes were gievn in  \cite{foscolo2017exoticnk}.  For the purpose of this article, the most important examples of nearly K\"ahler manifolds will be the homogeneous ones. There are precisely four such manifolds 
\begin{align*}
    S^6&=G_2/ \text{SU}(3) , \qquad &S^3\times S^3=\text{SU}(2)^3/\text{SU}(2) \\
    \C \mathbb{P}^3&=\text{Sp}(2)/\text{Sp}(1)\times \text{U}(1), \qquad &\mathbb{F}_{1,2,3}= \text{SU}(3) /\text{U}(1)^2.
\end{align*}
In each case the homogeneous space $G/H$ is \emph{reductive}, meaning there is a splitting $\mf{g}=\mf{h}\oplus \mf{m}$ with $\mf{m}$ closed under the adjoint action of $H.$ These coset spaces are called \emph{3-symmetric} since in each case the subgroup $H$ is fixed by an automorphism $s$ of $G$ satisfying $s^3=\text{Id}.$ The induced Lie algebra automorphism $S$ also satisfies $S^3=\text{Id}.$ This acts trivially on $\mf{h}$ and non-trivially on $\mf{m}$; one defines a almost complex structure $J\colon \mf{m}\to \mf{m}$ via 
\begin{equation}\label{eq:3sym}
    S|_{\mf{m}}=-\frac{1}{2} + \frac{\sqrt{3}}{2}J.
\end{equation}
The Riemannian metric on each space is determined by the Killing form on $\mf{g}.$ In \cite{moroianu2010hermitian} it is shown that $-\frac{1}{12}$ is the normalisation that yields $\lambda=\frac{1}{2}$ in the Killing spinor equation \eqref{eq:killingeqn}. So the metric is induced from the bilinear form 
\begin{equation}\label{eq:killingform}
B(X,Y)=-\frac{1}{12}\text{Tr}_{\mf{g}_2}(\ad(X)\ad(Y)) \qquad \forall X, Y \in \mf{g}_2.
\end{equation}
We will refer to \eqref{eq:killingform} as the \emph{ nearly K\"ahler metric  } on $\mf{g}$. Extending this metric by left translation furnishes $G/H$ with a $G$-invariant metric. \par 
The tensors defining the $\text{SU}(3)$-structure admit local expressions in terms of the Lie algebra data. We record here only the identities required for this article and refer the reader to \cite{harland2010yang} for further details:  Let $\{I_A\}$ be a basis for $\mf{g}$ such that $I_a$ for $1\leq a \leq 6$ forms a basis for $\mf{m}$ and $I_i$ for $7\leq i \leq \text{dim}(G)$ forms a basis of $\mf{h}.$ The choice of orthonormal basis for $\mf{m}$ induces a local orthonormal frame $e^a$ which trivialises $T^*(G/H)|_U$ and we can write $e^i=e^i_ae^a$ with locally defined smooth functions $e^i_a.$  Let $f_{AB}^C$ be the structure constants defined by $[I_A,I_B]=f_{AB}^CI_C$ and use the nearly K\"ahler metric to lower an index $f_{ABC}\coloneqq f_{AB}^D\delta_{DC}$. The  local coordinate expression we shall require is that of the $3$-form $\text{Re}\Omega$ and this is
\begin{equation}\label{eq:holformincoords}
    \text{Re}\Omega = -\frac{1}{6}f_{abc}e^a \wedge e^b \wedge e^c.
\end{equation}

Reductive homogeneous spaces come with a distinguished connection, also called the canonical connection, on the principal $H$ bundle $G\to G/H$ whose horizontal distribution is given by left translation of $\mf{m}.$  The tangent bundle $T(G/H)$ is associated to   $G\to G/H$  via the representation $\mf{m}$ of $H.$ The canonical connection coming from the reductive homogeneous structure therefore defines a connection on $T(G/H)$ and this agrees with the connection  \eqref{eq:canonicalconnectiontm}, justifying the nomenclature.

Analogously to the $G_2$ case the exterior bundles split according to how the fibres split as representations of $ \text{SU}(3) .$ The splitting $\Lambda^k(T^*M)=\bigoplus_d\Lambda^k_d$, where $\Lambda^k_d$ has fibre dimension $d$, is as follows:
\begin{align}
    \Lambda^1(T^*\Sigma)&=\Lambda^1_6 \\
    \Lambda^2(T^*\Sigma)&=\langle \omega\rangle_\R \oplus \Lambda^2_6\oplus \Lambda^2_8 \\
    \Lambda^3(T^*\Sigma)&= \langle\text{Re}\Omega \rangle_\R\oplus \langle \text{Im}\Omega\rangle_\R \oplus \Lambda^3_6\oplus \Lambda^3_{12}
\end{align}
and there are isometric splittings $\Lambda^{6-k}=\bigoplus_d *(\Lambda^k_d).$ These spaces are modelled as follows:
\begin{align}
 \\
    \Lambda^2_6&=\text{Re}\left(\Lambda^{2,0}\oplus \Lambda^{0,2}\right)=\{v\lrcorner \text{Re}(\Omega)\, ; \, v\in\Lambda^1\} \label{eq:lambda^2_6} \\
    \Lambda^2_8&=\{\alpha \in \Lambda^2 \, ; \, *(\alpha \wedge \omega )=-\alpha\} \label{eq:su3space} \\
    \Lambda^3_6&=\{ v\wedge \omega \, ;\, v \in \Lambda^1 \} \\
    \Lambda^3_{12}&=\text{Re}\{\gamma \in \Lambda^{2,1}\oplus \Lambda^{1,2} \, ; \, \gamma \wedge \omega =0 \}.
\end{align}
The bundle $\Lambda^2_8$ has fibres isomorphic to $\mf{su}(3).$ Since the action of the group $ \text{SU}(3) $ fixes the Killing spinor it is clear that the Lie algebra action annihilates it, hence we have $\alpha \cdot s_6=0$ for any $\alpha \in \Lambda^2_8.$
The Killing spinor $s_6$ determines  a bundle map from $\Lambda^0(T^*\Sigma)\oplus \Lambda^1(T^*\Sigma)\oplus \Lambda^6(T^*M)$ to $\slashed{S}(\Sigma),$ defined by $\eta \mapsto \eta \cdot s_6$ and Charbonneau-Harland \cite{charbonneau2016deformations} note this is  an isomorphism:
\begin{equation}\label{eq:nkspinbundle}
    \slashed{S}(\Sigma)\cong \Lambda^0(T^*\Sigma)\oplus \Lambda^1(T^*\Sigma)\oplus \Lambda^6(T^*M).
\end{equation}
The almost complex structure can be constructed from this splitting; let $\text{Vol}_6$ be the volume element of the Clifford algebra, then one defines $\text{Vol}_6\cdot v\cdot s_6= Jv\cdot s_6$ for any $v\in\Lambda^1.$
The forms $\text{Re}\Omega $ and $*\omega$ act as scalar multiples on the summands of this splitting:
\begin{lemma}[{\cite[Lemma 2]{charbonneau2016deformations}}]\thlabel{eigenvals}
The subspaces of $\slashed{S}(\Sigma)$ isomorphic to $\Lambda^0, \Lambda^1$ and $\Lambda^6$ are eigenspaces of the operations of Clifford multiplication by $\text{\normalfont Re}\Omega$ and $*\omega$ with the following eigenvalues 
\begin{center}
\begin{tabular}{c|ccc}
 & $\Lambda^0$ & $\Lambda^1 $&$\Lambda^6 $ \\ \hline
 $\text{\normalfont Re}\Omega$ & 4&0&-4 \\
 $*\omega$ &-3&1&-3.
 \end{tabular}
 \end{center}
\end{lemma}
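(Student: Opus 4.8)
The plan is to mirror the proof of \thref{evalsg2}: use Schur's lemma to cut the problem down to the computation of scalars, determine the eigenvalue on the line $\R s_6$ directly, and then transport this to the other two summands by a symmetry argument and a trace identity.

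For the structural reductions, note that $\text{Re}\Omega$ and $*\omega$ are $SU(3)$-invariant, so Clifford multiplication by them commutes with the $SU(3)$-action on $\slashed{S}(\Sigma)$. The isotypic decomposition of $\slashed{S}(\Sigma)\cong\Lambda^0\oplus\Lambda^1\oplus\Lambda^6$ under $SU(3)$ consists of the two-dimensional trivial summand $(\Lambda^0\oplus\Lambda^6)\cdot s_6$ and the summand $\Lambda^1\cdot s_6$, so each of $\text{Re}\Omega\,\cdot$ and $*\omega\,\cdot$ preserves these. Moreover Clifford multiplication by a real form of degree $3$ or $4$ is self-adjoint on $\slashed{S}(\Sigma)$ (the adjoint of $e_{i_1}\!\cdots e_{i_k}$ is $(-1)^{k(k+1)/2}$ times itself, which is $+1$ for $k=3,4$). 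Since $\Lambda^1_6$ is a real irreducible of complex type its equivariant endomorphism algebra is $\R\,\mathrm{Id}\oplus\R\,J$, and because $J$ is skew-adjoint, self-adjointness forces the action of each operator on $\Lambda^1$ to be a real scalar. It remains to compute $\text{Re}\Omega\cdot s_6$ and $*\omega\cdot s_6$, and then fix the three scalars.

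For $*\omega$: from $\text{Vol}_6\cdot v\cdot s_6=Jv\cdot s_6$, an orthonormal pair $v,Jv$ satisfies $v\cdot (Jv)\cdot s_6=v\cdot\text{Vol}_6\cdot v\cdot s_6=-\text{Vol}_6\cdot v\cdot v\cdot s_6=\text{Vol}_6\cdot s_6$; summing over the three such pairs appearing in $\omega$ gives $\omega\cdot s_6=3\,\text{Vol}_6\cdot s_6$. Combined with the Clifford-algebra identity $\omega\cdot\omega=2*\omega-|\omega|^2=2*\omega-3$ (checked in the model frame), with $\omega\cdot\text{Vol}_6=\text{Vol}_6\cdot\omega$ and $\text{Vol}_6\cdot\text{Vol}_6=-1$, this yields $\omega^2\cdot s_6=9\,\text{Vol}_6^2\cdot s_6=-9s_6$ and hence $*\omega\cdot s_6=\tfrac12(\omega^2+3)\cdot s_6=-3s_6$. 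For $\text{Re}\Omega$ I would use the canonical connection: the identity $\nabla^{t}_Xs_6=\tfrac{1-t}{2}X\cdot s_6$ specialises at $t=1$ to $\nabla^{\text{can}}s_6=0$, so $D^{\text{can}}s_6=0$ for $D^{\text{can}}=\sum_i e_i\cdot\nabla^{\text{can}}_{e_i}$. By \eqref{eq:canonicalconnectiontm} the contorsion of $\nabla^{\text{can}}$ relative to the Levi--Civita connection has associated torsion $3$-form $\text{Re}\Omega$, so the standard spin-lift computation (the spinorial connection picks up $\tfrac14(X\lrcorner\,\text{Re}\Omega)\cdot$, and $\sum_i e_i\cdot(e_i\lrcorner\,\text{Re}\Omega)=3\,\text{Re}\Omega$) gives $D^{\text{can}}=D+\tfrac34\,\text{Re}\Omega\,\cdot$. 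Since the Killing spinor equation with $\lambda=\tfrac12$ gives $Ds_6=\sum_i e_i\cdot\tfrac12 e_i\cdot s_6=-3s_6$, we obtain $0=-3s_6+\tfrac34\,\text{Re}\Omega\cdot s_6$, i.e.\ $\text{Re}\Omega\cdot s_6=4s_6$. In particular both products lie in $\R s_6$, so $\Lambda^0$ is genuinely an eigenspace with the claimed eigenvalues.

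Finally I would propagate. As $\text{Re}\Omega$ has odd degree it anticommutes with $\text{Vol}_6$, so $\text{Re}\Omega\cdot(\text{Vol}_6 s_6)=-\text{Vol}_6\cdot\text{Re}\Omega\cdot s_6=-4\,\text{Vol}_6 s_6$, giving eigenvalue $-4$ on $\Lambda^6$; as $*\omega$ has even degree it commutes with $\text{Vol}_6$, giving eigenvalue $-3$ on $\Lambda^6$. For $\Lambda^1$, Clifford multiplication by any form of positive degree is trace-free on $\slashed{S}(\Sigma)$, so writing $a_1,b_1$ for the (now scalar) actions of $\text{Re}\Omega,*\omega$ on $\Lambda^1$ (of real rank $6$), the trace identities $4+6a_1-4=0$ and $-3+6b_1-3=0$ force $a_1=0$, $b_1=1$. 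The one genuinely delicate point is getting the constant and sign right in $D^{\text{can}}=D+\tfrac34\,\text{Re}\Omega\,\cdot$ and in $Ds_6=-3s_6$: this is where the orientation on $\varphi_0$, the choice that $\text{Vol}_7$ acts as $+1$, and the normalisation $\lambda=\tfrac12$ all enter, and the overall sign of the $\text{Re}\Omega$-eigenvalue is convention-dependent. As a cross-check one can verify the Clifford identity $\text{Re}\Omega\cdot\text{Re}\Omega=4-4*\omega$ directly, which forces $\text{Re}\Omega^2\cdot s_6=16s_6$ (consistent with $\pm4$) and $\text{Re}\Omega^2=0$ on $\Lambda^1$ (consistent with $a_1=0$).
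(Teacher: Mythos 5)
Your argument is correct, but it is worth noting that the paper itself offers no proof of this lemma at all: it is imported verbatim as \cite[Lemma 2]{charbonneau2016deformations}, so there is nothing internal to compare against except the proof of the $G_2$ analogue, \thref{evalsg2}, whose Schur-plus-trace strategy you have essentially transplanted to the $SU(3)$ setting and then completed with genuinely six-dimensional input. The pieces all check out against the paper's conventions: with $\lambda=\tfrac12$ one indeed gets $D^0 s_6=-3s_6$; the torsion of $\nabla^{\mathrm{can}}$ is $\mathrm{Re}\,\Omega$, so the Friedrich--Ivanov spin lift gives $D^1=D^0+\tfrac34\mathrm{Re}\,\Omega$, which reproduces the paper's \eqref{eq:diracdifference} at $t=1$ independently (so there is no circularity with \eqref{eq:diracop6}, which is derived \emph{from} the lemma), and $\nabla^{\mathrm{can}}s_6=0$ then forces $\mathrm{Re}\,\Omega\cdot s_6=4s_6$; the computation $\omega\cdot s_6=3\,\mathrm{Vol}_6\cdot s_6$, $\omega^2=2*\omega-3$, $\mathrm{Vol}_6^2=-1$ gives $*\omega\cdot s_6=-3s_6$; and your Clifford identity $(\mathrm{Re}\,\Omega)^2=4-4*\omega$ is correct, so the cross-check is sound. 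Two points you handled that are easy to fumble and deserve the emphasis you gave them: Schur's lemma alone does not make $\Lambda^0$ and $\Lambda^6$ separate eigenspaces (the trivial isotypic summand is two-dimensional), so the explicit evaluation on $s_6$ together with the (anti)commutation with $\mathrm{Vol}_6$ is genuinely needed; and the trace argument on $\Lambda^1$ only works once you have excluded a $J$-component of the equivariant action, which your self-adjointness observation ($(-1)^{k(k+1)/2}=+1$ for $k=3,4$) does. Compared with simply citing Charbonneau--Harland, your route buys a self-contained verification that the stated eigenvalues are compatible with this paper's specific normalisations ($\lambda=\tfrac12$, $\diff\omega=3\,\mathrm{Im}\,\Omega$, the choice of $J$ via $\mathrm{Vol}_6\cdot v\cdot s_6=Jv\cdot s_6$), which is exactly where such tables usually go wrong when conventions are mixed.
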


We would like to understand Clifford multiplication by 1-forms under this splitting of the spin bundle. We begin with a lemma:
\begin{lemma}
For any $\alpha \in\Omega^2(\Sigma)$ we have that 
\begin{equation}
    \alpha \cdot s_6= (-(\alpha\lrcorner\omega)\text{\normalfont Vol}_6  - \alpha \lrcorner \text{\normalfont Re}\Omega )\cdot s_6.
\end{equation}
\end{lemma}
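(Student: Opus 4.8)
The plan is to verify the identity component-by-component under the $SU(3)$-decomposition $\Lambda^2(T^*\Sigma)=\langle\omega\rangle_\R\oplus\Lambda^2_6\oplus\Lambda^2_8$, in direct analogy with the corollary $\alpha\cdot s_7=*(\alpha\wedge\psi)\cdot s_7$ of the previous subsection. Writing $\alpha=\alpha_1+\alpha_6+\alpha_8$ accordingly, the fibres of $\Lambda^2_8$ are $\mf{su}(3)$ and $SU(3)$ stabilises $s_6$, so $\alpha_8\cdot s_6=0$; hence the left-hand side depends only on $\alpha_1$ and $\alpha_6$. On the other side, the contractions $\Lambda^2\otimes\langle\omega\rangle\to\Lambda^0$ and $\Lambda^2\otimes\langle\text{Re}\Omega\rangle\to\Lambda^1$ are $SU(3)$-equivariant and $\langle\omega\rangle,\langle\text{Re}\Omega\rangle$ are trivial summands, so by Schur's lemma $\alpha\lrcorner\omega=\alpha_1\lrcorner\omega$ and $\alpha\lrcorner\text{Re}\Omega=\alpha_6\lrcorner\text{Re}\Omega$; in particular $\omega\lrcorner\text{Re}\Omega=0$. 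It therefore suffices to treat the two summands separately.

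For $\alpha_1$: write $\alpha_1=\tfrac13(\alpha\lrcorner\omega)\,\omega$ using $|\omega|^2=3$, so it is enough to prove the pointwise Clifford identity $\omega\cdot s_6=3\,\text{Vol}_6\cdot s_6$. Since $\omega$ is $SU(3)$-invariant, $\omega\cdot s_6$ lies in the two-dimensional invariant subspace $\Lambda^0\oplus\Lambda^6$ of $\slashed{S}(\Sigma)$, that is $\omega\cdot s_6=a\,s_6+b\,\text{Vol}_6\cdot s_6$. Applying $\omega\cdot$ again and using the Clifford relation $\omega\cdot\omega=-|\omega|^2+2*\omega$ together with \thref{eigenvals} (which gives $*\omega\cdot s_6=-3\,s_6$) forces $a^2-b^2=-9$ and $2ab=0$, hence $a=0$ and $b=\pm3$; the sign is pinned down by one explicit evaluation in a unitary coframe. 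Since also $\alpha_1\lrcorner\text{Re}\Omega=0$, this gives $\alpha_1\cdot s_6=(\alpha\lrcorner\omega)\,\text{Vol}_6\cdot s_6$, which is the claimed formula restricted to this summand.

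For $\alpha_6$: by the model \eqref{eq:lambda^2_6} write $\alpha_6=v\lrcorner\text{Re}\Omega$ for a unique $v\in\Omega^1(\Sigma)$. As in the $G_2$ case, the identity $w\lrcorner\eta=-\tfrac12(w\cdot\eta+\eta\cdot w)$ for a $1$-form $w$ and a $3$-form $\eta$ gives $\alpha_6\cdot s_6=-\tfrac12(v\cdot\text{Re}\Omega\cdot s_6+\text{Re}\Omega\cdot v\cdot s_6)$, and \thref{eigenvals} applied to $s_6\in\Lambda^0$ and $v\cdot s_6\in\Lambda^1$ (on which $\text{Re}\Omega$ acts by $4$ and $0$ respectively) yields $\alpha_6\cdot s_6=-2\,v\cdot s_6$. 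It then remains to identify $v$: a direct contraction computation gives $(v\lrcorner\text{Re}\Omega)\lrcorner\text{Re}\Omega=2v$, so $v=\tfrac12\,\alpha_6\lrcorner\text{Re}\Omega=\tfrac12\,\alpha\lrcorner\text{Re}\Omega$ and hence $\alpha_6\cdot s_6=-(\alpha\lrcorner\text{Re}\Omega)\cdot s_6$. Adding the two contributions and using $\alpha\lrcorner\omega=\alpha_1\lrcorner\omega$ and $\alpha\lrcorner\text{Re}\Omega=\alpha_6\lrcorner\text{Re}\Omega$ gives the stated identity.

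The only genuinely delicate points are the two normalising constants: the scalar in $\omega\cdot s_6=3\,\text{Vol}_6\cdot s_6$ — in particular its sign, which is tied to the Clifford and orientation conventions already fixed in the paper — and the constant $2$ in $(v\lrcorner\text{Re}\Omega)\lrcorner\text{Re}\Omega=2v$, equivalently the normalisation $|\text{Re}\Omega|^2=4$. Everything else is formal bookkeeping with the $SU(3)$-decomposition and \thref{eigenvals}. One could instead simply evaluate both sides on an explicit basis of $\Lambda^2$ in the standard coframe, but the decomposition argument is shorter and parallels the $G_2$ computation it generalises.
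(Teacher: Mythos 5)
Your proposal is correct and follows essentially the same route as the paper: decompose $\alpha$ into its $\Lambda^2_1\oplus\Lambda^2_6\oplus\Lambda^2_8$ components, kill the $\Lambda^2_8$ part using the stabiliser of $s_6$, handle $\langle\omega\rangle$ via $\omega\cdot s_6=3\,\text{Vol}_6\cdot s_6$ and $|\omega|^2=3$, and handle $\Lambda^2_6$ via the anticommutator $v\lrcorner\text{Re}\Omega=-\tfrac12\{v,\text{Re}\Omega\}$ together with \thref{eigenvals} and the normalisation $v=\tfrac12\,\alpha\lrcorner\text{Re}\Omega$. The only (minor) differences are that the paper gets $\omega\cdot s_6=3\,\text{Vol}_6\cdot s_6$ directly from the relation between $\omega$, $\text{Vol}_6$ and $*\omega$ rather than your quadratic Clifford argument, and pins down $v$ by Schur's lemma rather than the explicit contraction $(v\lrcorner\text{Re}\Omega)\lrcorner\text{Re}\Omega=2v$; both variants are sound.
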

\begin{proof}
Let $\pi_d$ denote projection from $\Lambda^2$ to the subspace $\Lambda^2_d.$ The description \eqref{eq:su3space} shows that $\pi_8(\alpha)\cdot s_6=0,$ so that 
$$ \alpha \cdot s_6=(\pi_1(\alpha)+\pi_6(\alpha))\cdot s_6 .$$
The $\pi_1$ component of $\alpha$ is a multiple of $\omega$ so we calculate
$$\omega \cdot s_6 = \text{Vol}_6\cdot *\omega \cdot s = -3\text{Vol}_6\cdot s_6$$
by \thref{eigenvals}. Since $|\omega|^2=3$ we see that $\pi_1(\alpha)=\frac{1}{3}(\alpha \lrcorner \omega)\omega$ and thus 
$$\pi_1(\alpha)\cdot s_6 = (\alpha \lrcorner \omega) \text{Vol}_6 \cdot s_6. $$
We have that$\pi_6(\alpha)=v\lrcorner \text{Re} \Omega $ for a unique $v\in\Lambda^1$ and we find
$$v\lrcorner \text{Re}\Omega \cdot s_6 = -\frac{1}{2}\{v,\text{Re}\Omega\}\cdot s_6=-2v\cdot s_6$$
again by \thref{eigenvals}. An application of Schur's lemma shows that $v=\frac{1}{2}\alpha \lrcorner \text{Re}\Omega.$
Therefore 
$$ \pi_6(\alpha)\cdot s_6=-(\alpha \lrcorner \text{Re}\Omega ) \cdot s_6.$$
Overall we see that 
$$\alpha \cdot s_6=(-(\alpha \lrcorner  \omega )\text{Vol}_6 - (\alpha \lrcorner \text{Re}\Omega))\cdot s_6.$$
\end{proof}
\begin{corollary}\thlabel{nkclif}
Clifford multiplication of a spinor  $(f+v+h\text{\normalfont Vol}_6)\cdot s_6$  by  a 1-form $u\in\Omega^1(\Sigma)$ is given by 
\begin{equation}\label{eq:nkclifford}
\text{\normalfont} (u)(f+v+h\text{\normalfont Vol}_6)\cdot s_6=(- u\lrcorner v  + fu -hJu -(u \wedge v)\lrcorner \text{\normalfont Re}\Omega-    (u \lrcorner Jv)\text{\normalfont Vol}_6 )\cdot s_6. 
\end{equation}
\end{corollary}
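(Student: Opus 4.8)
The plan is to reduce the statement to the preceding lemma on Clifford multiplication of $s_6$ by a $2$-form, together with the two basic structural facts already recorded: the Clifford relations for $1$-forms and the defining identity $\text{Vol}_6\cdot v\cdot s_6=Jv\cdot s_6$ for the almost complex structure. By $\R$-linearity it suffices to compute $u\cdot f\cdot s_6$, $u\cdot v\cdot s_6$ and $u\cdot h\text{Vol}_6\cdot s_6$ separately and add the results.

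The first term is immediate: $u\cdot f\cdot s_6=fu\cdot s_6$. For the second term I would use the Clifford identity for $1$-forms, $u\cdot v=u\wedge v-\langle u,v\rangle$, which holds under the identification of the Clifford and exterior algebras with the sign convention $u\cdot v+v\cdot u=-2\langle u,v\rangle$ fixed by the paper (recall $\Cl(\R^7)\cong\text{Mat}_\R(8)\oplus\text{Mat}_\R(8)$). This gives $u\cdot v\cdot s_6=(u\wedge v)\cdot s_6-\langle u,v\rangle s_6$. Since $u\wedge v$ is a $2$-form, the preceding lemma applies verbatim and yields $(u\wedge v)\cdot s_6=\bigl(((u\wedge v)\lrcorner\omega)\text{Vol}_6-(u\wedge v)\lrcorner\text{Re}\Omega\bigr)\cdot s_6$. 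Matching the contraction conventions --- here $\langle\cdot,\cdot\rangle$ is normalised so that $|e^{ij}|^2=1$, exactly as in the proof of that lemma where $|\omega|^2=3$ and $\pi_1(\alpha)=\tfrac13(\alpha\lrcorner\omega)\omega$ --- one has $(u\wedge v)\lrcorner\omega=\langle u\wedge v,\omega\rangle$, which produces precisely the $\langle u\wedge v,\omega\rangle\text{Vol}_6$ and $-(u\wedge v)\lrcorner\text{Re}\Omega$ contributions in the claimed formula.

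For the third term, write $u\cdot h\text{Vol}_6\cdot s_6=h\,u\cdot\text{Vol}_6\cdot s_6$ and note that in dimension $6$ the Clifford volume element anticommutes with every $1$-form: moving $u=\sum u_i e^i$ past the six factors of $\text{Vol}_6=e^1\cdots e^6$ costs an overall sign $(-1)$, so $u\cdot\text{Vol}_6=-\text{Vol}_6\cdot u$. Since $u\cdot s_6$ is the image of $u\in\Lambda^1$ under the isomorphism $\slashed{S}(\Sigma)\cong\Lambda^0\oplus\Lambda^1\oplus\Lambda^6$, the definition of $J$ gives $u\cdot\text{Vol}_6\cdot s_6=-\text{Vol}_6\cdot(u\cdot s_6)=-Ju\cdot s_6$, hence the $-hJu$ term. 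Summing the three contributions gives the asserted identity \eqref{eq:nkclifford}.

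The only delicate part is sign and normalisation bookkeeping: the sign in the Clifford relation $u\cdot v+v\cdot u=-2\langle u,v\rangle$, the anticommutation sign of $\text{Vol}_6$ with a $1$-form in even dimension, and the exact factor relating $\lrcorner$ with $\langle\cdot,\cdot\rangle$ on $2$-forms. Each of these is fixed by the conventions established for the preceding lemma, so consistency must simply be checked against that lemma; there is no conceptual obstacle.
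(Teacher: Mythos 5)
Your proposal is correct and follows essentially the same route as the paper's own (very brief) proof: anticommute $u$ past $\text{Vol}_6$ to produce the $-hJu$ term via the definition of $J$, expand $u\cdot v$ as $u\wedge v-\langle u,v\rangle$, and apply the preceding lemma to $(u\wedge v)\cdot s_6$. You have simply spelled out the sign and normalisation bookkeeping that the paper leaves implicit.
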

\begin{proof}
Since the volume form anti-commutes with 1-forms we have that 
$$\text{cl} (u) (f+v+h\text{Vol})\cdot s_6= (fu+u\wedge v - u \lrcorner  v  -hJu)\cdot s_6$$
and the term $(u\wedge v)\cdot s_6$ is handled using the previous lemma and the identity 
$ u\lrcorner  Jv =( u \wedge v)\lrcorner \omega.  $
\end{proof}
We can apply these results to understand the Dirac operator on a nearly K\"ahler manifold. Let $D^t=\text{cl}\circ\nabla^t\colon \Gamma (\slashed{S}(\Sigma))\to \Gamma(\slashed{S}(\Sigma))$ be the Dirac operator constructed from the connection $\nabla^t$ acting on the spin bundle.
Charbonneau-Harland \cite{charbonneau2016deformations} show that  these operators differ by a multiple of the action of $\text{Re}\Omega$ on the spin bundle:
\begin{equation}\label{eq:diracdifference}
    D^t=D^0+\frac{3t}{4}\text{Re}\Omega
\end{equation}
where $D^0$ is the Levi-Civita Dirac operator. Note that, since $\text{Re}\Omega\cdot v\cdot s_6=0$ for any 1-form $v$, this family of operators have the same action on 1-forms, namely
$$D^t(v\cdot s_6)=(\diff v+\diff^*v+2v)\cdot s_6.$$

\begin{corollary}
Under the splitting $\slashed{S}(\Sigma)\cong \Lambda^0\oplus \Lambda^1 \oplus \Lambda^6$ the Dirac operator is 
\begin{equation}\label{eq:diracop6}
  D^t=\begin{pmatrix}
    -3+3t&\diff^*&0 \\
    \diff & 2-(\diff \, \cdot)\lrcorner \text{\normalfont Re}\Omega & J\diff* \\
    0&*\langle \diff \, \cdot,  \omega \rangle &3-3t
    \end{pmatrix}.
\end{equation}
\end{corollary}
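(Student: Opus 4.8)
The plan is to reduce the computation to the Levi-Civita Dirac operator $D^0$ via the identity $D^t=D^0+\frac{3t}{4}\text{Re}\Omega$ of \eqref{eq:diracdifference}. By \thref{eigenvals}, Clifford multiplication by $\text{Re}\Omega$ acts as the scalars $4$, $0$, $-4$ on the summands $\Lambda^0$, $\Lambda^1$, $\Lambda^6$ of $\slashed{S}(\Sigma)$, so the correction term $\frac{3t}{4}\text{Re}\Omega$ contributes precisely the diagonal entries $3t$, $0$, $-3t$. This accounts for all of the $t$-dependence in \eqref{eq:diracop6}, so it remains to compute the matrix of $D^0$ and to check that its diagonal is $(-3,2,3)$ with the stated off-diagonal entries.

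To compute $D^0$, I would apply it to a general spinor $(f+v+h\,\text{Vol}_6)\cdot s_6$ using $D^0 s=e^i\cdot\nabla_{e_i}s$, the Leibniz rule, and the Killing spinor equation $\nabla_{e_i}s_6=\tfrac12 e_i\cdot s_6$ (the case $t=0$ of $\nabla^t_X s_6=\tfrac{1-t}{2}X\cdot s_6$). Writing $\eta=f+v+h\,\text{Vol}_6$, this yields $D^0(\eta\cdot s_6)=\bigl(e^i\cdot\nabla_{e_i}\eta\bigr)\cdot s_6+\tfrac12\bigl(e^i\cdot\eta\cdot e_i\bigr)\cdot s_6$. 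The second term is purely algebraic: the Clifford identity $\sum_i e_i\cdot\eta\cdot e_i=(-1)^{k+1}(6-2k)\,\eta$ for a $k$-form $\eta$ shows it equals $(-3f+2v+3h\,\text{Vol}_6)\cdot s_6$, which is exactly the constant part of the diagonal of \eqref{eq:diracop6}. The first term is the Hodge--Dirac operator on forms, $e^i\cdot\nabla_{e_i}\eta=(\diff+\diff^*)\eta$, followed by reducing the resulting forms back into $\slashed{S}(\Sigma)\cong\Lambda^0\oplus\Lambda^1\oplus\Lambda^6$ using \thref{nkclif}, the lemma expressing $\alpha\cdot s_6$ for a $2$-form $\alpha$, and \thref{eigenvals}.

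Carrying out this reduction column by column is where the work lies. For $f$ one obtains $\diff f\in\Omega^1$, giving the entry $\diff$ in the $(\Lambda^1,\Lambda^0)$ slot. For $v$ one obtains $\diff^*v\in\Omega^0$ (the entry $\diff^*$) together with the $2$-form $\diff v$, which the $2$-form lemma rewrites as $\bigl((\diff v\lrcorner\omega)\text{Vol}_6-\diff v\lrcorner\text{Re}\Omega\bigr)\cdot s_6$; since $*g=g\,\text{Vol}_6$ for a function $g$ and $\diff v\lrcorner\omega=\langle\diff v,\omega\rangle$, this contributes $-(\diff\,\cdot)\lrcorner\text{Re}\Omega$ in the $(\Lambda^1,\Lambda^1)$ slot (together with the $2v$ above) and $*\langle\diff\,\cdot,\omega\rangle$ in the $(\Lambda^6,\Lambda^1)$ slot, consistently with the formula $D^t(v\cdot s_6)=(\diff v+\diff^*v+2v)\cdot s_6$ recorded above. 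For $h\,\text{Vol}_6$ one has $\diff(h\,\text{Vol}_6)=0$, and the remaining contribution is cleanest to handle by noting that $\text{Vol}_6$ is $\nabla$-parallel and, in dimension $6$, anti-commutes with every $1$-form, so that $D^0$ anti-commutes with Clifford multiplication by $\text{Vol}_6$. Since the $\Lambda^6$-summand is the image under $\text{Vol}_6\cdot(-)$ of the $\Lambda^0$-summand, this reduces the third column to the first, and with $\text{Vol}_6\cdot v\cdot s_6=Jv\cdot s_6$ it produces the entry $J\diff*$ in the $(\Lambda^1,\Lambda^6)$ slot, the constant $3$ in the $(\Lambda^6,\Lambda^6)$ slot, and a vanishing $(\Lambda^0,\Lambda^6)$ entry. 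Assembling the three columns and adding back the diagonal $3t,0,-3t$ from $\frac{3t}{4}\text{Re}\Omega$ gives \eqref{eq:diracop6}.

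I expect the main obstacle to be exactly this last reduction: keeping the signs straight while pushing forms of degrees $0,1,2,5,6$ back into the three-summand model, where the anti-commutation of $\text{Vol}_6$ with $1$-forms, the Hodge-star conventions on a $6$-manifold, and the relation between $\text{Vol}_6$-multiplication and $J$ all enter. A convenient consistency check at the end is that $D^0$ must be formally self-adjoint, which forces the two $\Lambda^1\leftrightarrow\Lambda^6$ off-diagonal entries to be mutually $L^2$-adjoint and so fixes their relative sign, while $\diff^*$ and $\diff$ in the $\Lambda^0\leftrightarrow\Lambda^1$ corner are automatically compatible.
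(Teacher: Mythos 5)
Your strategy is the one the paper itself implicitly relies on: the corollary is stated without a separate proof and is meant to follow from exactly the ingredients you list --- the relation \eqref{eq:diracdifference} together with \thref{eigenvals} for the $t$-dependence, the Killing spinor equation and the Leibniz rule for $D^0$, and the $2$-form lemma together with \eqref{eq:nkclifford} to push $\diff v$ back into the three-summand model. Your diagonal computation and your first two columns are correct and reproduce the paper's stated action $D^t(v\cdot s_6)=(\diff v+\diff^*v+2v)\cdot s_6$.

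The one step you assert rather than compute is exactly where the only real content lies: the sign of the $\Lambda^1\leftrightarrow\Lambda^6$ entries. Running your own recipe for the third column with the paper's formulas gives, for the input $h\,\text{Vol}_6$,
$D^0(h\,\text{Vol}_6\cdot s_6)=\diff h\cdot\text{Vol}_6\cdot s_6+\tfrac{h}{2}\textstyle\sum_i e^i\cdot\text{Vol}_6\cdot e_i\cdot s_6=(-J\diff h+3h\,\text{Vol}_6)\cdot s_6$,
because $u\cdot\text{Vol}_6\cdot s_6=-Ju\cdot s_6$ by \eqref{eq:nkclifford}; that is, your method produces $-J\diff*$ in the $(\Lambda^1,\Lambda^6)$ slot rather than the printed $J\diff*$. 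Your proposed self-adjointness check points the same way: the formal $L^2$-adjoint of $v\mapsto\langle\diff v,\omega\rangle\text{Vol}_6$ is $h\,\text{Vol}_6\mapsto\diff^*(h\omega)=-*(\diff h\wedge*\omega)$, which equals $-J\diff h$ when $J$ acts on $1$-forms by metric duality with $\omega=g(J\cdot,\cdot)$ (and $+J\diff h$ only under the opposite convention $J\alpha=\alpha\circ J$). So the claim that the reduction ``produces the entry $J\diff*$'' does not come out for free: with the conventions fixed by \eqref{eq:nkclifford} and the definition $\text{Vol}_6\cdot v\cdot s_6=Jv\cdot s_6$, it comes out with the opposite sign. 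To complete the proof you must either pin down the convention for $J$ on $1$-forms that reconciles this, or record that \eqref{eq:diracop6} and \eqref{eq:nkclifford} differ by a sign in this one entry; everything else in your outline is sound and matches the paper's route.
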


\section{Asymptotically Conical \texorpdfstring{$G_2$}{G2}-Manifolds}

The link between Nearly K\"ahler and  $G_2$-geometry is via the cone construction, which we now describe:
\begin{mydef}
 Let $(\Sigma^6,g_6)$ be nearly K\"ahler. The $G_2$ cone of $\Sigma$ is   $C(\Sigma)=(0,\infty)\times \Sigma$ together with the torsion-free $G_2$-structure $(C(\Sigma), \varphi_C)$ defined by
\begin{align*}
    \varphi_C&=r^2\omega \wedge \diff r + r^3\text{\normalfont Im}\Omega 
\end{align*}
where $r$ is the coordinate on $(0,\infty).$ 
The metric determined by  $\varphi_C$ is the cone metric $g_C=\diff r^2 + r^2g_6$.
We choose the orientation such that $\diff r\wedge r^6V_6$ is the volume form on $C.$ We call $C$ a $G_2$ cone and $\Sigma$ the link of the cone.  Finally, we denote the natural projection map $\pi\colon C(\Sigma)\to \Sigma.$
\end{mydef}
A $G_2$ cone is of course not complete, but we will consider complete $G_2$-manifolds whose geometry is \emph{asymptotically} that of a $G_2$ cone. The next definition makes this notion precise.
\begin{mydef}\label{defac}
Let $(M, g,\varphi)$ be a non-compact $G_2$-manifold.
We call $M$ an asymptotically conical (AC) $G_2$-manifold  with rate $\mu<0$ if there exists a compact subset $K \subset M,$ a compact, connected nearly K\"ahler $6$-manifold $\Sigma$, a constant $R>1$ and a diffeomorphism 
$$ h \colon (R, \infty ) \times \Sigma \to  M \setminus K $$ such that 
\begin{equation}\label{eq:convergence}
\vert \nabla_C^j\left( h^*(\varphi|_{M\setminus K}) - \varphi_C \right) \vert(r,\sigma) = O(r^{\mu -j}) \qquad \text{ as $r \to \infty$}
\end{equation}
for each $\sigma\in\Sigma,$ for $j=0,1, 2 ,\ldots,$  where $\nabla_C$ is the Levi-Civita connection for the cone metric $g_C$ on $C(\Sigma)$, $\varphi_C$ is the $G_2$-structure on the cone and $\vert \cdot \vert$ is calculated using $g_C.$  We call $M\setminus K$ the end of $M$ and $\Sigma$ the asymptotic link of $M$.

\end{mydef}

\begin{remark}
We shall often drop the notation showing the dependence of a norm of the form in \eqref{eq:convergence} on a point in $\Sigma$. Such a norm is always to be understood pointwise in $\Sigma.$ 
\end{remark}
It follows from Taylor's theorem that the metric $g=g_\varphi$ and $\psi=*_\varphi \varphi$ satisfy the same asymptotic condition
\begin{align}
    \vert \nabla_C^j\left( h^*(g|_{M\setminus K}) - g_C \right) \vert  &= O(r^{\mu -j}) \qquad \text{ as $r \to \infty$} \\
    \vert \nabla_C^j\left( h^*(\psi|_{M\setminus K}) - \psi_C \right) \vert &= O(r^{\mu -j}) \qquad \text{ as $r \to \infty$}.
\end{align}

\begin{example}
\normalfont
\begin{enumerate}
    \item $(\R^7,\varphi_0)$ is clearly an AC $G_2$-manifold with any rate $\mu<0$ since $C(S^6)=\R^7\setminus \{0\}.$ In our convention $\varphi_C=\varphi_0$ and the induced metric is the Euclidean metric.
    \item The total space of $\slashed{S}(S^3),$ the spin bundle of $S^3,$ carries a metric of $G_2$ holonomy. This was the first non-trivial complete example of a $G_2$-manifold to be found and is due to the work of Bryant and Salamon in 1989 \cite{bryant1985metrics}. This example has cohomegeneity one, i.e there's a group acting isometrically with generic orbits of codimension one. This symmetry allows one to reduce the problem of finding a torsion free $G_2$-structure to solving an ODE. Furthermore the $G_2$-structure can be shown to be AC with asymptotic link $\Sigma=S^3\times S^3$ and rate -3. Bryant-Salamon also find $G_2$-holonomy metrics on $\Lambda^2_{-}S^4$  and $\Lambda^2_{-}\mathbb{CP}^2$ using this method. Here the asymptotic links are $\mathbb{CP}^3$ and $F_{1,2,3}$ respectively   and the rate of converge is -4 in both cases.  \item Foscolo, Haskins and Nordstr\"om find in \cite{foscolo2018infinitely}  infinitely many new diffeomorphism types of AC $G_2$-manifolds with asymptotic link (a quotient of) $S^3 \times S^3.$
  
\end{enumerate}
\end{example}

\subsection{Analysis on AC Manifolds}
For the remainder of this article, unless stated otherwise, $M$ will be an AC $G_2$-manifold with asymptotic link  $\Sigma$ and $h$ the diffeomorphism identifying the end of $M$ with the cone on $\Sigma.$ 
Weighted Sobolev spaces provide a natural setting for analysis on AC manifolds. Let us denote by $M_t$ for any $t>R$ the subspace of $M$ given by $h((t,\infty)\times \Sigma).$
\begin{mydef}
A radius function $\rho$ on $M$ is a smooth function on $M$ that satisfies the following
conditions. On the compact subset $K$ of $M$, we define $\rho = 1$. Let $x$ be a point in $M_{2R},$
then $h^{-1}(x)=(r,y)$ for some $r \in (2R, \infty)$ and we define $\rho(x) = r$ for such a point. Finally, in the
region $h((R, 2R) \times \Sigma)$, the function $\rho$ is defined by interpolating smoothly between its definition
near infinity and its definition in the compact subset $K$, in a decreasing fashion.
\end{mydef}
Given a principal $G$ bundle $P\to M $ we denote by $\Ad P$ the adjoint bundle of $P$, this is the vector bundle associated to $P$ via the adjoint representation of $G$ (hence the fibers are copies of the Lie algebra which we denote $\mf{g}$). We assume that $G$ is compact and  semi-simple and define a metric $-\langle ,\rangle_\mf{g}$ on the fibers using the negative of the Killing form on $\mf{g}.$ Furthermore a choice of connection $A$ on $P$ gives rise to a linear connection acting on sections of $\Ad P$ which we shall denote $\nabla^A.$ This is extended in the usual way to $\Omega^*(M, \Ad P)$ and for a section $f\in \Gamma(\Ad P)$ we use the notation $\nabla^A f =\diff_A f$ interchangeably.\par  
 We need to construct spaces of sections suitable for our purposes. For this let $E\to M$ be a vector bundle with a bundle metric  and denote by the subscript loc the space of sections $\eta$ such that $f\eta $ lie in the desired space  for all compactly supported functions $f$ on $M.$ For example, $C^k_\text{loc}(E)$ denotes the space of sections that in this sense lie in $C^k(E).$
\begin{mydef}
Let $p\geq1, k \in \mathbb{N}\cup \{0\}$ and $\mu\in \R.$ Let $(M,\varphi)$ be an AC $G_2$-manifold and fix a radius function $\rho$. Let $P\to M$ be a principal bundle with connection $A$ and let $T$ be either a tensor or spinor bundle, so that $T$ inherits a metric and Levi-Civita connection. Define a norm $\norm{\cdot}_{L^p_{k,\mu}}$ on $L^p_{k,\text{loc}}$ sections $\eta $ of $T\otimes \text{\normalfont Ad}P$ by defining
$$\norm{\eta}_{L^p_{k,\mu}} = \left( \sum_{j=0}^k \int_M \left| \rho^{j-\mu}\nabla^j\eta \right| ^p \rho^{-7}\diff V_g\right) ^{\frac{1}{p}}$$
where $|\cdot |$ is calculated using a combination of $g$ and $\langle,\rangle_\mf{g}$ and $\nabla=\nabla^{LC}\otimes\nabla^A$ is the tensor product of the Levi-Civita connection on $T$ and the connection $\nabla^A$ on $\text{\normalfont Ad}P.$ 
We  let $L^p_{k,\mu}(T\otimes \text{\normalfont Ad}P)$ denote the completion under this norm.
\end{mydef}
The weighted Sobolev spaces are Banach spaces and, when $p=2,$ Hilbert spaces.
An element $\eta\in L^p_{k,\mu}(E)$ can be thought of as a section that is $k$ times weakly differentiable such that the derivative $\nabla^j\eta$ is growing at most like $r^{\mu-j}$ on the end of $M.$ Indeed if $|\eta|=O(\rho^\mu)$ on the end of $M$ then $\eta \in L^p_{0,\mu+\epsilon}(E)$ for any $\epsilon >0.$

Denote $L^p_\mu=L^p_{0,\mu},$ then we have a weighted H\"older inequality \cite{bartnik1986mass}
$$ \norm{\xi\otimes\eta}_{L^p_{\mu+\nu}} \leq \norm{\xi}_{L^q_\mu}\norm{\eta}_{L^{q^\prime}_\nu}$$
where $\frac{1}{p}=\frac{1}{q}+\frac{1}{q^\prime}.$ 
As in the familiar $L^p$ case, this gives a duality pairing provided we use the correct weight $(L^p_\mu(E))^*\cong L^q_{-7-\mu}(E)$ where $\frac{1}{p}+\frac{1}{q}=1.$
We will mostly work with the Hilbert spaces $L^2_{k,\mu}(E).$ For such spaces one has that $\left( L^2_{k,\mu}(E)\right)^*\cong L^2_{-k,-7-\mu}(E),$ but in practice we will only ever be interested in the kernel of an operator with regularity properties that ensure the kernel is in fact independent of $k.$ In this way we can always work with Sobolev spaces with a positive degree of differentiability. \par
 We will also require weighted $C^k$ and $C^\infty$ spaces. 
 \begin{mydef}
Let  $\mu\in \R $ and let $k\in \mathbb{N} \cup \{0\}.$ The weighted $C^k$ space $C^k_\mu(E)$ is the subspace of $C^k_\text{loc}(E)$ such that the norm 
 $$\norm{\eta}_{C^k_\mu}= \sum_{j=0}^k\sup_M | \rho^{j-\mu}\nabla^j \eta | $$
 is finite. We also define $C^\infty_\mu(E)=\cap_{k\geq 0}C^k_\mu(E).$ The spaces $C^k_\mu$ are Banach spaces but $C^\infty_\mu$ need not be.
 \end{mydef}

The usual embedding theorems for Sobolev spaces can be adapted to the weighted case, we state here only the results needed for the purposes of this article. The  theorem is stated  using $\mu$-weighted H\"older spaces $C^{l,\alpha}_\mu(E)$, where $\alpha \in (0,1)$ is the H\"older exponent. These are spaces of sections with $l$ continuous derivatives and controlled growth on the end of $M$. The definition of a weighted Hölder space is stated here for completeness but we shall mostly choose  to work with Sobolev spaces.
\begin{mydef}
Let $\alpha\in (0,1),$ let $k\in 
\mathbb{N}$ and let $\mu \in \R.$ Let $d(x,y)$ be the geodesic distance between points $x,y \in M,$ let $0<c_1<1<c_2$ be constants and let 

$$H=\{(x,y)\in M \times M\, ; \, x\neq y, c_1\rho(x)\leq \rho(y)\leq c_2\rho(x) \text{ and the exists a geodesic in $M$ of length $d(x,y)$ from $x$ to $y$}\}.$$
A section $\eta$ of a vector bundle $E$, endowed with a fibre metric and a metric connection $\nabla$ is called \emph{Hölder continuous with exponent }$\alpha $ if 

$$[\eta]^\alpha = \sup_{(x,y)\in H}\frac{|\eta(x)-\eta(y)|_E}{d(x,y)^\alpha }<\infty.$$
Here $\nabla$ is used to identify the fibers $E_x$ and $E_y$ via parallel transport along a geodesic $\gamma$ connecting $x$ and $y$ (note we can find such a geodesic since $(x,y)\in H$).\par
The \emph{weighted Hölder space} $C^{k,\alpha }_\mu(E)$ of sections of $E$ is the subspace of $C^{k,a}_\text{loc}(E)$ such that the norm 

$$\norm{\eta}_{C^{k,\alpha}_\mu}=\norm{\eta}_{C^k_\mu}+[\eta]^{k,\alpha }_\mu $$
is finite, where 
$$[\eta]^{k,\alpha }_\mu= [\rho^{k+\alpha -\mu}\nabla^k\eta]^\alpha.$$

\end{mydef}
The spaces $C^{k,\alpha}_\mu(E)$ is a Banach space and there are embeddings $C^{k,\alpha }_\mu(E) \to C^l_\mu(E)$ whenever $l\leq k.$ With this definition in hand we can state the Sobolev embedding theorem we require:
\begin{theorem}[Weighted Sobolev Embedding Theorem]\thlabel{embed}
Let $k,l\geq 0$ and let $\alpha \in (0,1).$
\begin{enumerate}
    \item 
If $k - \frac{7}{p}\geq l+\alpha $ then there is a continuous embedding $L^p_{k,\mu}(E) \hookrightarrow C^{l,\alpha}_\mu(E).$ \\
\item 
If $k \geq l \geq 0 $ and $k-\frac{7}{p} \geq l-\frac{7}{q},$  $p\leq q $ and $\mu \leq \nu$ then there is a continuous embedding $L^p_{k, \mu}(E) \hookrightarrow L^q_{l,\nu}(E).$
\end{enumerate}
\end{theorem}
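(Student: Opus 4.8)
The plan is to reduce both embeddings to the classical (unweighted) Sobolev and H\"older embedding theorems on a \emph{fixed} compact model domain, exploiting the approximate dilation invariance of the cone metric near infinity. The twisting by $\Ad P$ is irrelevant throughout: all estimates are local and involve only the pointwise fibre norms and the combined connection $\nabla=\nabla^{LC}\otimes\nabla^A$, so I will suppress it. First I would split $M$ as $K'\cup(M\setminus K)$, where $K'\supset K$ is a slightly enlarged compact set on which the radius function $\rho$ is pinched between two positive constants. On $K'$ every weighted norm appearing in the statement is uniformly equivalent to its unweighted counterpart, so the classical embedding theorems on a compact manifold-with-boundary give the claimed inequalities immediately. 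It remains to treat the end, which via $h$ we identify with $(R,\infty)\times\Sigma$.

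On the end I would use a dyadic decomposition: for integers $i\ge i_0$ set $U_i=h\big((2^{i-1},2^{i+1})\times\Sigma\big)$, so the $U_i$ cover the end with bounded overlap and $\rho\sim 2^i$ on $U_i$. I pull each $U_i$ back to the fixed annulus $U=(1/2,2)\times\Sigma$ by the dilation $\delta_i(r,\sigma)=(2^i r,\sigma)$ and rescale: $2^{-2i}\delta_i^*g_C$ is the fixed cone metric $g_C|_U$, independent of $i$. By Definition~\ref{defac} and the induced $C^k$-asymptotics of $g$, the rescaled metrics $g_i:=2^{-2i}\delta_i^*(h^*g)$ converge to $g_C|_U$ in $C^k(U)$ as $i\to\infty$; hence they form a $C^k$-bounded family uniformly equivalent to $g_C|_U$, and the same holds for the induced bundle metrics and connections. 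Consequently the constants in the classical Sobolev and H\"older inequalities on $(U,g_i)$ are bounded uniformly in $i$.

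The remaining work is the bookkeeping of weights under $\delta_i$. Writing $\tilde\eta=\delta_i^*\eta$, a change of variables --- using crucially that $\dim M=7$, so that $\rho^{-7}dV_g$ is exactly dilation invariant, while the factor $\rho^{j}$ on the $j$-th derivative cancels the scaling of $\nabla^j$ --- shows that $\norm{\eta}_{L^p_{k,\mu}(U_i)}\sim 2^{-i\mu}\norm{\tilde\eta}_{L^p_k(U)}$, and likewise $\norm{\eta}_{C^{l,\alpha}_\mu(U_i)}\sim 2^{-i\mu}\norm{\tilde\eta}_{C^{l,\alpha}(U)}$ (the extra power $\rho^{\alpha}$ in the H\"older weight absorbing the $d(x,y)^\alpha$ scaling) and $\norm{\eta}_{L^q_{l,\nu}(U_i)}\sim 2^{-i\nu}\norm{\tilde\eta}_{L^q_l(U)}$, all with constants independent of $i$. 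For part (1), the uniform classical embedding $L^p_k(U)\hookrightarrow C^{l,\alpha}(U)$ (applicable since $k-\tfrac7p\ge l+\alpha$) gives $\norm{\eta}_{C^{l,\alpha}_\mu(U_i)}\lesssim \norm{\eta}_{L^p_{k,\mu}(U_i)}$ for every $i$, and taking the supremum over $i$ --- together with the bounded-overlap bound $\sup_i\norm{\eta}_{L^p_{k,\mu}(U_i)}\lesssim\norm{\eta}_{L^p_{k,\mu}(M)}$ --- yields the embedding into $C^{l,\alpha}_\mu$. For part (2), the uniform classical embedding $L^p_k(U)\hookrightarrow L^q_l(U)$ gives $\norm{\eta}_{L^q_{l,\nu}(U_i)}\lesssim 2^{i(\mu-\nu)}\norm{\eta}_{L^p_{k,\mu}(U_i)}\le\norm{\eta}_{L^p_{k,\mu}(U_i)}$ since $\mu\le\nu$; raising to the $q$-th power, summing over $i$, and using the elementary inclusion $\ell^p\hookrightarrow\ell^q$ (valid because $p\le q$) on the sequence $\big(\norm{\eta}_{L^p_{k,\mu}(U_i)}\big)_i$ gives $\norm{\eta}_{L^q_{l,\nu}(M\setminus K)}\lesssim\norm{\eta}_{L^p_{k,\mu}(M\setminus K)}$. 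Combining with the compact piece completes the proof.

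The step I expect to be the main obstacle is the uniformity assertion in the second paragraph: that the Sobolev and H\"older constants for the rescaled geometries $(U,g_i)$ do not degenerate as $i\to\infty$. This is precisely where Definition~\ref{defac} enters --- the $C^k$-decay of $\varphi-\varphi_C$ (hence of $g-g_C$, and of the induced data on the bundles) forces the rescaled metrics and connections into a $C^k$-precompact family on the fixed annulus $U$, so a single constant serves for all $i$. A secondary technical point is the H\"older seminorm: one must check that, on the end, the set $H$ in its definition consists of pairs $(x,y)$ lying in a common doubled annulus $U_i$ --- which holds once $c_1,c_2$ are fixed, since $c_1\rho(x)\le\rho(y)\le c_2\rho(x)$ confines $x$ and $y$ to comparable dyadic shells --- so that the rescaling genuinely reduces $[\,\cdot\,]^{k,\alpha}_\mu$ on $U_i$ to the classical H\"older seminorm on $U$.
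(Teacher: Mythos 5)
Your argument is essentially correct, and in fact it is the standard proof: the paper itself states \thref{embed} without proof, remarking only that ``the usual embedding theorems \ldots can be adapted to the weighted case'' and leaning on the weighted-analysis literature of Lockhart--McOwen, Marshall and Bartnik, where exactly your scheme appears --- split off a compact core, cover the end by dyadic annuli, rescale each annulus to a fixed model $(1/2,2)\times\Sigma$, invoke the classical embeddings there with constants made uniform by the AC decay of Definition~\ref{defac}, and then track the weights (the dilation invariance of $\rho^{-7}\diff V_g$ in dimension $7$, the supremum over shells for the H\"older conclusion, the geometric factor $2^{i(\mu-\nu)}\leq 1$ together with $\ell^p\hookrightarrow\ell^q$ and bounded overlap for the $L^q_{l,\nu}$ conclusion). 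Two small points are worth making explicit. First, when $E$ is a genuine tensor (or spinor) bundle rather than the trivial one, the rescaled pointwise norm picks up a fixed power of $2^{i}$ determined by the tensor type; since the same bundle appears on both sides of each embedding on a given shell $U_i$, this factor cancels, but it should be acknowledged (it is exactly the role of the conformal identification $T_E$ the paper introduces later). Second, your uniformity claim for the rescaled data silently uses that the background connection $A$ entering $\nabla=\nabla^{LC}\otimes\nabla^A$ is itself asymptotically conical in the sense of \thref{acconnection}; for a completely arbitrary connection the rescaled connection coefficients need not be bounded on the model annulus and the constants could degenerate. In the paper's usage $A$ is always AC, so this is a hypothesis to record rather than a gap, and with it your treatment of the H\"older seminorm (confining pairs of $H$ to a common doubled shell for the fixed $c_1,c_2$) and the summation over shells are both sound.
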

Throughout this article we choose to work with the spaces $L^2_{k,\mu}$ for $k\geq 4$ so that the sections we consider are continuous. We can use the weighted embedding theorem to form a multiplication theorem, this is done by adapting the argument of \cite{choquet1982analysis} to the weighted setting. 
\begin{theorem}[Weighted Sobolev Multiplication Theorem]\thlabel{weightedmult}
Let $\xi \in L^2_{k, \mu}(E)$, $\eta \in L^2_{l,\nu}(F)$ and suppose $l\geq k.$ If $k>\frac{7}{2}$ then multiplication $L^2_{k,\mu}(E)\times L^2_{l,\nu}(F)\hookrightarrow L^2_{k,\mu+\nu}(E\otimes F) $ is bounded, i.e there exists a constant $C>0$ such that
$$\norm{\xi \otimes \eta}_{k,\mu+\nu}\leq C \norm{\xi}_{k,\mu}\norm{\eta}_{l,\nu}.$$

\end{theorem}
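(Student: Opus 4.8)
The plan is to mimic the classical Sobolev multiplication argument (as in Choquet-Bruhat--Christodoulou, \cite{choquet1982analysis}) but track weights carefully. The key simplification, which I would exploit throughout, is that the weighted spaces behave like ordinary Sobolev spaces on each annulus $A_j := h(\{2^j \le r \le 2^{j+1}\} \times \Sigma)$ of fixed geometry: rescaling the cone metric on $A_j$ by $r^{-2}$ (equivalently working with the metric $g_C/r^2$, under which the $A_j$ are all isometric to a fixed annulus $A$) converts the weight $\rho^{-\mu}$ into a multiplicative constant $\sim 2^{-j\mu}$. So first I would set up this reduction: cover the end by the $A_j$, note that on $A_j$ one has $\norm{\eta}_{L^2_{k,\mu}(A_j)} \sim 2^{j(\mu - 7/2)}\norm{\tilde\eta}_{L^2_k(A)}$ where $\tilde\eta$ is the pullback under the rescaling diffeomorphism, with constants independent of $j$ (this uses the AC condition on $g$ to control the Christoffel symbols of $\nabla$ versus $\nabla_C$ uniformly). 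The compact piece $K$ is handled by the ordinary (unweighted) multiplication theorem, which applies since $k > 7/2$.

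Next, on the fixed annulus $A$, apply the standard $L^2_k$ multiplication theorem: since $\dim M = 7$ and $k > 7/2$, $L^2_k(A)$ embeds in $C^0(A)$ (indeed in $C^{0,\alpha}$ by \thref{embed}(1)) and is a Banach algebra, and more generally for $l \ge k > 7/2$ multiplication $L^2_k(A) \times L^2_l(A) \to L^2_k(A)$ is bounded with a uniform constant $C_A$. Transferring back to $A_j$ gives
\begin{equation*}
\norm{\xi\otimes\eta}_{L^2_{k,\mu+\nu}(A_j)} \le C_A\, \norm{\xi}_{L^2_{k,\mu}(A_j)}\,\norm{\eta}_{L^2_{l,\nu}(A_j)},
\end{equation*}
where the weight powers $2^{j(\mu+\nu-7/2)}$ on the left match the product $2^{j(\mu-7/2)}\cdot 2^{j(\nu - 7/2)}$ from the right up to the harmless factor $2^{7j/2}$ that is absorbed because the $\rho^{-7}dV_g$ measure contributes one such factor — I would double-check this bookkeeping of the $-7$ weight once explicitly. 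Then sum over $j$: using $\ell^2$-summability (or rather that $\sum_j a_j b_j \le (\sum a_j)^{1/2}(\sum b_j)^{1/2}$ applied to the squared annular norms), together with finite overlap of the $A_j$, yields the global estimate. The passage from finitely-overlapping annuli to a genuine partition requires a uniformly bounded partition of unity subordinate to $\{A_j\}$ whose derivatives are controlled in the weighted sense — standard, since $\rho$ is a radius function.

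The main obstacle is not any single inequality but the uniform control of constants across all annuli $A_j$ simultaneously: one must know that the Sobolev embedding and algebra constants for $L^2_k(A)$, the equivalence of $\nabla$-norms with $\nabla_C$-norms, and the partition-of-unity bounds are all independent of $j$. This is exactly where the AC hypothesis \eqref{eq:convergence} (and its consequence for $g$) is used — it guarantees that after the $r^{-2}$ rescaling the metrics on all $A_j$ are uniformly $C^\infty$-close to the fixed cylindrical metric $g_C/r^2$ on $A$, so all the relevant elliptic/functional-analytic constants are uniform. Once that uniformity is in place the rest is the routine rescaling-and-summing argument, and I would present it at that level of detail, citing \cite{bartnik1986mass} and \cite{choquet1982analysis} for the pieces that are verbatim the unweighted statements.
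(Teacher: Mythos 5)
Your argument is sound, but it is a genuinely different route from the one the paper intends: the paper proves this by directly adapting the classical argument of Choquet-Bruhat--Christodoulou to the weighted setting, i.e.\ expanding $\nabla^j(\xi\otimes\eta)$ by the Leibniz rule, applying the weighted H\"older inequality $\norm{\xi\otimes\eta}_{L^p_{\mu+\nu}}\leq \norm{\xi}_{L^q_\mu}\norm{\eta}_{L^{q'}_\nu}$ to each term, and controlling the factors by the weighted embedding theorem (\thref{embed}), which is exactly what the hypotheses $k>\frac{7}{2}$, $l\geq k$ feed into; no decomposition of $M$ is needed and the weights are tracked term by term. Your dyadic-annulus rescaling argument instead pushes all the weight bookkeeping into the geometry and reduces everything to the fixed-constant unweighted multiplication theorem on a single compact annulus, with uniformity of constants supplied by the AC condition (note you should invoke the AC condition on the connection $A$ as well as on $g$, since the norms are built from $\nabla^{LC}\otimes\nabla^A$); this buys a proof that visibly generalises to any tame weighted situation and makes the role of the $\rho^{-7}dV_g$ normalisation transparent, at the cost of setting up uniform rescaled geometry and finite-overlap summation, whereas the paper's route is shorter because the weighted H\"older and embedding theorems are already in place. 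Two small corrections to your bookkeeping, neither fatal: with the paper's norm convention (which already contains $\rho^{-7}dV_g$) the annular equivalence is $\norm{\eta}_{L^2_{k,\mu}(A_j)}\sim 2^{-j\mu}\norm{\tilde\eta}_{L^2_k(A)}$, so the weights on the two sides match exactly and there is no leftover $2^{7j/2}$ to absorb (that factor only appears, and then cancels as you say, if one forgets the measure weight); and the summation step should just use $\sum_j a_jb_j\leq(\sum_j a_j)(\sum_j b_j)$ for the squared annular norms (or bound one factor by its supremum) --- the inequality $\sum_j a_jb_j\leq(\sum_j a_j)^{1/2}(\sum_j b_j)^{1/2}$ as written is not correct, though the estimate you actually need is trivially true.
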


For convenience we shall adopt the following notation:
$$\begin{array}{ccc}
     \Omega^m_{\mu}(M)\coloneqq C^\infty_\mu(\Lambda^m T^*M),& &\Omega^m_{k,\mu}(M)\coloneqq L^2_{k,\mu}(\Lambda^mT^*M)  \\
     \Omega^m_{\mu}(M,\Ad P)\coloneqq C^\infty_\mu(T^*M\otimes \Ad P),& & \Omega^m_{k,\mu}(M,\Ad P)\coloneqq L^2_{k,\mu}(\Lambda^mT^*M\otimes \Ad P).
\end{array}$$

\section{Gauge Theory On AC \texorpdfstring{$G_2$}{G2}-Manifolds}\label{sec:gtonac}

In this section we show that the basic setup of gauge theory familiar from the compact case follows through to the case of weighted spaces. \par 

Let $(X^n,g)$ be an $n$ dimensional Riemannaian manifold and let $P\to X$ be a principal bundle. The bundle $P$ comes with a \emph{gauge group} $\mathscr{G}=\{\text{principal bundle isomorphisms}\colon P \to P\text { covering the identity}\}$. The gauge group acts on the space of connections $\mathscr{A}$ and one may form the space of \emph{connections modulo gauge}, $\mathscr{A}/\mathscr{G}.$ The space of connections is an affine space identified with $\Omega^1(X,\Ad P)$ since any two connections differ by a section in this space. \par
When $X$ is equipped with an $(n-4)$ form $\alpha$ we may define an operator on $2$-forms by
\begin{align*}
    T_\alpha &\colon \Omega^2(M) \to \Omega^2(M)\\
    T_\alpha&(\eta)=*(\alpha \wedge \eta). 
\end{align*}
Given such a set-up we seek connections $A$ whose curvature $F_A$ satisfies $T_\alpha(F_A)=\lambda F_A$ for a specified  $\lambda \in \R.$ This mimics the ASD instanton equation on a 4 manifold, where $\alpha=1$ and $\lambda=-1.$ We have two cases of interest:  
\begin{enumerate}
\item When $X=\Sigma$ is a nearly-K\"ahler $6$-manifold we take $\alpha=\omega$ and $\lambda=-1.$ The equation to be solved is
\begin{equation}
*(\omega \wedge F_A)=-F_A
\end{equation}
which is called the nearly K\"ahler instanton equation. A connection whose curvature solves this equation is called a \emph{nearly K\"ahler instanton} (or sometimes a pseudo Hermitian-Yang-Mills connection).  It is a non-trivial fact that solutions are Yang-Mills, as noted in \cite{xu2009instantons}. By \eqref{eq:su3space} a nearly K\"ahler instanton is a connection whose curvature has 2-form component lying in $\Lambda^2_8.$ Furthermore the nearly K\"ahler instanton equation is equivalent to 
\begin{equation}
    F_A\cdot s_6=0.
\end{equation}
In \cite{harland2010yang} it is shown that the canonical connection defines a nearly K\"ahler instanton. For our purposes this is the most important example of a nearly K\"ahler instanton and many of the tools we develop will be specifically for this connection.
\item In the case when $X=M$ is a $G_2$-manifold we take $\alpha=\varphi$ and $\lambda=-1,$ thus we look to solve
\begin{equation}
*(\varphi \wedge F_A)=-F_A
\end{equation}
which is called the $G_2$-instanton equation.
A connection $A$ whose curvature solves this equation is called a $G_2$-\emph{instanton}. Since $\varphi$ is closed it is not hard to see that such connections are automatically Yang-Mills, $\diff_A*F_A=0$. By  \eqref{eq:g2subspace} a $G_2$-instanton is a connection whose curvature has 2-form component lying in $\Lambda^2_{14}.$ It follows that the $G_2$-instanton equation is equivalent to either of the equations 
\begin{align}
    F_A\wedge \psi&=0 \label{eq:g2instantoneqn}\\
    F_A\cdot s_7&=0.
\end{align}

\end{enumerate}
For the remainder of this article we denote by $A_\infty$ a connection on a bundle $Q\to \Sigma$ and $A_C\coloneqq \pi^*(A_\infty)$ denotes the pullback connection on $\pi^*Q\to C(\Sigma).$
It is easily verified that $ A_\infty $ is a nearly K\"ahler instanton if and only if $A_C$ is a $G_2$-instanton. Furthermore examples of $G_2$-instantons on AC $G_2$-manifolds have been observed to decay to nearly K\"ahler instantons on the asymptotic link:
\begin{example}
\begin{itemize}
 
    \item In \cite{gunaydin1995seven} an instanton $A_\text{std}$ on the trivial $G_2$-bundle over $\R^7$ in constructed. This example decays to the canonical connection living on the trivial $G_2$-bundle over $S^6.$ We refer to this as the ``standard'' instanton. This example will be covered in more detail in Section~\ref{sec:stdinstsection}.
 \item Nearly K\"ahler instantons have been constructed on the Bryant-Salamon manifolds by Clarke  \cite{clarke2014instantons}, Oliveira \cite{oliveira2014monopoles} and Lotay-Oliveira \cite{lotay2018su2insantons}. All such examples decay  to the canonical connection living on an appropriate bundle. In particular, a limiting connection of the family of Clarke's instantons was constructed in \cite{lotay2018su2insantons} and it is interesting to note that this connection actually decays faster than Clarke's examples.
 
\end{itemize}
\end{example}

We now make precise the concept of a $G_2$-instanton decaying to a nearly K\"ahler instanton. To do so we first specify the class of principal bundles we work with:
\begin{mydef}
Let $M$ be an asymptotically conical $G_2$-manifold, with asymptotic cone $\Sigma$, and let $P\to M$ be a principal bundle over $M$. We call $P$ asymptotically framed  if there exists a principal bundle $Q\to \Sigma $ such that
$$ h^*P\cong \pi^*Q$$
where $\pi \colon C(\Sigma)\to \Sigma$ is the natural projection map.
\end{mydef}
\begin{remark}
Such a framing always exists \cite{lotay2018su2insantons}, so no generality is lost by fixing a framing. This condition is the slightly more general than the setup of Taubes in \cite{taubes1987gauge} where it is assumed that $Q$ is trivial. When this is the case Taubes notes that if $G$ is simple and simply connected then $P$ must also be trivial. 
\end{remark}
It will now be assumed that the principal $G$-bundle $P$ has $G$ semisimple and that an invariant metric is fixed on the fibers of $\Ad P.$
\begin{mydef}\thlabel{acconnection}
Let $M$ be an asymptotically conical manifold.
A connection $A$ on an asymptotically framed bundle $P \to M$ is called asymptotically conical with rate $\mu$ if there exists a connection $A_\infty$ on $Q \to \Sigma$ such that, denoting $A_C=\pi^*(A_\infty)$ we have
\begin{equation}\label{eq:asympconnection}
    |\nabla_{C}^j(h^*(A|_{M_\infty}) - A_C)|_{C}=O(r^{\mu-1-j})
\end{equation}
for all non-negative integers $j$, for some $\mu<0$ and where $\nabla_{C}$ is a combination of the Levi-Civita connection on the cone metric and $A_C$. Here $|\cdot |_C$ is the norm induced by the cone metric and the metric on $\mf{g}.$ Furthermore, we call the quantity $\mu_0 = \inf \{ \mu ; \, A \text{ is asymptotically conical with rate }\mu\}$ the fastest rate of converge of $A.$ 
\end{mydef}

It is natural to introduce Sobolev spaces of connections. Recall the space of connections is an affine space, a choice of reference connection $A$  identifies the space of connections $\mathcal{A}$ with $\Omega^1(M, \Ad P)$, since other connection $B$ is $B=A+a$ where $a$ is a uniquely determined Lie algebra valued 1-form. We let 
$$\mathcal{A}_{k,\mu-1}=\left\{ A+a \, ; \, a \in \Omega^1_{k,\mu -1}(M, \Ad P)\right\}$$
be the space of $L^2_{k,\mu-1}$ connections and $\mathcal{A}_{\mu-1}=\cap_{k\geq 0}\mathcal{A}_{k,\mu-1}$ the space of $C^\infty_{\mu-1}$ connections.  \par 

We also need to introduce gauge transformations with specified decay properties. Recall a gauge transformation $g$ is an automorphism of the principal bundle that covers the identity and that $g$ acts on a connection $A$ via the formula $g\cdot A=gAg^{-1}-\diff gg^{-1}.$ Following the setup of Nakajima in \cite{nakajima1990moduli} suppose $P\to M$ is a principal $G$ bundle, pick a faithful representation $G\to GL(V)$ and form the associated bundle $\text{End}(V).$ We define 
\begin{equation}\label{eq:weightedgaugegroup}
\mathcal{G}_{k+1,\mu}\coloneqq \{ g \in C^0(\text{End}(V)) \, ; \, \norm{\text{Id}-g}_{k+1,\mu}<\infty, \, g\in G \text{ a.e}\}.
\end{equation}
Furthermore we define $\mathcal{G}_\mu\coloneqq \bigcap_{l\geq 0}\mathcal{G}_{l,\mu}.$ The main difference between our setup and that of \cite{nakajima1990moduli} is that we shall  consider varying the weight $\mu.$ As in the compact case one can check directly that the exponential map provides the (weighted) gauge group with charts making them into manifolds and that the action on the space of connections is smooth. See \cite{freed2012instantons} for details on this construction.
\begin{lemma}
The sets $\mathcal{G}_{k+1,\mu}$ are Hilbert Lie groups with Lie algebra $\Omega^0_{k+1,\mu}(M, \Ad P)$ for $k\geq 3$. The group $\mathcal{G}_{k+1,\mu}$ acts smoothly on $\mathcal{A}_{k,\mu-1}$ via gauge transformations when $k\geq 4$.
\end{lemma}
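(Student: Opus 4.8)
## Proof Proposal

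The plan is to treat this as a standard piece of gauge-theory functional analysis adapted to the weighted setting, following the template of Freed–Uhlenbeck and Nakajima but keeping careful track of the weight $\mu$. The statement has two parts: (i) $\mathcal{G}_{k+1,\mu}$ is a Hilbert Lie group with the stated Lie algebra for $k\ge 3$, and (ii) the action on $\mathcal{A}_{k,\mu-1}$ is smooth for $k\ge 4$. The key analytic input that makes everything work is the weighted Sobolev multiplication theorem (\thref{weightedmult}): since $k\ge 3 > 7/2$ fails but $k+1\ge 4 > 7/2$ holds, the space $\Omega^0_{k+1,\mu}(M,\Ad P)$ with $\mu \le 0$ (or more carefully, $\mu<0$, which is our standing assumption on rates) is closed under pointwise multiplication, so sections of $\mathrm{End}(V)$ of the form $\mathrm{Id}+\xi$ with $\xi\in L^2_{k+1,\mu}$ can be composed and inverted within the class. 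This is also why the group acts only on $\mathcal{A}_{k,\mu-1}$ with $k\ge 4$: the gauge action $g\cdot A = gAg^{-1}-\diff g\, g^{-1}$ involves $\diff g$, which costs one derivative, so one needs $k+1 \ge 5$, i.e.\ $k\ge 4$, for the multiplication theorem to apply to the terms $(g-\mathrm{Id})a g^{-1}$ and $\diff g\, g^{-1}$ landing back in $L^2_{k,\mu-1}$.

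First I would establish that $\mathcal{G}_{k+1,\mu}$ is a group: given $g = \mathrm{Id}+\xi$, $h=\mathrm{Id}+\eta$ with $\xi,\eta\in L^2_{k+1,\mu}(\mathrm{End}(V))$, the product is $\mathrm{Id}+\xi+\eta+\xi\eta$, and $\xi\eta\in L^2_{k+1,2\mu}\subset L^2_{k+1,\mu}$ since $\mu<0$; pointwise $gh$ still takes values in $G$ a.e.\ because it does so at the level of the group. For the inverse, since $g$ takes values in the compact group $G$ a.e., $g^{-1}$ is bounded and one writes $g^{-1} = \mathrm{Id}-\xi g^{-1}$ with $\xi g^{-1}\in L^2_{k+1,\mu}$ by the multiplication theorem together with the fact that $g^{-1}$ differs from $\mathrm{Id}$ by an $L^2_{k+1,\mu}$ section (a short Neumann-series / bootstrap argument on the defining identity, or simply noting $\mathrm{Id}-g^{-1} = g^{-1}(g-\mathrm{Id})$ and using boundedness of $g^{-1}$ in $C^0$). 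Next, to give it a manifold structure I would use the exponential map $\exp\colon \mathfrak{g}\to G$ applied fibrewise: the map $\xi\mapsto \exp(\xi)$ sends a neighborhood of $0$ in $\Omega^0_{k+1,\mu}(M,\Ad P)$ to a neighborhood of $\mathrm{Id}$ in $\mathcal{G}_{k+1,\mu}$, and because $\exp(\xi)-\mathrm{Id}-\xi$ is a convergent power series in $\xi$ starting at quadratic order, the multiplication theorem shows this is a well-defined smooth map with smooth inverse (the log) near $\mathrm{Id}$; translating this chart around by group multiplication (which is smooth by the product estimate above) gives an atlas, and one checks the transition maps are smooth. This identifies the tangent space at $\mathrm{Id}$, hence the Lie algebra, with $\Omega^0_{k+1,\mu}(M,\Ad P)$. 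Smoothness of multiplication and inversion in these charts follows from the analyticity of the group operations composed with the bounded multilinear maps furnished by \thref{weightedmult}; I would cite \cite{freed2012instantons} for the detailed verification, as the text already suggests.

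For the action on connections, fix a reference AC connection $A$ with rate $\mu$ so that $\mathcal{A}_{k,\mu-1} = \{A+a : a\in\Omega^1_{k,\mu-1}(M,\Ad P)\}$. For $g\in\mathcal{G}_{k+1,\mu}$, compute $g\cdot(A+a) = A + \big(g(A+a)g^{-1} - \diff g\, g^{-1} - A\big)$, and I must show the bracketed term lies in $\Omega^1_{k,\mu-1}(M,\Ad P)$ and depends smoothly on $(g,a)$. Writing $g=\mathrm{Id}+\xi$, expand: $g A g^{-1}-A = \xi A g^{-1} - A\xi g^{-1}$-type terms involving $\nabla^A$ applied to $\xi$ (since conjugating the connection $A$ produces $\diff_A\xi$-like expressions); concretely $g\cdot A - A = -(\diff_A g)g^{-1}$ where $\diff_A g = \diff g + [A,g]$, and since $\diff_A \mathrm{Id}=0$ this equals $-(\diff_A\xi)g^{-1}$. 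Now $\diff_A\xi\in\Omega^1_{k,\mu-1}(M,\Ad P)$ because $\xi\in L^2_{k+1,\mu}$ and the connection $A$ is smooth with the reference rate, and multiplying by $g^{-1}=\mathrm{Id}+(\text{lower-weight } L^2_{k+1,\mu})$ keeps us in $L^2_{k,\mu-1}$ by the multiplication theorem (here is precisely where $k+1>7/2$ is needed so that the $L^2_{k+1,\mu}$ factor multiplies the $L^2_{k,\mu-1}$ factor boundedly). The term $g a g^{-1}$ is handled the same way: $a\in L^2_{k,\mu-1}$, and $gag^{-1}-a = \xi a g^{-1} - a\xi g^{-1}$ lands in $L^2_{k,\mu-1+\mu}\subset L^2_{k,\mu-1}$. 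Smoothness of the action map $\mathcal{G}_{k+1,\mu}\times\mathcal{A}_{k,\mu-1}\to\mathcal{A}_{k,\mu-1}$ then reduces, in the exponential charts, to smoothness of a map built from composition, the continuous bilinear multiplications of \thref{weightedmult}, the bounded linear operator $\diff_A$, and the (real-analytic, hence smooth) power-series expansions of $\xi\mapsto \exp(\xi)$ and $\xi\mapsto\exp(\xi)^{-1}$; each ingredient is smooth between the relevant Banach spaces, so the composite is.

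I expect the main obstacle — or at least the point requiring the most care — to be bookkeeping the weights: one must check at every multiplication step that the output weight $\mu-1$ (or $\mu$, or $2\mu-1$) is $\le$ the target weight, which uses $\mu<0$ crucially, and one must verify the hypothesis $l\ge k$ in \thref{weightedmult} is met in the right order (the more-differentiable factor, here the gauge parameter in $L^2_{k+1,\mu}$, multiplying the less-differentiable one in $L^2_{k,\mu-1}$). A secondary subtlety is that $g$ is only assumed to lie in $G$ \emph{almost everywhere} and to be $C^0$; one should remark that the Sobolev embedding $L^2_{k+1,\mu}\hookrightarrow C^{0}$ (from \thref{embed}, valid since $k+1\ge 4$ and $4-7/2 = 1/2 > 0$) upgrades this to genuine continuity, so $g$ is honestly $G$-valued everywhere and $g^{-1}$ is a bona fide bounded continuous section, legitimizing all the pointwise manipulations above.
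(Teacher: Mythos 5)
Your proposal is correct and follows essentially the same route the paper indicates (and leaves to \cite{freed2012instantons}): exponential-map charts on the weighted gauge group plus the weighted Sobolev multiplication theorem to close all products and to see that the action formula $g\cdot A - A = -(\diff_A\xi)g^{-1}$ lands in $\Omega^1_{k,\mu-1}(M,\Ad P)$, with the weight bookkeeping using $\mu<0$. One small correction: in \thref{weightedmult} the hypothesis is on the \emph{smaller} differentiability index, so the condition governing the action is $k>\tfrac{7}{2}$ (hence $k\geq 4$), not ``$k+1>\tfrac{7}{2}$'' as you parenthetically wrote — your stated conclusion $k\geq 4$ is nevertheless the right one.
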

With this in hand we can define our main object of study:
\begin{mydef}
Let $M$ be an AC $G_2$-manifold with asymptotic link $\Sigma.$ Let $P\to M$ be a bundle asymptotically framed by $Q\to \Sigma$ and let $A_\infty $ be a nearly K\"ahler instanton on $Q.$ The moduli space of $G_2$-instantons asymptotic to $A_\infty$ with rate $\mu$ is 
\begin{equation}\label{eq:modulispacedefinition}
\mathcal{M}(A_\infty,\mu)=\{G_2 \text{ instantons $A$ on $P$ satisfying \eqref{eq:asympconnection}}\}/\mathcal{G}_\mu.\end{equation}
\end{mydef}
\begin{remark}
It follows from \cite[Proposition 3]{oliveira2014monopoles} that an asymptotically conical $G_2$-instanton $A$ must have connection at infinity $A_\infty$ a nearly-K\"ahler instanton.
\end{remark}
To begin studying this moduli space we first try to understand it as the zero set of a (non-linear) elliptic operator.
Pick a reference connection $A$ and write any other connection $B$ as $B-A=a$ where $a\in \Omega^1(M, \Ad P).$ 
The relationship between the curvatures is $F_B-F_{A}=\diff_Aa + a \wedge a$ and hence the $G_2$-instanton equation for $B$ becomes the non-linear equation $\psi \wedge (\diff_Aa + a \wedge a)=0.$ From an analytic perspective it is advantageous to work instead with the $G_2$-monopole equation
\begin{equation}\label{eq:g2monopole}
    \diff_A f + *\left( \psi \wedge (\diff_Aa + a\wedge a)\right)=0 
\end{equation} 
for some $f \in \Omega^0(M,\Ad P).$ This is because  adding the gauge fixing condition $\diff_A^*a=0$ to \eqref{eq:g2monopole} yields an elliptic equation. Solutions of this equation are precisely elements of the zero set of the non-linear operator $\mathcal{D}_A\colon \Gamma((\Lambda^0\oplus \Lambda^1)\otimes \Ad(P))\to \Gamma((\Lambda^0\oplus \Lambda^1)\otimes \Ad(P))$ given by 
\begin{equation}\label{eq:nonlinearop}
\mathcal{D}_A(f,a)= \diff_A^*a + \diff_Af + [a,f] + *(\psi \wedge (\diff_Aa + a \wedge a)).
\end{equation}
To see that the linearisation of $\mathcal{D}_A$ is elliptic we compare the  expression for the Dirac operator \eqref{eq:diracop7} and conclude the linearisation is the \emph{twisted Dirac operator} $D_A$ where 
\begin{equation}\label{eq:linearop}
D_A =\begin{pmatrix} 0 & \diff_A^* \\
\diff_A & *(\psi \wedge \diff_A \, \cdot \, \,   )
\end{pmatrix}.
\end{equation}
Nothing is lost in moving to this setup for  if $(f,A)$ satisfies the $G_2$-monopole equation (with a decay condition) then $f=0$. Thus the zero set of $\mathcal{D}_A$ consists of solutions $B$ to the $G_2$-instanton equation together with the gauge fixing condition $\diff_A^* (B-A)=0.$ Similarly if $(f,a)$ satisfies the linearised $G_2$-monopole equation then $f=0$. We delay the proof of these facts until later in this section.\par 
Being a twisted Dirac operator, $D_A$ is first order elliptic and formally self adjoint \cite{lawson2016spin}. The kernel of $D_A$  consists of gauge fixed solutions to the linearised $G_2$-monopole equation. In studying the behaviour of this operator we are lead to consider various other Dirac operators, so we briefly list those operators we will require. If $A$ is an asymptotically conical connection then there is a connection $A_\infty$ on $Q\to \Sigma$ and we define $A_C=\pi^*A_\infty$ as in \eqref{eq:asympconnection}. Hence we have operators\\

\begin{center}
\ra{1.3}
\begin{tabular}{|c|c|c|}\hline
  Operator& Bundle & Formula \\ \hline
     $D_A$&$\slashed{S}(M)\otimes \Ad P$ &$\text{cl}\circ \nabla^\text{LC}\otimes \nabla^A$ \\
     $D_{A_C}$& $\slashed{S}(C)\otimes \Ad (\pi^*Q)$ &$\text{cl}\circ \nabla^\text{LC}\otimes \nabla^{\pi^*A_\infty}$ \\
     $D^t_{A_\infty}$&$\slashed{S}(\Sigma)\otimes \Ad Q$ & $\text{cl}\circ \nabla^t\otimes \nabla^{A_\infty}$  \\
\hline
\end{tabular}
\end{center}

The operator $D_A$ fits into the analytic framework for operators on AC manifolds  which has been developed by Lockhart-McOwen in \cite{lockhart1985elliptic} and by Marshall in \cite{marshal2002deformations}, whose work we now adapt to our setting.
Suppose $(M,g)$ is asymptotically conical and $\rho$ is a radius function, then $g_\text{Acyl}\coloneqq \rho^{-2}g$ is asymptotically cylindrical \cite{marshal2002deformations}. Let $E$ any vector bundle with fibre metric induced from the Riemannian metric (the cases we shall consider all fall into this category) and let $T_E\colon (E,g) \to (E,g_\text{Acyl})$  be the natural isometry from the conformal change of AC to Acyl metrics.
Let $E$ and $F$ be two such bundles over $M$. We will call an operator $P\colon \Gamma(E) \to \Gamma(F) $ \emph{ asymptotically conical} with rate $\nu$ if 
$$\rho^\nu (T_F)^{-1}P\,T_E$$
is asymptotic to an operator $P_\infty$ that is invariant under the $\R^+$ action on $M_\infty=h((R,\infty)\times \Sigma).$ More details of this construction can be found in \cite{marshal2002deformations}.  An important fact is that an AC rate $\nu$ order $l$ operator $P\colon \Gamma(E)\to \Gamma(F)$ admits a bounded extension $P\colon L^2_{k+l,\mu}(E)\to L^2_{k,\mu-\nu}(F).$  Our main  cases of interest are:
\begin{itemize}
    \item $E=F=\slashed{S}(M) \otimes \Ad(P) \cong (\Lambda^0\oplus \Lambda^1)\otimes \Ad(P)$ and $T_E=\begin{pmatrix} 1 & 0\\
    0&\rho\end{pmatrix}$. The asymptotic conditions on the $G_2$-structure and the connection $A$ ensure the operator $D_A$ is asymptotically conical with rate 1.
    \item $E=F=\Ad(P)$ and $T_E=\text{Id}.$ Here the asymptotic conditions on the metric and the connection ensure the coupled Laplace operator $\diff_A^*\diff_A$  is asymptotically conical with rate 2.
\end{itemize}

Furthermore the above operators are \emph{uniformly elliptic}, which is to say  they are elliptic operators that converge to elliptic operators on the cone. The operators in consideration converge to $D_{A_C}$ and $(\diff_{A_C})^*\diff_{A_C}$ which are built from the connection $A_C$ living on the cone. Such operators come with estimates that ensure desirable regularity properties analogous to the situation on a compact manifold.
\begin{theorem}\thlabel{ellipticestimate}
Suppose $P\colon C^\infty_c(E) \to C^\infty_c(F)$ is a smooth uniformly elliptic, asymptotically conical operator of rate $\gamma$ and order $l\geq 1.$ Suppose that $\eta \in L^1_\text{loc}(F)$ and $\xi \in L^1_\text{loc}(E)$ is a weak solution of $P \xi = \eta.$\\
If $\xi \in L^p_{0,\beta + \gamma}(E)$ and $\eta \in L^p_{k, \beta}(F)$ then $\xi \in L^p_{k+l, \beta +\gamma}(E)$ with  
$$\norm{\xi}_{L^p_{k+l,\beta+\gamma}(E)} \leq C\left( \norm{\eta}_{L^p_{k,\beta}(F)} + \norm{\xi}_{L^p_{0,\beta + \gamma}(E)}\right)$$
where the constant $C>0$ is independent of $\xi.$ 
\end{theorem}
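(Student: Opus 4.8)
The plan is to reduce this weighted elliptic regularity statement to the corresponding local interior estimate on compact pieces, then patch the local estimates together using the scaling structure built into the conformal change $g_{\mathrm{Acyl}} = \rho^{-2} g$. First I would transport the problem to the asymptotically cylindrical picture: writing $\tilde\xi = (T_E)^{-1}\xi$, $\tilde\eta = (T_F)^{-1}\eta$ and $\tilde P = \rho^\gamma (T_F)^{-1} P\, T_E$, the hypothesis $P\xi = \eta$ becomes $\tilde P \tilde\xi = \rho^{\gamma}\tilde\eta$, and $\tilde P$ is by assumption asymptotic to an $\mathbb{R}^+$-translation-invariant uniformly elliptic operator $P_\infty$ on the cylinder $(R,\infty)\times\Sigma$ equipped with $g_{\mathrm{Acyl}}$. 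Under $T_E$ the weighted space $L^p_{k,\mu}$ corresponds, up to the substitution $t = \log\rho$, to an \emph{unweighted} Sobolev space on the cylinder twisted by a factor $e^{-\mu t}$; the cleanest bookkeeping device is to absorb this exponential into the operator, i.e. conjugate $P_\infty$ by $e^{\mu t}$, which keeps it translation invariant and uniformly elliptic. The upshot is that it suffices to prove: a translation-invariant uniformly elliptic operator on a cylinder $\mathbb{R}\times\Sigma$ improves $L^p_k$ to $L^p_{k+l}$ on the solution, with a constant independent of where on the cylinder one looks.

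The core analytic input is the \emph{interior elliptic estimate with uniform constants}. Cover the end of $M$ by a sequence of ``annular'' regions $U_j = h((2^j R, 2^{j+2}R)\times\Sigma)$, each of which, after the conformal rescaling by $\rho^{-2}$ (equivalently, after dilating back to a fixed-size annulus), is uniformly bi-Lipschitz to a fixed compact model region $U$ with uniformly bounded geometry; the uniform ellipticity of $P$ and the $O(r^{\mu-j})$ decay of all the structure data (from Definition~\ref{defac} and Definition~\ref{acconnection}) guarantee that the rescaled operators on $U_j$ converge to the model operator $P_\infty$ on $U$ and in particular have uniformly bounded coefficients in $C^k$. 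Standard interior elliptic regularity (e.g.\ for the twisted Dirac operator, or more generally for any uniformly elliptic system, as in Lawson--Michelsohn \cite{lawson2016spin}) then gives, on each $U_j$, an estimate
\[
\norm{\xi}_{L^p_{k+l}(U_j')} \le C\big( \norm{\eta}_{L^p_{k}(U_j)} + \norm{\xi}_{L^p_{0}(U_j)}\big)
\]
on a slightly shrunken annulus $U_j' \subset U_j$, with $C$ \emph{independent of $j$} precisely because of the uniform bounded geometry. On the compact core $K$ the same estimate holds trivially by ordinary interior regularity. Bootstrapping this from $k=0$ upwards establishes $\xi \in L^p_{k+l,\mathrm{loc}}$; the point is then to sum.

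Finally I would assemble the global weighted estimate by raising each local inequality to the $p$-th power, multiplying by the appropriate weight $\rho^{p(\cdot)}$, and summing over $j$. The weight $\rho$ is comparable to the constant $2^j R$ on $U_j$, so inserting it into the local estimate on $U_j$ reproduces exactly the weighted norms $\norm{\cdot}_{L^p_{k+l,\beta+\gamma}}$, $\norm{\cdot}_{L^p_{k,\beta}}$, $\norm{\cdot}_{L^p_{0,\beta+\gamma}}$ in Definition of the weighted Sobolev space; the $\rho^{-7}\,dV_g$ measure and the $\rho^{j-\mu}$ derivative weights are built to be conformally natural under exactly this rescaling, so no extra powers of $\rho$ appear, and the finitely-overlapping nature of the cover contributes only a fixed multiplicative constant. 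Summing yields
\[
\norm{\xi}_{L^p_{k+l,\beta+\gamma}(E)} \le C\big( \norm{\eta}_{L^p_{k,\beta}(F)} + \norm{\xi}_{L^p_{0,\beta+\gamma}(E)} \big),
\]
with $C$ depending only on $p,k,l,\beta,\gamma$ and the geometry, which is the claim. The main obstacle I anticipate is not any single step but the careful verification that the conformal/scaling weights match up cleanly — i.e.\ that ``asymptotically conical operator of rate $\gamma$'' combined with the chosen weight conventions really does make the local estimates sum with no residual powers of $\rho$; this is a bookkeeping point but it is where an error would most naturally creep in, and it is exactly the place where one must invoke the $\R^+$-invariance of $P_\infty$ and the definitions in Marshall \cite{marshal2002deformations} rather than hand-wave.
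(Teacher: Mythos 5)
The paper does not prove this theorem at all: it is imported verbatim from the Lockhart--McOwen/Marshall framework (\cite{lockhart1985elliptic}, \cite{marshal2002deformations}), so there is no in-paper argument to compare against. Your sketch is essentially the standard proof from those references -- dyadic annuli $U_j$ on the end, rescaling to a fixed model annulus where the AC condition gives uniformly controlled coefficients, interior elliptic estimates for weak solutions with $j$-independent constants, then weighting by $\rho\sim 2^jR$ and summing over the finitely-overlapping cover -- and the scaling conventions of the weighted norms (the $\rho^{j-\mu}\nabla^j$ factors and the $\rho^{-7}\,dV_g$ measure) are indeed designed so that this summation closes with no stray powers of $\rho$, provided you use that $P$ has rate $\gamma$ so the rescaled operator relates the $\beta+\gamma$ and $\beta$ weights correctly. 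The only inessential detour is the initial passage to the asymptotically cylindrical picture and the conjugation by $e^{\mu t}$: that device is what one needs for the Fredholm/critical-weight theory, but for the regularity estimate itself the annulus-rescaling argument you give in the second and third paragraphs already suffices, so you could drop the first paragraph without loss.
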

Thus the kernel of an order $l$ AC uniformly elliptic rate $\nu$ operator $P\colon L^2_{k+l,\mu} \to L^2_{k,\mu-\nu}$ is independent of $k$, we therefore denote the kernel simply by $\text{Ker}P_\mu\coloneqq \text{Ker}P\colon L^2_{k+l,\mu} \to L^2_{k,\mu-\nu}.$
Using this Sobolev estimate together with the weighted Sobolev embedding theorem we find that the kernel of an AC uniformly elliptic operator has the desirable property of consisting of smooth sections. 
To study the kernel of our operators we will need to determine the set of \emph{critical weights} which are determined by the asymptotic operators.
\begin{mydef}
Let $C$ be a $G_2$ cone with asymptotic link $\Sigma$ and $A_C=\pi^*(A_\infty).$ Let $P_C$ be either the twisted Dirac operator $D_{A_C}$ or the coupled Laplace operator $\diff_{A_C}^*\diff_{A_C}$ acting on sections of $E=\slashed{S}(C)\otimes\Ad (\pi^*(Q))$ and $E=\Ad(\pi^*(Q))$ respectively. The set $W_\text{crit}(P_C)$ of critical weights of the operator $P_C$ is 
\begin{equation}
    W_\text{\normalfont crit}(P_C)= \left\{ \lambda \in \R \, ; \, \exists \text{ a non-zero homogeneous order } \lambda \text{ section $\eta$ of $E$ with } P_C(\eta) =0\right\}.
\end{equation}
\end{mydef}
In the case of a twisted spinor, a section $\eta$ is homogeneous of order $\lambda$ if $\eta=(f+v)\cdot s_C$ (here $S_C$ denotes the parallel spinor on the cone) with $f=r^\lambda \pi^*(f_\infty)$ and $v=r^{\lambda+1}\pi^*(v_\infty),$ where $f_\infty,v_\infty$ are sections of the appropriate bundles over $\Sigma$ (equivalently $\eta=r^\mu s_\infty $ where $s_\infty$ is a spinor on $\Sigma$ lifted to the cone). In general one has to allow for complex critical weights but the  formal self-adjoint property of the above operators in question ensures all such weights are real. The set $W_\text{crit}(P_C)$ is countable and discrete.  Recall an operator between Banach spaces is called Fredholm if it has finite dimensional kernel and cokernel. An operator whose range admits a finite dimensional complementary subspace automatically has closed range, so this is in particular true of Fredholm operators. 
\begin{theorem}\thlabel{homog}
Let $E$ and $P$ be as above. Then the extension $P\colon L^p_{k+l, \mu }(E) \to L^p_{k,\mu-\nu}(F)$ is Fredholm whenever $\mu \in \R \! \setminus \! W_\text{\normalfont crit}(P_C).$
Furthermore if $[\mu,\mu^\prime]\cap W_\text{\normalfont crit}(P_C)=\emptyset $, then 
$$\text{\normalfont Ker}P_\mu=\text{\normalfont Ker}P_{\mu^\prime}.$$ 
\end{theorem}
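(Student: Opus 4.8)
\textbf{Proof proposal for \thref{homog}.}

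The plan is to reduce the Fredholm statement and the kernel-invariance statement to the corresponding facts for translation-invariant elliptic operators on the cylinder $\R \times \Sigma$, which are exactly the results of Lockhart--McOwen \cite{lockhart1985elliptic} and Marshall \cite{marshal2002deformations}. First I would conjugate by the isometry $T_E \colon (E,g) \to (E, g_{\text{Acyl}})$ induced by the conformal change $g_{\text{Acyl}} = \rho^{-2} g$, and set $\widetilde{P} = \rho^{\nu}(T_F)^{-1} P \, T_E$; by the definition of an asymptotically conical operator of rate $\nu$, $\widetilde{P}$ is an elliptic operator on the asymptotically cylindrical manifold $(M, g_{\text{Acyl}})$ which, on the end, is asymptotic to a translation-invariant operator $\widetilde{P}_\infty$ on the cylinder. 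Under the substitution $r = e^t$ the weighted spaces transform as $L^p_{k,\mu}(M,g) \cong L^p_{k,\mu}(M,g_{\text{Acyl}})$ with the weight now measuring exponential growth $e^{\mu t}$, so that $P \colon L^p_{k+l,\mu} \to L^p_{k,\mu-\nu}$ is Fredholm iff $\widetilde{P} \colon L^p_{k+l,\mu} \to L^p_{k,\mu}$ is, and the kernels correspond. This is the step that takes the ``AC with rate $\nu$'' hypothesis and turns it into the exact hypotheses of the cylindrical theory.

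Next I would invoke the Lockhart--McOwen machinery: for the translation-invariant model operator $\widetilde{P}_\infty$ on $\R \times \Sigma$, one separates variables, writing sections as $e^{\mu t}$ times a section on $\Sigma$, and finds that $\widetilde{P}_\infty - $ (the shift by weight $\mu$) fails to be invertible on the cylinder precisely for those $\mu$ in a discrete set $\mathcal{D}$ determined by the spectrum of the associated operator on the link $\Sigma$. The content of \cite{lockhart1985elliptic, marshal2002deformations} is then: (i) $\widetilde{P}$ is Fredholm on $L^p_{k+l,\mu} \to L^p_{k,\mu}$ whenever $\mu \notin \mathcal{D}$, and (ii) the kernel is locally constant in $\mu$ on each connected component of $\R \setminus \mathcal{D}$, since an element of the kernel decays or grows at a rate controlled by the indicial roots, and no root is crossed on an interval $[\mu,\mu']$ disjoint from $\mathcal{D}$. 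I would then identify $\mathcal{D}$ with $W_{\text{crit}}(P_C)$: the model operator $\widetilde{P}_\infty$ on the cylinder is the pullback (under $r = e^t$ together with the isometry) of $P_C$ on the cone, and a homogeneous order-$\lambda$ solution of $P_C \eta = 0$ on the cone corresponds exactly to a separated solution $e^{\lambda t}(\text{section on } \Sigma)$ of $\widetilde{P}_\infty$, so the exceptional set $\mathcal{D}$ of the cylindrical theory coincides with $W_{\text{crit}}(P_C)$ as defined in the excerpt. For the twisted Dirac case one uses the isometry $T_E = \mathrm{diag}(1,\rho)$ recorded above and the fact (stated in the excerpt) that $D_A$ is AC of rate $1$; for the coupled Laplacian $\diff_{A_C}^*\diff_{A_C}$ one uses $T_E = \mathrm{Id}$ and rate $2$. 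In both cases formal self-adjointness of the model operator forces all indicial roots to be real, which is why $W_{\text{crit}}$ can be taken inside $\R$.

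Finally, for the kernel-invariance claim with $[\mu,\mu']\cap W_{\text{crit}}(P_C)=\emptyset$: since $\mu \le \mu'$ there is a continuous inclusion $L^2_{k+l,\mu} \hookrightarrow L^2_{k+l,\mu'}$ (weighted Sobolev embedding, \thref{embed}(2)), giving $\text{Ker}P_\mu \subseteq \text{Ker}P_{\mu'}$ trivially. For the reverse inclusion, take $\xi \in \text{Ker}P_{\mu'}$; then $\xi$ is a solution of $P\xi = 0$ lying in $L^2$ with weight $\mu'$, and by the regularity estimate \thref{ellipticestimate} it is smooth. The standard argument from \cite{lockhart1985elliptic} shows that on the end $\xi$ admits an asymptotic expansion in powers $r^\lambda$ (times link sections) with $\lambda$ ranging over critical weights, and the leading exponent that can appear in $\xi \in L^2_{\mu'}$ but not in $L^2_{\mu}$ must be a critical weight lying in $(\mu,\mu']$ — but there are none, so $\xi$ actually lies in $L^2_{k+l,\mu}$, i.e.\ $\xi \in \text{Ker}P_\mu$. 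Hence the kernels agree. I expect the main obstacle to be making the first step fully rigorous, namely verifying that the asymptotic decay rates on $\varphi$, $\psi$ and the connection $A$ in \Cref{defac} and \thref{acconnection} are exactly strong enough to place $\rho^{\nu}(T_F)^{-1}P\,T_E$ in the class of operators covered by Marshall's theorems (a perturbation-of-translation-invariant argument: the difference $\widetilde{P} - \widetilde{P}_\infty$ must be a lower-order-in-decay perturbation that does not affect the Fredholm property), rather than the separation-of-variables bookkeeping, which is routine once the model operator is pinned down.
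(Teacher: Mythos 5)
Your proposal is essentially the same approach the paper takes: the paper states this theorem without proof, appealing to the Lockhart--McOwen/Marshall framework for asymptotically conical operators (viewed, after the conformal change $g_{\text{Acyl}}=\rho^{-2}g$, as asymptotically cylindrical ones), and your reduction via $\rho^{\nu}(T_F)^{-1}P\,T_E$, the identification of the exceptional weights of the cylindrical model with $W_{\text{crit}}(P_C)$, and the expansion/weight-improvement argument for kernel constancy is precisely that machinery. The decay verification you flag at the end is indeed the only bookkeeping required, and the paper's Definitions of AC $G_2$-manifolds and AC connections are set up exactly so that it goes through.
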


Thus the kernel of $P$ is independent of the weight provided we do not pass through any critical weights. For any $k\geq 4$  and $\mu<0$ we define:
\begin{align}
    (\diff_A^*\diff_A)_{k,\mu}& \coloneqq \diff_A^*\diff_A \colon \Omega^0_{k+2,\mu}(M,\Ad P)\to \Omega^0_{k,\mu-2}(M, \Ad P) \label{eq:coupledlaplacian} \\
    (D_A)_{k,\mu}& \coloneqq D_A\colon L^2_{k+1,\mu}(\slashed{S}(M)\otimes \Ad P)\to L^2_{k,\mu-1}(\slashed{S}(M)\otimes \Ad P).
\end{align}

The last result we shall require is a Fredholm alternative for AC manifolds.
\begin{theorem}
Let $P$ be an AC uniformly elliptic order $l$ and rate $\nu$ operator and suppose that $\mu \not\in W_\text{\normalfont crit}(P_C)$ so that the extension
$$P\colon L^2_{k+l,\mu}(E) \to L^2_{k,\mu-\nu}(F)$$
is Fredholm. Then 
\begin{enumerate}
    \item There exists a finite dimensional subspace $\mathcal{O}_{\mu-\nu}$ of $L^2_{k,\mu-\nu}$ such that 
    \begin{equation}
        L^2_{k,\mu-\nu}(F)=P(L^2_{k+l,\mu}(E))\oplus \mathcal{O}_{\mu-\nu}
    \end{equation}
    and
    \begin{equation}
        \mathcal{O}_{\mu-\nu}\cong \text{\normalfont Ker}P^*_{-7-\mu+\nu}.
    \end{equation}
    \item If $\mu>-\frac{7}{2}+\nu$ then we can take  
    $$\mathcal{O}_{\mu-\nu} = \text{\normalfont Ker}P^*_{-7-\mu+\nu}.$$
    \item The image of  the extension $P$ is the space 
    \begin{equation}
    P(L^2_{k+l,\mu}(E))= \left\{ \eta \in L^2_{k,\mu-\nu}(F) \, ; \, \langle \eta , \kappa \rangle _{L^2(F)}=0 \, \,  \text{\normalfont for all } \kappa \in \text{\normalfont Ker}(P^*)_{-7-\mu+\nu} \right\}.
    \end{equation}
    
\end{enumerate}
\end{theorem}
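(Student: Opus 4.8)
The plan is to deduce this Fredholm alternative from \thref{homog} together with the duality pairing for weighted Sobolev spaces stated in the preliminaries. The key structural fact is that for an AC uniformly elliptic operator $P$ of order $l$ and rate $\nu$, the formal adjoint $P^*$ is again AC uniformly elliptic of the same order and rate, and the $L^2$ pairing identifies $\left(L^2_{k,\mu-\nu}(F)\right)^*$ with $L^2_{-k,-7-\mu+\nu}(F)$. First I would recall that, since $P\colon L^2_{k+l,\mu}(E)\to L^2_{k,\mu-\nu}(F)$ is Fredholm, its image is closed, so by the standard Banach-space closed-range theorem the image equals the annihilator of the cokernel, i.e.
\[
P\left(L^2_{k+l,\mu}(E)\right)=\left\{\eta\in L^2_{k,\mu-\nu}(F)\ ;\ \langle \eta,\kappa\rangle=0\ \text{for all }\kappa\in \mathrm{coker}\right\}.
\]
The content is then to identify the cokernel functionals concretely as $\mathrm{Ker}(P^*)_{-7-\mu+\nu}$. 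A linear functional on $L^2_{k,\mu-\nu}(F)$ annihilating the image is represented, via the duality pairing, by some $\kappa\in L^2_{-k,-7-\mu+\nu}(F)$ with $\langle P\xi,\kappa\rangle=0$ for all $\xi$, i.e.\ $P^*\kappa=0$ weakly. Here $P^*$ acts as $L^2_{-k,-7-\mu+\nu}(F)\to L^2_{-k-l,-7-\mu}(E)$; since $-7-\mu+\nu\notin W_{\mathrm{crit}}(P_C)$ (this requires checking that $W_{\mathrm{crit}}(P^*_C)$ is the reflection of $W_{\mathrm{crit}}(P_C)$ about $-7/2+\nu/2$ together with the self-adjointness of $D_{A_C}$ and $\diff_{A_C}^*\diff_{A_C}$, so that $\mu\notin W_{\mathrm{crit}}$ implies $-7-\mu+\nu\notin W_{\mathrm{crit}}$ as well), the operator $P^*$ is itself Fredholm on these spaces, and elliptic regularity (\thref{ellipticestimate}) applied to $P^*$ shows such a weak solution $\kappa$ is smooth and lies in $L^2_{m,-7-\mu+\nu}(F)$ for every $m$, hence in $\mathrm{Ker}(P^*)_{-7-\mu+\nu}$ (which is $k$-independent by the regularity theory). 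This proves parts (1) and (3) simultaneously, with $\mathcal{O}_{\mu-\nu}$ taken to be any vector-space complement of the image — which is finite-dimensional and isomorphic to the cokernel, hence to $\mathrm{Ker}(P^*)_{-7-\mu+\nu}$.

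For part (2), the point is that when $\mu>-\frac{7}{2}+\nu$ one has $-7-\mu+\nu<-\frac{7}{2}$, and more importantly the weight $-7-\mu+\nu$ is smaller than $\mu$; I would argue that $\mathrm{Ker}(P^*)_{-7-\mu+\nu}$ is then an honest subspace of $L^2_{k,\mu-\nu}(F)$, i.e.\ these particular harmonic sections decay fast enough to lie in the target space. Concretely, an element $\kappa\in\mathrm{Ker}(P^*)_{-7-\mu+\nu}$ is smooth and satisfies $|\kappa|=O(\rho^{\lambda})$ for some critical weight $\lambda\le -7-\mu+\nu$ of $P^*_C$; the condition $\mu>-\frac72+\nu$ forces $-7-\mu+\nu<\mu-\nu$ unless $\nu$ is large, but in fact what one needs is the weaker statement that $\mathrm{Ker}(P^*)_{-7-\mu+\nu}\subset L^2_{k,\mu-\nu}$ which follows from the Sobolev embedding $L^2_{l,\sigma}\hookrightarrow L^2_{l,\sigma'}$ for $\sigma\le\sigma'$ (\thref{embed}(2)) precisely when $-7-\mu+\nu\le \mu-\nu$, equivalently $\mu\ge -\frac72+\nu$; in the strict range this containment holds, and one checks it is transverse to the image, so the direct-sum decomposition of part (1) can be realized with $\mathcal{O}_{\mu-\nu}$ equal to $\mathrm{Ker}(P^*)_{-7-\mu+\nu}$ itself.

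The main obstacle I anticipate is making the identification of the cokernel rigorous — specifically, showing that a bounded linear functional vanishing on the closed image is not merely represented by \emph{some} distributional $\kappa$ solving $P^*\kappa=0$, but by one lying in the correct weighted space with the correct decay, and then bootstrapping its regularity. This is where one must invoke that $P^*$ is again an AC uniformly elliptic operator of rate $\nu$ whose cone limit $P^*_C=(P_C)^*$ has a critical-weight set related to $W_{\mathrm{crit}}(P_C)$ by reflection, so that the non-critical hypothesis on $\mu$ transfers to the adjoint weight; then \thref{homog} applies to $P^*$ and \thref{ellipticestimate} upgrades regularity. A secondary subtlety, already flagged in the excerpt, is the $k$-independence of all these kernels, which lets us state the result without reference to $k$; this is handled uniformly by the interior-plus-weighted elliptic estimate, so I would simply cite \thref{ellipticestimate} rather than reprove it. The bookkeeping of exactly which weighted space contains $\mathcal{O}_{\mu-\nu}$ in the threshold case $\mu=-\frac72+\nu$ is excluded by hypothesis in part (2), so no boundary case needs separate treatment.
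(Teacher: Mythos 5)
Your outline is essentially correct, but note that the paper itself offers no proof of this statement: it is quoted as part of the analytic framework for AC manifolds taken from Lockhart--McOwen and Marshall, so there is no internal argument to compare against. What you have reconstructed is precisely the standard proof from those references: closed range from Fredholmness, identification of annihilators of the image via the weighted duality $(L^2_{k,\mu-\nu})^*\cong L^2_{-k,-7-\mu+\nu}$, elliptic regularity (\thref{ellipticestimate}) applied to the formal adjoint to land the representing section in $\mathrm{Ker}(P^*)_{-7-\mu+\nu}$, and the embedding $L^2_{k,\sigma}\hookrightarrow L^2_{k,\sigma'}$ for $\sigma\le\sigma'$ to realise the obstruction space inside the target when $\mu>-\tfrac{7}{2}+\nu$. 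Three points deserve tightening. First, your aside ``unless $\nu$ is large'' is spurious: $-7-\mu+\nu<\mu-\nu$ is exactly equivalent to $\mu>-\tfrac72+\nu$, as your own corrected computation shows. Second, the transversality in part (2) should not be left as ``one checks'': if $\kappa=P\xi$ with $\kappa\in\mathrm{Ker}(P^*)_{-7-\mu+\nu}$, then $\kappa$ lies in unweighted $L^2$ (its weight is below $-\tfrac72$), and $\|\kappa\|_{L^2}^2=\langle P\xi,\kappa\rangle=\langle\xi,P^*\kappa\rangle=0$, where the integration by parts at the borderline dual weights (the weights sum to exactly $-7$) needs the usual cutoff/density argument; a dimension count then gives the direct sum. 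Third, your transfer of the non-critical hypothesis to the adjoint weight via reflection of $W_{\mathrm{crit}}(P_C)$ about $\tfrac{-7+\nu}{2}$ is valid for the operators actually used here ($D_{A_C}$ with $\nu=1$, symmetric about $-3$, and $\diff_{A_C}^*\diff_{A_C}$ with $\nu=2$, symmetric about $-\tfrac52$) because they are formally self-adjoint, but it is a property of these operators rather than of general AC elliptic operators, and strictly speaking only elliptic regularity for $P^*$ (not its Fredholmness) is needed to identify the cokernel.
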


 \subsection{The Space of Connections Modulo Gauge}
The aim of this subsection is to give a description of the space of connections modulo gauge. Given a  reference connection $A$ we may view the gauge orbit $\mathcal{G}_{k+1,\mu}\cdot A$  as a subset of $\Omega^1_{k,\mu-1}(M, \Ad P).$ The infinitesimal action of the $\mu$-weighted gauge group is $-\diff_A\colon \Omega^0_{k+1,\mu}(M, \Ad P)\to \Omega^1_{k,\mu-1}(M, \Ad P)$ and our strategy is to show this image is closed and hence admits a complement. We aim to find a particular complement for this image, which is called a ``slice'' of the action and as usual is given by the Coulomb gauge fixing condition. Such a splitting of $\Omega^1_{k,\mu-1}(M, \Ad P)$ shows the quotient space is a smooth Hilbert manifold if the action is free. As in the case of a compact manifold  the Coulomb gauge fixing condition may not pick out a unique class representative globally, so we also give a sufficient condition for this property to hold. Our strategy for this task is to develop the Fredholm theory of the coupled Laplacian $\diff_A^*\diff_A.$ \par 
To learn when $\diff_A^*\diff_A$ is Fredholm \thref{homog} tells us to look for homogeneous order $\lambda$ elements of the kernel of $\diff_{A_C}^*\diff_{A_C}$.  
Such a solution looks like $f=r^\lambda \xi$ for some $\xi \in \Omega^0(\Sigma, \Ad P)$ and we calculate $$\diff_{A_C}^{*}\diff_{A_C} f=r^{\lambda-2}\left( \diff_{A_\infty}^*\diff_{A_\infty}\xi-\lambda(\lambda+5)\xi\right)$$ so such a solution exists if and only if $\lambda(\lambda+5)$ is an eigenvalue of $\diff_{A_\infty}^*\diff_{A_\infty}.$ Given that the coupled Laplace operator is positive, we find that there are no critical weights in the range $(-5,0).$ Therefore:

\begin{proposition}
Let $A$ be an asymptotically conical $G_2$-instanton. If $\mu \in (-5,0)$ then the coupled Laplacian  $\diff_{A}^*\diff_A \colon \Omega^0_{k+2,\mu}(M,\text{\normalfont Ad}P)\to \Omega^0_{k,\mu-2}(M,\text{\normalfont Ad} P)$ is Fredholm.
\end{proposition}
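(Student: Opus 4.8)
The plan is to apply \thref{homog} directly, which reduces the Fredholm property of $\diff_A^*\diff_A$ acting between the weighted spaces to checking that the weight $\mu$ avoids the set $W_\text{crit}(\diff_{A_C}^*\diff_{A_C})$ of critical weights of the asymptotic operator on the cone. So the entire content of the proof is the computation of $W_\text{crit}(\diff_{A_C}^*\diff_{A_C})$, or at least a verification that it is disjoint from the interval $(-5,0)$.

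First I would separate variables: any homogeneous order $\lambda$ section of $\Ad(\pi^*Q)$ over the cone has the form $f = r^\lambda \xi$ with $\xi$ a section of $\Ad Q$ over $\Sigma$ (lifted to the cone), since the adjoint bundle is pulled back from $\Sigma$ and the connection $A_C = \pi^* A_\infty$ has no $\partial_r$ component. Then I would compute $\diff_{A_C}^* \diff_{A_C}(r^\lambda \xi)$ using the cone metric $g_C = \diff r^2 + r^2 g_6$. The radial part contributes the usual conical Laplacian terms: differentiating $r^\lambda$ twice and accounting for the $r^{-7} = r^{-(n-1)}$ volume factor (with $n=7$, so the weight $5 = n-2$) gives a term $-\lambda(\lambda+5) r^{\lambda-2}\xi$, while the link part contributes $r^{\lambda-2} \diff_{A_\infty}^* \diff_{A_\infty}\xi$. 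Hence
\begin{equation*}
\diff_{A_C}^* \diff_{A_C}(r^\lambda \xi) = r^{\lambda-2}\left( \diff_{A_\infty}^* \diff_{A_\infty}\xi - \lambda(\lambda+5)\xi \right),
\end{equation*}
which is exactly the displayed formula in the excerpt. Therefore $\lambda \in W_\text{crit}$ if and only if $\lambda(\lambda+5)$ is an eigenvalue of $\diff_{A_\infty}^*\diff_{A_\infty}$ on $\Omega^0(\Sigma, \Ad Q)$.

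Next I would invoke positivity: $\diff_{A_\infty}^*\diff_{A_\infty}$ is a non-negative self-adjoint operator on the compact manifold $\Sigma$, so all its eigenvalues $c$ satisfy $c \geq 0$. The equation $\lambda(\lambda+5) = c \geq 0$ has real roots $\lambda = \frac{-5 \pm \sqrt{25 + 4c}}{2}$; since $\sqrt{25+4c} \geq 5$, one root is $\geq 0$ and the other is $\leq -5$. Consequently no critical weight lies strictly between $-5$ and $0$, i.e. $W_\text{crit}(\diff_{A_C}^*\diff_{A_C}) \cap (-5,0) = \emptyset$. By \thref{homog}, for any $\mu \in (-5,0)$ the extension $\diff_A^* \diff_A \colon \Omega^0_{k+2,\mu}(M, \Ad P) \to \Omega^0_{k,\mu-2}(M, \Ad P)$ is Fredholm, which is the claim. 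The only genuinely delicate point is the curvature/weight bookkeeping in the separation-of-variables computation — making sure the $-\lambda(\lambda+5)$ factor (rather than some other quadratic in $\lambda$) is correct for the rate-$2$ asymptotically conical normalization and the $\rho^{-7}\,dV$ convention used to define the weighted norms; everything else is a direct appeal to the cited Fredholm theorem and the positivity of the coupled Laplacian.
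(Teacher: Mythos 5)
Your proposal is correct and follows essentially the same route as the paper: separate variables on the cone to obtain $\diff_{A_C}^{*}\diff_{A_C}(r^\lambda\xi)=r^{\lambda-2}\bigl(\diff_{A_\infty}^*\diff_{A_\infty}\xi-\lambda(\lambda+5)\xi\bigr)$, use positivity of the coupled Laplacian on the compact link to rule out critical weights in $(-5,0)$, and then invoke the Fredholm criterion of \thref{homog}. The only difference is that you spell out the quadratic-root argument and the bookkeeping behind the $-\lambda(\lambda+5)$ factor, which the paper leaves implicit.
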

\begin{remark}
All known examples of AC $G_2$-instantons converge at a rate in this interval.
\end{remark}

The next lemma is a gauged version of integration by parts on AC manifolds, the proof goes through identically to \cite[Lemma 4.16]{karigiannis2012deformation}.
\begin{lemma}
Let $\xi \in \Omega^{m-1}_{k,\mu}(M,\text{\normalfont Ad}P)$ and $\eta \in \Omega^{m}_{l,\nu}(M,\text{\normalfont Ad}P).$ If $k,l \geq4$ and $\mu + \nu<-6 $ then 
$$ \langle \diff_A\xi,\eta\rangle _{L^2}=\langle \xi,\diff_A^*\eta \rangle_{L^2}.$$
\end{lemma}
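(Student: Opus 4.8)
The plan is to adapt the standard integration-by-parts argument on asymptotically conical manifolds to the gauged setting, following the template of \cite[Lemma 4.16]{karigiannis2012deformation}. First I would recall the pointwise identity $\diff \langle \xi, *\eta \rangle_{\mf g} = \langle \diff_A \xi, \eta \rangle_{\mf g}\, \diff V_g - \langle \xi, \diff_A^* \eta \rangle_{\mf g}\, \diff V_g$, which holds because the extra $\Ad P$-valued terms coming from the connection cancel: the connection $\nabla^A$ is compatible with the fibre metric $-\langle\,,\rangle_{\mf g}$, so Leibniz applied to $\langle\,,\rangle_{\mf g}$ produces no boundary contribution beyond the de Rham differential of the scalar function $\langle \xi, *\eta\rangle_{\mf g}$. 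Equivalently, $\langle \diff_A \xi, \eta\rangle_{L^2} - \langle \xi, \diff_A^*\eta\rangle_{L^2} = \int_M \diff\!\left(\langle \xi \wedge *\eta\rangle_{\mf g}\right)$, so by Stokes' theorem on $M_t = h((t,\infty)\times\Sigma)$ the difference equals the limit, as $t\to\infty$, of the boundary integral $-\int_{\{t\}\times\Sigma} \langle \xi \wedge *\eta\rangle_{\mf g}$.

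The key step is then to show this boundary term vanishes in the limit, and this is where the weight hypothesis $\mu+\nu<-6$ enters. Using the weighted Sobolev embedding theorem (\thref{embed}) with $k,l\geq 4 > 7/2$, the sections $\xi$ and $\eta$ are continuous and satisfy pointwise bounds $|\xi|=O(\rho^{\mu})$, $|\eta|=O(\rho^{\nu})$ on the end of $M$ (more precisely $|\xi|=O(\rho^{\mu+\epsilon})$ for all $\epsilon>0$, and similarly for $\eta$, which suffices). Hence the integrand $\langle \xi\wedge *\eta\rangle_{\mf g}$ restricted to $\{t\}\times\Sigma$ is pointwise $O(t^{\mu+\nu+\epsilon})$ in the cone metric; since the induced volume form on $\{t\}\times\Sigma$ grows like $t^{6}$, the boundary integral is bounded by a constant times $t^{6+\mu+\nu+\epsilon}$, which tends to $0$ as $t\to\infty$ provided $\mu+\nu+\epsilon<-6$, i.e. for $\epsilon$ small enough when $\mu+\nu<-6$. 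Taking the limit gives $\langle \diff_A\xi,\eta\rangle_{L^2}=\langle\xi,\diff_A^*\eta\rangle_{L^2}$, and one should note in passing that the same weight condition guarantees both sides are finite by the weighted H\"older inequality.

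The main obstacle is a technical one: making rigorous the passage from the a priori weak differentiability of $L^2_{k,\mu}$-sections to a genuine application of Stokes' theorem on the exhausting domains $M_t$. This is handled by a standard density/cutoff argument — approximate $\xi$ and $\eta$ by compactly supported smooth sections in the respective weighted norms, for which Stokes holds trivially with no boundary term, and pass to the limit using the continuity of the $L^2$ pairing together with the uniform bound on the boundary integrals established above. One must also check that a set of radii $t_n\to\infty$ can be chosen along which the boundary integrals genuinely converge to zero (a co-area/Fubini argument shows the integral of $|\langle\xi\wedge*\eta\rangle_{\mf g}|$ over the end is finite, forcing a sequence of slices on which the boundary term vanishes in the limit); since the full difference $\langle\diff_A\xi,\eta\rangle_{L^2}-\langle\xi,\diff_A^*\eta\rangle_{L^2}$ is independent of $t$, its value must then be zero. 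None of these steps is genuinely different from the ungauged case treated in \cite{karigiannis2012deformation}, so the proof amounts to verifying that the presence of $\Ad P$ and the metric-compatible connection $\nabla^A$ introduces no new boundary terms, which it does not.
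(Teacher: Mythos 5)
Your proof is correct and is essentially the paper's own argument: the paper simply remarks that the proof of \cite[Lemma 4.16]{karigiannis2012deformation} carries over verbatim to the gauged setting, and that proof is precisely your Stokes-plus-decay computation, with metric compatibility of $\nabla^A$ ensuring no extra terms and the hypothesis $\mu+\nu<-6$ killing the boundary integral $O(t^{6+\mu+\nu})$ over $\{t\}\times\Sigma$ (and guaranteeing absolute convergence of both pairings). The only cosmetic point is that the weighted embedding with $k,l\geq 4$ gives the sharp bounds $|\xi|=O(\rho^{\mu})$, $|\eta|=O(\rho^{\nu})$ directly, so the $\epsilon$-bookkeeping is unnecessary.
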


\begin{corollary}\thlabel{closed}
Let $f\in \text{\normalfont Ker}(\diff_A^*\diff_A)_\mu.$ If $\mu<0$ then $\diff_Af=0.$ 
\end{corollary}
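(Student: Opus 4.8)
The plan is to show that any $f$ in the kernel of the coupled Laplacian with a negative weight is actually covariantly constant, via an integration-by-parts argument. First I would observe that, since the kernel of an AC uniformly elliptic operator is independent of the Sobolev degree $k$ (by the elliptic estimate \thref{ellipticestimate}) and consists of smooth sections, we may assume $f \in \Omega^0_{k+2,\mu}(M,\text{Ad}P)$ with $\mu<0$ and $f$ smooth. The goal is to apply the gauged integration-by-parts lemma to the pair $\xi = f$ and $\eta = \diff_A f$ so as to conclude
\begin{equation*}
0 = \langle \diff_A^*\diff_A f, f\rangle_{L^2} = \langle \diff_A f, \diff_A f\rangle_{L^2} = \norm{\diff_A f}_{L^2}^2,
\end{equation*}
which forces $\diff_A f = 0$.

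The one thing that needs checking is that the weight hypothesis of the integration-by-parts lemma is satisfied, i.e. that the relevant weights sum to less than $-6$. Here $f \in \Omega^0_{k+2,\mu}$, so $\diff_A f \in \Omega^1_{k+1,\mu-1}$; applying the lemma with $m=1$, $\mu_{\text{lemma}} = \mu$ and $\nu_{\text{lemma}} = \mu - 1$ requires $\mu + (\mu-1) < -6$, that is $\mu < -5/2$. This is weaker than what the statement claims ($\mu<0$), so the second step is to upgrade the range of admissible weights. This is where \thref{homog} comes in: since the coupled Laplacian $\diff_{A_C}^*\diff_{A_C}$ on the cone has no critical weights in the interval $(-5,0)$ (as established just above the lemma, because $\lambda(\lambda+5)\geq 0$ cannot be a nonzero-producing eigenvalue condition for $\lambda \in (-5,0)$, and at the endpoints one gets only the bounded harmonic sections), the kernel $\text{Ker}(\diff_A^*\diff_A)_\mu$ is the same for every $\mu \in (-5,0)$. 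In particular $\text{Ker}(\diff_A^*\diff_A)_\mu = \text{Ker}(\diff_A^*\diff_A)_{\mu'}$ for any $\mu' \in (-5,-5/2)$, where the integration-by-parts argument does apply, and the conclusion $\diff_A f = 0$ transfers back.

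The main (and really only) obstacle is this bookkeeping with weights: one has to be careful that $f$ genuinely lies in a weighted space with weight in the good range $(-5,-5/2)$ before invoking the integration-by-parts lemma, and that requires knowing the kernel does not jump as the weight decreases from the given $\mu \in (-5,0)$ down past $-5/2$ — which is exactly the content of the "no critical weights in $(-5,0)$" computation combined with \thref{homog}. Once that is in place, the rest is the standard Bochner-type manipulation. I would also remark that $\mu \geq 0$ is genuinely excluded: for $\mu \geq 0$ one picks up nonzero bounded (or growing) parallel sections corresponding to the trivial summands of $\text{Ad}P$, so the hypothesis $\mu<0$ cannot be dropped.
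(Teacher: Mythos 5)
Your proposal is correct and follows essentially the same route as the paper: use the absence of critical weights of the coupled Laplacian in $(-5,0)$ together with \thref{homog} to place $f$ in a weighted space with weight $\mu_0<-\tfrac{5}{2}$, then apply the gauged integration-by-parts lemma to get $\norm{\diff_A f}_{L^2}^2=\langle \diff_A^*\diff_A f,f\rangle_{L^2}=0$. The weight bookkeeping you flag is exactly the point the paper's proof also handles, so there is no gap.
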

\begin{proof}
We have seen that there are no critical weights of $\diff_A^*\diff_A$ in the region $(-5,0),$ so $\diff_A^*\diff_A f=0$ and $\mu<0$ then  $f\in \Omega^0(M,\Ad P)_{k+2,\mu_0}$ for some $\mu_0<-\frac{5}{2}$ and for any $k.$ It follows that $\diff_Af\in\Omega^1_{k+1,\mu_0-1}$ and integration by parts is valid for such and weight and we see that
$$\norm{\diff_A f}_{L^2}= \langle \diff_A^*\diff_A f, f \rangle_{L^2}=0.$$
\end{proof}

The following useful lemma is due to Marshall \cite{marshal2002deformations}, it is a straightforward application of the maximum principle. We denote by $\Delta = \diff ^* \diff$ the usual Laplacian on functions.
\begin{lemma}
Let $(M,g)$ be an asymptotically conical manifold. If $\mu<0$ then $\text{\normalfont Ker} (\Delta)_\mu=\{0\}. $
\end{lemma}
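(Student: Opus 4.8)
The plan is to use the maximum principle for the scalar Laplacian $\Delta = \diff^*\diff$ together with the asymptotic decay built into the weighted Sobolev setup. Let $f \in \mathrm{Ker}(\Delta)_\mu$ with $\mu < 0$. First I would note that elliptic regularity (\thref{ellipticestimate}) gives $f \in C^\infty_\mu$, so $f$ is a smooth harmonic function on the AC manifold $M$ with $|f|(x) = O(\rho(x)^\mu) \to 0$ as $\rho(x) \to \infty$, since $\mu < 0$.

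The key step is to apply the maximum principle. Since $M$ is connected (the end is $h((R,\infty)\times\Sigma)$ with $\Sigma$ connected, and $K$ is compact) and $f$ is harmonic, $f$ cannot attain an interior local maximum or minimum unless it is locally constant. Because $f \to 0$ at infinity, given any $\varepsilon > 0$ there is a compact set $K_\varepsilon \supset K$ such that $|f| < \varepsilon$ on $M \setminus K_\varepsilon$. On the compact manifold-with-boundary $\overline{K_\varepsilon}$, the maximum of $f$ is attained either in the interior — forcing $f$ constant by the strong maximum principle, and then that constant is $0$ since $f$ vanishes at infinity — or on $\partial K_\varepsilon$, where $|f| < \varepsilon$. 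Hence $\sup_M |f| \le \varepsilon$ for every $\varepsilon > 0$, so $f \equiv 0$.

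The one point requiring a little care — and the main (mild) obstacle — is ensuring the maximum principle applies on the noncompact manifold, i.e.\ that one genuinely reduces to a compact exhaustion. This is where $\mu < 0$ is essential: it forces $f \to 0$ at infinity, which is exactly the hypothesis needed to run the exhaustion argument above. (For $\mu \ge 0$ nonzero harmonic functions can exist, e.g.\ constants when $\mu = 0$.) Alternatively, one can argue by integration by parts as in \thref{closed}: for $\mu < 0$ one has $f \in L^2_{k+2,\mu_0}$ for some $\mu_0 < -\tfrac{5}{2}$ (moving the weight down within the interval $(-5,0)$, which contains no critical weights of $\Delta$ by positivity of the Laplacian), so $\diff f \in L^2_{k+1,\mu_0-1}$, the pairing $\langle \Delta f, f\rangle_{L^2} = \|\diff f\|_{L^2}^2$ is valid, and it vanishes, giving $\diff f = 0$; then $f$ is constant, and a constant lies in $L^2_{0,\mu}$ only if it is zero. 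Either route completes the proof; I would present the maximum-principle version as the cleaner of the two, citing Marshall \cite{marshal2002deformations} for the standard AC maximum principle.
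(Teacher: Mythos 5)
Your proof is correct and follows the same route the paper intends: the paper does not write out an argument but attributes the lemma to Marshall as ``a straightforward application of the maximum principle,'' and your compact-exhaustion maximum-principle argument (decay $|f|=O(\rho^\mu)\to 0$ from the weighted embedding, then $\sup_M|f|\le\varepsilon$ for every $\varepsilon$) is exactly that standard argument. Your alternative integration-by-parts route is also sound and mirrors the paper's own proof of \thref{closed}, but the maximum-principle version you present as primary matches the paper's approach.
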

As an immediate corollary we find that harmonic sections of the adjoint bundle must vanish:
\begin{corollary}\thlabel{harmonicvanishes}
Let $f\in \text{\normalfont Ker}(\diff_A^* \diff_A)_{\mu}.$ If $\mu<0$ then $f=0.$
\end{corollary}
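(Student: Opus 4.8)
The plan is to combine the previous corollary with the scalar maximum principle lemma quoted just above it. By \thref{closed}, since $\mu<0$ and $f\in\text{Ker}(\diff_A^*\diff_A)_\mu$, we have $\diff_A f = 0$, so $f$ is a covariantly constant (with respect to $\nabla^A$) section of $\Ad P$ on the end of $M$, and more importantly $f$ is parallel on all of $M$. The idea is then to pass from the bundle-valued statement to a scalar statement about the pointwise norm $|f|$ and invoke the maximum-principle lemma.

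First I would set $u = |f|^2 = -\langle f,f\rangle_{\mf g}$, a smooth non-negative function on $M$. A standard Bochner-type computation using $\diff_A f = 0$ gives $\Delta u = \diff^*\diff u = -2|\nabla^A f|^2 = 0$ (the cross terms vanish because $\nabla^A f = 0$, and $\nabla^A$ is compatible with the fibre metric so that $\diff|f|^2 = 2\langle \nabla^A f, f\rangle_{\mf g}$). Hence $u$ is a genuine harmonic function on $M$. Next I would check the weight: from \thref{closed}'s proof, $f\in\Omega^0_{k+2,\mu_0}(M,\Ad P)$ for some $\mu_0<0$, so $|f| = O(\rho^{\mu_0})$ and therefore $u=|f|^2 = O(\rho^{2\mu_0})$ with $2\mu_0<0$; thus $u\in\text{Ker}(\Delta)_{2\mu_0}$ with negative weight. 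Applying the lemma (Marshall's maximum principle result) with weight $2\mu_0<0$ forces $u\equiv 0$, i.e. $f\equiv 0$.

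The only subtlety, and the step I expect to require the most care, is justifying $\Delta|f|^2 = 0$ rigorously rather than formally — one wants $f$ to be smooth enough (which it is, being in the kernel of an AC uniformly elliptic operator, so elliptic regularity gives $f\in C^\infty$) and one wants the identity $\Delta|f|^2 = -2|\nabla^A f|^2$, valid because $\diff_A f=0$ implies $\nabla^A f = 0$ identically. An alternative that sidesteps even this mild computation: once $\diff_A f=0$, the function $|f|$ is itself locally constant on connected components, so on the connected manifold $M$ it is globally constant; but $f\in L^2_\mu$ with $\mu<0$ forces this constant to be zero. Either route closes the argument; I would present the Bochner route since it mirrors the structure already set up with the Laplacian lemma.

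\begin{proof}
By \thref{closed} we have $\diff_A f = 0$, which is equivalent to $\nabla^A f = 0$ since $f$ is a section of $\Ad P$. As $f$ lies in the kernel of the AC uniformly elliptic operator $\diff_A^*\diff_A$, elliptic regularity gives $f\in C^\infty$. Since $\nabla^A$ is compatible with the fibre metric $-\langle\,,\,\rangle_{\mf g}$, the pointwise norm satisfies $\diff|f|^2 = 2\langle\nabla^A f, f\rangle_{\mf g}$ and hence $\diff|f|^2 = 0$; in particular $|f|^2$ is a smooth function with $\Delta|f|^2 = 0$. From the proof of \thref{closed}, $f\in\Omega^0_{k+2,\mu_0}(M,\Ad P)$ for some $\mu_0<0$, so $|f|^2\in\text{Ker}(\Delta)_{2\mu_0}$ with $2\mu_0<0$. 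The preceding lemma then gives $|f|^2\equiv 0$, so $f=0$.
\end{proof}
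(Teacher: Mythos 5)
Your proposal is correct and follows essentially the same route as the paper: apply \thref{closed} to get $\diff_A f=0$, use compatibility of $\nabla^A$ with the fibre metric to conclude $|f|^2$ is harmonic (indeed constant), and then invoke the preceding maximum-principle lemma together with the negative decay rate to force $|f|^2\equiv 0$. Your extra care in checking the weight $2\mu_0<0$ and the remark that $|f|$ is in fact globally constant are harmless refinements of the paper's argument, not a different method.
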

\begin{proof}
Since $f\in \text{Ker}(\diff_A^* \diff_A)_{\mu}$ we know by \thref{closed} that $\diff_Af=0$ and the connection is compatible with the inner product so that 
$$\Delta |f|^2=\diff^*\diff|f|=2\diff^*\langle \diff_A f, f\rangle =0.$$
So $|f|^2$ is a harmonic function and hence zero.
\end{proof}
Pausing for a moment we can finally justify our switch from the $G_2$-instanton equation \eqref{eq:g2instantoneqn}  to the $G_2$-monopole equation \eqref{eq:g2monopole}.
\begin{corollary}\thlabel{xivanishes}
If $\mu<0$ and  $(f, A)\in \Omega^{0}_{k+1,\mu}(M, \text{\normalfont Ad}P)\oplus \mathcal{A}$ satisfies the $G_2$-monopole equation 
$$ \diff_A f + *(\psi \wedge F_A)=0$$
then $f=0.$
\end{corollary}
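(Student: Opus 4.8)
The plan is to apply $\diff_A^*$ to the $G_2$-monopole equation and exploit the algebraic identity $\diff_A^*\bigl(*(\psi \wedge F_A)\bigr)=0$, which holds because $\psi$ is closed: indeed $*(\psi \wedge F_A)$ is (up to sign) $*\psi \wedge F_A$ paired appropriately, and using the second Bianchi identity $\diff_A F_A = 0$ together with $\diff\psi=0$ one sees that $\diff_A\bigl(\psi \wedge F_A\bigr)=0$, which after applying $*$ translates into the statement that $\diff_A^*$ annihilates the second term of \eqref{eq:g2monopole}. Applying $\diff_A^*$ to $\diff_A f + *(\psi \wedge F_A)=0$ therefore yields $\diff_A^*\diff_A f = 0$, i.e. $f \in \text{Ker}(\diff_A^*\diff_A)_\mu$.

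From here the conclusion is immediate: by \thref{harmonicvanishes}, since $\mu<0$, any element of $\text{Ker}(\diff_A^*\diff_A)_\mu$ vanishes, so $f=0$. The only point requiring a little care is that the manipulation with $\diff_A^*$ is legitimate at the weight $\mu$ in question — one should check that $f \in \Omega^0_{k+1,\mu}$ and $F_A$ (decaying like the curvature of an AC connection) have weights whose sum lies in the range where the gauged integration-by-parts lemma applies, exactly as in the proof of \thref{closed}; alternatively one observes $\diff_A^*\diff_A f = 0$ holds weakly and then invokes elliptic regularity (\thref{ellipticestimate}) plus \thref{homog} to shift to a weight below $-5/2$ where everything is justified.

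\textbf{Main obstacle.} The substantive step is establishing $\diff_A^*\bigl(*(\psi\wedge F_A)\bigr)=0$ cleanly; the decay bookkeeping is routine and mirrors \thref{closed}. Concretely, I would write $*(\psi \wedge F_A)$ in terms of the operator $T_\psi$ and use that for a $G_2$-instanton $F_A \in \Omega^2_{14}(M,\Ad P)$, on which $*(\psi \wedge \cdot) = *(* \cdot)$ acts as $-1$ — wait, more carefully: the identity we really need is simply $\diff_A\bigl( F_A \wedge \psi\bigr) = \diff_A F_A \wedge \psi + F_A \wedge \diff\psi = 0$ by Bianchi and $\diff\psi = 0$; since the $G_2$-instanton equation gives $F_A \wedge \psi = - *F_A$, this says $\diff_A * F_A = 0$, i.e. $A$ is Yang-Mills, which is precisely $\diff_A^*(*(\psi\wedge F_A)) = \pm\diff_A^* F_A$... the cleanest route is: $*(\psi\wedge F_A) = \pm *F_A$ for an instanton, hence $\diff_A^* \bigl(*(\psi\wedge F_A)\bigr) = \pm \diff_A^* * F_A = \pm * \diff_A F_A = 0$ by the Bianchi identity. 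That is the key line, and once it is in place the corollary follows from \thref{harmonicvanishes}.
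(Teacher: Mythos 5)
Your core argument coincides with the paper's proof: apply $\diff_A^*$ to the monopole equation, observe that $\diff_A(\psi\wedge F_A)=\pm\,\diff_AF_A\wedge\psi \pm F_A\wedge \diff\psi=0$ by the Bianchi identity and $\diff\psi=0$, hence $\diff_A^*\diff_A f=0$, and conclude $f=0$ from \thref{harmonicvanishes}; the weight bookkeeping is exactly as in \thref{closed}. One caution about your closing ``cleanest route'': there you invoke $*(\psi\wedge F_A)=\pm *F_A$ ``for an instanton'', but in this corollary $A$ is \emph{not} assumed to be a $G_2$-instanton --- the hypothesis is only the monopole equation, and the instanton condition $\psi\wedge F_A=0$ is precisely what the conclusion $f=0$ delivers, so that version of the argument is circular. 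It is also unnecessary: your first computation, $\diff_A^*\bigl(*(\psi\wedge F_A)\bigr)=-*\diff_A(\psi\wedge F_A)=0$, requires no instanton assumption and is exactly the step the paper uses.
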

\begin{proof}
We apply $\diff_A^*$ to the  $G_2$-monopole equation $\diff_Af + *(\psi \wedge F_A)=0$ and use that  $\psi$ is closed together with the Bianchi identity to find that
$\diff_A^*\diff_A f=0$ and hence  \thref{harmonicvanishes} is applicable.

\end{proof}
\begin{corollary}\thlabel{0formszero}
Let $A$ be an asymptotically conical $G_2$-instanton. If $ \mu<0 $ and $(f,a)\in L^2_{k+1,\mu}((\Lambda^0\oplus \Lambda^1)\otimes \text{\normalfont Ad} P)$ satisfies the linearised $G_2$-monopole equation $$ \diff_Af + *(\psi\wedge \diff_Aa)=0$$
then $f=0.$
\end{corollary}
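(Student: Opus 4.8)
The plan is to mimic the argument for \thref{xivanishes}: apply the coupled codifferential $\diff_A^*$ to the linearised equation, use that $M$ is torsion-free and that $A$ is a $G_2$-instanton to conclude that $\diff_A^*\diff_A f=0$, and then invoke \thref{harmonicvanishes}.

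Concretely, I would first rewrite the hypothesis as $\diff_A f=-*(\psi\wedge\diff_A a)$. The right-hand side is an $\Ad P$-valued $1$-form on the $7$-manifold $M$, so using the formula $\diff_A^*=-*\diff_A*$ on $1$-forms in dimension $7$ together with $**=\mathrm{Id}$ on $6$-forms, one gets $\diff_A^*\bigl(*(\psi\wedge\diff_A a)\bigr)=-*\diff_A(\psi\wedge\diff_A a)$. Expanding by the Leibniz rule, $\diff_A(\psi\wedge\diff_A a)=\diff\psi\wedge\diff_A a+\psi\wedge\diff_A^2 a$; the first term vanishes because $M$ is a $G_2$-manifold, hence $\diff\psi=0$, and in the second term $\diff_A^2 a=[F_A\wedge a]$ is the curvature acting on $a$. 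Since $\psi$ has even degree it commutes past $F_A$, so $\psi\wedge[F_A\wedge a]=[(\psi\wedge F_A)\wedge a]$, which is zero by the $G_2$-instanton equation $F_A\wedge\psi=0$ (see \eqref{eq:g2instantoneqn}). Assembling these gives $\diff_A^*\diff_A f=0$.

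Finally, weighted elliptic regularity (\thref{ellipticestimate}) promotes $f\in L^2_{k+1,\mu}$ to a smooth section, so that $f\in\Ker(\diff_A^*\diff_A)_\mu$; since $\mu<0$, \thref{harmonicvanishes} forces $f=0$. I do not expect a genuine obstacle here — the argument is the linearised shadow of \thref{xivanishes}, the role of the Bianchi identity $\diff_A F_A=0$ now being played by $\diff_A^2 a=[F_A\wedge a]$ together with the instanton condition absorbing the resulting curvature term. The only places needing (routine) care are the sign conventions for $\diff_A^*$ in dimension $7$ and checking that $k+1\ge 5$ supplies enough regularity for all the weak derivatives appearing above to be well defined.
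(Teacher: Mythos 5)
Your proposal is correct and follows essentially the same route as the paper: apply $\diff_A^*$ to the linearised equation, use $\diff\psi=0$ and $\diff_A^2a=[F_A\wedge a]$ together with the instanton condition $\psi\wedge F_A=0$ to get $\diff_A^*\diff_A f=0$, then invoke \thref{harmonicvanishes}. The extra detail you supply (the sign of $\diff_A^*$ in dimension $7$ and the regularity remark) is just an expansion of the paper's terser argument.
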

\begin{proof}
Again we apply $\diff_A^*$ any observe that 
$$ \diff_A^*\diff_A f + *(\psi \wedge \diff_A^2 a)=0 $$
and since $A$ is a $G_2$-instanton $\psi \wedge \diff_A^2a=\psi\wedge [F_A,a]=0$ so we may appeal to \thref{harmonicvanishes}.
\end{proof}

We return our attention to splitting the space of 1-forms. It is sufficient for our purpose to work in the regime where $-5<\mu<0.$ Note from \eqref{eq:coupledlaplacian} that  $(\diff_A^*\diff_A)_{k,\mu}$ has trivial cokernel, since the adjoint maps from the space with weight  $-5-\mu<0.$ The bounded inverse theorem then yields:
\begin{lemma}\thlabel{covclosed}
Let $(M,g)$ be an asymptotically conical $G_2$-manifold and let $A$ be an asymptotically conical connection on a principal bundle $P \to M.$ 
If $-5<\mu<0$ then 
$$\diff_A^*\diff_A \colon \Omega^0_{k+2,\mu}(M, \text{\normalfont Ad} P)\to \Omega^0_{k,\mu-2}(M, \text{\normalfont Ad} P)$$ 
is an isomorphism of topological vector spaces.

\end{lemma}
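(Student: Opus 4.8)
The plan is to combine the Fredholm theory established above with the vanishing results just proved. First I would observe that by \thref{homog}, the operator $\diff_A^*\diff_A \colon \Omega^0_{k+2,\mu}(M,\Ad P)\to \Omega^0_{k,\mu-2}(M,\Ad P)$ is Fredholm for $\mu\in(-5,0)$, since we have checked that there are no critical weights in this interval. So it suffices to show that the map is both injective and surjective; the bounded inverse theorem then upgrades the bijection to an isomorphism of topological vector spaces.

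Injectivity is immediate: if $f\in\Ker(\diff_A^*\diff_A)_\mu$ with $\mu<0$, then \thref{harmonicvanishes} gives $f=0$. For surjectivity I would invoke the Fredholm alternative. The coupled Laplacian is formally self-adjoint, so its adjoint extension is $(\diff_A^*\diff_A)\colon \Omega^0_{k+2,-5-\mu}(M,\Ad P)\to \Omega^0_{k,-7-\mu}(M,\Ad P)$; note that $\mu\in(-5,0)$ forces $-5-\mu\in(-5,0)$ as well, so in particular $-5-\mu<0$. Hence by \thref{harmonicvanishes} again, the kernel of the adjoint is trivial, i.e.\ the cokernel of $(\diff_A^*\diff_A)_{k,\mu}$ vanishes. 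Combined with Fredholmness and closed range, this shows the map is surjective.

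The only point requiring a little care is that the cokernel computation via the Fredholm alternative should be carried out on the correct weighted space. Here the rate of $\diff_{A_C}^*\diff_{A_C}$ is $\nu=2$, so the dual weight appearing in the Fredholm alternative is $-7-\mu+\nu = -5-\mu$, and one checks directly that this lies in $(-5,0)$ precisely when $\mu\in(-5,0)$, so both the operator and its adjoint are simultaneously Fredholm and both have trivial kernel. This symmetry of the interval $(-5,0)$ under $\mu\mapsto -5-\mu$ is what makes the argument clean; I do not anticipate a genuine obstacle, as all the analytic inputs (ellipticity, the critical weight computation, the maximum principle vanishing) are already in place.
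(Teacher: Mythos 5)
Your proposal is correct and follows essentially the same route as the paper: injectivity from \thref{harmonicvanishes}, surjectivity because the adjoint (the same self-adjoint operator) acts at the dual weight $-5-\mu<0$ and so has trivial kernel, and the bounded inverse theorem to conclude the topological isomorphism. The paper packages this exact argument into the sentence preceding the lemma, so there is nothing to add.
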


This allows us to split the space of 1-forms, heuristically  $\Omega^1_{l,\mu-1}(M,\Ad P)= \text{Im}\diff_A \oplus \text{Ker}\diff_A^*.$ Note  however that the splitting may not be orthogonal, since the weights we require need not be in the $L^2$ integrable regime. Instead we make use of the following basic lemma from Banach space theory:
\begin{lemma}\thlabel{banachspacemap}
Suppose $T\colon X \to Y$ is a bounded linear operator between Banach spaces, so that the kernel $\text{\normalfont Ker}T$ is closed and admits a closed complement. A closed subspace $X_0\subset X$ is a complement to $\text{\normalfont Ker}T$ if and only if 
\begin{enumerate}
    \item $T|_{X_0}$ is injective
    \item $T(X)=T(X_0).$
\end{enumerate}
\end{lemma}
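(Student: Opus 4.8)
The statement to prove is the abstract Banach-space lemma (\thref{banachspacemap}): if $T\colon X\to Y$ is a bounded linear operator whose kernel is closed and admits a closed complement, then a closed subspace $X_0\subset X$ is a complement to $\ker T$ if and only if (1) $T|_{X_0}$ is injective and (2) $T(X)=T(X_0)$. The plan is to prove the two implications separately, keeping the argument purely algebraic except for one appeal to the open mapping theorem.

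First I would treat the ``only if'' direction, which is the easy half. Assume $X=\ker T\oplus X_0$ as a topological direct sum. If $x\in X_0$ with $Tx=0$ then $x\in\ker T\cap X_0=\{0\}$, giving injectivity of $T|_{X_0}$. For (2), any $x\in X$ decomposes as $x=k+x_0$ with $k\in\ker T$, $x_0\in X_0$, so $Tx=Tx_0\in T(X_0)$; hence $T(X)\subseteq T(X_0)$, and the reverse inclusion is trivial.

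For the ``if'' direction, suppose $X_0$ is a closed subspace satisfying (1) and (2). I would first show $\ker T\cap X_0=\{0\}$: if $x$ lies in both, then $Tx=0$ and injectivity of $T|_{X_0}$ force $x=0$. Next, $\ker T+X_0=X$: given $x\in X$, property (2) yields some $x_0\in X_0$ with $Tx_0=Tx$, so $x-x_0\in\ker T$ and $x=(x-x_0)+x_0$. Thus the sum is algebraically direct. The remaining point — that the decomposition is \emph{topological}, i.e. the projection onto each factor is continuous — is where the hypotheses that both $\ker T$ and $X_0$ are closed, and that $X$ is a Banach space, get used: the identity map $\ker T\oplus X_0\to X$ (from the external direct sum, which is Banach since both summands are closed hence complete) is a continuous linear bijection, so by the open mapping theorem its inverse is continuous, which is exactly the statement that the algebraic splitting is a topological one. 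This completes the proof.

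The main (only real) obstacle is the last step: one must remember that an algebraic direct sum of two closed subspaces of a Banach space need not be topological in general, so the closedness of $X_0$ is genuinely needed together with the open mapping theorem to upgrade the algebraic decomposition to a topological one. Everything else is bookkeeping. Since the paper's intended application takes $T=\diff_A$ with $X=\Omega^0_{k+1,\mu}(M,\Ad P)$, $Y=\Omega^1_{k,\mu-1}(M,\Ad P)$ and $X_0=\ker\diff_A^*$, it is worth noting in passing that closedness of that candidate complement and of $\ker\diff_A$ will follow from the Fredholm/elliptic theory already established (\thref{covclosed} and \thref{closed}), so the lemma is stated in exactly the form needed downstream.
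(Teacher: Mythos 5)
Your proof is correct. The paper states this lemma without giving a proof, so there is nothing to compare against; your argument — the easy algebraic verification for both directions, plus the open mapping theorem applied to the continuous bijection $\ker T\times X_0\to X$, $(k,x_0)\mapsto k+x_0$, to upgrade the algebraic splitting to a topological one — is the standard proof and uses exactly the hypotheses needed (closedness of $X_0$ gives completeness of the external sum; closedness of $\ker T$ is automatic from boundedness of $T$).

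One small quibble with your closing remark: it is not true that an algebraic direct-sum decomposition of a Banach space into two \emph{closed} subspaces can fail to be topological — when two closed subspaces sum algebraically to the whole space, continuity of the projections is automatic, and this is precisely what your open-mapping (or closed-graph) argument shows. What the closedness of $X_0$ is needed for is the argument itself, not to rule out closed-subspace counterexamples; the genuine pathologies occur only when one of the summands is not closed.
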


We would like to apply the above lemma with $X=\Omega^1_{k+1,\mu-1}(M,\Ad P),$ $T=\diff_A^*$ and $X_0=\diff_A(\Omega^0_{k+2,\mu}(M, \Ad P)),$ thus we must first establish that the image of $\diff_A$ is closed. 
\begin{lemma}
The operator $\diff_A\colon\Omega^0_{k+2,\mu}(M,\text{\normalfont Ad} P)\to \Omega^1_{k+1,\mu-1}(M, \text{\normalfont Ad})$ has closed image.
\end{lemma}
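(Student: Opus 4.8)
The plan is to realise $\mathrm{Im}\,\diff_A$ as the image of a bounded idempotent operator on $\Omega^1_{k+1,\mu-1}(M,\Ad P)$, using that the image of a continuous projection on a Banach space is automatically closed, being the kernel of the complementary projection. The whole argument runs on \thref{covclosed} together with the mapping properties of asymptotically conical operators.

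First I would record the relevant bounded extensions. Since the coupled Laplacian $\diff_A^*\diff_A$ is asymptotically conical of rate $2$ (as noted after \thref{ellipticestimate}), each of $\diff_A$ and $\diff_A^*$ is asymptotically conical of rate $1$, so they extend to bounded maps $\diff_A\colon\Omega^0_{k+2,\mu}(M,\Ad P)\to\Omega^1_{k+1,\mu-1}(M,\Ad P)$ and $\diff_A^*\colon\Omega^1_{k+1,\mu-1}(M,\Ad P)\to\Omega^0_{k,\mu-2}(M,\Ad P)$. By \thref{covclosed}, for $-5<\mu<0$ the operator $\diff_A^*\diff_A\colon\Omega^0_{k+2,\mu}(M,\Ad P)\to\Omega^0_{k,\mu-2}(M,\Ad P)$ is a topological isomorphism, so $G\coloneqq(\diff_A^*\diff_A)^{-1}$ is bounded with domain exactly $\Omega^0_{k,\mu-2}(M,\Ad P)$ and range exactly $\Omega^0_{k+2,\mu}(M,\Ad P)$; in particular the weights match up so that the composite $\Pi\coloneqq\diff_A\circ G\circ\diff_A^*$ makes sense as a bounded operator on $\Omega^1_{k+1,\mu-1}(M,\Ad P)$.

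Next I would verify that $\Pi$ is idempotent with image $\mathrm{Im}\,\diff_A$. For any $\xi$, set $g=G(\diff_A^*\xi)\in\Omega^0_{k+2,\mu}(M,\Ad P)$; then $\diff_A^*\diff_A g\in\Omega^0_{k,\mu-2}(M,\Ad P)$ and $G(\diff_A^*\diff_A g)=g$, so $\Pi^2\xi=\diff_A G\diff_A^*\diff_A g=\diff_A g=\Pi\xi$. Clearly $\mathrm{Im}\,\Pi\subseteq\mathrm{Im}\,\diff_A$, and if $\eta=\diff_A f$ then $\Pi\eta=\diff_A G(\diff_A^*\diff_A f)=\diff_A f=\eta$, giving the reverse inclusion. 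Hence $\mathrm{Im}\,\diff_A=\mathrm{Im}\,\Pi=\Ker(\mathrm{Id}-\Pi)$ is closed.

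There is no genuine obstacle here; the only point requiring care is the bookkeeping of weights, namely that $\diff_A^*$ lands precisely in the space $\Omega^0_{k,\mu-2}(M,\Ad P)$ on which $G$ is defined and that $G$ returns precisely an element of $\Omega^0_{k+2,\mu}(M,\Ad P)$ on which $\diff_A$ acts — all guaranteed by \thref{covclosed}. Equivalently, one may argue by sequences: if $\diff_A f_n\to\xi$ in $\Omega^1_{k+1,\mu-1}(M,\Ad P)$, then $\diff_A^*\diff_A f_n\to\diff_A^*\xi$, so $f_n=G(\diff_A^*\diff_A f_n)\to G\diff_A^*\xi=:f$ in $\Omega^0_{k+2,\mu}(M,\Ad P)$, whence $\xi=\lim\diff_A f_n=\diff_A f\in\mathrm{Im}\,\diff_A$; this is really the same computation read differently.
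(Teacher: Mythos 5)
Your proof is correct and rests on the same key fact as the paper's argument, namely the bounded invertibility of $\diff_A^*\diff_A$ from \thref{covclosed}; indeed the sequential argument you sketch in your final paragraph is essentially word-for-word the paper's own proof. Packaging it as the bounded idempotent $\Pi=\diff_A(\diff_A^*\diff_A)^{-1}\diff_A^*$ is only a mild reformulation (with the small bonus of exhibiting the complementary projection, anticipating the Slice Theorem), so there is nothing to correct.
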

\begin{proof}
Let $\{ \diff_A f_n\}_{n=1}^\infty$ be a sequence in $\diff_A(\Omega^0_{k+2,\mu}(M,\Ad P)),$ and let  $a\in \Omega^1_{k+1,\mu-1}(M,\Ad P)$ be such that
$$\norm{\diff_A f_n-a}_{k+1,\mu-1}\to 0.$$
Applying the bounded operator $\diff_A^*$ we see that $\diff_A^*\diff_Af_n$ converges to $\diff_A^*a$ in $\Omega^0_{k,\mu-1}(M,\Ad P).$ Since $\diff_A^*\diff_A$ admits a bounded inverse we find that $f_n$ converges to $f\coloneqq (\diff_A^*\diff_A)^{-1}\diff_A^*a$ in $\Omega^0_{k+2,\mu}(M,\Ad P).$ Finally we apply the bounded operator $\diff_A$ and see that 
$$\norm{\diff_A f_n-\diff_Af}_{k+1,\mu-1}\to 0.$$
So $a=\diff_Af$ by uniqueness of limits and hence $\text{Im}\diff_A$ is closed.
\end{proof}
\begin{theorem}[Slice Theorem]\thlabel{1formsplit}
Let $-5<\mu<0$  then
\begin{equation}
    \Omega^1_{k+1,\mu-1}(M, \text{\normalfont Ad} P)= \text{\normalfont Ker}(\diff_A^*\colon \Omega^1_{k+1,\mu-1}(M, \text{\normalfont Ad}  P) \to \Omega^0_{k,\mu -2}(M, \text{\normalfont Ad} P)) \oplus \diff_A(\Omega^0_{k+2,\mu}(M, \text{\normalfont Ad}  P)).
\end{equation}
\end{theorem}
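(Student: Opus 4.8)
The plan is to apply \thref{banachspacemap} with $X=\Omega^1_{k+1,\mu-1}(M,\Ad P)$, $T=\diff_A^*$, $Y=\Omega^0_{k,\mu-2}(M,\Ad P)$, and the candidate complement $X_0=\diff_A(\Omega^0_{k+2,\mu}(M,\Ad P))$. By the preceding lemma the image $X_0$ is closed, and $\Ker T=\Ker(\diff_A^*)$ is closed since $\diff_A^*$ is bounded; moreover $\Ker(\diff_A^*)$ admits a closed complement because $\diff_A^*$ has closed range (it is the restriction of a Fredholm-type operator, or one can simply invoke that bounded operators with closed range have complemented kernel in Hilbert spaces — here the $L^2_{k,\mu}$ spaces are Hilbert spaces). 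So the two hypotheses of \thref{banachspacemap} are what must be checked: (1) $\diff_A^*$ is injective on $\diff_A(\Omega^0_{k+2,\mu})$, and (2) $\diff_A^*(\Omega^1_{k+1,\mu-1})=\diff_A^*(\diff_A(\Omega^0_{k+2,\mu}))$.

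For injectivity (1): suppose $\diff_A^*\diff_A f=0$ for $f\in\Omega^0_{k+2,\mu}$ with $\mu<0$. Then $f\in\Ker(\diff_A^*\diff_A)_\mu$, so \thref{harmonicvanishes} gives $f=0$, hence $\diff_A f=0$. Thus $\diff_A^*$ restricted to $X_0$ has trivial kernel. For the surjectivity-type condition (2): the inclusion $\diff_A^*(X_0)\subseteq\diff_A^*(X)$ is trivial. For the reverse, take any $a\in\Omega^1_{k+1,\mu-1}$. By \thref{covclosed}, $\diff_A^*\diff_A\colon\Omega^0_{k+2,\mu}\to\Omega^0_{k,\mu-2}$ is an isomorphism, so there is a unique $f\in\Omega^0_{k+2,\mu}$ with $\diff_A^*\diff_A f=\diff_A^* a$; that is, $\diff_A^* a=\diff_A^*(\diff_A f)$ with $\diff_A f\in X_0$. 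Hence $\diff_A^*(X)\subseteq\diff_A^*(X_0)$, giving equality.

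With (1) and (2) verified, \thref{banachspacemap} yields that $X_0=\diff_A(\Omega^0_{k+2,\mu})$ is a closed complement to $\Ker(\diff_A^*)$ in $\Omega^1_{k+1,\mu-1}(M,\Ad P)$, which is exactly the asserted direct sum decomposition. A final remark: the decomposition need not be $L^2$-orthogonal because the weight $\mu-1$ generally lies outside the $L^2$-self-dual range, so one genuinely needs the Banach-space argument rather than a Hodge-theoretic projection; but the explicit projection onto the $\diff_A$-exact part is $a\mapsto \diff_A(\diff_A^*\diff_A)^{-1}\diff_A^* a$, which is bounded by \thref{covclosed}.

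The main obstacle is really bookkeeping rather than a deep difficulty: one must ensure all the weights line up so that \thref{covclosed} (which requires $-5<\mu<0$) and \thref{harmonicvanishes} (which requires $\mu<0$) both apply, and that $\diff_A$ and $\diff_A^*$ are bounded between the stated weighted spaces — this is the content of the AC-operator mapping properties recalled before \thref{ellipticestimate}. Once the isomorphism property of $\diff_A^*\diff_A$ from \thref{covclosed} is in hand, the proof is a short and formal application of the Banach-space splitting lemma.
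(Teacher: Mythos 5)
Your proof is correct and essentially identical to the paper's: both apply \thref{banachspacemap} to $\diff_A^*$ with the candidate complement $\diff_A(\Omega^0_{k+2,\mu})$, verify injectivity on that subspace via \thref{harmonicvanishes}, and obtain $\diff_A^*(\Omega^1_{k+1,\mu-1})=\diff_A^*\diff_A(\Omega^0_{k+2,\mu})$ from the isomorphism in \thref{covclosed}. Your extra remarks (Hilbert-space complementation of the kernel and the explicit bounded projection $a\mapsto \diff_A(\diff_A^*\diff_A)^{-1}\diff_A^*a$) are accurate but not a different argument.
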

\begin{proof}
We apply \thref{banachspacemap} to the operator $\diff_A^*\colon \Omega^1_{k+1,\mu-1}(M, \text{\normalfont Ad}  P) \to \Omega^0_{k,\mu -2}(M, \text{\normalfont Ad} P) .$ Since $\diff_A^*$ is AC this extension is bounded and hence the kernel is a closed subspace. We claim that  $\diff_A(\Omega^0_{k+2,\mu}(M, \Ad P))$ satisfies the hypothesis of \thref{banachspacemap}. Firstly, as noted above this is a closed subspace.  \\
Claim 1: $\diff_A^*$ is injective when restricted to $\diff_A(\Omega^0_{k+2,\mu}(M, \Ad P)).$\\
To see this suppose that $\diff_A^*\diff_Af = \diff_A^*\diff_Ag$ for $f,g \in \Omega^0_{k+2,\mu}(M, \Ad P). $ Then $f-g$ is harmonic and hence 0, so certainly $\diff_A f - \diff_A g=0.$
\\
Claim 2: $\diff_A^*\diff_A(\Omega^0_{k+2,\mu}(M, \Ad P))=\diff_A^*(\Omega^1_{k+1,\mu-1}(M, \Ad P)).$\\
This follows from \thref{covclosed}.\\
\end{proof}
The importance of this result is that it gives us a local description of the space  $\mathcal{B}_{k+1,\mu}=\mathcal{A}_{k+1,\mu-1} / \mathcal{G}_{k+2,\mu}$ of connections modulo gauge. The infinitesimal action of the gauge group $\mathcal{G}_{k+2,\mu}$ is $-\diff_A\colon \Omega^0_{k+2,\mu}(M, \Ad P) \to \Omega^1_{k+1,\mu-1}(M, \Ad P),$ so we can interpret  \thref{1formsplit} as a so called ``slice'' theorem-- we have found complements for the action of the gauge group. If the action is free, it will follow from general theory that the quotient space is a smooth manifold.\par 
To see that the action is free set  $\Gamma_{A,\mu}= \{  g\in \mathcal{G}_{k,\mu} \, ; \,  g\cdot A =A\}.$ Recall we are viewing gauge transformations as sections of $\text{End}(V)$ as defined in \eqref{eq:weightedgaugegroup}. By a standard argument $\Gamma_{A,\mu}$ is closed Lie subgroup of $\text{End}(V_x)$, for some base point $x$, whose elements are covariantly constant sections of $\text{End}(V)$ \cite[Section 4.2.2]{donaldson1990geometry} (i.e sections $g$ for which $\diff_Ag=0).$  Recall $G$ is assumed to be semi-simple and the inner product  on the representation $V$ is assumed to be invariant. The connection $A$ has Hol$(A)\subset G$ so it preserves the inner product on $V$ and thus also preserves the induced inner product on $\text{End}(V)=V \otimes V^*.$ Thus, regarding gauge transformations as sections of $\text{End}(V)$ as in \eqref{eq:weightedgaugegroup}, if $g\in \Gamma_{A,\mu}$ then $|g-\text{Id}|\in \Omega^0_{k+2,\mu}(M,\Ad P)$ and 
$\Delta |g-\text{Id}|^2=2\diff^*\langle \diff_A(g-\text{Id}), g-\text{Id}\rangle =0.$
We have seen that such a function must vanish, so that $\Gamma_{A,\mu}=\text{Id}.$ This is in contrast to the case when $M$ is compact since, in the AC case, reducible connections  (those with $\text{Hol}(A)$ a proper subgroup of $G$) do not lead to singularities in the space of connections modulo gauge. As a consequence, if we set
\begin{equation}
    T_{A,\mu, \epsilon } = \{ a \in \text{Ker}\diff_A^*\colon \Omega^1_{k+1,\mu-1}(M, \Ad P)\to \Omega^0_{k, \mu-2}(M, \Ad P) \, ; \, \norm{a}_{k,\mu-1}<\epsilon \}.
\end{equation}
then $T_{A, \epsilon}$ models a local neighbourhood of $A$ in $\mathcal{B}_{k+1,\mu}.$ 
We summarise this in the following corollary:
\begin{corollary}\label{couloumb}
Let $P\to M$ be a principal $G$-bundle with $G$ semisimple. If $-5<\mu<0$ then the moduli space $\mathcal{B}_{k+1,\mu}=\mathcal{A}_{k+1,\mu-1}/\mathcal{G}_{k+2,\mu}$ is a smooth manifold and the sets $T_{A,\mu,\epsilon}$ provide charts near $[A].$
\end{corollary}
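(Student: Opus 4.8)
The plan is to assemble the corollary from the three ingredients already in place: the Slice Theorem (\thref{1formsplit}), the triviality of the stabiliser $\Gamma_{A,\mu}$, and the standard fact that a free action of a Hilbert Lie group with a slice gives a smooth quotient. First I would recall that $\mathcal{G}_{k+2,\mu}$ is a Hilbert Lie group acting smoothly on the affine Hilbert space $\mathcal{A}_{k+1,\mu-1}$ (this is the lemma preceding \thref{acconnection}), so the orbits are smooth submanifolds. The discussion immediately before the corollary shows that for $-5<\mu<0$ every stabiliser $\Gamma_{A,\mu}$ is trivial: any $g$ fixing $A$ gives $\diff_A(g-\mathrm{Id})=0$, hence $|g-\mathrm{Id}|^2$ is a harmonic function decaying at the rate $2\mu<0$, which forces $g=\mathrm{Id}$ by the maximum principle lemma of Marshall. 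Therefore the action is free.

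Next I would invoke \thref{1formsplit}: at any connection $A$, the tangent space $\Omega^1_{k+1,\mu-1}(M,\Ad P)$ splits as $\ker\diff_A^*\oplus\diff_A(\Omega^0_{k+2,\mu})$, the second summand being exactly the image of the infinitesimal gauge action $-\diff_A$. Thus $\ker\diff_A^*$ is a closed complement to the tangent space of the gauge orbit, i.e. a linear slice. Exponentiating, the affine subspace $A+T_{A,\mu,\epsilon}$ (for $\epsilon$ small enough that gauge transformations near the identity move $A$ transversally) is a local slice for the action. The standard slice-theorem machinery — e.g. as in Freed--Uhlenbeck or the references \cite{donaldson1990geometry},\cite{freed2012instantons} already cited — then says that, because the action is free and proper with this slice, the quotient map $A+T_{A,\mu,\epsilon}\to\mathcal{B}_{k+1,\mu}$ is a homeomorphism onto an open neighbourhood of $[A]$, and these charts are smoothly compatible, making $\mathcal{B}_{k+1,\mu}$ a smooth Hilbert manifold modelled on $\ker\diff_A^*$.

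Concretely, the steps in order are: (1) recall the group/action regularity from the preceding lemma; (2) prove freeness via the stabiliser computation, which uses \thref{harmonicvanishes} and the absence of critical weights of $\diff_A^*\diff_A$ in $(-5,0)$; (3) identify $\ker\diff_A^*$ as a complement to the orbit tangent space using \thref{1formsplit}; (4) upgrade this linear slice to a genuine local slice by a transversality/implicit-function argument showing the gauge orbit through each nearby point meets $A+T_{A,\mu,\epsilon}$ in exactly one point for $\epsilon$ small; (5) conclude that the $T_{A,\mu,\epsilon}$ give a smooth atlas. I expect step (4) — verifying that the Coulomb gauge condition $\diff_A^*(B-A)=0$ locally picks out a unique orbit representative — to be the main technical point: it requires an implicit function theorem argument for the map $(g,a)\mapsto \diff_A^*(g\cdot(A+a)-A)$, using that its linearisation in $g$ is the isomorphism $\diff_A^*\diff_A$ from \thref{covclosed}. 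This is entirely parallel to the compact case, with the weighted Fredholm package replacing the ordinary elliptic theory, so beyond invoking \thref{covclosed} and \thref{weightedmult} for the nonlinear estimates it is routine; the novelty is purely that reducible connections cause no singularities here, which is exactly what the stabiliser computation in step (2) records.
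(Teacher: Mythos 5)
Your proposal is correct and follows essentially the same route as the paper: the Slice Theorem (\thref{1formsplit}) supplies the complement $\ker\diff_A^*$ to the image of the infinitesimal gauge action, the stabiliser computation via \thref{harmonicvanishes} shows the action is free, and the general slice machinery (with the Coulomb-gauge solvability coming from the isomorphism $\diff_A^*\diff_A$ of \thref{covclosed}, exactly as in the weighted version of Donaldson--Kronheimer recorded in \thref{coulomb}) yields the smooth quotient with charts $T_{A,\mu,\epsilon}$. The only difference is that you spell out the implicit-function step which the paper delegates to ``general theory'' and the subsequent proposition; this is a faithful expansion, not a different argument.
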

Working with the local models $T_{A,\mu,\epsilon}$ amounts to solving \emph{Coulomb gauge} condition $\diff_A^*a=0.$ This condition picks out a unique gauge equivalence class locally but may not do so sufficiently far away from $A.$ The failure of a global gauge fixing is a reflection of the rich topology of $\mathcal{B}$ \cite{donaldson2006mathematical}. Using the surjectivity of the coupled Laplacian one can prove a weighted version of \cite[Proposition 2.3.4]{donaldson1990geometry} which gives a sufficient condition for solving the Coulomb gauge condition:

\begin{proposition}\thlabel{coulomb}
Let $P\to M$ be a principal $G$-bundle with $G$ a compact Lie group. Let $A$ be an $L^2_{k+1, \mu-1}$ connection on $P$ for $\mu\in(-5,0)$. There is a constant $c(A)>0$ such that if $B$ is another $L^2_{k+1, \mu-1}$ connection on $P$ and $a=B-A$ satisfies 
$$\norm{a}_{L^2_{4,\mu-1}} < c(A)$$
then there is a gauge transformation $g\in \mathscr{G}_{k+2,\mu}$ such that $u(B)$ is in Coulomb gauge relative to $A.$
\end{proposition}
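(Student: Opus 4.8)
The plan is to adapt the proof of its compact-case analogue \cite{donaldson1990geometry} to the weighted setting of Section~\ref{sec:gtonac}, solving the Coulomb condition $\diff_A^*(g\cdot B - A)=0$ by a quantitative implicit function theorem argument. For $\xi\in\Omega^0_{5,\mu}(M,\Ad P)$ let $g=\exp(\xi)$ be the fibrewise exponential; since $G$ is compact this is a genuine gauge transformation, and the multiplication theorem \thref{weightedmult} shows the series $\exp(\xi)-\mathrm{Id}$ converges in $\Omega^0_{5,\mu}(M,\Ad P)$, so that $g\in\mathscr{G}_{5,\mu}$. Writing $a=B-A$ and linearising the gauge action (whose infinitesimal part is $-\diff_A$), one obtains an expansion
\begin{equation*}
\exp(\xi)\cdot B - A = a - \diff_A\xi + Q(\xi,a),
\end{equation*}
where the remainder $Q$ is a convergent series whose terms vanish to second order at $(\xi,a)=(0,0)$ and contain no summand independent of $\xi$. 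Thus the Coulomb condition is equivalent to solving
\begin{equation*}
\diff_A^*\diff_A\,\xi = \diff_A^*a + \diff_A^*Q(\xi,a).
\end{equation*}

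I would solve this equation in \emph{low regularity} by a contraction mapping. By \thref{covclosed} --- whose proof needs only that $\diff_A^*\diff_A$ is AC uniformly elliptic of rate $2$ with trivial kernel and cokernel for weights in $(-5,0)$, a property that holds for any reference connection $A\in\mathcal{A}_{k+1,\mu-1}$ --- the operator $\diff_A^*\diff_A$ is an isomorphism $\Omega^0_{5,\mu}(M,\Ad P)\to\Omega^0_{3,\mu-2}(M,\Ad P)$ with bounded inverse $G_A$. Define $T_a(\xi)=G_A\big(\diff_A^*a+\diff_A^*Q(\xi,a)\big)$ on the ball $\{\xi:\norm{\xi}_{5,\mu}\le\delta\}\subset\Omega^0_{5,\mu}(M,\Ad P)$. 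Since $7/2<4$, the space $\Omega^0_{4,\cdot}$ is an algebra by \thref{weightedmult}, and one obtains estimates
\begin{align*}
\norm{Q(\xi,a)}_{4,\mu-1}&\le C\big(\norm{\xi}_{5,\mu}+\norm{a}_{4,\mu-1}\big)\norm{\xi}_{5,\mu},\\
\norm{Q(\xi,a)-Q(\xi',a)}_{4,\mu-1}&\le C\big(\norm{\xi}_{5,\mu}+\norm{\xi'}_{5,\mu}+\norm{a}_{4,\mu-1}\big)\norm{\xi-\xi'}_{5,\mu}.
\end{align*}
Combined with the boundedness of $G_A$ and $\diff_A^*$, these yield $\delta=\delta(A)>0$ and $c(A)>0$, both independent of $k$, such that $\norm{a}_{4,\mu-1}<c(A)$ forces $T_a$ to map the $\delta$-ball into itself as a contraction; its unique fixed point $\xi\in\Omega^0_{5,\mu}(M,\Ad P)$ gives $g=\exp(\xi)\in\mathscr{G}_{5,\mu}$ with $\diff_A^*(g\cdot B - A)=0$. (This is the quantitative implicit function theorem applied to $\Phi(\xi,a)=\diff_A^*(\exp(\xi)\cdot B-A)$ at $(0,0)$, where $D_\xi\Phi(0,0)=-\diff_A^*\diff_A$; working in $\Omega^0_{5,\mu}\times\Omega^1_{4,\mu-1}$ rather than at higher regularity is precisely what makes the smallness hypothesis involve only the weak norm $\norm{a}_{4,\mu-1}$.)

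It remains to upgrade $g$ from $\mathscr{G}_{5,\mu}$ to $\mathscr{G}_{k+2,\mu}$, using the extra regularity $a\in\Omega^1_{k+1,\mu-1}$. Set $a'=g\cdot B-A\in\Omega^1_{4,\mu-1}$; then $\diff_A^*a'=0$ and $\diff_A a'+a'\wedge a'=\Ad_g F_B-F_A$, while a direct computation gives the first-order relation $\diff_A g=g\,a-a'\,g$. Weighted elliptic regularity (\thref{ellipticestimate}) for the overdetermined operator $\diff_A\oplus\diff_A^*$ on $\Omega^1$ raises the regularity of $a'$ by one derivative once that of $g$ is known; feeding the result into $\diff_A g=g\,a-a'\,g$ and controlling the products with \thref{weightedmult} raises the regularity of $g$ by one derivative; iterating, and stopping at the regularity of $a$, gives $a'\in\Omega^1_{k+1,\mu-1}$ and $g-\mathrm{Id}\in\Omega^0_{k+2,\mu}$, that is $g\in\mathscr{G}_{k+2,\mu}$, as required.

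The main obstacle is the regularity bookkeeping in the last two paragraphs: the nonlinear fixed-point argument must be run in the borderline space $\Omega^0_{5,\mu}$ (so that $\diff_A\xi$ and $Q$ land in the algebra $\Omega^1_{4,\mu-1}$ and the smallness hypothesis only sees $\norm{a}_{4,\mu-1}$), and only afterwards can the full regularity be bootstrapped back --- the two steps have to be compatible, and one must check that $c(A)$ depends only on $A$, through $\norm{G_A}$ and the algebra constants. The remaining points --- well-definedness and smoothness of the exponential at the relevant weighted regularities, and the validity of \thref{covclosed} for the merely $L^2_{k+1,\mu-1}$ reference connection --- are routine given the machinery already assembled.
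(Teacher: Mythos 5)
Your argument is essentially the proof the paper intends: the paper states this proposition without giving a proof, saying only that it is the weighted analogue of \cite[Proposition 2.3.4]{donaldson1990geometry} obtained from the surjectivity of the coupled Laplacian, and your scheme — solving $\diff_A^*\diff_A\xi=\diff_A^*a+\diff_A^*Q(\xi,a)$ by a contraction mapping built on the isomorphism of \thref{covclosed}, run at low regularity so that the smallness constant sees only $\norm{a}_{L^2_{4,\mu-1}}$, followed by an elliptic bootstrap to land in $\mathscr{G}_{k+2,\mu}$ — is exactly that adaptation, and it is correct. The only delicate points, namely invoking \thref{covclosed} and the elliptic estimates for operators whose coefficients come from a merely $L^2_{k+1,\mu-1}$ connection (the paper states these results for smooth AC operators), are the routine finite-regularity issues you already flag, and since $k\geq 4$ gives H\"older control of the coefficients they do not affect the argument.
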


\subsection{A Regularity Result}
  We observed that elements of the zero set of the operator $\mathcal{D}_A$ from \eqref{eq:nonlinearop}  consists of (smooth) $G_2$-instantons $B$ together with the Coulomb gauge condition $\diff_A^*(B-A)=0$ which fixes a gauge near to $A.$  The decay condition \eqref{eq:asympconnection} we impose on connections in the moduli space $\mathcal{M}(A_\infty, \mu)_k$, that a neighbourhood of $0$ in the zero set of $\mathcal{D}_A\colon C^\infty_{\mu-1} \to C^\infty_{\mu-2}$ is homeomorphic to  a neighbourhood of $[A]$ in $\mathcal{M}(A_\infty, \mu).$ We now show that we may instead study the moduli space using the weighted Sobolev spaces; this is advantageous since these spaces are the natural setting for studying elliptic operators on AC manifolds. Let us denote by $\mathcal{M}(A_\infty,\mu)_k$ the space of $L^2_{k,\mu-1}$ connections by $L^2_{k+1,\mu}$ gauge transformations.
Note that the Sobolev multiplication theorem (\thref{weightedmult})  ensures $\mathcal{D}_A\colon L^2_{k+1,\mu-1}(\slashed{S}(M)\otimes \Ad P)\to L^2_{k,\mu-1}(\slashed{S}(M)\otimes \Ad P)$ is bounded if $k\geq 3$.

We use the regularity of uniformly elliptic, asymptotically conical operators to obtain a result comparable to Donaldson and Kronheimer \cite[Proposition 4.2.16]{donaldson1990geometry}. 
\begin{proposition}\thlabel{regularity}
Let $k\geq 4$ and $-5<\mu<0.$
Then the natural inclusion $\mathcal{M}(A_\infty, \mu)_{k+1} \hookrightarrow \mathcal{M}(A_\infty, \mu)_k$ is a homeomorphism. 
\end{proposition}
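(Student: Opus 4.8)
The plan is to adapt the standard ``elliptic bootstrapping'' argument for moduli spaces of instantons (as in Donaldson--Kronheimer, Proposition 4.2.16) to the weighted setting. The inclusion $\mathcal{M}(A_\infty,\mu)_{k+1}\hookrightarrow\mathcal{M}(A_\infty,\mu)_k$ is clearly continuous and injective, so the content is to produce a continuous inverse: given a point in $\mathcal{M}(A_\infty,\mu)_k$, represented (via the Coulomb slice of Corollary~\ref{couloumb} / \thref{coulomb}) by a connection $B=A+a$ with $a\in\Omega^1_{k,\mu-1}(M,\Ad P)$, $\diff_A^*a=0$, and $\psi\wedge F_B=0$, I must show that $a$ is in fact $L^2_{k+1,\mu-1}$, with the $L^2_{k+1,\mu-1}$ norm controlled locally by the $L^2_{k,\mu-1}$ norm. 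First I would fix a smooth reference connection $A$ in the same gauge class, so that $a$ lies in the nonlinear zero set of $\mathcal{D}_A$ from \eqref{eq:nonlinearop}; the pair $(0,a)$ then satisfies the coupled system $D_A(0,a) = -\,*(\psi\wedge(a\wedge a))$ together with $\diff_A^*a=0$, which packages into the single equation $D_A\sigma = Q(\sigma)$ for the twisted Dirac operator $D_A$ acting on $\sigma=(0,a)\in\slashed{S}(M)\otimes\Ad P$, where $Q$ is the pointwise quadratic term $\sigma\mapsto -*(\psi\wedge(a\wedge a))$.

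The heart of the argument is then the interplay between the weighted elliptic regularity estimate (\thref{ellipticestimate}) for the uniformly elliptic, asymptotically conical, rate-$1$ operator $D_A$, and the weighted Sobolev multiplication theorem (\thref{weightedmult}). Concretely: $\sigma\in L^2_{k,\mu-1}$, and since $k\geq 4>\tfrac72$, \thref{weightedmult} gives $Q(\sigma)=a\wedge a$-type terms in $L^2_{k,\,2(\mu-1)}\subset L^2_{k,\mu-1}$ (using $\mu<0$, so $2(\mu-1)=2\mu-2 < \mu-1$, and the weighted embedding \thref{embed}(2) to raise the weight back to $\mu-1$; one should check the inequality $2\mu-2\le\mu-1$, i.e. $\mu\le 1$, which holds). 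Thus $D_A\sigma = Q(\sigma)\in L^2_{k,\mu-2}$ — wait, more carefully, I want $Q(\sigma)\in L^2_{k,(\mu-1)-1}=L^2_{k,\mu-2}$, which follows since $2\mu-2\le\mu-2$ is equivalent to $\mu\le 0$, true. Applying \thref{ellipticestimate} with $\gamma=1$, $\beta=\mu-2$ (noting $\sigma\in L^2_{0,\mu-1}=L^2_{0,\beta+\gamma}$) upgrades $\sigma$ to $L^2_{k+1,\mu-1}$, which is exactly the desired conclusion, and the estimate in \thref{ellipticestimate} gives $\|\sigma\|_{k+1,\mu-1}\le C(\|Q(\sigma)\|_{k,\mu-2}+\|\sigma\|_{0,\mu-1})$, with the first term controlled quadratically by $\|\sigma\|_{k,\mu-1}$ via \thref{weightedmult}. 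This last inequality gives the quantitative bound needed for continuity of the inverse map.

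Two points require care. First, the bootstrap as stated only gains one derivative, which is all that is needed for the stated proposition; one should just verify that all the weight arithmetic is compatible with $\mu\in(-5,0)$ and $k\ge 4$, and that the reference connection $A$ (smooth, AC) makes $D_A$ genuinely uniformly elliptic AC of rate $1$ as recorded in the bulleted list before \thref{ellipticestimate}. Second, and this is the main obstacle, one must be careful about the gauge: the representative $B$ of a class in $\mathcal{M}(A_\infty,\mu)_k$ is only known a priori to be in Coulomb gauge relative to $A$ after applying an $L^2_{k+1,\mu}$ gauge transformation (Proposition~\ref{coulomb}), and one needs this gauge transformation itself to be smooth (or at least not to destroy the regularity gain) so that the slice coordinate $a$ really does solve the elliptic system. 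The standard fix is to bootstrap the gauge transformation simultaneously: the Coulomb condition $\diff_A^*(g\cdot B - A)=0$ together with $g\in G$ pointwise yields an elliptic equation for $g$ (of Laplace type, using \thref{covclosed}), so $g$ gains a derivative whenever $a$ does, and one runs the two bootstraps in tandem. Once smoothness/regularity of the slice representative is secured, the homeomorphism statement follows from the open mapping / inverse function machinery already used to set up the charts $T_{A,\mu,\epsilon}$ in Corollary~\ref{couloumb}.
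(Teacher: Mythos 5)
Your analytic core (Coulomb slice, weighted multiplication \thref{weightedmult}, AC elliptic estimate \thref{ellipticestimate} applied to the twisted Dirac operator, plus the resulting norm inequality for continuity of the inverse) is the same engine the paper uses, and your weight arithmetic is correct. But there is a genuine gap at the crucial first step: you ``fix a smooth reference connection $A$ in the same gauge class.'' That is circular --- the existence of a more regular (let alone smooth) connection in the gauge class of the given $L^2_{k,\mu-1}$ instanton is exactly what the proposition is establishing, and an arbitrary class in $\mathcal{M}(A_\infty,\mu)_k$ is not known a priori to lie near a smooth instanton that could serve as a chart centre. The paper's missing idea is to get the smooth reference by \emph{density}: since $C^\infty_{\mu-1}$ is dense in $L^2_{k,\mu-1}$, one picks a smooth connection $B$ (not gauge-related to the instanton, and not itself an instanton) within the Coulomb radius of \thref{coulomb}, uses the symmetry of the Coulomb condition to put the rough instanton $A$ in Coulomb gauge relative to $B$, and bootstraps $a=g(A)-B$ using the \emph{smooth} operator $D_B$. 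Because $B$ need not be an instanton, the equation you bootstrap is $*(\psi\wedge\diff_B a)=-*(\psi\wedge F_B)-*(\psi\wedge a\wedge a)$; your version drops the reference curvature term, which only vanishes under your circular assumption. The term is harmless (it lies in $C^\infty_{\mu-2}$), but it must be carried.

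Your ``main obstacle'' --- that the Coulomb gauge transformation is only $L^2_{k+1,\mu}$ and must be bootstrapped in tandem --- is not actually an obstacle in this formulation, so the extra machinery you propose is unnecessary (and, as sketched, would not repair the circularity above). The instanton equation is gauge invariant and the Coulomb condition is precisely what $g$ was chosen to arrange, so $a=g(A)-B$ genuinely solves the elliptic system with smooth coefficients and starts in $L^2_{k,\mu-1}$ by \thref{coulomb} together with \thref{weightedmult}; once $a\in L^2_{k+1,\mu-1}$, the improved representative is $g(A)=B+a$ with $B$ smooth, so no regularity of $g$ beyond what \thref{coulomb} already provides is ever used. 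With the density step and the $F_B$ term inserted, your argument becomes the paper's proof; the homeomorphism claim is then finished, as you indicate, by the quantitative estimate (the paper phrases it as showing both topologies have the same convergent sequences, using the elliptic estimate, the multiplication theorem and the embedding $L^2_{k,\mu-1}\hookrightarrow L^2_{k,-1}$).
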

\begin{proof}
  Suppose $A$ is an   asymptotically conical $G_2$-instanton of rate $\mu$ and class  $L^2_{k, \mu-1.}$, we will show that there exists a gauge transform $g$ such that $g(A)$ is in $L^2_{k+1,\mu-1}$. Firstly we know by \thref{coulomb} there is an $\varepsilon>0$ such that any $L^2_{k, \mu-1}$ connection $B$ with $\norm{A-B}_{L^2_{k,\mu-1}}< \varepsilon$ can be gauge transformed into Coulomb gauge relative to $A.$ Since $C^\infty_{\mu-1}$ lies densely in $L^2_{k,\mu-1}$ we may pick a smooth connection $B$ satisfying this condition, then we know there exists a $g$ in $L^2_{k+1, \mu}$ with 
  $$ \diff_A^* ( g^{-1}(B)-A)=0.$$
  The Coulomb gauge condition is symmetric, so that $A$ is also in Coulomb gauge relative to $g^{-1}(B),$ i.e
 \begin{equation}\label{eq:coulomb1}
 \diff^*_{g^{-1}(B)}(A-g^{-1}(B))=0. 
 \end{equation} 
  Let us denote $ g(A)=B+a,$ then we can apply $g$ to \eqref{eq:coulomb1} deduce  
  $$\diff_B^*a=0.$$
 Furthermore we have the relation 
$$ *(\psi \wedge \diff_Ba )=-* (\psi   \wedge F_B) - *(\psi \wedge a \wedge a).$$
Now  the weighted multiplication theorem      \thref{weightedmult}                                 ensures that $a \wedge a \in L^2_{k, 2(\mu-1)} $  and the curvature of $B$ lies in $C^\infty_{\mu-2}$. Thus $D_B(a)\in L^2_{k,\mu-2}$ and the  asymptotically conical uniformly elliptic estimates for the smooth operator $D_B$ allow us deduce that $a \in L^2_{k+1, \mu-1}.$ That is, we have bootstraped to gain a degree of differentiability.   This establishes the surjectivity of the inclusion.\\
The map is clearly injective and continuous; to see it is a homeomorphism we show that the two spaces have the same convergent sequences with their induced topologies. Let $\{a_n\}_{n=1}^\infty$ be a sequence in $\mathcal{M}(A_\infty,\mu)$ which is convergent in the $L^2_{k,\mu-1}$ norm, i.e there is some $a_\infty\in \mathcal{M}(A_\infty, \mu)$ with $\norm{a_n-a_\infty}_{k\mu-1}\to0$ as $n\to \infty.$ 
Observe that, since $a_n$ and $a_\infty$ are in the zero set of $\mathcal{D}_B$, we have that $$|D_B(a_n-a_\infty)|=|*(\psi\wedge(a_n\wedge a_n - a_\infty \wedge a_\infty))|=|\pi_7(a_n\wedge a_n-a_\infty \wedge a_\infty)| $$ where the final equality follows from the facts that the Hodge star operator acts isometrically and since the operation of wedging  with $\psi$ is an isomorphism of representations between $\Lambda^2_7$ and $\Lambda^6$ which preserves the norm.
To see that $\{a_n\}_{n=1}^\infty$ also converges in the $L^2_{k+1,\mu}$ norm we observe that
\begin{align*}
\norm{a_n-a_\infty}_{k+1,\mu-1}&\leq C(\norm{D_B(a_n-a_\infty)}_{k,\mu-2}+\norm{a_n-a_\infty}_{0,\mu-1})\\
&\leq  C( \norm{a_n\wedge a_n - a_\infty \wedge a_\infty}_{k,\mu-2} + \norm{a_n -a_\infty}_{0,\mu-1}) \\
&\leq C^\prime(\norm{a_n}_{k,-1}\norm{a_n - a_\infty}_{k,\mu-1} + \norm{a_\infty}_{k,-1}\norm{a_n-a_\infty}_{k,\mu-1} + \norm{a_n-a_\infty}_{0,\mu-1})
    \end{align*}
with the constants $C,C^\prime>0$ coming from the elliptic estimate for the smooth operator $D_B$ and the weighted Sobolev multiplication  \thref{weightedmult}. Now since $\{a_n\}_{n=1}^\infty $ converges in $L^2_{k,\mu-1}$ and there is a continuous embedding $L^2_{k,\mu-1}\hookrightarrow L^2_{k,-1},$ the sequence $\norm{a_n}_{k,-1}$ is bounded independent of $n$ and therefore $\norm{a_n-a_\infty}_{k+1,\mu-1}\to 0.$ Since the spaces $\mathcal{M}(A_\infty,\mu)_k$ and $\mathcal{M}(A_\infty,\mu)_{k+1}$ have the same convergent sequences, the inclusion is a homeomorphism.

\end{proof}
Note the weighted Sobolev embedding theorem ensures the spaces $\mathcal{M}(A_\infty,\mu)$ consists of smooth connections.  This yields the following important corollary; the moduli space near $[A]$ is modelled  on a neighbourhood of $0$ in the zero set of the non-linear elliptic operator $\mathcal{D}_A$ in any Sobolev extension.
 \begin{corollary}\thlabel{modulihomeo}
Let $A$ be an AC $G_2$-instanton, asymptotic to $A_\infty$ with rate $\mu$ where $-5<\mu<0.$ The zero set of the operator $\mathcal{D}_A\colon L^2_{k+1,\mu-1}(\slashed{S}(M) \otimes \text{\normalfont Ad} P) \to L^2_{k,\mu-2}(\slashed{S}(M) \otimes \text{\normalfont Ad} P) \  $ is independent of $k\geq 3$ and a neighbourhood of $A$ in $\mathcal{M}(A_\infty, \mu)$ is homeomorphic to a neighbourhood of $0$ in $(\mathcal{D}_A)_{\mu-1}^{-1}(0).$
 \end{corollary}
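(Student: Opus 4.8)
The corollary assembles the elliptic regularity for the nonlinear operator $\mathcal{D}_A$ with the local description of $\mathcal{B}_{k+1,\mu}$, so my plan is to prove its two assertions in turn. For the independence of $k$ I would bootstrap exactly as in the proof of \thref{regularity}. Given $(f,a)\in(\mathcal{D}_A)_{\mu-1}^{-1}(0)$ of class $L^2_{k+1,\mu-1}$ with $k\geq 3$, the defining equation rearranges to $D_A(f,a)=-\bigl(0,\,*(\psi\wedge a\wedge a)\bigr)$; since $k+1>\tfrac{7}{2}$ the weighted multiplication theorem \thref{weightedmult} gives $a\wedge a\in L^2_{k+1,2(\mu-1)}$, and because $\mu<0$ we have $2(\mu-1)\leq\mu-2$, so the right-hand side lies in $L^2_{k+1,\mu-2}$. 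The operator $D_A$ is first order, uniformly elliptic and asymptotically conical of rate $1$, so the elliptic estimate \thref{ellipticestimate} upgrades $(f,a)$ to $L^2_{k+2,\mu-1}$; iterating and applying the weighted embedding \thref{embed} shows $(f,a)\in C^\infty_{\mu-1}$. Hence $(\mathcal{D}_A)_{\mu-1}^{-1}(0)$ is the same set of smooth sections for every $k\geq3$, and coincides with the zero set of $\mathcal{D}_A\colon C^\infty_{\mu-1}\to C^\infty_{\mu-2}$.

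For the homeomorphism I would proceed as follows. First, any $(f,a)\in(\mathcal{D}_A)_{\mu-1}^{-1}(0)$ has $f=0$: since $\psi\wedge F_A=0$, the second component of $\mathcal{D}_A(f,a)=0$ is a $G_2$-monopole equation for the pair $(f,A+a)$, so \thref{xivanishes} forces $f=0$; consequently $\psi\wedge F_{A+a}=\psi\wedge F_A+\psi\wedge(\diff_Aa+a\wedge a)=0$, so $A+a$ is a $G_2$-instanton and the zero set is identified with $\{a\in\Omega^1_{k+1,\mu-1}(M,\Ad P)\,;\,\diff_A^*a=0,\ \psi\wedge F_{A+a}=0\}$. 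Next, by the slice theorem \thref{1formsplit} and Corollary~\ref{couloumb} the set $T_{A,\mu,\epsilon}=\{a\in\Ker\diff_A^*\,;\,\norm{a}_{k,\mu-1}<\epsilon\}$ is a chart for $\mathcal{B}_{k+1,\mu}$ around $[A]$, and using \thref{coulomb} to put nearby instantons into Coulomb gauge relative to $A$, the $G_2$-instanton locus meets this chart precisely in $\{a\in T_{A,\mu,\epsilon}\,;\,\psi\wedge F_{A+a}=0\}$, which by the previous sentence is a neighbourhood of $0$ in $(\mathcal{D}_A)_{\mu-1}^{-1}(0)$. Since \thref{regularity} identifies $\mathcal{M}(A_\infty,\mu)$ with the Sobolev moduli space $\mathcal{M}(A_\infty,\mu)_k$ (for $k\geq 4$), this is the desired bijection.

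The step I expect to be the main obstacle is verifying that this bijection is a \emph{homeomorphism} — equivalently, that all the Sobolev extensions induce the same topology near $0$ on the zero set. Here I would reuse the estimate from the proof of \thref{regularity}: for a convergent sequence $a_n\to a_\infty$ in $L^2_{k,\mu-1}$ of gauge-fixed $G_2$-instantons, the elliptic estimate for the smooth operator $D_B$ gives $\norm{a_n-a_\infty}_{k+1,\mu-1}\leq C\bigl(\norm{a_n\wedge a_n-a_\infty\wedge a_\infty}_{k,\mu-2}+\norm{a_n-a_\infty}_{0,\mu-1}\bigr)$, and \thref{weightedmult} together with the continuous embedding $L^2_{k,\mu-1}\hookrightarrow L^2_{k,-1}$ controls the quadratic term, forcing convergence in $L^2_{k+1,\mu-1}$. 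As the two spaces then have the same convergent sequences, the inclusion is a homeomorphism, and by the first part the identification is independent of $k\geq3$; the weighted embedding finally ensures the connections so obtained are smooth, so a neighbourhood of $[A]$ in $\mathcal{M}(A_\infty,\mu)$ is homeomorphic to a neighbourhood of $0$ in $(\mathcal{D}_A)_{\mu-1}^{-1}(0)$.
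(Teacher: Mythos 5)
Your proposal is correct and follows essentially the same route as the paper: the bootstrap via \thref{weightedmult} and \thref{ellipticestimate} is exactly the argument used in the proof of \thref{regularity}, giving smoothness of the zero set and its independence of $k$, and the identification of a neighbourhood of $[A]$ with a neighbourhood of $0$ in $(\mathcal{D}_A)^{-1}_{\mu-1}(0)$ is obtained, as in the paper, by combining the vanishing result \thref{xivanishes}, the slice theorem \thref{1formsplit} with Coulomb gauge fixing \thref{coulomb}, and the homeomorphism of \thref{regularity}. The only caveat, inherited verbatim from the paper's own discussion, is the identification of the second component of $\mathcal{D}_A(f,a)=0$ with the $G_2$-monopole equation for $(f,A+a)$ even though the scalar part is differentiated with $\diff_A$ rather than $\diff_{A+a}$, so this is not a gap relative to the paper's proof.
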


\subsection{Fredholm and Index Theory of the Twisted Dirac Operator}
 Recall that the operator $(D_A)_{\mu-1}$ is Fredholm when there are no non-zero solutions to $$D_{A_C}(r^{\mu-1}s_\Sigma)=0$$ where $s_\Sigma$ is a spinor on $\slashed{S}(\Sigma)$, lifted to $\slashed{S}(C).$  We will find an expression for the Dirac operator on the cone in terms of the operator on the link $\Sigma$ to determine the solutions to this equation.

We begin by comparing the Dirac operators on the link $\Sigma$ and the cone $C.$ Let $\Sigma$ a nearly K\"ahler  6-manifold and $(C,g_C)=(\Sigma\times \R_{>0},dr^2+r^2g)$ be the cone on $\Sigma.$  We consider first the case of the ordinary Dirac operators
\begin{align*}
    D^0&\colon \Gamma(\slashed{S}(\Sigma)) \to \Gamma (\slashed{S}(\Sigma)) \\
    D^C&\colon \Gamma(\slashed{S}(C)) \to \Gamma(\slashed{S}(C))
\end{align*}
arising from the Levi-Civita connection acting on the spin bundle.

 Let $e^i$ be a local orthonormal frame for $T^*\Sigma,$ then $E^0=dr,$ $E^i=re^i$ forms a local orthonormal frame for $T^*C.$  We  use the convention that an index of $\mu$ or $\nu$ runs from $0$ to $6$ whilst an index of $i,j$ or $k$ runs from $1$ to $6.$
 Denote by $\partial_i$ differentiation with respect to the vector field dual to $e^i$ using the metric $g$ and denote by $D_i$ differentiation with respect to the vector field dual to $E^i$ with respect to the metric $g_C.$
 We write $\nabla_C$ for the Levi-Civita connection on the cone and $\nabla$ for the Levi-Civita connection on $\Sigma$, similarly we write $\diff_C, \diff$ for the exterior derivatives on the cone and $\Sigma$ respectively.
 
 The connection one form of the Levi-Civita connection on the cone should be metric compatible  and torsion free, which here means that $\diff_C =\nabla_C \wedge.$ Let $\omega^i_j$ be the connection one form of the Levi-Civita connection on $T^*\Sigma,$ so that $\nabla e^i=-\omega^i_je^j$ in this frame. Then $\omega^i_j = - \omega^j_i$ and 
\begin{equation}\label{eq:dwedge}
\diff e^i+\omega^i_j\wedge e^j=0
\end{equation}
(this is equivalent to $\diff=\nabla \wedge$). \\
On the cone we let $\Omega^\mu_\nu=-\Omega^\nu_\mu$ be the Levi-Civita form, then testing that \eqref{eq:dwedge} holds  we first note that  $\diff E^0=\diff^2 r=0$ so that $\Omega^0_i\wedge E^i=0,$ and testing when $i\geq 0$ we find
\begin{align*}
    \diff_C E^i&=-\Omega^i_\mu\wedge E^\mu \\
    &= -\Omega^i_j\wedge re^j -\Omega^i_0\wedge \diff r 
\end{align*}
and also 
\begin{align*}
\diff_C E^i&=\diff r \wedge e^i + r\diff e^i\\
&=\diff r \wedge e^i - \omega^i_j \wedge e^j.
\end{align*}
Since we know \eqref{eq:dwedge} holds we conclude that 
$$ \Omega^i_j=\omega^i_j, \qquad \Omega^i_0=e^i.$$
The connection acts on one forms as 
$$\nabla_C v_\mu e^\mu=\left(\diff_C v_\mu - v_\nu\Omega^\nu_\mu\right) \otimes e^\mu.$$
Let $\Gamma_{\sigma \nu}^\mu$ be the Christoffel symbols of the Levi-Civita connection on the cone, so that
$$E^\sigma \Gamma_{\sigma \nu}^\mu=\Omega^\mu_\nu$$
and $\gamma_{kj}^i$ be the Christoffel symbols on the Levi-Civita connection on $\Sigma$ so that
$$e^k\gamma_{kj}^i=\omega_j^i.$$ 

Note that $\Gamma_{0\nu}^\mu=0$ and $\Gamma_{k0}^i=\frac{1}{r}\delta^{ik}.$  We take the natural Clifford algebra embedding of $\Cl_n$ into $\Cl_{n+1}^\text{even}$ given by $e^i \mapsto E^iE^0.$ The action of the Dirac operator on $\Sigma$ is 
\begin{equation}\label{eq:diraclink} D^0s=E^iE^0\left( \partial_is + \frac{1}{4}\gamma^k_{ij}E^0E^jE^0E^k s\right).\end{equation}
The operator on the cone acts as 
\begin{align*}
D^C(s)&=E^0\nabla_0s +E^i\left(D_is + \frac{1}{4} \Gamma^\nu_{i\mu}E^\mu E^\nu s\right) \\
&=E^0\frac{\partial s }{\partial r} + E^i\left( D_is + \frac{1}{4} \left( \Gamma^j_{i0} E^0E^js + \Gamma^0_{ij}E^jE^0s + \Gamma^k_{ij}E^jE^ks\right) \right) \\
&=E^0\frac{\partial s}{\partial r} + E^i\left( \frac{1}{r}\partial_i s + \frac{1}{4r} \left(\delta_{ij}E^0E^js-\delta_{ij}E^jE^0s+ \gamma_{ij}^kE^jE^ks \right) \right) \\
&=E^0\frac{\partial s}{\partial r} - \frac{1}{2r}E^iE^iE^0s+\frac{1}{r}E^i\left( \partial_is + \frac{1}{4}\gamma_{ij}^kE^0E^jE^0E^ks\right). 
\end{align*}
Thus we have that 
$$ D^C=E^0\frac{\partial s}{\partial r} + \frac{3}{r}E^0s+ \frac{1}{r}E^i\left(\partial_is + \frac{1}{4}\gamma_{ij}^kE^0E^jE^0E^ks\right) $$
and comparing to \eqref{eq:diraclink} one finds 
\begin{equation}
   D^C=E^0\cdot \left( \frac{\partial }{\partial r} + \frac{1}{r}\left( 3 + D^0 \right) \right) .
\end{equation}
\begin{remark}
The above calculation generalises easily to the case of $(X^n,g)$ an $n$-dimensional Riemanninan manifold and $(C(X),\diff r^2+r^2g)$ the $(n+1)$-dimensional cone of $X.$
\end{remark}
We would like to generalise this to the case of twisted spinors and twisted Dirac operators. Recall the notation $A_C=\pi^*A_\infty$ and consider the operators
\begin{align*}
    D^0_{A_\infty}&\colon \Gamma(\slashed{S}(\Sigma)\otimes \Ad Q) \to \Gamma (\slashed{S}(\Sigma)\otimes \Ad Q) \\
    D_{A_C}&\colon \Gamma(\slashed{S}(C)\otimes \Ad \pi^*Q) \to \Gamma(\slashed{S}(C)\otimes \Ad \pi^*Q).
\end{align*}
These operators factor as follows:
\begin{align*}
    D^0_{A_\infty}&=\text{cl}_6\circ (\nabla\otimes 1 + 1 \otimes \nabla^{A_\infty})=D^0\otimes 1 + \text{cl}_6\circ (1\otimes \nabla^{A_\infty}) \\
    D_{A_C}&=\text{cl}_7\circ (\nabla_C\otimes 1 + 1 \otimes \nabla^{A_C})=D^C\otimes 1 + \text{cl}_7\circ (1\otimes \nabla^{\pi^*A_\infty}) 
\end{align*}
it suffices consider the terms $\text{cl}_6(1\otimes \nabla^{A_\infty})$ and $\text{cl}_7(1\otimes \nabla^{A_C}) .$ We choose a local frame $\{v_a\}$ of $\Ad Q$ and  let $\tilde{\omega}_a^b=e^i\tilde{\gamma}_{ia}^b$ the the connection 1-form of $\nabla^A$ and let $\tilde{\Omega}_a^b=E^i\tilde{\Gamma}_{ia}^b$ be the connection 1-form  of $\nabla^{\pi^*A}.$ 
Then $\pi^*\tilde{\omega}=\tilde{\Omega}$ and we can apply a similar analysis to before. Let $s_A$ be a local frame for the spin bundle.  One finds 
$$D^0_{A_\infty}(f_{Aa}s_A\otimes v_a)=D^0(f_{Aa}s_A)\otimes v_a + E^iE^0f_{Aa}s_A\otimes \tilde{\gamma}_{ia}^bv_b$$
whilst 
\begin{align*}
    D_{A_C}(f_{Aa}s_A\otimes v_a) &= E^0\cdot \left( \frac{\partial}{\partial r}f_{Aa}s_A+ \frac{1}{r}(3+D^0)(f_{Aa}s_A)\right)\otimes v_a + (E^if_{Aa}s_A) \otimes \tilde{\Gamma}_{ia}^bv_b \\
    &= E^0\cdot \left( \frac{\partial}{\partial r}f_{Aa}s_A+ \frac{1}{r}(3+D^0)(f_{Aa}s_A)\right)\otimes v_a + \frac{1}{r}(E^if_{Aa}s_A) \otimes \tilde{\gamma}_{ia}^bv_b \\
    &=E^0\cdot \left( \frac{\partial }{\partial r} + \frac{1}{r}\left( 3 + D^0_{A_\infty} \right) \right)(f_{Aa}s_A\otimes v_a).
\end{align*}
So the twisted Dirac operators satisfy the same relation as in the case of the ordinary spin bundle.

A simple calculation shows that the volume form $\text{Vol}_6$ anti-commutes with the Dirac operator, so the spectrum is symmetric about $0$ and hence $W_\text{crit}(D_{A_C})$ is symmetric about $-3.$ Furthermore the spectrum of a Dirac operator is unbounded and discrete.\\

\begin{proposition}\thlabel{eigenvalueonlink}
The set of critical weights for the twisted  Dirac operator $D_A$ is $W_\text{crit}(D_{A_C})=\{ \mu \in \R \colon \mu + 3 \in \text{Spec}D^0_{A_\infty}\}$ where $D^0_{A_\infty}$ is the Dirac operator on the link twisted by the asymptotic connection $A_{\infty}.$ Furthermore this set is discrete, unbounded and symmetric about $-3.$  The operator $(D_A)_{\mu}$ is therefore Fredholm whenever $\mu+3 \in \R \setminus \text{Spec}D^0_{A_\infty}.$
\end{proposition}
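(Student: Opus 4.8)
The argument rests on the separation-of-variables identity $D_{A_C}=E^0\cdot\bigl(\partial_r+\tfrac1r(3+D^0_{A_\infty})\bigr)$ just established, together with \thref{homog}, so the plan is a short deduction from those two facts.

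First I would unwind the definition of $W_\text{crit}$: a weight $\mu$ lies in $W_\text{crit}(D_{A_C})$ exactly when $\slashed{S}(C)\otimes\Ad(\pi^*Q)$ carries a nonzero homogeneous order-$\mu$ section $\eta$ with $D_{A_C}\eta=0$, and such an $\eta$ has the form $\eta=r^{\mu}s_\Sigma$ with $s_\Sigma\in\Gamma(\slashed{S}(\Sigma)\otimes\Ad Q)$ lifted to the cone — here one uses that over a cone with even-dimensional link the restriction of $\slashed{S}(C)$ to each slice $\{r\}\times\Sigma$ is canonically $\slashed{S}(\Sigma)$, the lift being $r$-independent in precisely the sense in which $\partial/\partial r$ was used to derive the cone formula. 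Since Clifford multiplication by the unit covector $\diff r$ squares to $-1$, the factor $E^0\cdot$ is invertible, so $D_{A_C}\eta=0$ is equivalent to $\partial_r\eta+\tfrac1r(3+D^0_{A_\infty})\eta=0$.

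Next I would substitute $\eta=r^{\mu}s_\Sigma$. Because $D^0_{A_\infty}$ differentiates only in the $\Sigma$-directions it commutes with multiplication by powers of $r$, so the equation collapses to $r^{\mu-1}\bigl((\mu+3)s_\Sigma+D^0_{A_\infty}s_\Sigma\bigr)=0$, i.e. $D^0_{A_\infty}s_\Sigma=-(\mu+3)s_\Sigma$. Hence nonzero homogeneous order-$\mu$ elements of $\Ker D_{A_C}$ are in bijection with eigenspinors of $D^0_{A_\infty}$ of eigenvalue $-(\mu+3)$, so $W_\text{crit}(D_{A_C})=\{\mu\in\R:\ -(\mu+3)\in\mathrm{Spec}\,D^0_{A_\infty}\}$. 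Since $\Sigma$ is compact, $D^0_{A_\infty}$ is elliptic and formally self-adjoint, so $\mathrm{Spec}\,D^0_{A_\infty}$ is real, discrete and unbounded (this also explains why all critical weights are real); and since $\text{Vol}_6$ is parallel and, in dimension six, anticommutes with Clifford multiplication by covectors, it anticommutes with $D^0_{A_\infty}$, whence $\mathrm{Spec}\,D^0_{A_\infty}$ is symmetric about $0$. Replacing $-(\mu+3)$ by $\mu+3$ is therefore harmless, which yields the asserted description of $W_\text{crit}(D_{A_C})$ and shows it is discrete, unbounded and symmetric about $-3$.

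Finally the Fredholm claim is immediate from \thref{homog}: $D_A$ is an AC uniformly elliptic first-order operator whose asymptotic model is $D_{A_C}$, so the extension $(D_A)_{\mu}\colon L^2_{k+1,\mu}\to L^2_{k,\mu-1}$ is Fredholm for every $\mu\notin W_\text{crit}(D_{A_C})$, i.e. for every $\mu$ with $\mu+3\notin\mathrm{Spec}\,D^0_{A_\infty}$. The only step demanding genuine care is the first one — matching the ``homogeneous of order $\mu$'' convention for cone spinors (whose $\Lambda^0$ and $\Lambda^1$ constituents carry different powers of $r$) with an honest eigenvalue equation on $\Sigma$, and verifying the correspondence runs in both directions; once the cone formula is in hand, the rest is routine bookkeeping.
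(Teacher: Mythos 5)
Your proposal is correct and follows essentially the same route as the paper: apply the cone identity $D_{A_C}=E^0\cdot\bigl(\partial_r+\tfrac1r(3+D^0_{A_\infty})\bigr)$ to a homogeneous section $r^{\mu}s_\Sigma$ to reduce the kernel condition to the eigenvalue equation $D^0_{A_\infty}s_\Sigma=-(\mu+3)s_\Sigma$, use the anticommutation of $\text{Vol}_6$ with the twisted Dirac operator to symmetrize the spectrum (so $\mu+3$ may replace $-(\mu+3)$) and get discreteness, unboundedness and symmetry about $-3$, and then invoke \thref{homog} for the Fredholm statement. Your closing remark about reconciling the two conventions for homogeneity (different powers of $r$ on the $\Lambda^0$ and $\Lambda^1$ parts versus $\eta=r^\mu s_\Sigma$) is exactly the point the paper also treats only parenthetically, so no substantive difference remains.
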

Recall that in studying the deformation theory of $\mathcal{M}(A_\infty,\mu)$ we are lead to work with the operator $(D_A)_{\mu-1}$ on a $\mu-1$ weighted Sobolev space, since the kernel of this operator consists solutions to the linearised $G_2$-instanton equation converging with rate $\mu.$ Therefore let us set 
\begin{equation}
    W\coloneqq \{ \mu \in \R \, ; \, \mu + 2 \in \text{Spec}(D^0_{A_\infty}) \},
\end{equation}
so that $(D_A)_{\mu-1}$ is Fredholm whenever $\mu \in \R \setminus W$ and $W$ is symmetric about $-2.$

\begin{remark}
In \cite{charbonneau2016deformations} Charbonneau and Harland show that the linearised nearly K\"ahler instanton equation on the link $\Sigma$ is solved by one forms $a$ satisfying $D_{A_\infty}(a\cdot s)=2a\cdot s$ . Therefore we may think of rate $0 $ deformations on cone $C(\Sigma)$ as being deformations of the nearly K\"ahler instanton $A_\infty.$

\end{remark}
Recall the index of a Fredholm operator $P$ is the quantity $\text{ind}P=\text{dim}(\text{ker}P)-\text{dim}(\text{coker} P).$ The Lockhart McOwen theory provides an \emph{index change} formula that describes how the index varies as we vary the weight of the Sobolev space.
\begin{mydef}\thlabel{kmudef}
Let $\mu\in \R$ and define  
$$\mathcal{K}(\mu)=\left\{ \eta_C \in \text{\normalfont Ker}D_{A_C} \, ; \, \eta_C(r,\sigma)=r^{\mu-1}\sum_{j=0}^m(\log r)^j\eta_j(\sigma) \text{ \normalfont each  } \eta_j \text{\normalfont is a section of }\slashed{S}(\Sigma)\otimes \text{\normalfont Ad} Q\right\}.$$
That is, $\mathcal{K}(\mu)$ consists of sections in the kernel of $D_{A_C}$ which are polynomials in $\log r$ whose coefficients are homogeneous order $\mu$ spinors. 
\end{mydef}
The importance of these spaces is that they describe the change in index as the weight varies. We  let $k(\mu)$ be the dimension of the dimension of the space $\mathcal{K}(\mu)$ in \thref{kmudef}.
\begin{theorem}
 Let $\text{ind}_\mu$ denote the index of $D_A\colon L^2_{k+1,\mu-1}(\slashed{S}(M)\otimes \text{\normalfont Ad} P) \to L^2_{k,\mu-2}(\slashed{S}(M)\otimes \text{\normalfont Ad} P).$ If $\mu,\mu^\prime\in\R\setminus W$ are such that $\mu\leq\mu^\prime,$ then 
$$\text{ind}_{\mu^\prime}D_A-\text{ind}_{\mu}D_A=\sum_{\nu\in W\cap(\mu, \mu^\prime)}k(\nu).$$
\end{theorem}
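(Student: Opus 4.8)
The plan is to derive this index-change formula from the general Lockhart--McOwen theory for asymptotically cylindrical operators, transported to the AC setting exactly as in \cite{lockhart1985elliptic} and \cite{marshal2002deformations}. The first step is to reduce to the cylinder: applying the isometry $T_E$ between the AC bundle $(\slashed{S}(M)\otimes\Ad P, g)$ and its asymptotically cylindrical counterpart, the operator $(D_A)_{\mu-1}$ becomes an asymptotically translation-invariant elliptic operator on an Acyl manifold acting between suitably weighted (in the cylindrical sense, with exponential weight $e^{\delta t}$) Sobolev spaces. Under this correspondence a weight $\mu$ in the conical picture corresponds to a weight $\delta$ in the cylindrical picture, and the critical weights $W$ correspond precisely to the \emph{indicial roots} of the translation-invariant limit operator $D_{A_C}$ written in cylindrical coordinates $t=\log r$. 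By \thref{eigenvalueonlink} these indicial roots are exactly $\mu$ with $\mu+2\in\mathrm{Spec}(D^0_{A_\infty})$, and the separation-of-variables argument there shows the corresponding indicial solutions on the cylinder are $e^{(\mu-1)t}$ times eigenspinors of $3+D^0$, or more generally polynomials in $t$ times such spinors in the (non-generic) case of Jordan blocks --- which is precisely the definition of $\mathcal{K}(\mu)$ in \thref{kmudef}.

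The second step is to invoke the Lockhart--McOwen index-jump theorem: for an asymptotically translation-invariant elliptic operator on an Acyl manifold, the index on the $\delta'$-weighted space minus the index on the $\delta$-weighted space (for $\delta\le\delta'$ both noncritical) equals the sum, over critical weights $\nu$ strictly between $\delta$ and $\delta'$, of the total algebraic multiplicity of $\nu$ as an indicial root --- and this algebraic multiplicity is exactly the dimension $k(\nu)$ of the space of polynomial-in-$t$ indicial solutions. Concretely one writes $D_A = D_{A_C} + (\text{lower-order, decaying})$ near infinity, notes that on each interval free of critical weights the operator is Fredholm with locally constant index (so the index is well-defined and depends only on the connected component of $\R\setminus W$ containing $\mu$), and then the jump across a single critical weight $\nu$ is computed by a contour-integral/residue argument applied to the inverse of the indicial family $\lambda\mapsto D^0_{A_\infty}+2-\lambda$ (equivalently, by a direct parametrix patching argument comparing approximate kernels and cokernels on the two sides). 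Summing the single-weight jumps over all $\nu\in W\cap(\mu,\mu')$ gives the stated formula.

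The third step is bookkeeping to make sure the conical-to-cylindrical dictionary lines up with the conventions already fixed in the excerpt. One must check that: (i) $D_A$ is AC of rate $1$ (recorded earlier in the bullet list before \thref{ellipticestimate}), so that it extends boundedly $L^2_{k+1,\mu-1}\to L^2_{k,\mu-2}$ and the relevant weight shift is by $\nu=1$; (ii) the symmetry of $W$ about $-2$ and the self-adjointness of $D^0_{A_\infty}$ guarantee all indicial roots are real, so no genuinely complex critical weights intervene and $\mathcal{K}(\mu)$ as defined captures the full indicial multiplicity; (iii) the independence of the kernel on $k$ (from \thref{homog} together with the elliptic regularity \thref{ellipticestimate}) means $\mathrm{ind}_\mu D_A$ is unambiguous. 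With these in place the theorem is just the specialization of the general result.

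I expect the main obstacle to be purely expository rather than mathematical: carefully stating the Lockhart--McOwen index-jump theorem in a form directly applicable here and verifying that $k(\nu)$ --- defined in \thref{kmudef} as the dimension of the space of $\log r$-polynomial homogeneous solutions --- coincides with the \emph{algebraic} multiplicity of $\nu$ as a pole of the resolvent of the indicial family, not merely the geometric multiplicity (the dimension of the $\nu$-eigenspace of $D^0_{A_\infty}$). In the generic situation where $D^0_{A_\infty}+2$ is diagonalizable at the relevant eigenvalue these agree and there are no $\log r$ terms, but the definition of $\mathcal{K}(\mu)$ was deliberately written to allow logarithms, so the cleanest route is to cite the Acyl index theory in the generality that already packages this equality (as in \cite{lockhart1985elliptic}), and simply remark that \thref{eigenvalueonlink}'s separation of variables identifies the indicial solutions with elements of $\mathcal{K}(\mu)$.
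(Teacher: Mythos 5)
Your proposal is correct and follows essentially the same route as the paper: the paper offers no independent proof of this theorem but simply invokes the Lockhart--McOwen index-change theory (as adapted to the AC setting by Marshall), which is exactly the conformal-rescaling-to-cylinder, indicial-root, and index-jump argument you outline, with \thref{eigenvalueonlink} identifying the indicial solutions with elements of $\mathcal{K}(\mu)$. Your remark about algebraic versus geometric multiplicity is also consistent with the paper, which addresses it immediately afterwards by showing self-adjointness of $D^0_{A_\infty}$ rules out the $\log r$ terms.
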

\begin{remark}\thlabel{virtdim}
 Being self adjoint the operator $(D_A)_{-2}$ has index zero, since the dual of the target space has the same weight as the domain. It follows that when $D^0_{A_\infty}$ has non-trivial kernel we have  $ind(D_A)_{-2+\epsilon}=\frac{1}{2}(\text{dim Ker}D^0_{A\infty})$ for $\epsilon$ sufficiently small that $[-2,-2+\epsilon)$ contains no other critical weights. Observe that in this situation the virtual dimension is negative for all rates $\mu<-2.$
\end{remark}
In analogy with the results of \cite{marshal2002deformations} we show that being self adjoint ensures that elements of $\mathcal{K}(\mu)$  have no polynomial terms. 
\begin{proposition}
Suppose $$D_{A_C} \left(r^{\mu-1}\sum_{j=0}^m(\log r)^jv_j(\sigma)\right)=0.$$
Then $m=0.$
\end{proposition}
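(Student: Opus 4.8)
The plan is to use the explicit form of the Dirac operator on the cone derived above, namely
$$D_{A_C} = E^0 \cdot \left( \frac{\partial}{\partial r} + \frac{1}{r}\bigl(3 + D^0_{A_\infty}\bigr)\right),$$
together with the fact that $D^0_{A_\infty}$ is a formally self-adjoint operator on the \emph{closed} manifold $\Sigma$. Since Clifford multiplication by the unit covector $E^0$ is invertible, the equation $D_{A_C}\eta = 0$ is equivalent to $\bigl(\partial_r + \tfrac{1}{r}P\bigr)\eta = 0$, where I abbreviate $P := 3 + D^0_{A_\infty}$, a self-adjoint operator on $\slashed{S}(\Sigma)\otimes \Ad Q$.

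First I would substitute the ansatz $\eta = r^{\mu-1}\sum_{j=0}^m (\log r)^j v_j(\sigma)$, with each $v_j$ a section of $\slashed{S}(\Sigma)\otimes \Ad Q$ independent of $r$, and compute $\partial_r\eta$ using $\partial_r r^{\mu-1} = (\mu-1)r^{\mu-2}$ and $\partial_r (\log r)^j = j(\log r)^{j-1}/r$. Multiplying the identity $\partial_r\eta + \tfrac1r P\eta = 0$ by $r^{2-\mu}$ and matching the coefficient of each power $(\log r)^i$ for $0 \le i \le m$ yields, on setting $\lambda := 1-\mu$, the chain of relations
$$(P - \lambda)v_m = 0, \qquad (P - \lambda)v_i = -(i+1)\,v_{i+1} \quad (0 \le i \le m-1).$$
Here there is no loss of generality in assuming $v_m \not\equiv 0$, i.e.\ that $m$ is the true degree of the polynomial; otherwise one relabels. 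Note this step uses nothing about whether $\mu$ is a critical weight.

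The final step derives a contradiction when $m \geq 1$. Pairing the relation $(P-\lambda)v_{m-1} = -m\,v_m$ against $v_m$ in $L^2(\Sigma)$ and using self-adjointness of $P$ — there are no boundary terms, since $\Sigma$ is compact, in contrast to integration by parts on the ambient AC manifold — gives
$$-m\,\|v_m\|_{L^2}^2 = \langle (P-\lambda)v_{m-1}, v_m\rangle_{L^2} = \langle v_{m-1}, (P-\lambda)v_m\rangle_{L^2} = 0,$$
so $v_m = 0$, contradicting $\deg = m$. Hence $m = 0$. The only care needed is the bookkeeping in the coefficient-matching step and the harmless reduction to $v_m \neq 0$; the mechanism — self-adjointness on the compact link annihilating the top-degree coefficient — is exactly that of Marshall in the untwisted case \cite{marshal2002deformations}, so I do not anticipate a genuine obstacle.
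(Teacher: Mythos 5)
Your proof is correct and follows essentially the same route as the paper: substitute the ansatz into the cone formula $D_{A_C}=E^0\cdot(\partial_r+\tfrac{1}{r}(3+D^0_{A_\infty}))$, compare the $(\log r)^m$ and $(\log r)^{m-1}$ coefficients, and use formal self-adjointness of $D^0_{A_\infty}$ on the compact link to kill the top coefficient. Your relations $(P-\lambda)v_m=0$ and $(P-\lambda)v_{m-1}=-m\,v_m$ with $\lambda=1-\mu$ are exactly the paper's equations $D^0_{A_\infty}v_m=-(\mu+2)v_m$ and $(\mu+2)v_{m-1}+mv_m+D^0_{A_\infty}v_{m-1}=0$ in different notation.
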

\begin{proof}
Let $v_C\in \mathcal{K}(\mu),$ then we may write
$$ v_C(r,\sigma)=r^{\mu-1}\sum_{j=0}^m(\log r)^jv_j(\sigma).$$
Suppose for a contradiction that $m>0$. Thinking of $D_{A_C}v_C$ as a polynomial in $\log r$ we first compare coefficients of $(\log r)^m$ to find 
$$D^0_{A_\infty}v_m=-(\mu+2)v_m.$$ Now comparing coefficients of $(\log r)^{m-1}$ we find 
$$(\mu+2)v_{m-1}+mv_m+D^0_{A_\infty} v_{m-1} =0$$
and we us the self-adjointness of $D^0_{A_\infty}v$ to compute
\begin{align*}
    m\langle v_m,v_m\rangle _{L^2(\Sigma)}&= -\langle D^0_{A_\infty} v_{m-1},v_m\rangle_{L^2(\Sigma)}-(\mu+2)\langle v_{m-1},v_m\rangle_{L^2(\Sigma)}\\
    &=-\langle v_{m-1},D^0_{A_\infty} v_m\rangle_{L^2(\Sigma)}-(\mu+2)\langle v_{m-1},v_m \rangle_{L^2(\Sigma)} \\
    &=-\langle v_{m-1},-(\mu+2)v_m\rangle_{L^2(\Sigma)}-(\mu+2)\langle v_{m-1},v_m\rangle_{L^2(\Sigma)}=0.
\end{align*}
Thus $v_m=0$ which yields our contradiction. 
\end{proof}
This proposition shows that $\mathcal{K}(\mu)$ is simply the $-(\mu+2)$ eigenspace for the operator $D^0_{A_\infty}.$ The dimension of these spaces therefore determines how $\text{ind}_\mu D_A$ varies as we change the weight $\mu.$

\subsection{Structure of the Moduli Space}

Suppose we work in a small enough neighbourhood of $A$ in the space of $L^2_{k , \mu-1}$ connections so that we may solve the Coulomb gauge condition. Then we have seen that the the zero set of $(\mathcal{D}_A)_{\mu -1}$ consists of smooth sections and that its linearisation is Fredholm whenever $(\mu + 2 )\not\in \text{Spec}D^0_{A_\infty}.$
\begin{mydef}
For a given weight $\mu <0$ we define the rate $\mu$ infinitesimal deformation space to be
$$\mathcal{I}(A, \mu) \coloneqq \left\{ (f,a)\in\Omega^0_{k+1,\mu-1}(M, \text{\normalfont Ad} P) \oplus \Omega^1_{k+1,\mu-1}(M, \text{\normalfont Ad} P) \, ; \, D_A(f,a)=0 \right\}.$$
By AC uniform elliptic regularity this is independent of $k$ and is finite dimensional. 
\end{mydef}

When $(D_A)_{k+1,\mu-1}$ is Fredholm it  has closed range and finite dimensional kernel, furthermore we can choose a finite dimensional subspace $\mathcal{O}(A, \mu)$ of $\Omega^0_{k,\mu-2}(M, \Ad P) \oplus \Omega^1_{k,\mu-2}(M, \Ad P)$, called the obstruction space, such that 
$$\Omega^0_{k,\mu-2}(M, \Ad P) \oplus \Omega^1_{k,\mu-2}(M, \Ad P)= D_A \left( \Omega^0_{k+1,\mu-1}(M, \Ad P) \oplus \Omega^1_{k+1,\mu-1}(M, \Ad P) \right) \oplus \mathcal{O}(A, \mu).$$
Again by elliptic regularity we have that $\mathcal{O}(A, \mu)$ is isomorphic to the kernel of the adjoint map  $(D_A)_{l+1, -5-\mu}$ for any $l \in \mathbb{N}$ . If $-\frac{5}{2}<\mu<0$ then the kernel of the adjoint is contained in the target space and we may choose $\mathcal{O}(A, \mu)=\text{ker}(D_A)^*.$

\begin{remark}
One can also describe the framework for study deformations of $G_2$-instantons in the form of an elliptic complex. In this weighted setting, the complex takes the form \\
\begin{figure}[H] \label{fig:chaincomplex}
\centering
\makeatletter
\tikzset{join/.code=\tikzset{after node path={%
 \ifx\tikzchainprevious\pgfutil@empty\else(\tikzchainprevious)%
 edge[every join]#1(\tikzchaincurrent)\fi}}} 
\makeatother
  \tikzset{>=stealth',every on chain/.append style={join},
  every join/.style={->}} 
\centering
  \begin{tikzpicture}[start chain] {
    \node[on chain, join={node[above] }] {$\Omega^0_\mu(M, \Ad P)$};
     \node[on chain, join={node[above] {$\diff_A$}}] {$\Omega^1_{\mu-1}(M, \Ad P)$};
      \node[on chain, join={node[above] {$\psi \wedge \diff_A$}}] {$\Omega^6_{\mu-2}(M, \Ad P)$};
       \node[on chain, join={node[above] {$\diff_A$}}] {$\Omega^7_{\mu-3}(M, \Ad P)$.};
    }
  \end{tikzpicture}
  \end{figure}
\flushleft
Denote the cohomology groups of this complex by $H^k_{A,\mu}.$
We have already noted that the zeroth cohomology group is trivial $H^0_{A,\mu}=\{0\}$, whilst $H^1_{A,\mu}\cong \mathcal{I}(A,\mu)$ and $H^2_{A,\mu}\cong \mathcal{O}(A,\mu)$ provided $-5<\mu<0.$

\end{remark}

With this in hand we can apply the implicit function theorem to integrate our infinitesimal deformation theory. We state here the version of the implicit function theorem we shall apply.
\begin{theorem}
Let $\mathcal{X}$ and $\mathcal{Y}$ be Banach spaces and let $\mathcal{U} \subset \mathcal{X}$ be an open neighbourhood of $0.$ Let $\mathcal{F} \colon \mathcal{U} \to \mathcal{Y}$ be a $C^k$ map, for some $k\geq 1$, with  $\mathcal{F}(0)=0.$ Suppose $d\mathcal{F}|_0 \colon \mathcal{X} \to \mathcal{Y}$ is surjective with kernel $\mathcal{K}$ such that $\mathcal{X}=\mathcal{K}\oplus \mathcal{Z}$ for some closed subspace $\mathcal{Z}.$ \\
Then there exists open sets $\mathcal{V} \subset \mathcal{K}$ and $\mathcal{W} \subset \mathcal {Z}$ both containing $0,$ with $\mathcal{V}\times \mathcal{W} \subset \mathcal{U}$ and a unique $C^k$ map $\mathcal{G} \colon \mathcal{V} \to \mathcal{W}$ such that 
$$\mathcal{F}^{-1}(0) \cap (\mathcal{V} \times \mathcal{W} ) = \{(x,\mathcal{G}(x))\colon x \in \mathcal{V}\}.$$
\end{theorem}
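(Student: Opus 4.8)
The statement is the Banach-space implicit function theorem, phrased with a complemented kernel in place of an invertible partial derivative; the plan is to reduce it to the textbook implicit function theorem for $C^k$ maps between Banach spaces, the only real work being a translation of the hypotheses via the open mapping theorem.

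First I would check that the differential restricts to an isomorphism on the complement. Since $\mathcal{Z}$ is closed in the Banach space $\mathcal{X}$ it is itself Banach, and $L := d\mathcal{F}|_0\big|_{\mathcal{Z}}\colon\mathcal{Z}\to\mathcal{Y}$ is injective (its kernel is $\mathcal{K}\cap\mathcal{Z}=\{0\}$) and surjective (because $d\mathcal{F}|_0(\mathcal{K})=0$ gives $d\mathcal{F}|_0(\mathcal{X})=d\mathcal{F}|_0(\mathcal{Z})$, while $d\mathcal{F}|_0$ is onto by hypothesis). A continuous linear bijection of Banach spaces has continuous inverse, so $L$ is a topological isomorphism. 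Next I would turn the internal direct sum into an external product: the addition map $\iota\colon\mathcal{K}\times\mathcal{Z}\to\mathcal{X}$, $(v,w)\mapsto v+w$, is a continuous linear bijection of Banach spaces (using the norm $\|v\|+\|w\|$ on $\mathcal{K}\times\mathcal{Z}$), hence again a topological isomorphism. Therefore $\iota^{-1}(\mathcal{U})$ is open and contains $(0,0)$, so it contains a product $\mathcal{V}_0\times\mathcal{W}_0$ of open neighbourhoods of the origin; set $\widetilde{\mathcal{F}}:=\mathcal{F}\circ\iota$ on $\mathcal{V}_0\times\mathcal{W}_0$, which is $C^k$ with $\widetilde{\mathcal{F}}(0,0)=0$ and $D_2\widetilde{\mathcal{F}}(0,0)=L$ invertible.

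Then I would apply the classical implicit function theorem to $\widetilde{\mathcal{F}}$ at $(0,0)$: it produces open neighbourhoods $\mathcal{V}\subseteq\mathcal{V}_0$ of $0$ in $\mathcal{K}$ and $\mathcal{W}\subseteq\mathcal{W}_0$ of $0$ in $\mathcal{Z}$ and a unique $C^k$ map $\mathcal{G}\colon\mathcal{V}\to\mathcal{W}$ with $\mathcal{G}(0)=0$ such that, for $(v,w)\in\mathcal{V}\times\mathcal{W}$, one has $\widetilde{\mathcal{F}}(v,w)=0$ if and only if $w=\mathcal{G}(v)$. Transporting through $\iota$ — which is the identification implicit in the way $\mathcal{V}\times\mathcal{W}$ and the pairs $(x,\mathcal{G}(x))$ are read in the statement — this is exactly $\mathcal{F}^{-1}(0)\cap(\mathcal{V}\times\mathcal{W})=\{(x,\mathcal{G}(x)):x\in\mathcal{V}\}$, and shrinking $\mathcal{V},\mathcal{W}$ if necessary keeps the product inside $\mathcal{U}$.

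I do not expect a genuine obstacle: the entire content is the observation that ``surjective differential with complemented kernel'' is the coordinate-free form of ``invertible partial derivative'', after which the open mapping theorem and a citation finish the proof. If instead one wanted a self-contained argument, one would prove the implicit function theorem here by applying the inverse function theorem to $(v,w)\mapsto(v,\widetilde{\mathcal{F}}(v,w))$, whose derivative at the origin is the invertible block-triangular operator $(v',w')\mapsto(v',\,D_1\widetilde{\mathcal{F}}(0,0)v'+Lw')$, and reading $\mathcal{G}$ off the local inverse; but citing the standard theorem is the natural choice in this context.
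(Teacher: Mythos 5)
Your proposal is correct. The paper in fact states this theorem without proof, as the standard Banach-space implicit function theorem it intends to apply, and your argument --- restricting $d\mathcal{F}|_0$ to the closed complement $\mathcal{Z}$, using the open mapping theorem to see this restriction is a topological isomorphism, identifying $\mathcal{X}$ with $\mathcal{K}\times\mathcal{Z}$, and then invoking the classical implicit function theorem --- is exactly the standard justification one would cite for it.
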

\begin{theorem}\thlabel{moduli}
Let $A_\infty$ be a nearly K\"ahler instanton and let  $A$ be an AC $G_2$-instanton converging to $A_\infty.$ Suppose that $\mu \in (\R \setminus W)\cap (-5,0).$ There exists a smooth manifold $\hat{\mathcal{M}} (A, \mu)$, which is an open neighbourhood of
$0$ in $\mathcal{I}(A, \mu)$, and a smooth map $\pi \colon  \hat{\mathcal{M}} (A, \mu) \to \mathcal{O}(A, \mu)$, with $\pi(0) = 0$, such
that an open neighbourhood of $0$ in $\pi^{-1}(0)$ is homeomorphic to a neighbourhood of $A$ in $\mathcal{M}(A_\infty, \mu ).$ Thus, the virtual
dimension of the moduli space is $\text{dim} \mathcal{I}(A, \mu) - \text{dim} \mathcal{O}(A, \mu)$ and $\mathcal{M}(A_\infty, \mu)$ is
smooth if $\mathcal{O}(A, \mu) = \{0\}.$
\end{theorem}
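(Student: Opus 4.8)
The plan is to package the $G_2$-monopole equation together with the Coulomb gauge condition as the zero set of the smooth nonlinear map $\mathcal{D}_A$ and apply the implicit function theorem to this map, using the Fredholm theory already developed for its linearisation $D_A$. By \thref{moduli} (the regularity corollary) it suffices to work in a single Sobolev completion, say $k\geq 4$, since a neighbourhood of $A$ in $\mathcal{M}(A_\infty,\mu)$ is homeomorphic to a neighbourhood of $0$ in $(\mathcal{D}_A)_{\mu-1}^{-1}(0)$, and the latter is independent of $k$. First I would set $\mathcal{X}=\Omega^0_{k+1,\mu-1}(M,\Ad P)\oplus\Omega^1_{k+1,\mu-1}(M,\Ad P)$ and $\mathcal{Y}=\Omega^0_{k,\mu-2}(M,\Ad P)\oplus\Omega^1_{k,\mu-2}(M,\Ad P)$, and consider $\mathcal{D}_A\colon \mathcal{U}\to\mathcal{Y}$ on a small open ball $\mathcal{U}\ni 0$. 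One checks $\mathcal{D}_A$ is a smooth map of Banach spaces: the only nonlinearity is the quadratic term $a\mapsto *(\psi\wedge(a\wedge a))$, and the weighted multiplication theorem \thref{weightedmult} (valid since $k+1>\tfrac72$) shows this is a bounded quadratic map $\Omega^1_{k+1,\mu-1}\to\Omega^1_{k,\mu-2}$ — here one uses $2(\mu-1)\leq\mu-2$, i.e. $\mu\leq 0$ — hence smooth with derivative at $0$ equal to the twisted Dirac operator $D_A$ of \eqref{eq:linearop}.

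Next I would invoke the Fredholm theory: since $\mu\in(\R\setminus W)\cap(-5,0)$, \thref{eigenvalueonlink} (in the shifted form defining $W$) gives that $(D_A)_{\mu-1}\colon\mathcal{X}\to\mathcal{Y}$ is Fredholm, so it has closed range, finite-dimensional kernel $\mathcal{I}(A,\mu)$, and — by the Fredholm alternative theorem stated in the excerpt together with AC elliptic regularity — a finite-dimensional complement $\mathcal{O}(A,\mu)$ to its image, isomorphic to $\Ker(D_A)^*_{-5-\mu}$. Because $D_A$ is self-adjoint this cokernel sits inside $\mathcal{Y}$ (it is the kernel of the same operator on the dual weight), so I can enlarge the target and instead work with the surjective map $\widetilde{\mathcal{D}}_A\colon\mathcal{U}\times\mathcal{O}(A,\mu)\to\mathcal{Y}$, $(f,a,o)\mapsto \mathcal{D}_A(f,a)+o$; alternatively, equivalently, compose $\mathcal{D}_A$ with the projection onto $\Img D_A$ and treat the $\mathcal{O}$-component as the obstruction map $\pi$. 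Either way, the differential at $0$ is surjective with kernel $\mathcal{I}(A,\mu)\oplus\mathcal{O}(A,\mu)$ (resp. $\mathcal{I}(A,\mu)$), which splits off a closed complement since it is finite-dimensional. The implicit function theorem then produces a smooth map $\mathcal{G}$ from an open neighbourhood $\hat{\mathcal{M}}(A,\mu)$ of $0$ in $\mathcal{I}(A,\mu)$ into the complement, and the obstruction map is $\pi=(\text{projection to }\mathcal{O})\circ\mathcal{G}$; the zero set $\pi^{-1}(0)$ locally parametrises gauge-fixed solutions of the $G_2$-monopole equation. Finally, \thref{0formszero} shows the $\Omega^0$-component of any such solution vanishes (using $\mu<0$), so these are genuine $G_2$-instantons in Coulomb gauge relative to $A$, and \thref{coulomb} identifies them, modulo $\mathcal{G}_\mu$, with a neighbourhood of $A$ in $\mathcal{M}(A_\infty,\mu)$; the virtual dimension count and the smoothness statement when $\mathcal{O}(A,\mu)=\{0\}$ are then immediate.

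The main obstacle — really the only point requiring care — is verifying that the quadratic term genuinely lands in the right weighted space and that $\mathcal{D}_A$ is $C^\infty$ between these specific Banach completions; this is where the hypothesis $\mu<0$ (equivalently $2(\mu-1)<\mu-2$ up to the borderline, so that the quadratic term decays at least as fast as the linear one) is used, and where one must be slightly attentive to which Sobolev index the multiplication theorem requires (hence the choice $k\geq4$ so that sections are continuous and \thref{weightedmult} applies with $l=k\geq k$). Everything else is a formal application of results already in hand: the identification of $\ker d\mathcal{D}_A|_0$ with $\mathcal{I}(A,\mu)$, the finite-dimensional splitting giving the hypotheses of the implicit function theorem, the passage between Sobolev regularities via \thref{regularity}, and the gauge-theoretic interpretation via the slice theorem \thref{1formsplit} and \thref{coulomb}.
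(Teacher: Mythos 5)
Your proposal is correct and follows essentially the same route as the paper: augment the target by the obstruction space (equivalently consider $\mathcal{F}(v,w)=\mathcal{D}_A(v)+w$), check via the weighted multiplication theorem that $\mathcal{D}_A$ is smooth with Fredholm linearisation $D_A$ for $\mu\notin W$, apply the implicit function theorem to obtain $\hat{\mathcal{M}}(A,\mu)\subset\mathcal{I}(A,\mu)$ and $\pi$ valued in $\mathcal{O}(A,\mu)$, and identify $\pi^{-1}(0)$ with a neighbourhood of $[A]$ via the regularity corollary and Coulomb gauge slice. The only slip is that for the augmented map the kernel of the differential at $0$ is $\mathcal{I}(A,\mu)\times\{0\}$ rather than $\mathcal{I}(A,\mu)\oplus\mathcal{O}(A,\mu)$ (since $\mathrm{Im}\,D_A\cap\mathcal{O}(A,\mu)=\{0\}$), which is in fact what your subsequent argument uses.
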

\begin{proof}
  For $k \geq 5$ let 
  $$\mathcal{X} = \left( \Omega^0_{k+1,\mu-1}(M, \Ad P) \oplus \Omega^1_{k+1,\mu-1}(M, \Ad P)\right) \times \mathcal{O}(A, \mu)$$ 
  and let 
  $$\mathcal{Y}= \Omega^0_{k,\mu-2}(M, \Ad P) \oplus \Omega^1_{k,\mu-2}(M, \Ad P).$$
Pick a sufficiently small neighbourhood of $A$  so that we may solve the Coulomb gauge condition. This in turn gives an open neighbourhood $\mathcal{U}$ of $(0,0)$ in $ \Omega^0_{k,\mu-1}(M, \Ad P) \oplus \Omega^1_{k,\mu-1}(M, \Ad P).$\\
  We define a map of Banach spaces $\mathcal{F} \colon \mathcal{X} \to \mathcal{Y}$ by 
  $$ \mathcal{F}(v,w)=\mathcal{D}_A(v)+w$$
  and note $\mathcal{F}(0,0)=0.$ The differential of $\mathcal{F}$ at $0$ acts as 
  \begin{align*} 
  d\mathcal{F}|_{(0,0)} &\colon \mathcal{X} \to \mathcal{Y} \\
  (v,w) &\mapsto D_Av +w.
  \end{align*}
By the definition of the obstruction space $d\mathcal{F}|_{(0,0)}$ is surjective and $d\mathcal{F}|_{(0,0)}(v,w)=0$ if and only if 
$(D_Av,w)=(0,0).$
Thus ker$d\mathcal{F}|_{(0,0)}= \mathcal{K}=\mathcal{I}(A, \mu)\times \{0\}$ is finite dimensional and splits $\mathcal{X}.$ That is, there exists a closed $\mathcal{Z}\subset \mathcal{X}$ such that $\mathcal{K}\oplus\mathcal{Z}=\mathcal{X}$ and we can moreover write $\mathcal{Z}=\mathcal{Z}_1\times \mathcal{O}(A, \mu)$ for some closed $\mathcal{Z}_1\subset \Omega^0_{k+1,\mu-1}(M, \Ad P) \oplus \Omega^1_{k+1,\mu-1}(M, \Ad P).$ We are now in a position to apply the implicit function theorem-- we deduce that there are open sets 
\begin{align*}
\mathcal{V} &\subset \mathcal{I}(A, \mu) \\
\mathcal{W}_1&\subset \mathcal{Z}_1 \\
\mathcal{W}_2& \subset \mathcal{O}(A, \mu)
\end{align*}
and smooth maps $\mathcal{G}_j\colon \mathcal{V} \to \mathcal{W}_j$ for $j=1,2$ such that 
$$\mathcal{F}^{-1}(0) \cap \left( \left( \mathcal{V} \times \mathcal{W}_1\right) \times \mathcal{W}_2 \right) = \left\{ \left( \left( v,\mathcal{G}_1(v)\right),\mathcal{G}_2(v) \right) \colon v \in \mathcal{V} \right\} $$
in $\mathcal{X}=\left( \mathcal{I}(A, \mu ) \oplus \mathcal{Z}_1 \right) \times \mathcal{O}(A, \mu).$ Therefore the kernel of $\mathcal{F}$ near $(0,0)$ is diffeomorphic to an open subset of $\mathcal{I}(A,\mu)$ containing $0.$ \par
Define $\hat{\mathcal{M}}(A, \mu)=\mathcal{V}$ and $\pi \colon \hat{\mathcal{M}} (A, \mu) \to \mathcal{O}(A, \mu)  $ by $\pi(v)=\mathcal{G}_2(v).$ Then an open neighbourhood of $0$ in $(\mathcal{D}_A)^{-1}(0)$ is homeomorphic to an open neighbourhood of $0$ in $\pi^{-1}(0).$ Finally, \thref{moduli} says a neighbourhood of $0$ in the zero set of $\mathcal{D}_A$ is homeomorphic to a neighbourhood of $A$ in the moduli space. 
  
\end{proof}
\begin{remark}
 When $\mu \in W $ or when $\mathcal{O}(A,\mu)\neq\{0\}$ the moduli space may not be smooth, or may have larger than expected dimension. 
\end{remark}

\section{A Lichnerowicz Formula and Eigenvalue Bounds}\label{sec:lichnerowicz}

The results of the previous section inform us that the virtual dimension of the $G_2$-instanton moduli space is determined by (part of) the spectrum of a twisted Dirac operator on a nearly K\"ahler 6-manifold. The required eigenvalues lie in an interval determined by the fastest rate of converge of the example being considered. This section develops methods for determining  these eigenvalues. In all known examples of AC $G_2$-instantons the nearly K\"ahler link is a homogeneous space and the limiting connection is the canonical connection. To study these examples we are thus able to develop a representation theoretic approach. \par

In this section the link $\Sigma$ of the AC $G_2$-manifold $M$  is assumed to be a compact homogeneous nearly K\"ahler 6-manifold $\Sigma=G/H.$ We shall denote by $\widehat{G}$ the set of isomorphism classes of irreducible, complex  unitary representations of $G$ and for $\gamma \in \widehat{G}$ we write $(V_\gamma, \rho_{V_\gamma})$ to denote any class representative. Similarly $\widehat{H}$ denotes  the set of isomorphism classes of irreducible, complex  unitary representations of $H$ and for $\gamma \in \widehat{H}$ a class representative is denoted $(W_\gamma, \rho_{W_\gamma})$. \par
\subsection{Homogeneous Bundles}
When constructing examples of $G_2$-instantons on cohomogeneity one manifolds, it is natural to seek to extend the group action on the manifold and look for connections invariant under this action. The framework of homogeneous bundles is for this reason important when studying the examples of $G_2$-instantons considered in this article. We give here a brief introduction to homogeneous bundles, before revisiting in Section~\ref{sec:stdinstsection}.

Recall on a principal $K$-bundle $\pi \colon Q\to X$  the structure group $K$ acts on the right and for any $p\in P$ this action defines a homomorphism $K\cong Q_{\pi(p)}=\pi^{-1}(\{p\}).$
\begin{mydef}
 Let $Q\to G/H$ be a principal $K$-bundle. We say that $Q$ is a $G$-homogeneous $K$-bundle  if there is a lift of the natural left action of $G$ on $G/H$ to the total space $Q$ which commutes with the right action of $K$.
\end{mydef}
Let $Q$ be a homogeneous $K$-bundle over $G/H. $ Choose a point $q_0\in \pi^{-1}(\{eH\}),$ then for all $h\in H$, we see that $h\cdot q_0\in \pi^{-1}(\{eH\}).$ Thus, for each $h\in H$ there exists a unique $k\in K$ such that $h\cdot q_0=q_o\cdot k.$ This defines a map $\lambda \colon H \to K$ and it is not difficult to see that this is a homomorphism.  We call $\lambda$ the \emph{isotropy homomorphism}. This allows one to reconstruct the bundle $Q$ as follows: Consider the associated bundle 
$$Q\times_{(H,\lambda)}K=(G\times K)/\sim $$
where $\sim $ is the equivalence relation $(gh,k)\sim (g,\lambda(h)k)$ for all $g\in G, h\in H$ and $k\in K.$ One can then check that the map
$$ G\times_{(H,\lambda)}K\to Q, \qquad [(g,k)] \mapsto g\cdot q_0\cdot k$$
defines an isomorphism of principal bundles. Thus, $G$-homogeneous $K$-bundles over $G/H$ are determined by isotropy homomorphisms $\lambda \colon H \to K.$ More precisely, isomorphism classes of homogeneous $K$-bundles are in bijection with conjugacy classes of homomorphism $\lambda \colon H \to K.$\par
Suppose now we have a homogeneous $K$-bundle $Q\cong G\times_{(H,\lambda)}K$ and a representation $(V,\rho)$ of $K.$ Then the lift of the $G$-action to $Q$ endows the associated bundle $E=Q\times_{(K,\rho)}V$ with an an action of $G.$ Furthermore there is an isomorphism of homogeneous bundles 
$$E\cong G\times_{(H,\rho \circ \lambda) }V.$$

A section $S\in\Gamma(E)$ is then said to be invariant if, once viewed as and $H$-equivariant map $s\colon G \to V,$ it is constant.
We can also understand gauge transformations in the homogeneous setting. Let $Q=G\times_{(H,\lambda)}K$ be  a homogeneous $K$-bundle and consider gauge transformations of $Q$ as sections of $c(Q)=Q\times_{(K,c)}K$ where $c(k_1)k_2=k_1k_2k_1^{-1}.$ Using the isomorphism $c(Q)=G\times _{(H,c\circ \Lambda)}K$ we can define a $G$-invariant section of $c(Q)$ from an element $k$ of the centraliser of $\lambda(H)$ via the map
$$[g]\mapsto[g,k]$$
where $[\, \cdot \, ]$ denotes an equivalence class.

\subsection{Frobenius Reciprocity}

To calculate eigenvalues explicitly we utilise results from harmonic analysis. The main tool we shall require is the Frobenius reciprocity theorem which generalises the classical Peter-Weyl theorem to the space of sections of an associated vector bundle. \par

The Frobenius reciprocity theorem provides a method for decomposing $(\rho_L,L^2(G, V)_H)$ as a representation of $G.$ The left action of $G$ on this space gives a representation which is said to be \emph{induced} by the representation $\rho_V$ and which is denoted $\text{Ind}^G_H(V).$ Suppose we have an irreducible representation $V_\gamma$ of $G$ and a non-trivial element $\Phi$ of $\text{Hom}(V_\gamma,V)_H.$ Here $\text{Hom}(V_\gamma,V)_H$ denotes the space of $H$-equivariant maps $V_\gamma\to V$
$$ \text{Hom}(V_\gamma, V)_{H}=\{ \Phi \in \text{Hom}(V_\gamma, V) \, ; \, \Phi \circ \rho_{V_\gamma}(h)=\rho_V(h)\circ \Phi \text{ for all $h\in H$}\}.$$ Then  for any $v\in V_\gamma$ we have  a map
\begin{equation}\label{eq:frob}
V_\gamma \to L^2(G,V)_H, \qquad v \mapsto (g\mapsto \Phi(\rho_{V_\gamma}(g^{-1})v)).
\end{equation}
Frobenius reciprocity uses this construction to show that an irreducible representation $V_\gamma$ is contained in the induced representation if and only if $\text{Hom}(V_\gamma,V)_H\neq\{0\}$ and the multiplicity of $V_\gamma$ in the induced representation is $\text{dim}(\text{Hom}(V_\gamma,V)_H).$ Thus if we denote by $\text{Res}^G_H(V_\gamma)$ the restriction of $(\rho_\gamma,V_\gamma)$ to the subgroup $H$, we have 
$$\text{mult}(V_\gamma,\text{Ind}^G_H(V))=\text{mult}(\text{Res}^G_H(V_\gamma),V).$$
This construction enables one to decompose the space of sections of $E\coloneqq G\times_{H}V$ into an  orthogonal Hilbert sum

\begin{equation}\label{eq:hilbertsum}
L^2(E)\cong L^2(G,V)_H\cong\bigoplus_{\gamma \in \widehat{G}}\text{Hom}(V_\gamma,V)_H\otimes V_\gamma. 
\end{equation}
The element of $L^2(G,V)_H$ that one obtains via \eqref{eq:frob} from an element $\Phi \otimes v\in \text{Hom}(V_\gamma, V)_H\otimes V_\gamma$ will be denoted 
\begin{equation}\label{eq:frobfn}
    F^\gamma_{\Phi, v}(g)=\Phi(\rho_{V_\gamma}(g^{-1})v)
\end{equation}
Since $G$ is assumed to be compact any irreducible representation must be finite dimensional.
Furthermore, each summand in the above Hilbert space sum in fact lies in $C^\infty(G,E)_H.$

\subsection{The Family of Dirac Operators}
Let $\Sigma=G/H$ be a compact, homogeneous nearly K\"ahler 6-manifold. We choose to work with complex spinor bundle $\slashed{S}_\C(\Sigma)$ which is the associated bundle
$$\slashed{S}_\C(\Sigma)=G\times_{(H,\rho)}S. $$
Here $S$, which we refer to as spinor space, is a complex eight dimensional vector space which as an $H$-module is 
\begin{equation} \label{eq:spinorspace}
S=\C \oplus \mf{m}_\C^* \oplus \C
\end{equation}
where $\mf{m}_\C^*$ carries the adjoint action of $H$ and $\rho$ is the representation defined by this action. The splitting \eqref{eq:spinorspace} of $S$ as an $H$-module comes from the splitting $\slashed{S}_\C(\Sigma)=\Lambda^0_\C \oplus \Lambda^1_\C \oplus \Lambda^6_\C$ of the spinor bundle.  We shall consider a twisting of the spinor bundle by an associated bundle 
$$E=G\times_{(H, \rho_V)}V$$
constructed from some representation $V$ of $H$. Thus the twisted spinor bundle is given by 
$$\slashed{S}_\C(\Sigma)\otimes E=G\times_{(H,\rho_S\otimes \rho_V)}S\otimes V.$$ 
The space of twisted spinor fields, which is isomorphic to $L^2(G,S\otimes V)_H$,  carries several natural actions, including:
\begin{itemize}
    \item The left regular representation $\rho_L$ where $\rho_L (g)s(g^\prime) = s(g^{-1}g^\prime ), \, \, \forall g, g^\prime \in G, \, s \in L^2(G,S\otimes V)_H.$
    \item The right right representation $\rho_R$ where $\rho_R(g)s(g^\prime) = s (g^\prime g), \, \, \forall g, g^\prime \in G, \, s \in L^2(G,S\otimes V)_H.$
    \item The representation induced by the representation $\rho_V$    on $V,$ which we still denote as $\rho_V.$ 
\end{itemize}
We shall seek to apply tools from Harmonic Analysis to study the space of twisted spinor fields and the  family of twisted Dirac operators. When the operator is twisted by the canonical connection, that is $A_\infty=A_\text{can} $, the covariant derivative can be understood using the Frobenius reciprocity theorem. Recall that $D^1_{A_\text{can}}=\text{cl}\circ\nabla^{1,A_\text{can}}$ is constructed from the canonical connection acting both on the spinor bundle $A$ and the associated bundle $E.$ We can therefore understand this operator using this formalism since the covariant derivative defined by the canonical connection is determined by the right regular representation \cite{bar1992dirac}:
\begin{equation}\label{eq:nabla1can}
\nabla^{1,A_\text{can}}_Xs=\rho_R(X)s
\end{equation}
for $s\in L^2(G,S\otimes V)_H$ and where we think of a vector field $X$ as an element of $L^2(G,\mf{m})_H.$ Recall $\mf{g}$ is given the nearly K\"ahler metric \eqref{eq:killingform} and that $\{I_A\}$ denotes a basis for $\mf{g}$ such that $I_a$ for $1\leq a \leq 6$ forms a basis for $\mf{m}$ and $I_i$ for $7\leq i \leq \text{dim}(G)$ forms a basis of $\mf{h}.$ The basis $\{I_a\}$ defines a local orthonormal frame $\{e^a\}$ of $T^*\Sigma|_U$ \cite{harland2010yang} and we can think of these local 1-forms as equivariant maps from $\pi^{-1}U$ to $\mf{m}^*$. By \eqref{eq:nabla1can} the operator $D^{1}_{A_\text{can}}$ takes the form 
\begin{equation}\label{eq:dcaneqn}
    D^{1}_{A_\text{can}}=\text{cl}(e^a)\rho_R(e^a)
\end{equation}
when acting on elements of $L^2(G, S\otimes V)_H.$
Furthermore we know from \cite{charbonneau2016deformations} that the family of Dirac operators differ by the action of the 3-form $\text{Re}\Omega$on the spin bundle
$$D^t_{A_\text{can}}=D^1_{A_\text{can}}+\frac{3(t-1)}{4}\text{Re}\Omega.$$ 

By combining these facts with the Frobenius reciprocity theorem we can understand how each of these operators act with respect to the splitting of the space of sections given in \eqref{eq:hilbertsum}. 
The next result  is due to B\"ar, although the version presented here is generalised to the case of a twisted spinor. The proof goes through identically since in either case the covariant derivative being used in determined by the right regular representation.  
\begin{proposition}\thlabel{diraconhomspace}
Let $E$ be a vector bundle associated via a representation  $V$ of $H$ and split the space of sections 
$$L^2(\slashed{S}_\C(\Sigma)\otimes E)\cong L^2(G, S\otimes V )_H\cong \bigoplus_{\gamma \in \widehat{G}}\text{\normalfont Hom}(V_\gamma, S\otimes V)_H\otimes V_\gamma.$$
For any $\gamma \in \widehat{G}$ and for every $t\in \R$, the operator $D^t_{A_\text{can}}$ leaves invariant the space $\text{\normalfont Hom}(V_\gamma,S\otimes V)_{H}\otimes V_\gamma$ and 
\begin{equation}\label{eq:dtonhomspace}
D^{t}_{A_\text{can}}|_{\text{\normalfont Hom}(V_\gamma,S\otimes V)_{H}\otimes V_\gamma}=(D^t_{A_\text{can}})_\gamma \otimes \text{\normalfont Id} 
\end{equation}
where $(D^{t}_{A_\text{can}})_\gamma\colon \text{\normalfont Hom}(V_\gamma,S\otimes V)_{H} \to \text{\normalfont Hom}(V_\gamma,S\otimes V)_{H}$ is the operator
\normalfont 
\begin{align}\label{eq:diracholformaction}
(D^{t}_{A_\text{can}})_\gamma \Phi &= -\text{cl}(I_a)\cdot\left(  \Phi \circ \rho_{V_\gamma}(I_a)\right) +\frac{3(t-1)}{4}\text{\normalfont Re}\Omega\cdot \Phi.  
\end{align}
\end{proposition}
\begin{proof}
The formula is calculated for $D^1_{A_\text{can}}$ in \cite[Proposition 1]{bar1992dirac}.
The fact that the family of Dirac operators differ by the above multiple of $\text{Re}\Omega$ is calculated in \cite{charbonneau2016deformations}.
\end{proof}
Note that under the isomorphism $\text{Hom}(V_\gamma, S\otimes V)\cong V_\gamma^*\otimes S\otimes V$ an $H$-equivariant homomorphism corresponds to a vector fixed by the $H$ action and the formula for $(D^t_{A_\text{can}})_\gamma$ takes the form 
\begin{align}\label{eq:D^thomspaceformula}
    (D^{t}_{A_\text{can}})_\gamma  &= \text{cl}(I_a)\rho_{V_\gamma^*}(I_a)+\frac{3(t-1)}{4}\text{\normalfont Re}\Omega.
\end{align}
This point of view will be the one we adopt when calculating eigenvalues explicitly.

\subsection{Eigenvalue Bounds}
Recall $A$ is assumed to be an AC $G_2$-instanton with fastest rate of convergence $\mu_0<0$ and the limiting connection $A_\infty=A_\text{can}$ will be assumed to be the canonical connection living on some bundle associated via a representation of $H.$ We shall consider the family of moduli spaces $\mathcal{M}(A_\text{can},\mu)$ with $\mu$ ranging from the fastest rate of convergence of the example we are studying, to 0. We have seen the virtual dimension of these moduli spaces jumps as we pass through eigenvalues in this interval shifted by 2. Our method is to develop a representation theoretic Lichnerowicz formula to determine the eigenvalues of a related Dirac operator.\par 
 The relevant Lichnerowicz type formula is calculated for the action of the Dirac operator on $\Lambda^1(\Sigma) \subset \slashed{S}(\Sigma) $ in \cite{charbonneau2016deformations} and we shall build on this work. 
 The formula gives the square of the Dirac operator as a sum of Casimir operators, so we first remind the reader how these operators are constructed. Any representation $(V, \rho)$ of a Lie algebra $\mf{g},$ equipped with an invariant inner product $B,$ yields a quadratic Casimir operator $\rho(\text{Cas}_\mf{g})$ defined as $\rho(\text{Cas}_\mf{g})v=\rho(I_A)\rho(I_A)v$ for any $v\in V$ and where $I_A$ is an orthonormal basis for $\mf{g}.$ In the case at hand the metric on $\mf{g}$ is the nearly K\"ahler metric $B(X,Y)=-\frac{1}{12}\text{Tr}_{\mf{g}}(\ad (X) \ad (Y))$ and the metrics on $\mf{m}$ and $\mf{h}$ are the restrictions of $B$.
 Since the Casimir operators commute with the group action they act as multiples of the identity on irreducible representations and so we can write
\begin{align*}
    \rho_{\gamma}(\text{Cas}_{\mf{g}})&=c^{\mf{g}}_{\gamma}\text{Id} \\
    \rho_{\gamma}(\text{Cas}_{\mf{h}})&=c^{\mf{h}}_{\gamma}\text{Id}.
\end{align*}
These  eigenvalues are calculated, for $G$ and $H$ such that $G/H$ is a nearly K\"ahler 6-manifold and using the above metric,  in \cite{charbonneau2016deformations}. \par
To obtain a suitable Lichnerowicz formula  we combine  \cite[Proposition 8]{charbonneau2016deformations} with the results of \cite{moroianu2010hermitian} to obtain a formula for $(D^\frac{1}{3}_{A_\text{can}})^2.$ The operator we would like to calculate eigenvalues for is $D^0_{A_\text{can}},$ however we are unable to calculate these eigenvalues directly using our Lichnerowicz formula. We shall see that we are nonetheless  able to gain useful information in the form of eigenvalue bounds.   
 \begin{lemma}\thlabel{lichnerowicz}
Let $G/H$ be a homogeneous nearly K\"ahler manifold. Let $E$ be the vector bundle  obtained from $G\to G/H$ through some representation $V$ of $H$.  Let $A_\text{\normalfont can}$ be the canonical  connection on $E$, then  $(D^\frac{1}{3}_{A_\text{can}})^2$ preserves the decomposition $\Gamma(\slashed{S}_\C(\Sigma)\otimes E)=\Gamma(\Lambda^0_\C\otimes E)\oplus \Gamma(\Lambda^1_\C\otimes E)\oplus \Gamma(\Lambda^6_\C\otimes E)$ and \normalfont 
 \begin{align}\label{eq:square}
     ( D^{\frac{1}{3}}_{A_\text{can}})^2\eta&= ( -\rho_L(\text{Cas}_\mf{g})\eta+\rho_V(\text{Cas}_\mf{h})\eta+4\eta)
 \end{align}
\textit{ for any $\eta \in \Gamma(\slashed{S}_\C(\Sigma)\otimes  E)$.}
\end{lemma}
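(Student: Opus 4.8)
The plan is to recognise $D^{\frac{1}{3}}_{A_\text{can}}$ as (a twist of) \emph{Kostant's cubic Dirac operator} for the reductive pair $(\mf{g},\mf{h})$, whose square is classically a Casimir plus a constant. The value $t=\tfrac{1}{3}$ is the point of the whole argument: by \eqref{eq:diracdifference} the operators $D^t=D^0+\tfrac{3t}{4}\text{Re}\,\Omega$ are the Dirac operators of the metric connections $\nabla^t$, whose skew torsion is $t$ times that of the canonical connection $\nabla^1$, and $\nabla^{1/3}$ is precisely the connection whose Dirac operator is the cubic one. I would run this using the representation-theoretic machinery already in place: combining \eqref{eq:diracdifference} with the formula \eqref{eq:dirachomspaceop} for $D^0_{A_\text{can}}$ gives $D^{\frac{1}{3}}_{A_\text{can}}=D^{\rho}_{A_\text{can}}-\text{Vol}^{-1}D^{\rho}_{A_\text{can}}\text{Vol}-\tfrac{1}{2}\text{Re}\,\Omega$, where $D^{\rho}_{A_\text{can}}=\tfrac{1}{2}\big(\rho_{S\otimes R}(\text{Cas}_{\mf{m}})-\rho_S(\text{Cas}_{\mf{m}})-\rho_R(\text{Cas}_{\mf{m}})\big)$, cf.\ \eqref{eq:diracascasimirs}, and the action of $\text{Re}\,\Omega$ on the spinor fibre is the fixed endomorphism of \thref{eigenvals}.

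The main step is to square this and collect terms. Since $D^{\rho}_{A_\text{can}}$, its $\text{Vol}$-conjugate and the $\text{Re}\,\Omega$-action are all built from the Casimirs $\text{Cas}_{\mf{g}}$, $\text{Cas}_{\mf{h}}$, $\text{Cas}_{\mf{m}}=\text{Cas}_{\mf{g}}-\text{Cas}_{\mf{h}}$ and the structure constants of $\mf{g}=\mf{h}\oplus\mf{m}$, and since the relevant $\mf{g}$-actions commute with the right-regular action, the square collapses to a combination of $\text{Cas}_{\mf{g}}$, $\text{Cas}_{\mf{h}}$ and scalars. The one genuinely delicate point — and where I expect the real difficulty to lie — is that the cross terms between the first-order ($D^{\rho}$-type) part and the $\text{Re}\,\Omega$-part, together with the square of the $\text{Re}\,\Omega$-action on $S$, telescope to a constant; this is exactly the cancellation that singles out $t=\tfrac{1}{3}$ (Kostant's cubic-Dirac identity). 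Granting this, rewriting $\rho_R(\text{Cas}_{\mf{g}})$ as $\rho_L(\text{Cas}_{\mf{g}})$ (it is central, and acts as $c^{\mf{g}}_\gamma$ on the Peter--Weyl summand of $V_\gamma$) and the residual fibrewise $\mf{h}$-action as $\rho_F(\text{Cas}_{\mf{h}})$ (using the constraint $\rho_R(X)f=-\rho_{S\otimes F}(X)f$ for $X\in\mf{h}$), one arrives at $(D^{\frac{1}{3}}_{A_\text{can}})^2=-\rho_L(\text{Cas}_{\mf{g}})+\rho_F(\text{Cas}_{\mf{h}})+c$ for a constant $c=c(\mf{g},\mf{h})$; the bookkeeping behind this cancellation is that of \cite{charbonneau2016deformations}.

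Two things then remain. First, $c=4$: abstractly it is $\norm{\rho_{\mf{g}}}^2-\norm{\rho_{\mf{h}}}^2$ computed in the normalisation \eqref{eq:killingform}, which by \cite{moroianu2010hermitian} equals $4$ for each of the four homogeneous nearly K\"ahler $6$-manifolds; the check I would actually carry out is to take $F$ trivial and feed the formula the Killing spinor $s_6$ — it is a $G$-invariant section, so $\rho_L(\text{Cas}_{\mf{g}})s_6=0$, while $\nabla^{1/3}_Xs_6=\tfrac{1}{3}X\cdot s_6$ gives $D^{\frac{1}{3}}s_6=\tfrac{1}{3}\sum_a e_a\cdot e_a\cdot s_6=-2\,s_6$, so $(D^{\frac{1}{3}})^2 s_6=4\,s_6$, forcing $c=4$. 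Second, that $(D^{\frac{1}{3}}_{A_\text{can}})^2$ preserves $\Gamma(\Lambda^0\otimes F\Sigma)\oplus\Gamma(\Lambda^1\otimes F\Sigma)\oplus\Gamma(\Lambda^6\otimes F\Sigma)$ is immediate from the formula: each of $\Lambda^0,\Lambda^1,\Lambda^6$ is fibrewise an $SU(3)$-submodule (cf.\ \eqref{eq:spinorspacesu(3)}), and both $\rho_L(\text{Cas}_{\mf{g}})$ (acting on the $V_\gamma$-factor) and $\rho_F(\text{Cas}_{\mf{h}})$ (acting on the $F$-factor) commute with this decomposition — even though $D^{\frac{1}{3}}_{A_\text{can}}$ itself does not preserve the splitting, cf.\ \eqref{eq:diracop6} at $t=\tfrac{1}{3}$. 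In short, the self-contained content is the Kostant-type cancellation of the second paragraph, which the statement imports from \cite{charbonneau2016deformations}, together with the constant from \cite{moroianu2010hermitian}.
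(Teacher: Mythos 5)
Your outline is correct, and it reaches the formula by a different route than the paper does. The paper splits the bundle and imports two statements from \cite{charbonneau2016deformations}: the Casimir formula is quoted directly for the $\Lambda^1\otimes F\Sigma$ summand, while on $(\Lambda^0\oplus\Lambda^6)\otimes F\Sigma$ it quotes the Weitzenb\"ock identity $(D^{1/3}_{A_\text{can}})^2=(\nabla^{1,A_\text{can}})^*\nabla^{1,A_\text{can}}+4$, identifies the rough Laplacian of the canonical connection with $-\rho_R(\text{Cas}_\mf{m})$ by the standard homogeneous-space argument of \cite{moroianu2010hermitian}, and then uses the equivariance constraint (together with the $\mf{h}$-triviality of $\Lambda^0,\Lambda^6$) to replace $\rho_R(\text{Cas}_\mf{h})$ by $\rho_F(\text{Cas}_\mf{h})$ and $\rho_R(\text{Cas}_\mf{g})$ by $\rho_L(\text{Cas}_\mf{g})$; in particular the "$+4$" is simply inherited from that reference, and preservation of the splitting comes out summand by summand. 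You instead square the algebraic expression $D^{1/3}_{A_\text{can}}=D^\rho_{A_\text{can}}-\text{Vol}^{-1}D^\rho_{A_\text{can}}\text{Vol}-\tfrac{1}{2}\text{Re}\Omega$ and invoke the Kostant cubic-Dirac/Parthasarathy cancellation to get a global operator identity at once, which buys you the invariance of the decomposition for free and a clean, independent determination of the constant: your evaluation on the Killing spinor ($\rho_L(\text{Cas}_\mf{g})s_6=0$, $D^{1/3}s_6=-2s_6$, consistent with the $(-3+3t)$ entry of \eqref{eq:diracop6}) is a genuinely nicer way to see why $c=4$ in this normalisation. The one caveat is that the step you "grant" is the whole content: the telescoping of the cross terms is not quite what \cite[Proposition 8]{charbonneau2016deformations} states verbatim (that reference gives the formula on the $\Lambda^1$ part and a Weitzenb\"ock identity on $\Lambda^0\oplus\Lambda^6$, which the paper then assembles), so to make your route self-contained you would either carry out that computation or cite Kostant's theorem for the cubic Dirac operator on naturally reductive spaces directly, noting that $\nabla^{1/3}$ is the connection with one third of the canonical torsion; with that reference supplied, your argument is complete and arguably more conceptual.
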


\begin{proof}
The  formula is calculated for sections of $\Lambda^1_\C \otimes E$ in \cite[Proposition 8]{charbonneau2016deformations} so we need only consider the case where $\eta \in (\Lambda^0_\C\oplus \Lambda^6_\C)\otimes E.$ It is also shown in \cite{charbonneau2016deformations}   that
$$ ( D^{\frac{1}{3}}_{A_\text{can}})^2 \kappa =(\nabla^{1,A_\text{can}})^*\nabla^{1,A_\text{can}}\kappa + 4 \kappa$$
for $\kappa\in \Gamma( (\Lambda^0_\C\oplus \Lambda^6_\C)\otimes E)$ and so we aim to show this admits the required Casimir expression.
It is a standard fact, see \cite{moroianu2010hermitian} for example,   that the rough Laplacian $(\nabla^{1,A_\text{can}})^*\nabla^{1,A_\text{can}}$ is identified with the action of a Casimir operator
$$(\nabla^{1,A_\text{can}})^*\nabla^{1,A_\text{can}}=-\rho_R(\text{Cas}_\mf{m})$$
on $L^2(G, (\Lambda^0(\R^6)^*\oplus \Lambda^6(\R^6)^*)\otimes V)_{H}.$ Note if $\kappa \in L^2(G,(\Lambda^0_\C(\R^6)^*\oplus \Lambda^6_\C(\R^6)^*)\otimes V)_{H}$ then we have $\rho_R(X)\kappa+\rho_V(X)\kappa=0$ for any $X\in \mf{h}$ and therefore $\rho_R(\text{Cas}_\mf{h})=\rho_V(\text{Cas}_\mf{h}).$ Combining this with the fact that $\rho_L(\text{Cas}_\mf{g})=\rho_R(\text{Cas}_\mf{g})$ yields  the result.
\end{proof}
 The operator $(D^\frac{1}{3}_{A_\text{can}})^2$ preserves the decomposition  \eqref{eq:hilbertsum}, so as in \thref{diraconhomspace} we can define an endomorphism $(D^\frac{1}{3}_{A_\text{can}})^2_\gamma$ of $\text{Hom}(V_\gamma,S\otimes V)_H$ such that 
\begin{equation}\label{eq:diracsquaredef}
(D^\frac{1}{3}_{A_\text{can}})^2_\gamma \otimes \text{Id}= (D^\frac{1}{3}_{A_\text{can}})^2|_{\text{Hom}(V_\gamma,S\otimes V)_H\otimes V_\gamma}.
\end{equation}
Since we are considering unitary representations the space  $\text{Hom}(V_\gamma, S\otimes V)_H$ carries a natural inner product given by 
$$\langle X, Y\rangle = \text{Tr}(X^*Y)$$
where $X^*$ is the hermitian adjoint with respect to the hermitian inner products on $V_\gamma$ and $S\otimes V.$ The self-adjointness of $(D^\frac{1}{3}_{A_\text{can}})^2$ ensures that the restriction to any of the subspaces $\text{Hom}(V_\gamma, S\otimes V)_H\otimes V_\gamma$ defines a hermitian opertor, hence it is diagonalisable with real eigenvalues. Furthermore the spectrum satisfies  $$\text{Spec}(D^{\frac{1}{3}}_{A_\text{can}})^2=\bigcup_{\gamma\in\widehat{G}}(D^\frac{1}{3}_{A_\text{can}})^2_\gamma .$$\par  
It follows from Frobenius reciprocity and \thref{lichnerowicz} that $(D^\frac{1}{3}_{A_\text{can}})^2_\gamma$  acts as the endomorphism 
\begin{equation}\label{eq:dthirdsquare}
(D^\frac{1}{3}_{A_\text{can}})^2_\gamma = -\rho_{V_\gamma^*}(\text{Cas}_\mf{g})+ \rho_V(\text{Cas}_\mf{h})+4. \end{equation}

\begin{corollary}\thlabel{eigenvalsofsquare}
Let $V_\gamma$ be an irreducible representation of $G,$ and let $V=\bigoplus_{\nu\in K}W_\nu$ be the decomposition of $V$ into irreducible representations of $H$, where $K$ is a finite sequence in $\widehat{H}.$ The eigenvalues  and multiplicities of the operator $(D^\frac{1}{3}_{A_\text{can}})_{\gamma}^2$ are 
\begin{center}
\begin{TAB}(r,0.5cm,1cm)[5pt]{|c|c|}{|c|c|} 
     Eigenvalue  & Multiplicity  \\
   
     $-c^{\mf{g}}_{\gamma}+c^\mf{h}_\nu + 4 $&$ 
     \text{\normalfont dim} \text{\normalfont  Hom}(V_{\gamma},S \otimes W_\nu)_H  $  \\
    \end{TAB}
\end{center}
where $c^{\mf{g}}_\gamma$ is the eigenvalue of the Casimir operator on the irreducible representation $V_\gamma$ with respect to the inner product $B$ from \eqref{eq:killingform} and  $c^{\mf{h}}_\nu$ is the eigenvalue of the Casimir operator on the irreducible representation $W_\nu$ with respect to this metric. The eigenvalues of $(D^\frac{1}{3}_{A_\text{can}})_\gamma $ are $\pm \sqrt{-c^{\mf{g}}_{\gamma}+c^\mf{h}_\nu + 4} $ and the $\sqrt{-c^{\mf{g}}_{\gamma}+c^\mf{h}_\nu + 4 }$ and $-\sqrt{-c^{\mf{g}}_{\gamma}+c^\mf{h}_\nu + 4} $ eigenspaces are isomorphic.
\end{corollary} 
\begin{proof}
We have a splitting 
$$\text{Hom}(V_{\gamma}, S\otimes V)_{H}= \bigoplus_{\nu \in K}\text{Hom}(V_\gamma,S\otimes W_\nu)_H$$
which is respected by the operator $(D^\frac{1}{3}_{A_\text{can}})^2_\gamma. $ 
Inspection of \eqref{eq:square} reveals that $(D^\frac{1}{3}_{A_\text{can}})^2_\gamma$ acts as a constant with the advertised value on each of these summands. \par 
It follows that the eigenvalues of $(D^\frac{1}{3}_{A_\text{can}})_\gamma$ are given by $\pm \sqrt{-c^{\mf{g}}_{\gamma}+c^\mf{h}_\nu + 4}$ (verification of this fact is given in \cite{bar1996dirac} for example). Furthermore one can check that $(D^\frac{1}{3}_{A_\text{can}})_\gamma$ anti-commutes with the action of the volume form $\text{Vol}$ on the spin bundle and since $\text{Vol}^2=-1$ one sees that the volume form provides an isomorphism between the $\sqrt{-c^{\mf{g}}_{\gamma}+c^\mf{h}_\nu + 4}$ and $-\sqrt{-c^{\mf{g}}_{\gamma}+c^\mf{h}_\nu + 4}$ eigenspaces.
\end{proof}

Note that the eigenvalues of $-\rho_{V_\gamma}(\text{Cas}_{\mf{g}})$ change as $\gamma$ varies, whereas the eigenvalue of   $\rho_{V}(\text{Cas}_{\mf{h}})+4$ are fixed. Furthermore, large dimensional irreducible representations $V_\gamma$ lead to the Casimir operator $-\rho_{V_\gamma}(\text{Cas}_\mf{g})$ having a large positive eigenvalue.  Intuitively  this should mean that, if $V_\gamma$ is a large dimensional irreducible representation, the eigenvalues of the operator $(D^{\frac{1}{3}}_{A_\text{can}})^2_\gamma$ are large. The following lemma allows us to compare eigenvalues of operators in this way.
\begin{lemma}\thlabel{eigenvaluecomparison}
Let $X$ and $Y$ be $n\times n$ Hermitian matrices with eigenvalues $\{\lambda^X_1, \ldots , \lambda^X_n\}$ and $\{\lambda^Y_1, \ldots,\lambda_n^Y\}$ respectively. Let $\{\lambda^{X+Y}_1, \ldots \lambda^{X+Y}_n\}$ be the set of eigenvalues of $X+Y.$ Then 
$$\min\{ |\lambda^{X+Y}_1|,\ldots , |\lambda^{X+Y}_n|\}\geq \left| \min\{|\lambda_1^X|,\ldots , |\lambda_n^X|\}-\max \{|\lambda^Y_1|,\ldots , |\lambda^Y_n|\} \right| .$$
\end{lemma}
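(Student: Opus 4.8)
The statement is a purely linear-algebra fact about Hermitian matrices, so the proof will rely only on the variational (Rayleigh–Ritz / min-max) characterization of eigenvalues of Hermitian operators and the triangle inequality for the operator norm. The key observation is that for a Hermitian matrix $X$, the quantity $\min\{|\lambda^X_1|,\dots,|\lambda^X_n|\}$ equals the distance from $0$ to the spectrum, which can be written as $\min_{\|v\|=1}\|Xv\|$; indeed, writing $v$ in an orthonormal eigenbasis $\{e_i\}$ of $X$ with $Xe_i=\lambda^X_i e_i$, one has $\|Xv\|^2 = \sum_i |\lambda^X_i|^2 |\langle v,e_i\rangle|^2 \geq \bigl(\min_i |\lambda^X_i|\bigr)^2$, with equality attained at an eigenvector for the smallest-modulus eigenvalue. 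The same holds for $X+Y$, which is again Hermitian since $X$ and $Y$ are.

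First I would record this lemma: for any Hermitian $H$, $\min_j |\lambda^H_j| = \min_{\|v\|=1}\|Hv\|$. Then, for a unit vector $v$ achieving the minimum for $X+Y$, I would estimate
\[
\min_j |\lambda^{X+Y}_j| = \|(X+Y)v\| \geq \|Xv\| - \|Yv\| \geq \min_{\|w\|=1}\|Xw\| - \max_{\|w\|=1}\|Yw\| = \min_i |\lambda^X_i| - \max_k |\lambda^Y_k|,
\]
using the reverse triangle inequality and that $\|Yv\| \leq \max_k|\lambda^Y_k|$ for every unit $v$ (the operator norm of a Hermitian matrix is its spectral radius). By symmetry in the roles of $X$ and $Y$ — or simply because the left side is nonnegative — one also gets $\min_j|\lambda^{X+Y}_j| \geq \max_k|\lambda^Y_k| - \min_i|\lambda^X_i|$ is \emph{not} what we want; rather the clean way is: the left-hand side is $\geq 0$, and it is $\geq \min_i|\lambda^X_i| - \max_k|\lambda^Y_k|$, hence it is $\geq \max\{0,\ \min_i|\lambda^X_i| - \max_k|\lambda^Y_k|\}$. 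To upgrade this to the absolute value $|\min_i|\lambda^X_i| - \max_k|\lambda^Y_k||$ claimed in the statement, I would note that in the application the inequality is only ever used when $\min_i|\lambda^X_i| \geq \max_k|\lambda^Y_k|$, so the two bounds coincide; alternatively one observes that if $\max_k|\lambda^Y_k| > \min_i|\lambda^X_i|$ the claimed right-hand side $|\min_i|\lambda^X_i|-\max_k|\lambda^Y_k||$ could exceed $\min_j|\lambda^{X+Y}_j|$ (take $Y=-X$), so the statement as written is really intended with the convention that the right-hand side is interpreted as $\max\{0, \min_i|\lambda^X_i|-\max_k|\lambda^Y_k|\}$, and I would phrase the proof accordingly, flagging that this is the content actually needed.

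There is no serious obstacle here; the only subtlety is the one just mentioned about the absolute value on the right-hand side, so the ``hard part'' is merely stating the conclusion in the form that is both true and sufficient for the subsequent application (bounding eigenvalues of $(D^0_{A_\text{can}})_\gamma$ away from $[0,2)$), namely that $\min_j|\lambda^{X+Y}_j| \geq \min_i|\lambda^X_i| - \max_k|\lambda^Y_k|$ whenever the right side is nonnegative. Everything else is a two-line application of min-max plus the triangle inequality.
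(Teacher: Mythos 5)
Your argument is correct, and since the paper records this only as a ``basic lemma'' with no proof supplied, there is nothing to compare against line by line; the route you take --- the variational identity $\min_j|\lambda^H_j|=\min_{\|v\|=1}\|Hv\|$ for Hermitian $H$, the reverse triangle inequality applied to a unit vector minimising $\|(X+Y)v\|$, and the bound $\|Yv\|\le\max_k|\lambda^Y_k|$ --- is exactly the elementary argument the author presumably had in mind. Your side observation is also well taken and worth flagging: with the absolute value on the right-hand side the statement is false as written (take $X=\mathrm{diag}(1,10)$ and $Y=-X$, so the left-hand side is $0$ while the right-hand side is $9$); what your proof establishes, and what is true, is the one-sided bound $\min_j|\lambda^{X+Y}_j|\ \ge\ \min_i|\lambda^X_i|-\max_k|\lambda^Y_k|$, equivalently $\ge\max\bigl\{0,\ \min_i|\lambda^X_i|-\max_k|\lambda^Y_k|\bigr\}$. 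This repaired form is all the paper ever uses: in the application $X=(D^{\frac13}_{A_{\mathrm{can}}})_\gamma$ and $Y=-\tfrac14\mathrm{Re}\,\Omega$ with $\max_k|\lambda^Y_k|\le 1$, and the resulting lower bound $(-c^{\mf{g}_2}_\gamma-5)^{\frac12}-1$ is only invoked ``when this quantity is defined,'' i.e.\ when it is nonnegative, so your reformulation loses nothing and in fact corrects a minor overstatement in the lemma.
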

\begin{proof}
See \cite[Lemma 5.3.4]{mythesis}.
\end{proof}

Recall the operators $(D^t_{A_\text{can}})_\gamma$ from \eqref{eq:dtonhomspace}, as was the case for $(D^\frac{1}{3}_{A_\text{can}})_\gamma$
these anti-commute with the volume form $\text{Vol}$ so have spectra symmetric about 0 and satisfy
$$\text{Spec}(D^t_{A_\text{can}})=\bigcup_{\gamma\in\widehat{G}}\text{Spec}(D^t_{A_\text{can}})_\gamma.$$
Recall that our task is to determine the eigenvalues of $D^0_{A_\text{can}}$ in the region $(\mu_0+2,2)$ where $\mu_0$ is the fastest rate of convergence of $A$, therefore we would like to know which representations $V_\gamma$ have eigenvalues in this region. \par 
Suppose that the operator $(D^{\frac{1}{3}}_{A_\text{can}})^2_\gamma$ has eigenvalues $\lambda_1^2, \cdots , \lambda_n^2,$ then the eigenvalues of $(D^{\frac{1}{3}}_{A_\text{can}})_\gamma$ are precisely $\pm\lambda_1, \cdots \pm\lambda_n$.   Now since 
\begin{equation}\label{eq:lcdirac}
(D^0_{A_\text{can}})_\gamma=(D^{\frac{1}{3}}_{A_\text{can}})_\gamma-\frac{1}{4}\text{Re}\Omega
\end{equation}
and the eigenvalues of $-\frac{1}{4}\text{Re}\Omega$ are contained in the set $\{\pm1,0\},$ we can apply \thref{eigenvaluecomparison} to obtain lower bound on the smallest positive eigenvalue of $(D^0_{A_\text{can}})_\gamma.$

\begin{theorem}\thlabel{eigenvaluelowerbound}
Let $V_\gamma$ be an irreducible representation of $G$. If the quantity 
$$L_\gamma \coloneqq \sqrt{\min_\nu \{ -c^\mf{g}_\gamma+c^\mf{h}_\nu +4\} }-1$$ 
is positive, then it is a lower bound on the smallest positive eigenvalue of $(D^0_{A_\text{can}})_\gamma.$
\end{theorem}

\begin{proof}
Inspection of \thref{eigenvalsofsquare} reveals that the smallest eigenvalue of $(D^\frac{1}{3}_{A_\text{can}})_\gamma$ is $\sqrt{-c^\mf{g}_\gamma+c^\mf{h}_\nu+4}.$ Since the eigenvalues of $-\frac{1}{4}\text{Re}\Omega$ are contained in the set $\{\pm1,0\}$ we can apply \thref{eigenvaluecomparison} to yield the result.
\end{proof}
With this in hand we can outline our strategy for calculating the eigenvalues of $D^0_{A_\text{can}}$ in $(\mu_0+2,2)$:
\begin{enumerate}
    \item Use \thref{eigenvaluelowerbound} to rule out the irreducible representations $V_\gamma$ such that $(D^0_{A_\text{can}})_\gamma$  has no eigenvalues in the interval $(m,2)$ where $m=\min\{-2,\mu_0+2\}$ and $\mu_0$ is the fastest rate of converge of the example we are studying (in every case but one we just need to consider eigenvalues in the region $[0,2)$).
    \item For the remaining representations, if all maps in $\text{Hom}(V_\gamma,S\otimes V)_H$ factor through $\Lambda^1_\C\otimes V \subset S\otimes V$ then $(D^0_{A_\text{can}})_\gamma=(D^\frac{1}{3}_{A_\text{can}})_\gamma$ on this space (this is thanks to \thref{eigenvals}) and \thref{eigenvalsofsquare} informs us of the eigenvalues. 
    \item If there are maps factoring through $(\Lambda^0_\C\oplus \Lambda^6_\C)\otimes E \subset S\otimes E $ then $\text{Re}\Omega$ acts non-trivially and one must work harder to calculate the relevant eigenvalues. 
\end{enumerate}
In the last case here, when $\text{Re}\Omega$ acts non-trivially, we shall attack the known examples using two different methods. When the nearly K\"ahler 6-manifold is  $\C \mathbb{P}^3$ we adapt the work of B\"ar \cite{bar1992dirac} and develop a representation theoretic framework to calculate directly the matrix (and hence the eigenvalues) of the Dirac operator on one of the homomorphism spaces from \eqref{eq:hilbertsum}  via the formula \eqref{eq:D^thomspaceformula}. On $S^6$ we instead write the Dirac operator as a sum of Casimir operators and this again allows us to calculate the matrix and eigenvalues of the Dirac operator on the relevant homomorphism spaces.

\section{Deformations of \texorpdfstring{$G_2$}{G2}-Instantons over the Bryant-Salamon Manifolds}\label{sec:bsinstsection}
Having  developed the deformation theory in Section~\ref{sec:gtonac} and provided a suitable framework for studying twisted Dirac operators on homogeneous nearly K\"ahler 6-manifolds in the previous section, we are now ready to apply these results to concrete examples. We do so by first considering known examples of Clarke \cite{clarke2014instantons}, Lotay-Oliveira \cite{lotay2018su2insantons} and Oliveira \cite{oliveira2014monopoles}, which live on the Bryant-Salamon manifolds. In each example that we study, the aim shall be to calculate the virtual dimension of the moduli space on which the connection lives. The results developed in the previous section are sufficient for this calculation, with one exception: In studying Oliveira's $G_2$-instanton on $\Lambda^2_-(S^4)$ we present a different method for calculating the eigenvalues of the operator $(D^0_{A_\text{can}})_\gamma$ given in \eqref{eq:lcdirac}, in the case where $V_\gamma=V_{(0,1)}$ is the standard representation of $\text{Sp}(2).$

\subsection{Clarke's Family of Instantons on \texorpdfstring{$\R^4 \times S^3$}{R4xS3}}\label{sec:clarke}
We begin by briefly introducing the construction of the Bryant-Salamon metric on $\R^4\times S^3$, based on the presentation given in \cite{lotay2018su2insantons}. The idea is to consider a 1-parameter family of $\text{SU}(3)$ structures $(\omega(t), \gamma_2(t))$ on $S^3 \times S^3=\text{SU}(2)^2$ such that, setting $\gamma_1(t) = J\gamma_2(t):$
$$ \phi = \diff t \wedge \omega(t) + \gamma_1(t), \qquad \psi = \frac{\omega^2(t)}{2} = \diff t \wedge \gamma_2(t) $$
defines a torsion free $G_2$-structure on $S^3 \times S^3 \times (0,T).$ To construct such an $\text{SU}(3)$-structure, let $I_i$ for $ i=1,2,3$ be a basis of $\mf{su}(2)$ with $[I_i,I_j]=2\epsilon_{ijk}I_k,$ then we may split $\mf{su}(2)^2=\mf{su}(2)^+\oplus \mf{su}(2)^-$  where $I_i^+= (I_i,I_i)$ and $I_i^-= (I_i,-I_i) $ provide bases for $\mf{su}(2)^+$ and $\mf{su}(2)^-$ respectively. We define $\eta_i^+$ and $\eta_i^-$ to be dual to $I_i^+$ and $I_i^-$ respectively. The Maurer-Cartan relations take the form 
\begin{align}
    \diff \eta_i^+ &=-\epsilon_{ijk} ( \eta_j^+\wedge \eta_k^+ + \eta_j^-\wedge \eta_k^+), \\
    \diff \eta_i^- &= -2\epsilon_{ijk}\eta_j^-\wedge \eta_k^+.
\end{align}
The group $\text{SU}(2)^3$ acts on $\text{SU}(2)^2$ as follows:
$$(g_1,g_2,g_3)\cdot (\tilde{g}_1,\tilde{g}_2)=(g_1\tilde{g}_1g_3^{-1}, g_2\tilde{g}_2g_3^{-1})$$
and under this action we can  impose $\text{SU}(2)^3$ symmetry on our $\text{SU}(3)$ structure; this yields the following expressions:
\begin{align}
    \omega &= 4XY\eta^-_i\wedge \eta_i^+\\
    \gamma_1&=8 Y^3\eta_1^-\wedge \eta_2^- \wedge \eta_3^- - 4 X^2Y\epsilon_{ijk}\eta_i^+\wedge \eta_j^+\wedge \eta_k^-\\
    \gamma_2&=8X^3\eta_1^+\wedge \eta_2^+ \wedge \eta_3^+ - 4 Y^2X\epsilon_{ijk}\eta^-_i\wedge \eta^-_j \wedge \eta_k^+
\end{align}
where $X$ and $Y$ are real valued functions of $t\in \R^+.$ The equations defining a torsion free $G_2$-structure, which in this situation are known as the Hitchin flow equations, are:
\begin{align}\label{eq:ode}
    \dot{X}=\frac{1}{2}\left( 1 - \frac{X^2}{Y^2}\right) , \qquad \dot{Y}=\frac{X}{Y}.
\end{align}
Set $Y=s$ and $X=sF(s)$, then \eqref{eq:ode} becomes 
\begin{equation}
\frac{\diff }{\diff s}(sF)=\frac{1-F^2}{2F}.
\end{equation}
Solutions are given by 
\begin{equation}\label{eq:Fsolution}
F(s)=\sqrt{\frac{1-c^3s^{-3}}{3}}
\end{equation}
where $c>0$ and $s\geq c.$This yields 
$$X(s)=\frac{s}{\sqrt{3}}\sqrt{1-c^3s^{-3}} \qquad \text{and } \qquad Y(s)=s.$$
Let us choose $c=1$ and define a coordinate $r\in [1, +\infty)$ as follows: recall $t$ is the arclength parameter along the geodesic parameterised by $s$, so we may set 
$$t(r)=\int_1^r\frac{\diff s}{\sqrt{1-s^{-3}}}$$
then our functions take the form 
$$X=\frac{r}{3}\sqrt{1-r^{-3}} \qquad \text{and } \qquad Y=\frac{r}{\sqrt{3}}.$$ \par

We now switch focus to Clarke's family of $G_2$-instantons. These live on the trivial  $\text{SU}(2)$-bundle  over the Bryant-Salamon manifold $\R^4\times S^3$. \par

We will denote by $P_\lambda$ the bundle over $\R^4\times S^3$ whose extension over the singular orbit is determined by an isotropy homomorphism  $\lambda \colon  \text{SU}(2) \to \text{SU}(2).$
Clarke's $G_2$-instantons live on the bundle $P_1$, the bundle whose isotropy homomorpshism is the trivial homomorphism $\lambda(g) \equiv 1.$ The connections are globally well-defined and take the form 
$$A=xX\left( \sum_{i=1}^3 I_i\otimes \eta_i^+\right) + yY\left( \sum_{i=1}^3I_i\otimes \eta^+_i \right) $$
with 
\begin{equation}
x(t)=\frac{2x_0X(t)}{1+x_0(Y^2(t)-\frac{1}{3})} \qquad \text{and} \qquad y(t)=0
\end{equation}
for some $x_0\in \R.$ The connections are generically AC with fastest rate of convergence $-2$, although note that when $x_0=0$ the connection $A$ is the trivial flat connection.

\subsection{Eigenvalues of the Twisted Dirac Operator on \texorpdfstring{$S^3\times S^3$}{S3xS3}}
Since Clarke's $G_2$-instantons are AC with fastest rate of convergence $-2$ and limiting connection $A_{\text{can}}$, the results of Section~\ref{sec:gtonac} tell us that the virtual dimension of $\mathcal{M}(A_\text{can},\mu),$ for $-2<\mu<0$, is determined by the eigenvalues of $D^0_{A_\text{can}}$ in the interval $[0,2).$ In this section the eigenvalues in this interval are calculated, using the method developed in \ref{sec:lichnerowicz}.\par

An asymptotic framing for the bundle $P_1$ is provided by the homogeneous bundle $Q=\text{SU}(2)^3\times_{(\Delta \text{SU}(2),\text{id})}\text{SU}(2).$ Away from the associative $S^3,$ which is the singular orbit for the action of $\text{SU}(2)^3,$ we have $P_1=\pi^*Q$ where $\pi\colon C(S^3\times S^3)\to S^3\times S^3$ is the projection map.  The bundle $\Ad Q$ is associated to the canonical bundle via the adjoint representation of $\text{SU}(2)$ on its Lie algebra. As a complexified representation the vector space is $(\mf{su}(2))_\C=\mf{sl}(2,\C).$ Let us denote by $W_i$ the unique irreducible representation of $\text{SU}(2)$ with dimension  $(i+1)$. Irreducible representations of $\text{SU}(2)^3$ are then given by $V_{(i,j,k)}\coloneqq W_i\otimes W_j\otimes W_k.$ As a representation of $\text{SU}(2)$ we have $\mf{m}^*_\C=W_2\oplus W_2$ so the complexified spinor space is $S=W_0\oplus W_2 \oplus W_2 \oplus W_0$ and an application of the Clebsch-Gordan rule shows that the twisted spinor space is 
\begin{equation}\label{eq:su(2)twistedspinorspace}
S\otimes \mf{su}(2)_\C = 2 W_4\oplus 4W_2 \oplus 2W_0.
\end{equation}
The Frobenius reciprocity theorem says that the space of sections splits as 
\begin{align}
L^2(\slashed{S}_\C(S^3\times S^3)\otimes \Ad Q )&\cong L^2(\text{SU}(2)^3, S\otimes \mf{su}(2)_\C)_{\text{SU}(2)}\nonumber \\
&\cong \bigoplus_{\gamma \in \widehat{\text{SU}(2)^3}}\text{Hom}(V_\gamma,S\otimes \mf{su}(2)_\C)_{\text{SU}(2)}\otimes V_{\gamma} \label{eq:su(2)twisteddecomp}
\end{align}
where $\widehat{\text{SU}(2)^3}$ denotes the set of isomorphism classes of irreducible representations of $\text{SU}(2)^3$ and $V_\gamma$ is a class representative for $\gamma.$
The action of the Casimir operators on irreducible representations is given by
\begin{align*}
    \rho_{V_{(i,j,k)}}(\text{Cas}_{\mf{su}(2)^3})&=c^{\mf{su}(2)^3}_{(i,j,k)}\text{Id}\\
    \rho_{W_i}(\text{Cas}_{\mf{su}(2)})&=c^{\mf{su}(2)}_i\text{Id}
\end{align*}
where these eigenvalues of the Casimir operators are with respect to the nearly K\"ahler metric. The eigenvalues are calculated in \cite{charbonneau2016deformations} to be 
\begin{align*}
    c^{\mf{su}(2)^3}_{(i,j,k)}&=-\frac{3}{2}\left( i(i+2) + j(j+2) + k(k+2)\right) \\
    c^{\mf{su}(2)}_i&=-\frac{1}{2}i(i+2).
\end{align*}

 Since the adjoint representation of $\text{SU}(2)$ is the 3-dimensional irreducible representation, we are able to combine \thref{lichnerowicz} with the above Casimir eigenvalue formula to see that the operators $(D^\frac{1}{3}_{A_\text{can}})^2_\gamma$ of $\text{Hom}(V_\gamma, S\otimes \mf{su}(2)_\C)_{\text{SU}(2)}$, which were defined in \eqref{eq:D^thomspaceformula}, take the form

\begin{equation} \label{eq:su(2)diracsquarehomspace}
(D^{\frac{1}{3}}_{A_\text{can}})^2_\gamma =-\rho_{V_\gamma^*}(\text{Cas}_{\mf{su}(2)^3}).
\end{equation}
Thus by the results of Section~\ref{sec:lichnerowicz}, we have found an expression for the eigenvalues of $ (D^{\frac{1}{3}}_{A_\text{can}})^2: $
\begin{lemma}\thlabel{su(2)lichnerowiczevals}
The spectrum of $(D^\frac{1}{3}_{A_\text{\normalfont can}})^2$ is 
$$\text{\normalfont Spec}(D^\frac{1}{3}_{A_\text{\normalfont can}})^2=\left\{ -c^{\mf{su}(2)^3}_{(i,j,k)} \, ; \, \text{\normalfont dim Hom}(V_{(i,j,k)},S\otimes \mf{su}(2)_\C)_{\text{\normalfont SU}(2)}\neq 0\right\}.$$
\end{lemma}
We can use this to rule out representations that do not lead to eigenvalues of the twisted Levi-Civita Dirac operator in the interval $[0,2).$ First let us recall the relation between the various operators that we need:\par
The operator $D^1_{A_\text{can}}$ is constructed from the canonical connection acting on both the spinor space and the adjoint bundle. We can define endomorphisms $(D^1_{A_\text{can}})_\gamma$ of $\text{Hom}(V_\gamma, S\otimes \mf{su}(2)_\C)_{\text{SU}(2)}$ such that 
 $$ (D^1_{A_\text{can}})_\gamma \otimes \text{Id}=(D^1_{A_\text{can}})|_{\text{Hom}(V_\gamma, S\otimes \mf{su}(2)_\C)_{\text{SU}(2)}\otimes V_\gamma}$$
 and thus a family of endomorphisms $(D^t_{A_\text{can}})_\gamma$ of $\text{Hom}(V_\gamma, S\otimes \mf{su}(2)_\C)$ via the formula 
 $$(D^t_{A_\text{can}})_\gamma= (D^1_{A_\text{can}})_\gamma +\frac{3(t-1)}{4}\text{Re}\Omega.$$
Then one has $$\text{\normalfont Spec}(D^t_{A_\text{can}})=\bigcup_\gamma \text{Spec}(D^t_{A_\text{can}})_\gamma$$ and we  use this to obtain eigenvalue estimates for the Levi-Civita Dirac operators $(D^0_{A_\text{can}})_\gamma$.
\begin{corollary}
If $V_\gamma\neq \C$ then $\text{\normalfont Spec}(D^0_{A_\text{\normalfont can}})_\gamma\cap [0,2)=\emptyset.$ 
\end{corollary}
\begin{proof}
Let $V_\gamma$ be an irreducible representation of $\text{SU}(2)^3$ and suppose that dim Hom$(V_\gamma, S\otimes \mf{sl}(2,\C))_{\text{SU}(2)}\neq 0.$  Application of \thref{eigenvaluelowerbound} shows that the smallest non-negative eigenvalue of $(D^0_{A_\text{can}})_\gamma$ is bounded below by 
$$L_\gamma=\sqrt{-c^{\mf{su}(2)^3}_{(i,j,k)}}-1$$
where $V_\gamma=V_{(i,j,k)}.$ The quantity $L_\gamma$ does not yield a lower bound when $V_\gamma=\C$ and since dim Hom$(V_{(1,0,0)},S\otimes\mf{sl}(2,\C))_{\text{SU}(2)}=0$ the representation $V_{(1,0,0)}$ need not be considered. The next representation to consider is $V_{(1,1,0)},$ this yields the bound $L_\gamma=\sqrt{9}-1=2,$ so this bound is sufficient for the statement of the theorem. Any other irreducible representation $V_\gamma$ leads to a large lower bound, which completes the proof.
\end{proof}
To calculate the relevant eigenvalues we therefore only need consider those coming from the trivial representation. Let $V_\gamma=\C$ and let us write $S=W_0^{(1)}\oplus W_2^{(1)}\oplus W_2^{(2)}\oplus W_0^{(2)}$ where $W_{i}^{(a)}$ are distinct copies of the irreducible representation $W_i.$  A basis of $\text{Hom}(\C , S\otimes \mf{su}(2)_\C)_{\text{SU}(2)}=\text{Hom}(\C, S\otimes W_2)_{\text{SU}(2)}$ is given by the $\text{SU}(2)$-equivariant maps $q^{2,2}_0$ and $\tilde{q}^{2,2}_0$ which factor as maps 
\begin{align*}
    q^{2,2}_0&\colon \C\to  W_2^{(1)}\otimes W_2\to S\otimes W_2 \\
    \tilde{q}^{2,2}_0& \colon \C \to W^{(2)}_2\otimes W_2 \to S\otimes W_2.
\end{align*}
Notice that all maps in this homomorphism space factor through $\Lambda^1\subset S.$
On this space \eqref{eq:su(2)diracsquarehomspace} says that $(D^\frac{1}{3}_{A_\text{can}})^2_\gamma\equiv 0.$ Furthermore $\text{Ker}(D^\frac{1}{3}_{A_\text{can}})^2_\gamma=\text{Ker}(D^\frac{1}{3}_{A_\text{can}})_\gamma$ so we see that $(D^\frac{1}{3}_{A_\text{can}})_\gamma$ also vanishes on this space. Finally, since all basis vectors factor through $\Lambda^1\subset S$ \thref{eigenvals} ensures that $\text{Re}\Omega$ acts trivially on this space. Therefore $(D^t_{A_\text{can}})_\gamma=0$ for all $t$, when $V_\gamma$ is the trivial representation. We have shown:
\begin{proposition}
The only eigenvalue of $D^0_{A_\text{can}}$ in the interval $[0,2)$ is $0$ and has multiplicity 2.
\end{proposition}
This result tells us, by the observation in  \thref{virtdim} that $\text{ind}_{-2+\epsilon}D_A=\frac{1}{2}\text{dim Ker}D^0_{A_\text{can}}$ for $\epsilon$ sufficiently small, the virtual dimension of the moduli space:
\begin{theorem}
Let $P$ be the trivial $\text{\normalfont SU}(2)$-bundle over $\R^4\times S^3$, framed at infinity by the homogeneous $\text{\normalfont SU}(2)$-bundle over the nearly K\"ahler $S^3 \times S^3$ whose isotropy homomorphism is the identity. Let $A_\text{\normalfont can}$ be the canonical connection on $Q$, so that Clarke's $G_2$-instantons $A$ live in $\mathcal{M}(A_\text{\normalfont can},\mu)$ for $\mu \in (-2,0)$. The virtual dimension of the moduli space this family of moduli spaces  is 
$$\text{\normalfont \scriptsize{virt}\normalsize dim}\mathcal{M}(A_\text{\normalfont can},\mu)= 1 $$
for all $\mu \in (-2,0).$ 
\end{theorem}
Note that the above virtual dimension coincides with the dimension of the $\text{SU}(2)^3$-invariant moduli space, which is known from the work of Clarke \cite{clarke2014instantons} and Lotay-Oliveira \cite{lotay2018su2insantons}. This is perhaps not surprising- the $\text{SU}(2)^3$-invariant deformation parameter also defines an AC deformation, so it would appear we are only seeing this in our calculation, although further work is required to rule out the case of having a larger kernel and a non-trivial cokernel compensating.
We shall observe similar behaviour in the other examples that we study- the virtual dimension of the AC moduli space matches the number of deformation parameters known from the invariant setting.

\subsection{The Limiting Connection of Lotay-Oliveira}

Consider now the bundle $P_\text{id}$ which extends the action over the singular orbit via the identity homomorphism $\text{id}\colon \text{SU}(2) \to \text{SU}(2).$ The $G_2$-instanton constructed by Lotay and Oliveira \cite[Theorem 5]{lotay2018su2insantons} can be considered as a limiting connection of Clarke's family. Furthermore, the authors interpret its existence as a removable singularity phenomena.  The connection is well defined on the entire bundle $P_\text{id}$ and takes the form 
$$A=xX\left( \sum_{i=1}^3 I_i\otimes \eta_i^+\right) + yY\left( \sum_{i=1}^3I_i\otimes \eta^+_i \right) $$
with 
$$x(t)=\frac{X(t)}{\frac{1}{2}(Y^2(t)-\frac{1}{3})} \qquad \text{and} \qquad y(t)=0.$$
This connection is asymptotically conical with fastest rate of convergence -3. 
Note that the $G_2$-instantons of Clarke and Lotay-Oliveira live on (topologically) the same bundle and both converge to the canonical connection on the asymptotic framing bundle $Q$ but (in the terminology of \eqref{eq:asympconnection}) have fastest rates of convergence $-2$ and $-3$ respectively. Thus the moduli space we shall consider is identical to that considered previously, although the range of weights we shall consider is now $(-3,0).$ Consequently we must determine eigenvalues of the twisted Dirac operator on the link which fall in the interval $(-1,2).$ Note however, that since the spectrum we consider is symmetric about 0, no extra work is required. Hence:
\begin{theorem}
Let $P$ be the trivial $\text{\normalfont SU}(2)$-bundle over $\R^4\times S^3$, framed at infinity by the homogeneous $\text{\normalfont SU}(2)$-bundle over the nearly K\"ahler $S^3 \times S^3$ whose isotropy homomorphism is the identity. Let $A_\text{\normalfont can}$ be the canonical connection on $Q$, so that Lotay-Oliveira's $G_2$-instanton $A$ lives in $\mathcal{M}(A_\text{\normalfont can},\mu)$ for $\mu \in (-3,0)$. The virtual dimension of the moduli space this family of moduli spaces  is 
$$\text{\normalfont \scriptsize{virt}\normalsize dim}\mathcal{M}(A_\text{\normalfont can},\mu)=\begin{cases}
1 & \text{if }\mu\in (-2,0)\\
-1&\text{if } \mu \in (-3,-2).\end{cases}$$
\end{theorem}
Observe that the above moduli space is necessarily obstructed when the weight is less than $-2.$ This reflects what we know from the general theory; suitably large negative weights will lead to the moduli space having a negative virtual dimension. This connection is the only known AC $G_2$-instanton that decays with rate faster than $-2$ and as we shall see it is also the only example that yields a negative virtual dimension.

\subsection{The Bryant-Salamon Manifolds \texorpdfstring{$\Lambda^2_-(N^4)$}{L2-(N4)}}

Next we turn our attention to the $G_2$-instantons constructed by Oliveria in \cite{oliveira2014monopoles}.  These are connections on the Bryant-Salamon manifolds $\Lambda^2_-(N^4),$ where $N^4$ is either $\mathbb{CP}^2$ or $S^4,$ so we first briefly recap the contruction of these manifolds. We consider one example on each manifold. Note in \cite{oliveira2014monopoles} Oliveira also constructs examples of $\text{SU}(3)$-instantons over $\Lambda^2_-(\mathbb{CP}^2)$ and shows that the pullback of an ASD instanton on $S^4$ defines a $G_2$-instanton on $\Lambda^2_-(S^4).$ These examples are not considered in this article, although the virtual dimension of the moduli space in the latter case is calculated in the author's thesis \cite{mythesis}.  \par
If $(N^4,g_N)$ is a 4-dimensional Riemannian manifold, then the twistor space of $N$ is the unit sphere bundle in $\Lambda^2_-(N).$ When $N=\mathbb{CP}^2$ or $N=S^4$ the twistor space carries a natural nearly-K\"ahler structure \cite{eells1985twistorial} whose metric is 
$$g_6 =\frac{1}{2}g_{S^2} + \pi^*g_N.$$ 
The Bryant-Salamon metric on the total space of $\Lambda^2_-(N)$ takes the form
\begin{equation}\label{eq:bsmetric}
    g = f^2(s)g_{\R^3} + f^{-2}(s)\pi^* g_N
\end{equation}
where $s$ is the Euclidean distance to the zero section given by the fiber metric and where
\begin{equation}\label{eq:bsf}
f(s) = (1 + s^2)^{-\frac{1}{4}}.
\end{equation}

\subsection{Oliveira's \texorpdfstring{$G_2$}{G2}-Instantons on \texorpdfstring{$\Lambda^2_-(\mathbb{CP}^2)$}{L2-(CP2)}}

Consider now the case when $N=\C\mathbb{P}^2,$ then the twistor space is the space of flags in $\C^3$ i.e the space of complex lines contained in planes  in $\C^3$ and will be denoted $\mathbb{F}_{1,2,3}.$ The standard action of $\text{SU}(3)$ on $\C^3$ gives rise to a transitive action of $\text{SU}(3)$ on the space of flags with isotropy subgroup  $T^2.$ At a suitable point of $\mathbb{F}_{1,2,3}$ the  $T^2$ isotropy subgroup of $\text{SU}(3)$ that one obtains is the standard one \cite{oliveira2014monopoles}
$$T^2=\left \{ \begin{pmatrix} e^{i\theta} & 0 & 0 \\
0 & e^{-i(\theta + \phi)} & 0 \\
0 & 0 & e^{i\phi} \end{pmatrix} \right\} 
$$
and we will work with this fixed subgroup throughout this section. \par
The geodesic distance to the zero section then takes the form $t(s)=\int_0^sf(u)\diff u$ and this allows us to rewrite the metric \eqref{eq:bsmetric} as 
$$ g= \diff t^2 +s^2(t)f^2(s(t))g_{S^2} + f^{-2}(s(t))\pi^*g_{\C\mathbb{P}^2}$$
where $g_{S^2}$ is the round metric on the unit sphere of a fibre in $\Lambda^2_-(\C \mathbb{P}^2).$ One can also describe the $G_2$-structure explicitly. To gain a suitable local expression we first study homogeneous structure of $\text{SU}(3)/T^2.$\par
Under the adjoint action the real representation $\mf{m}$ splits into three irreducible subspaces $\mf{m}=\mf{m}_1\oplus \mf{m}_2\oplus \mf{m}_3$ and we choose orthonormal bases $\{I_1, I_2\}, \{I_3, I_4\}$ and $\{I_5,I_6\}$ respectively (if the reader is interested this decomposition is covered more explicitly in \cite{oliveira2014monopoles}).This induces a local frame for $T^*\mathbb{F}_{1,2,3}$ which we denote $e^1, \ldots e^6.$ Away from the zero section of $\Lambda^2_-(\C \mathbb{P}^2)$ we can think of the geodesic distance $t$ as a coordinate. Let us define $2$-forms 
\begin{equation} \begin{array}{ccc}\label{eq:2forms}
\Omega_1=e^{12}-e^{34}, & \Omega_2=e^{13}-e^{42}, & \Omega_3=e^{14}-e^{23}\\
\overline{\Omega}_1=e^{12}+e^{34}, & \overline{\Omega}_2=e^{13}+e^{42}, & \overline{\Omega}_3=e^{14}+e^{23}
\end{array}
\end{equation}
then the 3-form $\varphi$ of the Bryant-Salamon $G_2$-structure takes the form 
\begin{equation}
    \varphi = \diff t \wedge (a^2(t)e^{12}  -a^2(t)e^{34}+c^2(t)e^{56})+ a^2(t)c(t)(\Omega_2\wedge e^6-\Omega_3\wedge e^5)
\end{equation}
where $a(t)=2f^{-1}(t)$ and $c(t)=2s(t)f(t).$  From this viewpoint one sees that the $G_2$-structure is AC with $|g-g_C|_g=O(t^{-4}).$ \par

Let us fix the standard basis $I_i$ of $\mf{so}(3)$ with $[I_i,I_j]=2\epsilon_{ijk}I_k.$ We choose  the maximal torus in SO$(3)$ to be the one generated by $\frac{1}{2}I_1$. Recall that $\text{SU}(3)$-homogeneous  $\text{SO}(3)$-bundles over $\mathbb{F}_{1,2,3}$ are determined by isotropy homomorphisms from $T^2$ to $\text{SO}(3).$ It is shown in \cite{oliveira2014monopoles} that there is a unique isotropy homomorphism $\lambda$ which yields a bundle $Q=\text{SU}(3)\times_\lambda \text{SO}(3)$ with non-trivial invariant connections and for which the pullback $\pi^*Q$ extends over the zero section $\C \mathbb{P}^2$ of $\Lambda^2_-(\C \mathbb{P}^2).$ \par
To describe this bundle we first note that the singular orbit of $\Lambda^2_-(\C \mathbb{P}^2)$ is $\text{SU}(3)/\text{U}(2)=\C \mathbb{P}^2$ and so $\text{SU}(3)$-homogeneous $\text{SO}(3)$-bundles over the singular orbit are determined by isotropy homomorphisms $\tilde{\lambda}\colon \text{U}(2)\to \text{SO}(3).$ The adjoint action of $\text{U}(2)$ on $\mf{su}(2)$ defines a homomorphism 
\begin{align}
    \begin{split} 
    \tilde{\lambda} \colon  \text{U}(2) &\to \text{SO}(3)\\
    g&\mapsto  \Ad_g 
    \end{split}
    \label{eq:isotropyhomt2u2}
\end{align}
where we view  $\text{SO}(3)$ as a subgroup of $\text{GL}(\mf{su}(2)).$
By viewing $T^2$ as the subgroup of $\text{U}(2)$
$$T^2=\begin{pmatrix} 
e^{i\theta} & 0 \\
0 & e^{i\phi}\end{pmatrix}\subset \text{U}(2)$$
we obtain a homomorphism $T^2 \to \text{SO}(3)$ by restriction of \eqref{eq:isotropyhomt2u2}, this is
\begin{align}
    \begin{split} 
    \lambda \colon  T^2 &\to \text{SO}(3)\\
    g&\mapsto  \Ad_g .
    \end{split}
    \label{eq:isotropyhomt2}
\end{align}
We take $\lambda$ to be the isotoropy homomorphism of the homogeneous bundle
\begin{equation}\label{eq:su3hombundle}
    Q=\text{SU}(3)\times_\lambda \text{SO}(3).
\end{equation}
Let us denote by $P$ the extension of  $\pi^*Q$ to $\Lambda^2_-(\C \mathbb{P}^2)$  determined by the isotropy homomorphism \eqref{eq:isotropyhomt2u2}. We denote by $A_\text{can}$ the pullback of the canonical connection on $Q$, this  lives  on $P|_{\Lambda^2_-(\C \mathbb{P}^2)\setminus \C \mathbb{P}^2}.$ An invariant connection on $P$ then takes the form 
$$A=A_\text{can}+ h(t)(e^5 \otimes I_2 + e^6\otimes I_3)$$
and one finds \cite[Section 4.3.2]{oliveira2014monopoles} that the $G_2$-instanton equation for such a connection becomes 
\begin{equation}\label{eq:su3invarianteqn}
f^{-4}h^2=1, \qquad f^{-4}\frac{\diff h }{\diff s} +sh=0
\end{equation}
with boundary data $\left.\frac{\diff}{\diff s}\right|_{s=0}f^{-2}h=0, \lim_{s\to \infty}h=0$ and where $f$ is defined as in \eqref{eq:bsf}. The first equation here, which is algebraic, implies $h=\pm f^2$ and one can check that the second equation is then automatically satisfied. Note that the paired equations have a 0-dimensional space of solutions, whilst the linearisation of the system has a 1-dimensional space of solutions. We summarise this result in the following theorem.

\begin{theorem}[Oliveira, {\cite[Theorem 8]{oliveira2014monopoles}}]\thlabel{oliveiraso3inst}
The connection $A$ given by 
$$A=A_\text{\normalfont can}\pm f^2(s)(e^5\otimes I_2 + e^6 \otimes I_3)$$
defines a $G_2$-instanton on $P.$ Moreover $A$ is AC with limiting connection the canonical connection living on the bundle $Q.$
\end{theorem}
The conenction $A$ satisfies $|A-A_\text{can}|_g=O(t^{-3}),$ so in the notation of \eqref{eq:asympconnection} defines an AC $G_2$-instanton with fastest rate of convergence $-2.$

\subsection{Eigenvalues of the Twisted Dirac Operator on \texorpdfstring{$\mathbb{F}_{1,2,3}$}{F1,2,3}}
We now consider the calculations necessary for determining the virtual dimension of the moduli space on which $A$ lives. Again we denote by $Q$ the  bundle defined in \eqref{eq:su3hombundle}.
The basis of $\mf{su}(3)_\C=\mf{sl}(3,\C)$ that we choose is $\{ E_{12},  E_{13},  E_{23}, E_{21},  E_{31},  E_{32},H_{12},H_{23}\},$ where $E_{ij}$ is the elementary matrix with a $1$ in the $(i,j)$\textsuperscript{th} entry and zeros elsewhere, and where
\begin{align*}
    H_{12}=\text{diag}(1,-1,0), \qquad  H_{23}=\text{diag}(0,1,-1).
\end{align*}
The matrices $H_{12}$ and $H_{23}$ form a basis of a Cartan subalgebra given by the Lie algebra of the group of diagonal matrices in $\text{SU}(3),$ we denote this $\mf{t}^2=\langle H_{12},H_{23} \rangle.$ Irreducible representations of $\text{SU}(3)$ are indexed by a pair natural numbers $(a,b)$ with $V_{(1,0)}$ being the standard representation $\C^3.$  
The representation $\mf{m}_\C$ carries the adjoint action of $T^2$ and, as in \cite{charbonneau2016deformations}, one can calculate the weight space decomposition to be 
$$\mf{m}_\C =  W_{(2,-1)}\oplus W_{(-1,2)}\oplus W_{(-1,-1)} \oplus W_{(-2,1)} \oplus W_{(1,-2)}\oplus W_{(1,1)}.$$ \par
The eigenvalues of the Casimir operator one obtains from the nearly K\"ahler metric are calculated in \cite{charbonneau2016deformations}.  We may write
\begin{align*}
    \rho_{V_{(a,b)}}(\text{Cas}_{\mf{su}(3)})&= c^{\mf{su}(3)}_{(a,b)}\text{Id}\\
    \rho_{W_{(c,d)}}(\text{Cas}_{\mf{t}^2})&=c^{\mf{t}^2}_{(c,d)}\text{Id}
\end{align*}
and the eigenvalues are 
\begin{align}
    c^{\mf{su}(3)}_{(a,b)}&=-\frac{4}{3}\left( a^2 + b^2 + ab +3a +3b\right) \label{eq:evalssu3cas} \\
    c^{\mf{t}^2}_{(c,d)}&=-\frac{4}{3}\left( c^2+cd + d^2\right).\label{eq:evalsoftorus}
\end{align}

Let us now examine the relevant Lichnerowicz formula. In the case at hand this takes the form 
$$(D^\frac{1}{3}_{A_\text{can}})^2=-\rho_L(\text{Cas}_{\mf{su}(3)})+\rho_{\Ad \circ \lambda}(\text{Cas}_{\mf{t}^2})+4$$
where $\rho_{\Ad \circ \lambda}$ acts on $\mf{so}(3)_\C.$  We split the space of sections using the Frobenius reciprocity theorem
\begin{equation}
    L^2(\slashed{S}_\C(\mathbb{F}_{1,2,3})\otimes \Ad Q)\cong \bigoplus_{\gamma \in \widehat{\text{SU}(3)}}\text{Hom}(V_\gamma, S\otimes \mf{so}(3)_\C )_{T^2}\otimes V_\gamma
\end{equation}
and define an operator $(D^\frac{1}{2}_{A_\text{can}})^2_\gamma$ by requiring that 
\begin{equation}\label{eq:diracsquareso3case}
    (D^\frac{1}{3}_{A_\text{can}})^2_\gamma \otimes \text{Id} = (D^\frac{1}{3}_{A_\text{can}})^2|_{\text{Hom}(V_\gamma, S\otimes \mf{so}(3))_\C\otimes V_\gamma}.
\end{equation}
The decomposition of $\mf{so}(3)_\C$ under the action $\Ad \circ \lambda $ is 
$$\mf{so}(3)_\C = W_{(-1,-1)}\oplus W_{(0,0)}\oplus W_{(1,1)}.$$ 
\begin{remark}
The reader is warned that we have chosen different generators of the maximal torus of $\text{\normalfont SU}(3)$ to those in \cite{oliveira2014monopoles}, so our labelling conventions are not the same.
\end{remark}
With this in hand we can use \thref{eigenvalsofsquare} and \eqref{eq:evalsoftorus} to  state the eigenvalues of $(D^\frac{1}{3}_{A_\text{can}})^2:$
\begin{lemma}
Let $V_\gamma=V_{(a,b)}$   be an irreducible representation of $\text{\normalfont SU}(3)$ such that $\text{\normalfont dim Hom}(V_\gamma, S\otimes \mf{so}(3)_\C)\neq 0$ and let  $(D^\frac{1}{3}_{A_\text{can}})^2_\gamma $ be the operator defined by \eqref{eq:diracsquareso3case}. The eigenvalues and multiplicities of this operator are 
 \begin{center}
\begin{TAB}(r,0.5cm,1cm)[5pt]{|c|c|}{|c|c|c|} 
     Eigenvalue & Multiplicity  \\
    
     $ -c^{\mf{su}(3)}_{(a,b)}+4$& $\text{\normalfont dim Hom}(V_\gamma, S)_{T^2} $   \\ 
 
     $-c^{\mf{su}(3)}_{(a,b)}$ & $\text{\normalfont dim Hom}(V_\gamma, S\otimes (W_{(-1,-1)}\oplus W_{(1,1)}))_{T^2}.$

    \end{TAB}

\end{center}
\end{lemma}
Again this allows us to bound the eigenvalues of $(D^0_{A_\text{\normalfont can}})_\gamma:$
\begin{corollary}\thlabel{boundsu3}
 Let $V_\gamma$ be an irreducible representation of $\text{\normalfont SU}(3)$. If $V_\gamma$ is not the trivial representation  then $\text{\normalfont Spec}(D^0_{A_\text{can}})_\gamma \cap [0,2]=\emptyset$.
 
\end{corollary}
\begin{proof}
Let $V_\gamma$ be an irreducible representation of $\text{SU}(3)$, then the lower bound on smallest positive eigenvalue of $(D^0_{A_\text{can}})_\gamma$ that is given by \thref{eigenvaluelowerbound} is 
$$L_\gamma=\sqrt{-c^{\mf{su}(3)}_\gamma}-1.$$ 
Using \eqref{eq:evalssu3cas} we see that when $V_\gamma=V_{(1,1)}$ the lower bound that one obtains is $L_\gamma = \sqrt{12}-1>2.$
Any irreducible representation $V_\gamma$ of dimension greater than that of $V_{(1,1)}$ is bounded by a larger lower bound $L_\gamma$ so that the only remaining possibilities are $V_{(0,0)}, V_{(1,0)}$ and $V_{(0,1)}.$
Whilst $\text{Hom}(\C, S\otimes \mf{so}(3)_\C)_{T^2}$ is a non-trivial vector space, one finds that the only $T^2$-equivariant map from $\C^3$ or $(\C^3)^*$ to $S\otimes \mf{so}(3)_\C$ is the zero map and so these representations do no contribute any eigenvalues to the spectrum of $(D^0_{A_\text{can}})_\gamma$.
\end{proof}

Let us thus set $V_\gamma=V_{(0,0)}=\C.$  We should expect to see a 2-dimensional kernel for $(D^0_{A_\text{can}})_\gamma$ since invariant solutions appear from the trivial representation and we know that the linearisation of the invariant $G_2$-instanton equations \eqref{eq:su3invarianteqn}  has a 1-dimensional space of solutions.\par
An application of Schur's lemma shows
$\text{Hom}(\C , S\otimes \mf{so}(3)_\C)_{T^2}$ is four dimensional.
 Let $m^{(i,j)}$ be a fixed non-zero vector lying in the copy of $W_{(i,j)}$ contained in   $ \mf{m}_\C\subset S$. Similarly we let  $w^{(k,l)}$ be a fixed non-zero vector in the copy of $W_{(k,l)}$ contained in $\mf{so}(3)_\C.$ We set 
\begin{align*}
    q^{(0,0)(0,0)}&={\bf{1}} \otimes w^{(0,0)}\\
    \tilde{q}^{(0,0)(0,0)}&=\text{Vol}\otimes w^{(0,0)}\\
    q^{(1,1)(-1,-1)}&=m^{(1,1)}\otimes w^{(-1,-1)}\\
    q^{(-1,-1)(1,1)}&=m^{(-1,-1)}\otimes w^{(1,1)} 
\end{align*}
then $\{q^{(0,0)(0,0)},\tilde{q}^{(0,0)(0,0)},q^{(1,1)(-1,-1)}q^{(-1,-1)(1,1)}\}$ forms a basis of 
$$(S\otimes \mf{so}(3)_\C)_{T^2}\cong \text{Hom}(\C, S\otimes\mf{so}(3)_\C)_{T^2}$$
where $(S\otimes \mf{so}(3)_\C)_{T^2}$ is the space of $T^2$-invariant vectors in $S\otimes \mf{so}(3)_\C.$ Let $\{I_a\}$ be an orthonormal basis for $\mf{m}$ and recall from \eqref{eq:D^thomspaceformula} that 
$$(D^0_{A_\text{can}})_\gamma =\text{cl}(I_a)\rho_\C(I_a)-\frac{3}{4}\text{Re}\Omega$$
on this space. Since the action involved here is the trivial one we see that $(D^0_{A_\text{can}})_\gamma =-\frac{3}{4}\text{Re}\Omega$ and the vectors $q^{(0,0)}$ and $q^{(3,3)}$ are, by \thref{eigenvals}, thus eigenvectors with eigenvalues $-3$ and $3$ respectively. In contrast the vectors $q^{(1,1)(-1,-1)},q^{(-1,-1)(1,1)}$ define equivariant maps that factor through $\Lambda^1\otimes \mf{so}(3)_\C \subset S\otimes \mf{so}(3)_\C$ and it follows from \thref{eigenvals} that $\text{Re}\Omega$ acts trivially on these vectors. These observation yield the next proposition:
\begin{proposition}

Let $V_\gamma=\C$ and let $(D^0_{A_\text{can}})_\gamma$ be the operator on $\text{\normalfont Hom}(V_\gamma, S\otimes \mf{so}(3)_\C)_{T^2}$ given in \eqref{eq:dtonhomspace}. The eigenvalues of this operator are
\begin{center}
\begin{TAB}(r,0.5cm,1cm)[5pt]{|c|c|}{|c|c|c|c|} 
     Eigenvalue $\lambda$ & Multiplicity  \\
   
    $0$ & $2$   \\ 
    
     $3$ & $1$   \\ 
 
      $-3$ & $1$
    \end{TAB}

\end{center}
\end{proposition}

Observe we have shown $2$ is not an eigenvalue of $(D^0_{A_\text{can}})_\gamma$ when $V_\gamma$ is either the trivial representation, the standard representation $\C^3$ or the dual of the standard representation $(\C^3)^*.$ Furthermore 2 cannot occur as an eigenvalue coming from any other representation by \thref{boundsu3}. According to \cite{charbonneau2016deformations} perturbations of a nearly K\"ahler instanton $A_\infty$ on a 6-manifold are given by 1-forms $a\in \Omega^1(\Sigma,\Ad Q) $ such that $D_{A_\infty}(a\cdot s_6)=2a\cdot s_6,$ so this observation yields the following corollary:
\begin{corollary}
 The canonical connection living on the homogeneous bundle $Q$ defined in \eqref{eq:su3hombundle} is a rigid nearly K\"ahler instanton.
\end{corollary}
We conclude this section by giving the virtual dimension of the moduli space $\mathcal{M}(A_\text{can},\mu)$ of which Oliveira's instanton is an element. This follows from  \thref{virtdim} since $0$ is the only eigenvalue of $D^0_{A_\text{can}}$ in $[0,2)$ and has multiplicity 2.
\begin{theorem}\thlabel{virtdimflag}
Let $A$ be Oliveria's $G_2$-instanton with gauge group $\text{\normalfont SO}(3)$ on $\Lambda^2_-(\C \mathbb{P}^2)$ given in \thref{oliveiraso3inst}. The virtual dimension of the moduli space is 
$$\text{\normalfont \footnotesize{virt}}\text{\normalfont \normalsize dim}\mathcal{M}(A_\text{\normalfont can},\mu)=1$$
for all $\mu\in(-2,0).$
\end{theorem}
\begin{remark}\thlabel{invarianceremarks}
Observe that the linearisation of the $\text{SU}(3)$-invariant $G_2$-instanton equation (this is just the equation on the right of \eqref{eq:su3invarianteqn}), is satisfied by $\lambda f$ for any $\lambda \in \R$ and where $f$ is defined as in \eqref{eq:bsf}. Although the linearised invariant equation has a one dimensional space of solutions, the full (non-linear) invariant equation is satisfied by $\pm f $ only. It follows that the  $\text{SU}(3)$-invariant moduli space is obstructed. \\
Note that this one-parameter family of linear deformations should also appear in our calculation, as they are also AC deformations. Whilst we emphasise that investigations into possible obstructions of the AC moduli space are beyond the scope of this paper, we note that the AC moduli space being  \emph{unobstructed} would mean there is a one-parameter family of $G_2$-instantons which must break the  $\text{SU}(3)$ symmetry, for they cannot lie in the invariant moduli space. 

\end{remark}

\subsection{Oliveira's Instantons on \texorpdfstring{$\Lambda^2_-(S^4)$}{L2-(S4)}}
Under the isomorphism $\text{Sp}(2)\cong \text{Spin}(5)$ we see that $\text{Sp}(2)$ acts transitively on $S^4$ with isotropy $\text{Spin}(4)\cong \text{Sp}(1)\times \text{Sp}(1). $ We can lift this  to an isometric action on the total space of $\Lambda^2_-(S^4)$ by asking that $\text{Sp}(2)$ acts on an ASD 2-form via pull back. One can understand the action of the isotropy group $\text{Sp}(1)\times \text{Sp}(1)$ on a fiber of $\Lambda^2_-(S^4)$ by modelling a fibre as the imaginary quaternions \cite{oliveira2014monopoles}, the action is then $(p,q)\cdot x=qx\overline{q}$ for $(p,q)\in \text{Sp}(1)\times \text{Sp}(1)$ and $x\in \text{Im}\mathbb{H}.$ The stabiliser of a non-zero imaginary quaternion under this action is $\text{Sp}(1)\times\text{U}(1)$, so away form the zero section the principal orbits are $$\text{Sp}(2)/\text{Sp}(1)\times \text{U}(1) \cong \C \mathbb{P}^3.$$ 
Moreover since the action is isometric the principal orbits are diffeomorphic to level sets of the norm function $s=|\cdot |^2$ on the fibers. The metric $| \cdot |$ on the fibers is Euclidean and will be denoted $g_{\R^3}.$ The unit sphere bundle of $\Lambda^2_-(S^4)$ is the twistor fibration $\pi \colon \C \mathbb{P}^3 \to S^4$ and carries a nearly K\"ahler structure.\par 
 The geodesic distance to the zero section takes the form 
$$t(s)=\int_0^sf(u)\diff u$$
and this allows us to rewrite the metric \eqref{eq:bsmetric} as 
$$ g= \diff t^2 +s^2(t)f^2(s(t))g_{S^2} + f^{-2}(s(t))\pi^*g_{S^4}$$
where $g_{S^2}$ is the round metric on the unit sphere of a fibre in $\Lambda^2_-(S^4).$  Again we can also describe the $G_2$-structure explicitly in local coordinates. To gain a suitable local expression we first study the homogeneous structure of $\text{Sp}(2)/\text{Sp}(1)\times \text{U}(1).$\par
Recall the Lie algebra $\mf{sp}(2)$ is defined as 
$$ \mf{sp}(2) = \{ X \in \text{Mat}_2(\mathbb{H}) \, ; \, X + X^\dagger =0\}$$
where $X^\dagger $ is the quaternionic conjugate transpose of $X.$ Let $\mf{t}=\text{Lie}(\text{U}(1))$ and let $\bm{i}, \bm{j}$ and $\bm{k}$ denote the unit imaginary quaternions. We embed the subalgebra  $\mf{sp}(1)\oplus \mf{t}$   as 
$$\begin{pmatrix} q & 0 \\
0 & a\bm{i}
\end{pmatrix}$$ 
for $a\in \R.$ Under the reductive decomposition $\mf{sp}(2)=(\mf{sp}(1)\oplus \mf{t})\oplus \mf{m}$ a suitable model for $\mf{m}$ is 
$$\mf{m}=\begin{pmatrix}0 & h \\
-h^\dagger & b\bm{j}+c\bm{k}
\end{pmatrix}$$
where $h$ is a quaternion, $b,c\in \R$ and $h^\dagger$ is the quaternionic conjugate of $h.$ Thus we have a splitting of vector spaces $\mf{m}=\mathbb{H}\oplus \langle \bm{j},\bm{k} \rangle_\R.$ A choice of orthonormal basis for $\mf{m}$ determines a local frame for $T^*\C P^3,$ we label the 1-forms arising from the standard basis of $\mathbb{H}$ as $e^1, \ldots e^4$ and the one-forms corresponding to $\bm{j}$ and $\bm{k}$ respectively are labelled $e^5$ and $e^6.$   Let us define local $\text{Sp}(2)$-invariant 2-forms $\Omega_i$ identically to \eqref{eq:2forms}, then the 3-form $\varphi$ of the Bryant-Salamon structure takes the form 
\begin{equation}
    \varphi=\diff t \wedge (a^2(t) e^{56} + b^2(t)\Omega_1) + a(t)b^2(t)(e^6\wedge \Omega_2 - e^5\wedge \Omega_3)
\end{equation}
where $a(s)=2sf(s^2)$ and $b(s)=\sqrt{2} f^{-1}(s^2).$
This viewpoint allows us to see that the metric and indeed the $G_2$-structure are again AC with $|g-g_C|_g=O(t^{-4}).$ \par

With a description of the metric in hand, we can now explain Oliveira's  \cite{oliveira2014monopoles} examples of $\text{Sp}(2)$-invariant $G_2$-instantons with gauge group $\text{SU}(2)$ on $\Lambda^2_-(S^4)$. 
Homogeneous $\text{SU}(2)$-bundles over $\mathbb{CP}^3=\text{Sp}(2)/\text{Sp}(1)\times \text{U}(1)$ are determined by isotropy homomorphisms $\lambda \colon \text{Sp}(1)\times \text{U}(1) \to \text{SU}(2).$ By \cite[Proposition 5]{oliveira2014monopoles} such a homomorphism is either trivial, $\lambda(g,e^{i\theta })=\text{diag}(e^{il\theta},e^{-il\theta})$ for $l\in \mathbb{Z}$ or $\lambda(g,e^{i\theta})=g$ where we use the standard isomorphism $\text{Sp}(1)\cong \text{SU}(2).$ We shall only consider  the homomorphisms $\lambda_l(g,e^{i\theta})=\text{diag}(e^{i\theta},e^{-i\theta}). $  By \cite[Lemma 1]{oliveira2014monopoles} the only one of the bundles $$Q_l\coloneqq \text{Sp}(2)\times_{\lambda_l}\text{SU}(2)$$ to admit a non-trivial family of invariant connections is $Q_1.$ Let us therefore fix $Q=Q_1.$ In the standard basis $I_1,I_2,I_3$ of $\mf{su}(2)$ an invariant connection on $Q$ is of the form
$$A=A_\text{can}+ c(e^5\otimes I_2 + e^6\otimes I_3)$$
for $c\in \mathbb{R}.$ Let $p\colon (\Lambda^2_-(S^4)\setminus S^4)\to \mathbb{CP}^3$ denote the projection to the unit sphere bundle of $\Lambda^2_-(S^4)$, then the bundle $p^*Q$ admits an extension to the total space of $\Lambda^2_-(S^4)$ which we denote $P$. Thus an invariant connection on $P$ is of the form 
$$A=A_\text{can}+h(e^5\otimes I_2 + e^6\otimes I_3)$$
with $h$  a function of the geodesic distance coordinate $t$ (or alternatively the Euclidean distance $s$). The $G_2$-instanton equation for $A$ then becomes the system \cite[Proposition 6]{oliveira2014monopoles}
$$f^{-4}h^2=1, \qquad f^{-4}\frac{\diff h}{\diff s}+sh=0$$
together with boundary data $\left.\frac{\diff }{\diff s}\right|_{s=0}f^{-2}h=0, \lim_{s\to \infty }h=0.$ Note these equations are identical to \eqref{eq:su3invarianteqn}-- for more on the origins of this duality see \cite[Remark 13]{oliveira2014monopoles}. Again the first equation here, which is algebraic, implies the differential equation is automatically satisfied. Thus:
\begin{theorem}[{\cite[Theorem 5]{oliveira2014monopoles}}]\thlabel{oliveirasp(2)}
The connection 
$$A=A_\text{\normalfont  can}\pm f^2(s)(e^5\otimes I_2 + e^6\otimes I_3)$$
is a $G_2$-instanton on the principal $\text{\normalfont SU}(2)$-bundle $P\to \Lambda^2_-(S^4).$ Moreover $A$ is AC with limiting connection the canonical connection living on the bundle $Q\to \C \mathbb{P}^3.$
\end{theorem}
This connection satisfies $|A-A_\text{can}|_g=O(t^{-3})$ so in the notation of \eqref{eq:asympconnection} defines an AC $G_2$-instanton with fastest rate of convergence $-2.$

\subsection{Eigenvalues of the Twisted Dirac Operator on \texorpdfstring{$\C \mathbb{P}^3$}{CP3}}
Let us again denote by $Q\to \text{Sp}(2)/\text{Sp}(1)\times \text{U}(1)$ the homogeneous SU$(2)$-bundle associated via the homomorphism $\lambda_1(g,e^{i\theta})=\text{diag}(e^{i\theta},e^{-i\theta}).$ Using the framework developed in Section~\ref{sec:lichnerowicz} we aim here to determine which representations of $\text{Sp}(2)$ could lead to eigenvalues of $D^0_{A_\text{can}}$ in the interval $[0,2).$ We look for eigenvalues in this range since the connection $A$ from \thref{oliveirasp(2)} has fastest rate of convergence $-2.$ \par
To achieve this we must first review the representation theory of the groups $\text{Sp}(2)$ and $\text{Sp}(1)\times \text{U}(1).$ Following \cite{charbonneau2016deformations} we choose the Cartan subalgebra of $\mf{sp}(2)_\C$ to be the space of $2\times 2$ quaternionic matrices of the form $\text{diag}(z{\bf{i}},w\bf{i})$ with $z,w\in \C. $  Let us call a weight  positive if it evaluates to a positive real number on the matrix $i\text{diag}(2\bf{i},\bf{i}).$ The matrices $H_1=i\text{diag}(0,\bf{i})$ and $H_2=i\text{diag}(\bf{i},-\bf{i})$ are dual to the fundamental weights $\lambda_1$ and $\lambda_2.$  The irreducible complex representations of $\text{Sp}(2)$ are thus determined by their highest weight $a\lambda_1 + b \lambda_2 $ for $a,b\in \mathbb{N}$ and we write $(V_{(a,b)},\rho_{V_{(a,b)}})$ for such a representation. The first few representations are as follows
\begin{itemize}
    \item $V_{(0,0)}=\C$ the trivial representation
    \item $V_{(0,1)}=\mathbb{H}^2=\C^4$ the standard representation. The group $\text{Sp}(2)$ acts on $\mathbb{H}^2$ by matrix multiplication and hence on $\C^4$ via the isomorphism $\mathbb{H}^2\cong\C^4$ of complex vector spaces.
    \item $V_{(1,0)}$ is a 5 dimensional representation which under the isomorphism $\text{Sp}(2)\cong \text{Spin}(5)$ corresponds to the vector representation of $\text{Spin}(5).$ This means that Spin$(5)$ acts adjointly on $\R^5\otimes \C \subset \text{Cl}(\R^5)\otimes \C.$ As noted in \cite{charbonneau2016deformations} the real 5 dimensional representation 
    $$V_{(1,0)}^\R=\left\{\begin{pmatrix} x & h \\
    h^\dagger & -x\end{pmatrix} \, ; \, x\in \R \text{ and } h\in \mathbb{H} \right\}$$
    with the action being matrix commutation, satisfies $V_{(1,0)}^\R\otimes_\R \C=V_{(1,0)}.$
\end{itemize}

The representation theory of $\text{Sp}(1)\times \text{U}(1)$ is straightforward since  irreducible representations are precisely those of the form $W\otimes W^\prime$ where $W$ is an irreducible representation of $\text{Sp}(1)$ and $W^\prime $ is an irreducible representation of  $\text{U}(1).$ For $a\in \mathbb{N}$ and $b\in \mathbb{Z}$ let us therefore denote by $(W_{(a,b)},\rho_{W_{(a,b)}})$ the unique $(a+1)$-dimensional representation of $\text{Sp}(1)\times \text{U}(1)$ on which $\text{U}(1)$ acts with weight $b$. We realise the Lie algebra as 
\begin{equation}\label{eq:sp1u1subalgebra}
\mf{sp}(1)\oplus \mf{t}=\begin{pmatrix} p & 0 \\
0 & x\bf{i} \end{pmatrix}\end{equation}
for $p\in \mf{sp}(1)=\text{Im}\mathbb{H}$ and $x\in \R $ so that $\mf{sp}(1)\oplus \mf{t}$ naturally forms a subalgebra of $\mf{sp}(2). $ The spinor space $S=\C \oplus \mf{m}_\C\oplus \C$ is the following representation of $\text{Sp}(1)\times \text{U}(1)$ \cite{charbonneau2016deformations}
$$S=\langle {\bf{1}} \rangle_\C \oplus W_{(1,-1)}\oplus W_{(1,1)}\oplus W_{(0,-2)}\oplus W_{(0,2)}\oplus \langle \text{Vol}\rangle_\C.$$
\par
Next we consider the Casimir operators on the irreducible representations $V_{(a,b)}$ and $W_{(c,d)}$ of $\text{Sp}(2)$ and $\text{Sp}(1)\times \text{U}(1)$ respectively. We know that the Casimir operators act as multiples on the identity on these representations so
\begin{align*}
    \rho_{V_{(a,b)}}(\text{Cas}_{\mf{sp}(2)})&=c^{\mf{sp}(2)}_{(a,b)}\text{Id} \\
    \rho_{W_{(c,d)}}(\text{Cas}_{\mf{sp}(1)\oplus \mf{t}})&=c^{\mf{sp}(1)\oplus \mf{t}}_{(c,d)}\text{Id}
\end{align*}
and these constants are found in \cite{charbonneau2016deformations}  to be 
\begin{align}
    c^{\mf{sp}(2)}_{(a,b)}&= -\left( 2a^2 +2ab +b^2 +6a +4b\right) \label{eq:sp2evals}\\
    c^{\mf{sp}(1)\oplus \mf{t}}_{(c,d)}&=-\left( c(c+2)+d^2\right). 
\end{align}\par

Let us begin to apply these facts to study the spectrum of $D^\frac{1}{3}_{A_\text{can}}. $ 
By \thref{eigenvalsofsquare} we need to decompose $(\mf{su}(2)_\C, \Ad \circ \lambda_1)$ as a representation of $\text{Sp}(1)\times \text{U}(1).$ The action is simply the adjoint action of the maximal torus of $\text{SU}(2)$ on $\mf{su}(2)_\C=\mf{sl}(2,\C)$ so the decomposition is 
$$ (\mf{su}(2)_\C, \Ad \circ \lambda_1)=W_{(0,-2)}\oplus W_{(0,0)}\oplus W_{(0,2)}$$
and $\rho_{\Ad \circ \lambda_1}(\text{Cas}_{\mf{sp}(1)\oplus \mf{t}})$ acts as $-4$ on the $W_{(-2,0)}\oplus W_{(0,2)}$ subspace  and acts trivially on $W_{(0,0)}.$ We can once again use \thref{eigenvalsofsquare} to state the eigenvalues of $(D^\frac{1}{3}_{A_\text{can}})^2_\gamma $ are

\begin{lemma}
Let $V_\gamma=V_{(a,b)}$   be an irreducible representation of $\text{\normalfont SU}(3)$ such that $\text{\normalfont dim Hom}(V_\gamma, S\otimes \mf{so}(3)_\C)\neq 0$ and let  $(D^\frac{1}{3}_{A_\text{can}})^2_\gamma $ be the operator defined by \eqref{eq:D^thomspaceformula}. The eigenvalues and multiplicities of this operator are 

\begin{center}
\begin{TAB}(r,0.5cm,1cm)[5pt]{|c|c|}{|c|c|c|} 
     Eigenvalue  & Multiplicity  \\
   
     $-c^{\mf{sp}(2)}_{(a,b)}+  4 $&$ 
     \text{\normalfont dim} \text{\normalfont  Hom}(V_{(a,b)},S )_{\text{Sp}(1)\times \text{U}(1)}  $  \\
     $-c^{\mf{sp}(2)}_{(a,b)}$&$\text{dim} \text{Hom}(V_{(a,b)}, S\otimes (W_{(0,-2)}\oplus W_{(0,2)}))_{\text{Sp}(1)\times \text{U}(1)}$\\
    \end{TAB}
\end{center}
\end{lemma}

Again this will prove a useful consistency check when we come to calculate the matrices of the twisted Dirac operators $(D^t_{A_\text{can}})_\gamma.$ 
\par
Recall that we are looking for eigenvalues of $D^0_{A_\text{can}}$ in the interval $[0,2)$.  We can use \thref{eigenvaluelowerbound} to eliminate most of the representation $V_{(a,b)}$ under consideration. 
\begin{lemma}\thlabel{boundssp2}
Let $V_\gamma$ be an irreducible representation of $\text{\normalfont Sp}(2).$ If $V_\gamma$ is not one of the following representations then $(D^0_{A_\text{can}})_\gamma$ has no eigenvalues in the interval $[0,2]$:
\begin{itemize}
    \item $V_{(0,0)}$ the trivial representation
    \item $V_{(1,0)}$ the vector representation of $\text{\normalfont Spin}(5).$
\end{itemize}
\end{lemma}
\begin{proof}
This is a simple application of \thref{eigenvaluelowerbound}. When $V_\gamma=V_{(0,2)}$ we find that $L_\gamma=\sqrt{12}-1>2$, this leaves the possibility of the above irreducible representations, as well as the standard representation $V_{(0,1)}.$ To rule out the case of the standard representation of Sp$(2)$, note that restricting the action to the subgroup Sp$(1)\times \text{U}(1)$ one finds that $V_{(0,1)}=W_{(1,0)}\oplus [[W_{(0,1)}]]$ but inspection of \eqref{eq:spinorspacesp2} and \eqref{eq:spinorspacesp2prime} then reveals that there are no non-trivial Sp$(1)\times \text{U}(1)$ equivariant maps from $V_{(0,1)}\to S\otimes \mf{su}(2)_\C,$ so this case need not be considered.
\end{proof}

Our task is thus to calculate the eigenvalues of the operator $(D^0_{A_\text{can}})_\gamma$ for the irreducible representations listed in \thref{boundssp2}. Whilst the case $V_\gamma = \C$ is straightforward, the case $V_\gamma = V_{(1,0)}$ requires more work. In this case, the tools developed in Section~\ref{sec:lichnerowicz} are unable to rule out the possibility that the eigenvalues of $(D^0_{A_\text{can}})_\gamma$ lie in the interval $[0,2).$ To remedy this we directly calculate the matrix of the operator in a suitable basis and thus find the eigenvalues in question. Whilst the calculation involved is somewhat lengthy, and therefore deferred to Appendix~\ref{sec:appendixcp3}, the method used can be described as follows: \\
\begin{enumerate}
    \item Calculate orthonormal bases of $\{ I_a \}$ of $\text{m}$ and $\{I_i\}$ of $\text{sp}(2)\times \mf{t}$, this yields an orthonormal basis $\{I_A\}$ of $\text{sp}(2).$
    \item Look for elements  of $V_{(0,1)}^*\otimes S \otimes \mf \mf{su}(2)_\C$ which are invariant under the action of $\text{Sp}(1)\times U(1)$ and hence pick a basis for the space of equivariant maps $V_{(0,1)} \to S\otimes \mf{su}(2)_\C.$ 
    \item Apply the formula \eqref{eq:D^thomspaceformula} with $t=0:$
    $$ (D^0_{A_\text{can}})_\gamma = \text{cl}(I_a)\rho_{V_\gamma^*}(I_a) - \frac{3}{4} \text{Re}\Omega $$
    where $\text{Re}\Omega $ is an element of $\Lambda^3(\mf{m}^*)$ determined by the structure constants via \eqref{eq:holformincoords}. Clifford multiplication can be understood using $\text{Re}\Omega$ and the almost complex structure $J$ via \eqref{eq:nkclifford}. 
\end{enumerate}
This method is similar to that of B\"ar in \cite{bar1992dirac}, where the spectrum of the Dirac operator on $S^3$ and a number of 3-dimensional lens spaces was calculated. The main difference in our homogeneous space $G/H$ has $H$ a connected Lie group, as opposed to a discrete group. Furthermore we are only interested in calculating part of the spectrum- to calculate the entire spectrum would require knowledge of the relevant branching rule and this is in general a difficult problem. \par
The calculations presented in Appendix~\ref{sec:appendixcp3} yield the following result:

\begin{proposition}\thlabel{sp2evals}
Let $V_\gamma$ be an irreducible representation of $\text{Sp}(2)$ and let $(D^0_{A_\text{can}})_\gamma$ denote the twisted Dirac operator on $\text{\normalfont Hom}(V_\gamma,S\otimes \mf{su}(2)_\C)_{\text{\normalfont Sp}(1)\times \text{\normalfont U}(1)}$ as in \eqref{eq:D^thomspaceformula}. The eigenvalues are symmetric about zero and the $\pm \lambda$ eigenspaces are isomorphic.  The non-negative eigenvalues and multiplicities when $V_\gamma =V_{(0,0)}$ are: 
\begin{center}
\begin{TAB}(r,0.5cm,1cm)[5pt]{|c|c|}{|c|c|c|c|} 
     Eigenvalue  & Multiplicity  \\
   
     $0 $&$ 2  $  \\
     $-3$&$1$\\
     $3$ & $1$
    \end{TAB}
\end{center}

When $V_\gamma = V_{(1,0)}$ is the vector representation of $\text{Sp}(2),$ the non-negative eigenvalues and multiplicities are:
\begin{center}
\begin{TAB}(r,0.5cm,1cm)[5pt]{|c|c|}{|c|c|c|c|} 
     Eigenvalue  & Multiplicity  \\
   
    $\sqrt{8}$&$2$\\
    $\frac{1}{2}+\frac{\sqrt{57}}{2}$&$ 1  $  \\
    $-\frac{1}{2}+\frac{\sqrt{57}}{2}$&$1$
    \end{TAB}
\end{center}
\end{proposition}

Observe that $2$ is not an eigenvalue of $(D^0_{A_\text{can}})_\gamma$ when $V_\gamma$ is either the trivial representation or the vector representation and  \thref{boundssp2} ensures it is not an eigenvalue for any other representation. Therefore $2$ is not in the spectrum of $D^0_{A_\text{can}}.$ Since deformations of a nearly K\"ahler instanton correspond to 1-forms in the $+2$ eigenspace of the operator  $D^0_{A_\text{can}}$  \cite{charbonneau2016deformations} we obtain a corollary: 
\begin{corollary}
 Let $Q\to \C \mathbb{P}^3$ be the homogeneous $\text{\normalfont SU}(2)$-bundle 
 $$Q=\text{\normalfont Sp}(2)\times_{\lambda_1}\text{\normalfont SU}(2)$$
 where $\lambda_1\colon \text{\normalfont Sp}(1)\times \text{\normalfont U}(1)\to \text{\normalfont SU}(2)$ is the homomorphism $\lambda_1(g,e^{i\theta})=\text{\normalfont diag}(e^{i\theta},e^{-i\theta}).$ Then the canonical connection $A_\text{can}$ on $Q$ is a rigid nearly K\"ahler instanton.
\end{corollary}
We conclude this section by stating the virtual dimension of the moduli space we have been studying. This follows from the observation in \thref{virtdim}:
\begin{theorem}
Let $A_\text{\normalfont can}$ be the canonical connection on the homogeneous $\text{\normalfont SU}(2)$-bundle $Q=\text{\normalfont Sp}(2)\times_{\lambda_1}\text{\normalfont SU}(2).$ Let $p\colon \Lambda^2_-(S^4)\to \mathbb\C \mathbb{P}^3$ denote the natural projection and let $P$ be the extension of the bundle $p^*Q$ as described in \cite{oliveira2014monopoles}. The virtual dimension of the moduli space $\mathcal{M}(A_\text{\normalfont can},\mu)$ of  $G_2$-instantons on $P$ asymptotic to $A_\text{can}$ with rate $\mu$  is 
$$\text{ \normalfont \footnotesize virt} \text{\normalfont dim}\mathcal{M}(A_\text{\normalfont can},\mu)=1 $$
for all $\mu \in (-2,0).$
\end{theorem}
Once again the one-parameter deformations known from the invariant theory also define AC deformations, so we expected at least a one-dimensional kernel. The dimension of the cokernel is not considered in this paper.

\section{Deformations of the Standard Instanton}\label{sec:stdinstsection}
We now turn our attention to the  $G_2$-instanton of G\"unaydin-Nicolai, which lives on $\R^7$ and is referred to in this paper as the standard instanton. This example is AC with fastest rate of convergence $-2$, so our task is to once more calculate the eigenvalues of the operator at infinity which lie in the interval $[0,2)$. We follow a similar path to that of the previous section, using eigenvalue bounds to narrow down which representations can contribute such eigenvalues. Again we are left needing to calculate the eigenvalues of a few endomorphims defined by $D^0_{A_\infty}.$ To do so we write the twisted Dirac operator as sum of Casimir operators in \thref{diraccasform} and work with this expression under the Frobenius reciprocity formalism. The calculations required here are lengthy, so are presented in Appendix~\ref{sec:appendixs6}. The result of this work is \thref{mainresult}, which determines the virtual dimension of the moduli space on which the standard instanton.

\subsection{The Standard Instanton}
Throughout this section we work with the AC $G_2$-manifold $M=\R^7$, whose asymptotic link is the homogeneous nearly K\"ahler manifold $\Sigma =S^6.$
There is a well known example of a $G_2$-instanton on $\R^7$ constructed by G\"unaydin and Nicolai \cite{gunaydin1995seven}. This example was recovered by Harland et al \cite{harland2010yang} via a different construction method, namely they considered the instanton equation on the cylinder over the nearly K\"ahler 6-sphere (which is conformally equivalent to $\R^7\setminus \{0\}$). This viewpoint makes it easier to understand the asymptotics of the connection, in particular one easily observes that it is AC to the canonical connection. We give here a very brief overview of the construction of this connection, the important fact we shall need is the rate at which the instanton converges. \par
 Form a bundle $Q$ over $S^6$ via
$$Q=G_2\times_{( \text{SU}(3) , \iota)}G_2$$
where $\iota \colon  \text{SU}(3) \hookrightarrow G_2$ is the inclusion homomorphism. Topologically $Q=G_2\times S^6 \to S^6$ is the trivial $G_2$-bundle over $S^6$ but it helps to keep in mind the homogeneous structure.  Since the bundle $Q$ is associated from the canonical bundle $G_2\to G_2/ \text{SU}(3) $ the canonical connection lives on $Q$. Essentially $Q$ extends the canonical bundle to a $G_2$-bundle and the canonical connection is the connection whose horizontal spaces are defined by the inclusion of left translates of $\mf{m}.$ In particular, the canonical connection on $Q$ has holonomy $ \text{SU}(3) .$\par 
 Other than the canonical connection $A_\text{can}$ there is another $G_2$-invariant  connection which is in fact flat. We denote this $A_\text{flat}=A_\text{can}+a.$
Consider $P=\R^7\times G_2$ so that $P|_{\R^7\setminus \{0\}}=\pi^*Q$, the pullback of $Q$ to the cone. Let $(r,\sigma)\in (0,\infty)\times S^6$ and make the $G_2$-invariant ansatz
$$A(r,\sigma)=A_\text{can}(\sigma)+f(r)a(\sigma)$$
where $f$ is a function on $\R^7$ depending only on the radial coordinate $r.$ It is shown in \cite[Section 5.3]{harland2010yang} that 
\begin{equation}\label{eq:diffeqn1}
f(r)=\frac{1}{Cr^2+1}
\end{equation}
with $C>0$ a constant yields a $G_2$-instanton which extends over the origin in $\R^7$.
\begin{remark}
 Harland et al \cite[Section 5.3]{harland2010yang} work on the cylinder over $S^6$, with coordinates $(\sigma ,t)$ for $\sigma \in S^6 $ and $t \in \R$,  so to change to the conical viewpoint considered here one makes the change of variables $r=e^t$. Furthermore the coefficient of $2$ in \eqref{eq:diffeqn1} differs from the one found in \cite{harland2010yang}, this is a consequence of the normalisations we have chosen. Namely the metric on $\mf{g}_2$ is $-\frac{1}{12}$\textsuperscript{th} of the Killing form and our $ \text{SU}(3) $ structure satisfies  $\diff \omega=3\text{Im}\Omega .$
\end{remark}

  For any such $f$ we define
$$A_\text{std}=A_\text{can}+fa$$
 and call this the standard $G_2$-instanton. It is clear from \eqref{eq:diffeqn1}  that $A_\text{std}$ aymptotes to the canonical connection with all rates greater than $-2.$\par
 In the notation of the previous section we set $A=A_\text{std}$ and therefore $A_\infty=A_\text{can}.$ The fastest rate at which $A_\text{std}$ converges is $-2,$ so we will consider the family of moduli spaces $\mathcal{M}(A_\text{can}, \mu)$ for $\mu \in (-2,0).$  In the case at hand we expect that $\{-2,-1\}\subset W$ for the following reason:  The deformation defined by the dilation (the $\R_+$ action on the end of $M$) is $\iota_{\rho \frac{\partial }{\partial \rho}}F_{A_\text{std}}$ and this is added with rate $-2,$ so defines an element of the kernel of $D^0_{A_\text{can}}.$  The deformations determined by translation in $\R^7$ are $\iota_{\frac{\partial}{\partial x_i}}F_{A_\text{std}}$ and these are added with rate $-1,$ therefore we expect the $+1$ eigenspace of $D^0_{A_\text{can}}$ has dimension at least 7.

\subsection{Eigenvalues the Twisted Dirac Operator on \texorpdfstring{$G_2/ \text{SU}(3) $}{G2/SU(3)}}
As in now familiar we can utilise the results of \ref{sec:lichnerowicz} to obtain the eigenvalues of $(D^\frac{1}{3}_{A_\text{can}})^2$ and to obtain bounds on the eigenvalues of $(D^\frac{1}{3}_{A_\text{can}})_\gamma$ for each irreducible representation $V_\gamma$ of $G_2.$ \par

Let us set $G=G_2,H= \text{SU}(3) $ and $\Sigma=S^6.$ All representations under consideration are complex representations for simplicity. It follows from \eqref{eq:descriptionofm} that the complexified tangent bundle is associated via the representation $(\mf{m}_\C,\Ad)=W_{(1,0)}\oplus W_{(0,1)}$ of $ \text{SU}(3) $ and it is worth noting that $\mf{m}_\C\cong \mf{m}_\C^*$ as representations. The complex spinor bundle is associated via the representation
 \begin{equation}\label{eq:spinorspacesu(3) }
     S=W_{(0,0)}\oplus W_{(1,0)} \oplus W_{(0,1)} \oplus W_{(0,0)}.
 \end{equation}

The relevant eigenvalues of the Casimir operators action on the irreducible representations $V_{(i,j)}$ of $G_2$ and $W_{(k,l)}$ of $\text{SU}(3)$ are calculated in \cite{charbonneau2016deformations} to be
\begin{align*}
    \rho_{(i,j)}(\text{Cas}_{\mf{g}_2})&=c^{\mf{g}_2}_{(i,j)}\text{Id} \\
    \rho_{(k,l)}(\text{Cas}_{\mf{su}(3)})&=c^{\mf{su}(3)}_{(k,l)}\text{Id}
\end{align*}
where
\begin{align}
    c^{\mf{g}_2}_{(i,j)}&=-(i^2+3j^2+3ij+5i+9j) \label{eq:gcaseigenvalue}\\
    c^{\mf{su}(3)}_{(k,l)}&=-(k^2+l^2+kl+3k+3l).    \label{eq:hcaseigenvalue}
\end{align}

Let us turn our attention back to the operator $D^\rho=\rho_S(I_a)\rho_R(I_a).$ For $i\neq j$  we set  $[[W_{(k,l)}]]=W_{(k,l)}\oplus W_{(l,k)}.$  The isomorphism class of the $ \text{SU}(3) $ representation $S$ is
$$S=W_{(0,0)} \oplus[[W_{(1,0)}]] \oplus W_{(0,0)}.$$

\begin{lemma}\thlabel{repeigenvalsofsquare}
Let $V_\gamma$ be an irreducible representation of $G_2,$ then the eigenvalues  and multiplicities of the operator $(D^\frac{1}{3}_{A_\text{can}})_{\gamma}^2$ are 
\begin{center}
\begin{TAB}(r,0.5cm,1cm)[5pt]{|c|c|}{|c|c|c|} 
     Eigenvalue  & Multiplicity  \\
   
     $-c^{\mf{g}_2}_{\gamma}$&$ 
     \text{dim} \text{ Hom}(V_{\gamma},S \otimes (W_{(1,0)}\oplus W_{(0,1)}))_{ \text{SU}(3) }  $  \\
    
     $-c^{\mf{g}_2}_{\gamma}-5$ & $\text{dim}  \text{ Hom}(V_\gamma , S \otimes W_{(1,1)})_{ \text{SU}(3) }$  \\ 
    \end{TAB}
\end{center}
where $c^{\mf{g}_2}_\gamma$ is the eigenvalue of the Casimir operator on the irreducible representation $V_\gamma$ with respect to the inner product $B$ from \eqref{eq:killingform}.
\end{lemma} 
\begin{proof}
Immediate from \thref{eq:dthirdsquare} and  \eqref{eq:hcaseigenvalue}.
\end{proof}

\begin{corollary}\thlabel{g2remainingreps}
If $V_{\gamma}$ is not one of the following irreducible representations of $G_2,$ then the operator $(D^0_{A_\text{can}})_\gamma,$ acting on  $\text{Hom}(V_{\gamma}, E)_{ \text{SU}(3) }$, has no eigenvalues in the range $[0,2)$:
\begin{enumerate}
    \item $V_{(0,0)},$ the trivial representation
    \item $V_{(1,0)},$ the standard representation
    \item $V_{(0,1)},$ the adjoint representation.
\end{enumerate}
\end{corollary}

\begin{proof}
 By \thref{repeigenvalsofsquare} the smallest possible eigenvalue of $(D^\frac{1}{3}_{A_\text{can}})^2_\gamma$ is $-c^{\mf{g}_2}_{\gamma}-5$ and it follows that the smallest possible non-negative eigenvalue of $(D^{\frac{1}{3}}_{A_\text{can}})_\gamma$ is $(-c^{\mf{g}_2}_\gamma-5)^\frac{1}{2}.$
 Applying \thref{eigenvaluelowerbound} to the case when $V_\gamma=V_{(2,0)}$ one finds that the lower one the smallest positive eigenvalue one obtains is $L_\gamma=\sqrt{14-5}-1=2$ so this bound is sufficient for the statement of the lemma. When $V_\gamma$ is an irreducible representation of higher dimension one obtains a greater lower bound and the only irreducible representations of lower dimension are those given in the statement of the lemma. 
\end{proof}

For the three representations in the above list we must compute the sets of eigenvalues explicitly.

\subsection{A Formula for the Twisted Dirac Operator on \texorpdfstring{$G_2/\text{SU}(3)$}{G2/SU(3)}}

We begin by establishing an explicit description of the embedding $\mf{g}_2\hookrightarrow\mf{so}(7)\cong\Lambda^2(\R^7)^*$ in term of the subalgebra $\mf{su}(3).$ Recall $\mf{so}(7)\cong \Lambda^2(\R^7)^*$ splits as into irreducible representations of $G_2$ as $\Lambda^2(\R^7)^*=\Lambda^2_{14}\oplus \Lambda^2_7$ where the subscript denotes the dimension of the irreducible component. We pick an orthonormal basis $e^1,\ldots , e^6,\diff t$ of $(\R^7)^*\cong(\R^6)^*\oplus \R^*$ so that the summand $\R^*$ is identified with $\langle \diff t \rangle.$ Under the splitting $\R^7=\R^6\oplus \R$ the action of $ \text{SU}(3) $ is the obvious one.
The image of the  embedding $\mf{g}_2\hookrightarrow \Lambda^2(\R^7)^*$ is the space of 2-forms $\alpha$ satisfying $\alpha \wedge \psi_0=0.$  We decompose the 2-forms on $\R^7$ as $ \text{SU}(3) $ modules:
\begin{align*}
    \Lambda^2(\R^7)^*&\cong \Lambda^2(\R^6)^*\oplus\left( (\R^6)^*\wedge \diff t\right) \\
    &\cong \Lambda^2_8\oplus \Lambda^2_6 \oplus\langle \omega \rangle_\R \oplus \left((\R^6)^*\wedge \diff t\right).
\end{align*}
Let us try to understand Clifford multiplication in this formalism. For this we first seek to extend the spinor space $S$ to a representation of $G_2$, this will allow elements of $\mf{m}\subset \mf{g}_2$ (which we may view as tangent vectors at a point) to act on the spinor bundle and we can compare this action with that of Clifford multiplication. Extending $S$ to a representation of $G_2$ means seeking an action that yields the splitting \eqref{eq:spinorspacesu(3) } when the action is restricted to  $ \text{SU}(3) \subset G_2.$ Note that the standard representation $V_{(1,0)}=\C^7$ of $G_2$ becomes the representation $V_{(1,0)}=W_{(0,0)}\oplus W_{(1,0)}\oplus W_{(0,1)}$ when one restricts to the subgroup $ \text{SU}(3) \subset G_2$ (this is known as the branching rule for $V_{(1,0)}$). Thus $V_{(1,0)}\oplus V_{(0,0)}$ branches to the correct representation of $ \text{SU}(3) .$ To view $S$ as the representation $V_{(1,0)}\oplus V_{(0,0)}$ of $G_2$ we use the  isomorphism 
\begin{align*}
    \C\oplus \mf{m}^*_\C\oplus \C&=(\C^7)^*\oplus \C \\
    (a,v,b) &\mapsto (a\diff t + v,b).
\end{align*}
We thus define an action $\rho_S$ of $\mf{g}_2$ on $S$ with 
$$\rho_S(\eta)(a \diff t + v , b)=(\eta \lrcorner_7 (a\diff t + v) , 0),$$
where $\eta\in\mf{g}_2\subset \Lambda^2(\R^7)^*$ and $\lrcorner_7$ denotes the contraction from the Euclidean inner product on $\R^7$ linearly extended to $(\C^7)^*$. We extend this action to sections of the spin bundle $\slashed{S}_C=\Lambda_\C^0\oplus\Lambda^1_\C\oplus \Lambda^6_\C$. \par
To compare this action to Clifford multiplication we think a vector field $u$ as an element of $L^2(G_2,\mf{m})_{ \text{SU}(3) },$ where $\mf{m}$ is modelled as 
\begin{equation}\label{eq:descriptionofm}
\mf{m}= \text{\normalfont span}\left\{e^a\wedge \diff t+ \frac{1}{2}e^a\lrcorner \text{\normalfont Re}\Omega\, ; \,  i=1,\ldots,6\right\}.
\end{equation}
If we set $F(u)= u \wedge \diff t + \frac{1}{2}u \lrcorner \text{Re}\Omega $ then $F(u)$ defined a section of $\Lambda^2_{14}.$ We can therefore apply the action of $\rho_S$ to $F(u)$ and one finds that
\begin{equation}\label{eq:naturalrep} \rho_S(F(u)) (f,v,g\text{Vol})= \left( -u \lrcorner v , fu - \frac{1}{2} (u \wedge v)\lrcorner \text{Re}\Omega,0\right).
\end{equation}
 Here $\lrcorner $ denotes the contraction map induced by the round metric, extended complex linearly.
This does not agree with the formula for Clifford multiplication given by \eqref{eq:nkclifford}, so to fix this disparity we consider another representation on the spin bundle. There is  another natural representation of $G_2$ on  given by $\rho_{\tilde{S}}(X)=\text{Vol}^{-1}\cdot \rho_S(X)\cdot \text{Vol} $ and we shall see that the two representations $\rho_S$ and $\rho_{\tilde{S}}$ together recover Clifford multiplication.  To see this we first need a simple lemma:

\begin{corollary}
After associating spinor fields with elements of $L^2(G_2,S)_{ \text{SU}(3) }$ and vector fields with elements of $L^2(G_2,\mf{m})_{ \text{SU}(3) },$ Clifford multiplication of a spinor $s$ by a tangent vector $u$ takes the form 
\begin{equation}
\text{\normalfont cl} (u) s = \left(\rho_S(F(u)) - \rho_{\tilde{S}}(F(u))s\right)s
\end{equation}
where $\rho_S$ is the  representation \eqref{eq:naturalrep} of $\mf{g}_2$ on $S$ and \normalfont $\rho_{\tilde{S}}=\text{Vol}^{-1}\cdot \rho_S\cdot \text{Vol}.$
\end{corollary}
\begin{proof}
Recall that the action of the volume from on the spin bundle is $\text{Vol}(f,v,h\text{Vol})=(-h,Jv,f\text{Vol})$, one thus calculates that 
$$(\rho_S(F(u))-\rho_{\tilde{S}}(F(u)))(f,v,h\text{Vol})=\left( -u \lrcorner v,fu-\frac{1}{2}(u \wedge v)\lrcorner \text{Re}\Omega - hJu-\frac{1}{2}J((u \wedge Jv)\lrcorner \text{Re}\Omega ),-(u\lrcorner Jv)\text{Vol}\right).$$
Using that 
$$J\left( (u \wedge Jv)\lrcorner \text{\normalfont Re}\Omega \right) = (u \wedge v)\lrcorner \text{\normalfont Re}\Omega$$ we see this exactly matches the formula for Clifford multiplication given by \eqref{eq:nkclifford}.
\end{proof}

\begin{remark}
Observe that this formula for Clifford multiplication is unique to the case $\Sigma=G_2/ \text{SU}(3) .$ The author has been unable to find a similar formula for the other homogeneous nearly K\"ahler 6-manifolds.
\end{remark}

Having found formulae for both covariant differentiation and Clifford multiplication we can now consider the Dirac operators relevant to this setting. Recall  the operators $D^t$ are built from the modified connections $\nabla^t$ and that $\nabla^1$ is the canonical connection arising from the reductive homogeneous structure. Let us fix   bases  $\{I_a\}_{a=1}^{6}$ for $\mf{m}$ and $\{I_i\}_{i=7}^{14}$ for $\mf{h}$ which are orthonormal with respect to $-\frac{1}{12}$ of the Killing form of $\mf{g}_2.$

\begin{corollary}\thlabel{diraccasform}
After associating spinor fields with elements of $L^2(G_2,S)_{ \text{SU}(3) }$ and vector fields with elements of $L^2(G_2,\mf{m})_{ \text{SU}(3) },$  
the Dirac operator of the canonical connection $D^1 \colon \Gamma(\slashed{S}_\C(\Sigma)) \to \Gamma(\slashed{S}_\C(\Sigma)) $
is 
\begin{align*}
   D^1=(\rho_S(I_a) - \tilde{\rho}_S(I_a))\rho_R(I_a)s.
\end{align*}
\end{corollary}
Let us therefore define operators 
\begin{align*}
D^{\rho}_{A_\text{can}} &\colon \Gamma(\slashed{S}_\C(\Sigma)) \to \Gamma(\slashed{S}_\C(\Sigma)) & \tilde{D^\rho}_{A_\text{can}}&\colon \Gamma(\slashed{S}_\C(\Sigma)) \to \Gamma(\slashed{S}_\C(\Sigma))\\
D^\rho_{A_\text{can}}&=\rho_S(I_a)\rho_R(I_a) & \tilde{D^\rho}_{A_\text{can}}&=\rho_{\tilde{S}}(I_a)\rho_R(I_a)
\end{align*}
so that $D^1_{A_\text{can}}=D^{\rho}_{A_\text{can}} - \tilde{D^\rho}_{A_\text{can}}.$

 One can show that the actions $\rho_S$ and $\rho_R$ commute  and this observation allows us to rewrite $D^\rho_{A_\text{can}}$ as:
$$ D^\rho_{A_\text{can}}= \frac{1}{2}\left(\rho_{S\otimes R}(\text{Cas}_\mf{m})-\rho_S(\text{Cas}_\mf{m})-\rho_R(\text{Cas}_\mf{m}) \right) $$
where a representation of $G_2$ defines a representation of $ \text{SU}(3) $ by restriction and  $\rho(\text{Cas}_\mf{m})\coloneqq \rho(\text{Cas}_{\mf{g}_2})-\rho(\text{Cas}_{\mf{su}(3)}).$ Similarly the expression for $\tilde{D^\rho}$ is 
$$ \tilde{D^\rho}_{A_\text{can}}= \frac{1}{2}\left(\rho_{\tilde{S}\otimes R}(\text{Cas}_\mf{m})-\rho_{\tilde{S}}(\text{Cas}_\mf{m})-\rho_R(\text{Cas}_\mf{m}) \right) $$
and in fact $\tilde{D^\rho}_{A_\text{can}}=\text{Vol}^{-1}D^\rho_{A_\text{can}} \text{Vol}.$ Combining this with \eqref{eq:diracdifference} yields a representation theoretic formula for the Levi-Civita Dirac operator:

\begin{equation}\label{eq:outoflabelnames}
    D^0_{A_\text{can}}=D^\rho_{A_\text{can}}-(\text{Vol}^{-1} D^\rho_{A_\text{can}} \text{Vol}) -\frac{3}{4}\text{Re}\Omega.
\end{equation}

This discussion transfers easily to the homomorphism space decomposition of that space of twisted spinor fields. If $V_\gamma$ is an irreducible representation of $G_2$ then $\text{Hom}(V_\gamma, S\otimes \mf{g}_2)_{\text{SU}(3)}\otimes V_\gamma$  embeds into the space of sections and we can define operators $D^\rho_\gamma$ such that
$$(D^\rho_{A_\text{can}})_\gamma \otimes \text{Id} = (D^\rho_{A_\text{can}}) |_{\text{Hom}(V_\gamma, S\otimes \mf{g}_2)_{\text{SU}(3)}\otimes V_\gamma}.$$ One finds that 

\begin{equation}\label{eq:diracascasimirs}
(D^\rho_{A_\text{can}})_\gamma= \frac{1}{2}\left(\rho_{S\otimes V_\gamma^*}(\text{Cas}_\mf{m})-\rho_S(\text{Cas}_\mf{m})-\rho_{V_\gamma^*}(\text{Cas}_\mf{m}) \right)
\end{equation}
and the operators $(D^t_{A_\text{can}})_\gamma $ from \eqref{eq:dtonhomspace} are of the from 
$$(D^t_{A_\text{can}})_\gamma = (D^\rho_{A_\text{can}})_\gamma - (\tilde{D}^{\rho}_{A_\text{can}})_\gamma + \frac{3(t-1)}{4}\text{Re}\Omega $$
where $(\tilde{D}^{\rho}_{A_\text{can}})_\gamma = \vol ^{-1} (D^{\rho}_{A_\text{can}})_\gamma \vol .$
\begin{remark}
The formula \eqref{eq:outoflabelnames} holds for any twisting of the spin bundle. When the bundle is not twisted, this formula recovers the usual family $D^t$ of Dirac operators. 
\end{remark}

\subsection{Virtual Dimension of the Moduli Space}

By applying \thref{g2remainingreps}, we see that in order to calculate the virtual dimension of $\mathcal{M}(A_\text{can},\mu)$ for $\mu \in (-2,0)$, we must explicitly calculate the eigenvalues of the operators $(D^0_{A_\text{can}})_\gamma$ for the the cases when $V_\gamma$ is the trivial representation, the standard representation and the adjoint representation of $G_2.$ 
This is done using the formula for the twisted Dirac operator that has just been developed. Further details of the required calculations are given in Appendix~\ref{sec:appendixs6}. The result is the following:
 
\begin{proposition}\thlabel{evalsstdinst}
Let $V_\gamma$ be an irreducible representation of $G_2,$ then the eigenvalues of  $(D^0_{A_\text{can}})_\gamma$ are symmetric about $0$ and the $\pm\lambda$ eigenspaces are isomorphic.\par 
When $V_\gamma = V_{(0,0)}$ is the trivial representation of $G_2$, the unique eigenvalue of $(D^0_{A_\text{can}})_\gamma$ is $0$ and has multiplicity 2. \par
When $V_\gamma = V_{(1,0)}$ is the standard representation of $G_2$, the non-negative eigenvalues and multiplicities of $(D^0_{A_\text{can}})_\gamma$  are:
\begin{center}
\begin{TAB}(r,0.5cm,1cm)[5pt]{|c|c|}{|c|c|c|c|} 
     Eigenvalue $\lambda$ & Multiplicity  \\
   
     $1$&$ 1$  \\
    
     $-\frac{1}{2}+\frac{\sqrt{33}}{2}$ & $2$   \\ 
 
     $\frac{1}{2}+\frac{\sqrt{33}}{2}$ & $2$  \\ 
    \end{TAB}
\end{center}
Finally, when 
Let $V_\gamma=V_{(0,1)}$ is the adjoint representation of $G_2$, the non-negative eigenvalues and multiplicities of $(D^0_{A_\text{can}})_\gamma$  are:
\begin{center}
\begin{TAB}(r,0.5cm,1cm)[5pt]{|c|c|}{|c|c|c|c|c|} 
     Eigenvalue $\lambda$ & Multiplicity  \\
    
     $\frac{1}{2}+\frac{\sqrt{57}}{2}$ & 2    \\ 
 
     $-\frac{1}{2}+\frac{\sqrt{57}}{2}$ & 2  \\ 
     
     $\frac{1}{2}+\frac{\sqrt{37}}{2}$ & 1 \\
     
     $-\frac{1}{2}+\frac{\sqrt{37}}{2}$ & 1

    \end{TAB}

\end{center}
\end{proposition}

With this in hand our next theorem follows immediately:
\begin{theorem}\thlabel{mainresult}
The virtual dimension of the moduli space $\mathcal{M}(A_\text{\normalfont can},\mu)$ of AC $G_2$-instantons on $P,$ decaying to $A_\text{\normalfont can}$ with rate $\mu\in(-2,0)\setminus\{1\}$ is 
$$\text{\footnotesize virt\normalsize dim}\mathcal{M}(A_\text{can}, \mu)=\begin{cases} 1 & \text{ if } \mu \in (-2,-1)\\
8 & \text{ if }  \mu \in (-1,0).\end{cases}$$
\end{theorem}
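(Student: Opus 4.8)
Set $A=A_{\mathrm{std}}$ and $A_\infty=A_{\mathrm{can}}$. Since $A_{\mathrm{std}}$ asymptotes to $A_{\mathrm{can}}$ with every rate $>-2$, the point $[A_{\mathrm{std}}]$ lies in $\mathcal{M}(A_{\mathrm{can}},\mu)$ for all $\mu\in(-2,0)$, and by \thref{moduli} (the implicit function theorem version) the virtual dimension there equals $\dim\mathcal{I}(A,\mu)-\dim\mathcal{O}(A,\mu)$; the Fredholm alternative together with the formal self-adjointness of $D_A$ identifies $\mathcal{O}(A,\mu)$, up to isomorphism, with $\mathrm{Ker}(D_A)_{-5-\mu}$. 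Hence the virtual dimension is exactly the Fredholm index $\mathrm{ind}_\mu D_A$ of $D_A\colon L^2_{k+1,\mu-1}\to L^2_{k,\mu-2}$, which depends only on the asymptotic operator $D_{A_C}$ and on $\mu$; so the whole statement reduces to computing this index on the two intervals $(-2,-1)$ and $(-1,0)$ via the index-change theorem.

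First I would pin down the critical weights in $(-2,0)$. By \thref{eigenvalueonlink} and the definition of $W$, the operator $(D_A)_{\mu-1}$ is Fredholm precisely when $\mu\notin W=\{\mu:\mu+2\in\mathrm{Spec}\,D^0_{A_{\mathrm{can}}}\}$, so I need $\mathrm{Spec}\,D^0_{A_{\mathrm{can}}}\cap(0,2)$. Combining the eigenvalue bound theorem (no eigenvalue of $(D^0_{A_{\mathrm{can}}})_\gamma$ lies in $[0,2)$ unless $V_\gamma\in\{V_{(0,0)},V_{(1,0)},V_{(0,1)}\}$) with the three explicit eigenvalue computations — $V_{(0,0)}$ contributes only $0$; $V_{(1,0)}$ contributes $1$ and $\pm\tfrac12+\tfrac{\sqrt{33}}{2}$; $V_{(0,1)}$ contributes $\pm\tfrac12+\tfrac{\sqrt{57}}{2}$ and $\pm\tfrac12+\tfrac{\sqrt{37}}{2}$ — shows that the only eigenvalue in $(0,2)$ is $1$. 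Therefore $W\cap(-2,0)=\{-1\}$ and $\mathrm{ind}_\mu D_A$ is constant on each of $(-2,-1)$ and $(-1,0)$.

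Next I would compute the base value and the jump. The operator $D_A$ is formally self-adjoint and $\mu=-2$ is its balance point (there the dual weight $-5-\mu$ coincides with the domain weight $\mu-1$), so — exactly as in the Remark following the index-change theorem — $\mathrm{ind}_{-2+\varepsilon}D_A=\tfrac12\dim\mathrm{Ker}\,D^0_{A_{\mathrm{can}}}$ for small $\varepsilon>0$; the computation for $V_{(0,0)}$ gives $\dim\mathrm{Ker}\,D^0_{A_{\mathrm{can}}}=2$, so $\mathrm{ind}_{-2+\varepsilon}D_A=1$. For the jump at $-1$, the proposition that rules out logarithmic terms identifies $\mathcal{K}(-1)$ (as in \thref{kmudef}) with the $(-1)$-eigenspace of $D^0_{A_{\mathrm{can}}}$, which by symmetry of the spectrum about $0$ has the same dimension as the $1$-eigenspace; and under the Hilbert-sum decomposition $\mathrm{Spec}\,D^0_{A_{\mathrm{can}}}=\bigcup_\gamma\mathrm{Spec}(D^0_{A_{\mathrm{can}}})_\gamma$ an eigenvalue occurring with multiplicity $m$ on $\mathrm{Hom}(V_\gamma,E)_{SU(3)}$ occurs in $\mathrm{Spec}\,D^0_{A_{\mathrm{can}}}$ with multiplicity $m\dim V_\gamma$. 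Only $V_{(1,0)}$ contributes the eigenvalue $1$, with Hom-space multiplicity $1$, so $k(-1)=\dim\mathcal{K}(-1)=\dim V_{(1,0)}=7$. Feeding this into the index-change theorem yields $\mathrm{ind}_\mu D_A=1$ for $\mu\in(-2,-1)$ and $\mathrm{ind}_\mu D_A=1+7=8$ for $\mu\in(-1,0)$, which is the assertion of \thref{mainresult}. This is also consistent with the heuristic recorded in Section~\ref{sec:stdinstsection}: the rate $-2$ contribution is the one-dimensional dilation $\iota_{\rho\partial_\rho}F_{A_{\mathrm{std}}}$, and the seven new rate $-1$ contributions are the translations $\iota_{\partial/\partial x_i}F_{A_{\mathrm{std}}}$.

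Essentially all the work is already done in the preceding subsections — the representation-theoretic Lichnerowicz bound of \thref{lichnerowicz} and \thref{eigenvalsofsquare}, and the explicit diagonalisations of the Dirac matrices on $\mathrm{Hom}(V_\gamma,E)_{SU(3)}$ for $V_\gamma\in\{V_{(0,0)},V_{(1,0)},V_{(0,1)}\}$ — so the proof of the final theorem is purely bookkeeping. The only points demanding care are to track the several weight shifts consistently (the $-1$ built into $\slashed{S}(M)\cong\Lambda^0\oplus\Lambda^1$, the $+3$ relating $D_{A_C}$ to $D^0_{A_{\mathrm{can}}}$, and the duality shift $\mu\mapsto-5-\mu$) and to remember the factor $\dim V_\gamma$ when passing from eigenvalue multiplicities on the homomorphism spaces to genuine spectral multiplicities of $D^0_{A_{\mathrm{can}}}$; a slip in either would change the final numbers.
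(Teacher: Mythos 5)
Your proposal is correct and is exactly the argument the paper intends (the paper states the theorem "follows immediately" from the preceding eigenvalue computations, and you have spelled out the same bookkeeping): virtual dimension $=\mathrm{ind}_\mu D_A$, the only critical weight in $(-2,0)$ is $-1$ since the only eigenvalue of $D^0_{A_\mathrm{can}}$ in $(0,2)$ is $1$, the self-adjoint balance at $\mu=-2$ gives base index $\tfrac12\cdot 2=1$, and the jump $k(-1)=1\cdot\dim V_{(1,0)}=7$ gives $8$ on $(-1,0)$. The weight shifts and the $\dim V_\gamma$ multiplicity factor are all handled correctly.
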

\begin{remark}
The dimension of this moduli space appears to be related to known deformations of the standard instanton: Dilation (radial scaling) is added with rate $-2$, whilst the seven translation parameters are added with rate $-1.$ 
\end{remark}

Establishing the virtual dimension of the moduli space is an important step towards proving a uniqueness theorem for the standard instanton. If we assume this instanton to be unobstructed then the above result provides a local uniqueness theorem for this instanton. In other words, there are no other genuinely different instantons nearby in the moduli space since the only deformations are those defined by the obvious scaling and translation maps. Proving the unobstructedness of connections in the moduli space is a difficult task since curvature terms complicate the usual method of applying Lichnerowicz type formulae to $L^2$ twisted harmonic spinors. We are however still able to apply the deformation theory to study the class of unobstructed instantons, in the next section we aim to build on \thref{mainresult} to attain a uniqueness result in this setting.

\section{An Application of Deformation Theory}\label{sec:invariance}

In this section we explore some applications of the deformation theory. We show in \thref{invariance} that unobstructed connections in the AC moduli space must be invariant under the action of $G_2.$ By classifying $G_2$-invariant connections on the relevant bundle we prove \thref{globaluniqueness}, which says: Among unobstructed connections, the standard instanton is the unique $G_2$-instanton on $P=G_2\times \R^7\to \R^7$ which is asymptotic to $A_\text{can}.$

\subsection{Invariance of AC Instantons}\label{sec:invariancer7}
Let $A$ be an AC $G_2$ instanton on $P=G_2\times \R^7\to \R^7$, converging to $A_\text{can}.$ Recall we may  study the deformations of $A$ in terms an elliptic complex. The cohomology group
$$H^1_{A,\mu}=\frac{\text{Ker}\left(\psi \wedge \diff_{A}\colon \Omega^1_{\mu-1}(M, \Ad P)\to \Omega^6_{\mu-2}(M, \Ad P)\right)}{\diff_{A}(\Omega^0_\mu(M, \Ad P))}$$
satisfies $H^1_{A,\mu}\cong I(A,\mu)$ so we know from \thref{mainresult} that the dimension of these vector spaces are 
$$\text{dim}H^1_{A,\mu}=\begin{cases} 1 & \text {if } \mu\in(-2,-1) \\
8 & \text{if } \mu \in(-1,0).
\end{cases}$$
Furthermore  we know by \cite[Proposition 9]{charbonneau2016deformations} that these deformations are given by the cohomology classes 
\begin{align*}
    \left[\iota_{\frac{\partial}{\partial x_i}}F_{A}\right]&=\left[\mathcal{L}_{\frac{\partial}{\partial x_i}}A\right]\\
     \left[\iota_{r\frac{\partial}{\partial r}}F_{A}\right]&=\left[\mathcal{L}_{r\frac{\partial}{\partial r}}A\right]
\end{align*}
where $\frac{\partial}{\partial x_i}$, for $i=1,\ldots ,7$, are coordinate vector fields and $r^2=x_ix_i.$ In fact any  Killing  field determines a deformation of $A$ and one can ask which Killing fields actually preserve the connection. Here we think of Killing fields $X$ as elements of Lie$(G_2 \ltimes \R^7)$ and define a map
\begin{align*}
    L&\colon \text{Lie}(G_2\ltimes \R^7)\to H^1_{A,-\frac{1}{2}}\\
    L&(X)=\left[\mathcal{L}_XA\right].
\end{align*}

Before investigating the properties of this map we pause to collect some facts about the Lie group $G_2 \ltimes \R^7,$  the group generated by translations and rotation by a $G_2$ matrix. More precisely an element of $G_2\ltimes \R^7$ consists of a pair $(g,v)$ where $g$ is an element of $G_2$ and $v\in\R^7.$ Denote by $R$ the standard representation of $G_2,$ then the action of $(g,v)$ on a point $p\in\R^7$ is 
$$(g,v)\cdot p=R(g)p+v.$$
Acting with two elements gives the composition formula 
$$ (g^\prime, v^\prime) \cdot (g,v)=(g^\prime g, R(g^\prime)v+v^\prime).$$
Denote by $(G_2)_p$ the elements of $G_2\ltimes \R^7$ that fix a point $p\in \R^7.$ Then $(G_2)_p=\{(g,p-R(g)p)\, ; \, g\in G_2\}$ is a subgroup of $G_2\ltimes \R^7$ isomorphic to $G_2.$ 
Other connected subgroups are of the form $H\ltimes U$ for $H$ a proper subgroup of $G_2$ and $U\subset \R^7$ a vector subspace, or $H_p$ the isotropy subgroup of $H$ fixing $p.$ 
If a subgroup $H\ltimes U\subset G_2\ltimes \R^7$ does not fix a point in $\R^7$ then $U$ is a vector space of positive dimension and it follows that the subgroups of $G_2\ltimes \R^7$ that are isomorphic to $G_2$ are precisely the groups $(G_2)_p$ for some $p\in \R^7. $ \par
The set of connected proper Lie subgroups of $G_2$, up to conjugacy, is \begin{equation}\label{eq:g2subgroups}
\{ \text{SU}(3) ,\text{SO}(4),\text{U}(2),\text{SU}(2)\times \text{U}(1),\text{SU}(2),\text{SO}(3),\text{U}(1)^2,\text{U}(1)\} 
\end{equation}
and we can use this to understand the kernel of the map $L.$
\begin{proposition}
The kernel of the map $L$ is a Lie subalgebra of \normalfont Lie$(G_2\ltimes \R^7)$ \textit{isomorphic to} $\mf{g}_2.$
\end{proposition}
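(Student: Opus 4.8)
The statement to prove is that $\ker L$ is a Lie subalgebra of $\mathrm{Lie}(G_2\ltimes\mathbb R^7)$ isomorphic to $\mathfrak g_2$. First I would observe that $\ker L$ is a Lie subalgebra: if $X,Y$ are Killing fields with $\mathcal L_XA=\mathrm d_A\xi_X$ and $\mathcal L_YA=\mathrm d_A\xi_Y$ exact (i.e.\ trivial in $H^1_{A,-\frac12}$), then $\mathcal L_{[X,Y]}A=[\mathcal L_X,\mathcal L_Y]A$, and a direct computation (using that $\mathcal L_X$ commutes with $\mathrm d_A$ up to curvature terms which vanish on the complex) shows $[\mathcal L_X,\mathcal L_Y]A$ is again exact; hence $[X,Y]\in\ker L$. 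So $\ker L$ is a subalgebra.

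**Dimension count.** Next I would pin down $\dim\ker L$. We have $\dim\mathrm{Lie}(G_2\ltimes\mathbb R^7)=14+7=21$. The image of $L$ lands in $H^1_{A,-1/2}$, which by \thref{mainresult} (with $\mu=-\tfrac12\in(-1,0)$) has dimension $8$. Moreover the eight classes spanning $H^1_{A,-1/2}$ are exactly $[\mathcal L_{\partial/\partial x_i}A]$ for $i=1,\dots,7$ together with $[\mathcal L_{r\partial/\partial r}A]$ — but $r\partial/\partial r$ is the dilation field, which is \emph{not} an element of $\mathrm{Lie}(G_2\ltimes\mathbb R^7)$. So the relevant point is: the restriction of $L$ to $\mathrm{Lie}(G_2\ltimes\mathbb R^7)$ has image spanned by the seven translation classes $[\mathcal L_{\partial/\partial x_i}A]$, i.e.\ $\dim\operatorname{im}(L)=7$ — I would need to check the translations do give seven linearly independent classes, which should follow because they already account for seven of the eight dimensions of $H^1_{A,-1/2}$ and only one class (the dilation) is "missing". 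Then $\dim\ker L=21-7=14$.

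**Identifying the subalgebra.** Having a $14$-dimensional subalgebra $\mathfrak k\subset\mathfrak g_2\ltimes\mathbb R^7$, I would invoke the structural description of subgroups of $G_2\ltimes\mathbb R^7$ assembled in the paragraph preceding the proposition: any connected subgroup is of the form $H\ltimes U$ with $H$ a (possibly proper) subgroup of $G_2$ and $U\subseteq\mathbb R^7$ a subspace, or an isotropy subgroup $H_p\cong H$. Since $\dim\mathfrak k=14$ and the maximal possible dimension of $H\ltimes U$ with $H\subsetneq G_2$ is at most $8+7=15$ but with the proper-subgroup list \eqref{eq:g2subgroups} the genuine constraint is that any $14$-dimensional subalgebra must have $H=G_2$ (a proper subgroup of $G_2$ has dimension $\le 8$, so $\dim(\mathfrak h\oplus U)\le 8+7=15$, and to reach $14$ with $\mathfrak h\ne\mathfrak g_2$ one would need $\dim\mathfrak h\ge 7$, forcing $\mathfrak h\in\{\mathfrak{so}(4),\mathfrak{su}(3)\}$ of dimension $6$ or $8$; dimension $6$ is impossible, and $\mathfrak{su}(3)\oplus\mathbb R^7$ is $15$-dimensional but $\mathfrak{su}(3)$ does not act on $\mathbb R^7$ leaving a complement-free structure compatible with being exactly $14$...). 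The cleanest route: $\mathfrak k$ projects to a subalgebra of $\mathfrak g_2$ with kernel a subspace $U\subseteq\mathbb R^7$ invariant under that subalgebra; if $U\ne 0$ then $U$ is a nonzero $G_2$-subrepresentation of the irreducible $\mathbb R^7$ only if the projection is all of $\mathfrak g_2$, forcing $U=\mathbb R^7$ and $\dim\mathfrak k=21$, contradiction; if $U=0$ then $\mathfrak k$ embeds into $\mathfrak g_2$, so $\dim\mathfrak k\le 14$ with equality iff $\mathfrak k\cong\mathfrak g_2$, and moreover $\mathfrak k$ is then the graph of a homomorphism, i.e.\ an isotropy subalgebra $(\mathfrak g_2)_p$. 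Thus $\ker L\cong\mathfrak g_2$.

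**Main obstacle.** The delicate step is the dimension count for the image: I must verify that the seven translation deformations $\mathcal L_{\partial/\partial x_i}A$ are genuinely nontrivial and linearly independent in $H^1_{A,-1/2}$, and that no nontrivial combination of translations and infinitesimal $G_2$-rotations becomes exact (which is what makes $\dim\operatorname{im}L=7$ rather than smaller). This rests on the explicit identification in \cite[Proposition 9]{charbonneau2016deformations} of the basis of $H^1_{A,-1/2}$; I would argue that since $[\iota_{\partial/\partial x_i}F_A]$ together with $[\iota_{r\partial/\partial r}F_A]$ form a basis of the $8$-dimensional space, the seven translation classes are independent, and the rotation fields — being tangent to the $G_2$-action and hence (for a $G_2$-invariant limiting connection) producing faster-decaying or already-exact variations — contribute nothing new, so $\operatorname{im}L$ is exactly the $7$-dimensional span of the translations. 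Everything else is representation theory of $\mathbb R^7$ as a $G_2$-module, which is routine.
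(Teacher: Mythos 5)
Your first step (closure of $\ker L$ under the bracket) matches the paper, and your use of the dimension of $H^1_{A,-1/2}$ is in the right spirit, but the heart of your argument has a genuine gap, in fact two related ones. First, your purely algebraic claim that a $14$-dimensional subalgebra of $\mf{g}_2\ltimes\R^7$ must be isomorphic to $\mf{g}_2$ is false: $\mf{su}(3)\ltimes\R^6$ is a $14$-dimensional subalgebra (it is the Lie algebra of the subgroup $SU(3)\ltimes\R^6$ appearing in the paper's list), and your ``cleanest route'' does not exclude it --- the kernel $U$ of the projection to $\mf{g}_2$ only needs to be invariant under the \emph{projected} subalgebra (here $\mf{su}(3)$, which preserves $\R^6\subset\R^7$), not under all of $\mf{g}_2$, so your appeal to irreducibility of $\R^7$ does not apply; your parenthetical discussion of this very case trails off without an argument. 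Second, your dimension count $\dim\operatorname{im}L=7$ is not justified: the claim that the rotational Killing fields ``produce already-exact variations'' presupposes exactly the $G_2$-invariance of $A$ that this proposition is designed to establish, so the reasoning is circular. (The paper never needs $\dim\operatorname{im}L=7$; it only uses $\dim\ker L\geq 21-8=13$, with the $8$ coming from unobstructedness, and the $7$-dimensionality of the image is a consequence of the proposition, not an input.)

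The missing idea is the paper's analytic step: every candidate subalgebra of dimension $\geq 13$ other than $\mathrm{Lie}(G_2)_p$ (including $\mf{su}(3)\ltimes\R^6$ and $\mf{so}(4)\ltimes\R^7$) contains a nonzero translation, and translation-invariance of $A$ is incompatible with the asymptotics. Concretely, if $\mathcal{L}_XA=0$ for a constant vector field $X$ then $|F_A|$ is constant along the corresponding line, whereas the AC condition forces $|F_{g\cdot A}-F_{A_C}|=O(r^{\mu-2})$ with $r^2|F_{A_C}|$ equal to the nonzero constant $|F_{A_{\mathrm{can}}}|$ along rays; comparing these along the invariant line makes $r^{-\mu}\bigl|r^2c(A)-c(A_C)\bigr|$ unbounded, a contradiction. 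Only after ruling out all subalgebras containing translations does one conclude $\ker L=\mathrm{Lie}(G_2)_p\cong\mf{g}_2$. Without some argument of this kind (or another genuinely geometric input about $A$), the classification cannot be completed by dimension counting alone.
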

\begin{proof}
First we show that Ker$L$ is a Lie subalgebra of Lie$(G_2\ltimes \R^7)$. To avoid notational clutter we shall write elements of $H^1_{A,-\frac{1}{2}}$ without square brackets indicating that they are equivalence classes. Suppose that $X,Y\in \text{Ker}L, $ then there are $f_X,f_Y\in\Omega^0(M, \Ad P)$ with $L(X)=\diff_Af_X$ and $L(Y)=\diff_Af_Y.$ Observe that 
\begin{align*}
    \mathcal{L}_X(\diff_{A}f_Y)&= \{\iota_X,\diff \}(\diff f_Y + [A,f_Y])\\
    &=[\{\iota_X,\diff \}A,f_Y]+\diff \{\iota_X,\diff \}f_Y+[A,\{\iota_X,\diff \}f_Y]\\
    &=[\mathcal{L}_XA,f_Y] +\diff_{A} (\mathcal{L}_X f_Y).
\end{align*}
Therefore we find 
\begin{align*}
    L([X,Y])&= \mathcal{L}_X\mathcal{L}_YA-\mathcal{L}_Y\mathcal{L}_XA\\
    &= \diff_{A}(\mathcal{L}_Xf_Y-\mathcal{L}_Yf_X) + [\mathcal{L}_XA,f_Y]-[\mathcal{L}_YA,f_X]\\
    &=\diff_{A}((\mathcal{L}_X f_Y-\mathcal{L}_Y f_X+[f_X,f_Y])
\end{align*}
so $L([X,Y])$ is in the trivial cohomology class.
Suppose now that $A$ is unobstructed, then $L$ maps from a 21 dimensional vector space to an 8 dimensional vector space Ker$L$ must have dimension at least 13.  Looking through the list \eqref{eq:g2subgroups} of subgroups of $G_2$ we observe that the only possibilities are 
\begin{enumerate}
    \item $\text{Ker}L=\text{Lie}( \text{SU}(3) \ltimes \R^6)$ where $ \text{SU}(3) $ acts in the obvious way
    \item $\text{Ker}L=\text{Lie}( \text{SU}(3) \ltimes (\R^6\oplus \R))$ where $ \text{SU}(3) $ acts on $\R^6$ in the obvious way and trivially on $\R$
    \item $\text{Ker}L=\text{Lie}(G_2\ltimes \R^n)$ for $1\leq n \leq 7$ where $G_2$ acts trivially
    \item $\text{Ker}L=\text{Lie}(\text{SO}(4)\ltimes \R^7)$ where $\text{SO}(4)$ acts on $\R^7$ either trivially or by restriction of the standard $G_2$ action
    \item $\text{Ker}L=\text{Lie}(G_2)_p$ for some $p\in \R^7.$
\end{enumerate}
If any of the cases $1-4$ were to hold then $A$ would have translational symmetries, so we must rule this out. Suppose then for a contradiction that $A$ has translational symmetries, then $A$ is a (globally defined) 1-form such that $\mathcal{L}_XA=0,$ where $X$ is the vector field generating the translations under which $A$ is invariant, thus $X=c_i\frac{\partial}{\partial x_i}$ where $c_i$ are constants and $\frac{\partial}{\partial x_i}$ are the coordinate vector fields on $\R^7$. Note that $\mathcal{L}_XA=0$ implies $\mathcal{L}_XF=0$ and one can check that if $F=F_{ij}\diff x^i\wedge \diff x^j$ is any 2-form on $\R^n$ then $\mathcal{L}_XF=0$ if and only if $X(F_{ij})=0$ for all $i,j.$ It follows that 
$$X(|F|^2)=X(F_{ij}F_{ij})=2F_{ij}X(F_{ij})=0$$
so $|F|$ is constant in the direction $X.$ Pick $p\in S^6$ such that $\{tp \, ; \, t\in \R\} $ is the line through the origin generated by $X$, then 
$$|F_A|(tp)=c(A)$$
where $c(A)\geq 0$ is a constant depending only on $A.$ 
\par
Since $A$ is asymptotically conical (up to gauge) there is a gauge transformation $g$ such that $g\cdot A=gAg^{-1}-\diff gg^{-1}$ satisfies 
$$|g\cdot A - A_C|=|a|=O(r^{\mu-1})$$
where $A_C=\pi^*A_\text{can}$ and $a$ is defined as the difference of $g\cdot A$ and $A_C.$
Observe that
$$|F_{g\cdot A}-F_{A_C}|=|\diff_A a + a\wedge a|=O(r^{\mu-2}),$$
in other words $r^{2-\mu}|F_{g\cdot A}-F_{A_C}|$ is a bounded function of $r$ for some $R>0$ and where $r\in[R,\infty).$ Recall that $F_{A_C}=\pi^*(F_{A_\text{can}})$ and therefore $r^2|F_{A_C}|(rp)=c(A_C)$ where $c(A_C)=|F_{A_\text{can}}|_{g_\text{round}}>0$ is a constant independent of both $r$ and $p.$
We calculate 
\begin{align*}
r^{2-\mu}|F_{g\cdot A}-F_{A_C}|(rp)&\geq r^{2-\mu} ||F_{g\cdot A}|-|F_{A_C}||(rp)\\
&= r^{-\mu}|r^2|F_A|-r^2|F_{A_C}||)(rp)\\
&=r^{-\mu}|r^2c(A)-c(A_C)|.
\end{align*}
However $-\mu>0$ and $c(A_C)>0$ ensures $r^{-\mu}|r^2c(A)-c(A_C)|$ is an unbounded function of $r,$ which yields our contradiction. 
\end{proof}
This proves the $G_2$-invariance of connections in the moduli space:
\begin{proposition}\thlabel{invariance}
Let $A$ be an unobstructed AC $G_2$-instanton on $P,$ converging to $A_\text{can}.$ Then $A$ is invariant under the action of $(G_2)_p$ for some $p\in \R^7.$
\end{proposition}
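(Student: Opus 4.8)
## Proof Proposal

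The plan is to use the previous proposition, which identifies $\mathrm{Ker}\,L$ with a Lie subalgebra of $\mathrm{Lie}(G_2\ltimes\R^7)$ isomorphic to $\mf{g}_2$, and to convert this Lie-algebra statement into a statement about the action of a subgroup of $G_2\ltimes\R^7$ on the connection $A$. First I would recall the structural fact established in the lead-up to the previous proposition: the only subgroups of $G_2\ltimes\R^7$ whose Lie algebra is isomorphic to $\mf{g}_2$ and which do \emph{not} contain nontrivial translations are the isotropy subgroups $(G_2)_p=\{(g,p-R(g)p)\,;\,g\in G_2\}$ for $p\in\R^7$. Thus it suffices to show $\mathrm{Ker}\,L$ is the Lie algebra of such a subgroup, equivalently that $\mathrm{Ker}\,L$ contains no translational vector fields.

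The heart of the argument is precisely the contradiction already carried out inside the proof of the previous proposition: if some nonzero translation $X=c_i\,\partial/\partial x_i$ lay in $\mathrm{Ker}\,L$, then $\mathcal{L}_X A$ would be exact, and after an argument comparing $|F_A|$ along the line generated by $X$ with the decay of $|F_{A_C}|=\pi^*|F_{A_\mathrm{can}}|$ (which scales like $r^{-2}$) against the asymptotic bound $|F_{g\cdot A}-F_{A_C}|=O(r^{\mu-2})$ with $\mu<0$, one obtains that an unbounded function of $r$ is bounded --- a contradiction. So I would either invoke that computation directly or restate it: an unobstructed AC $G_2$-instanton admits no translational symmetry, hence $\mathrm{Ker}\,L$ intersects the translation subalgebra $\R^7\subset\mathrm{Lie}(G_2\ltimes\R^7)$ trivially.

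Given that $\mathrm{Ker}\,L\cong\mf{g}_2$ is $14$-dimensional and meets the $7$-dimensional translation ideal only in $0$, its image in the quotient $\mathrm{Lie}(G_2\ltimes\R^7)/\R^7\cong\mf{g}_2$ is all of $\mf{g}_2$; since $\dim\mathrm{Ker}\,L=14=\dim\mf{g}_2$, the projection restricts to an isomorphism $\mathrm{Ker}\,L\xrightarrow{\sim}\mf{g}_2$. Running through the list \eqref{eq:g2subgroups} of proper connected subgroups of $G_2$ rules out cases $1$--$4$ of the previous proposition (each of which forces a translational symmetry), leaving only $\mathrm{Ker}\,L=\mathrm{Lie}((G_2)_p)$ for some $p\in\R^7$. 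Finally, since $(G_2)_p$ is connected and $\mathrm{Ker}\,L=\mathrm{Lie}((G_2)_p)$ annihilates $A$ (i.e.\ $\mathcal{L}_X A=0$ for all $X$ in this subalgebra --- here one uses that $L(X)=0$ together with $H^0_{A,\mu}=\{0\}$ to upgrade ``$\mathcal{L}_X A$ exact'' to ``$\mathcal{L}_X A=0$'', or more directly that an infinitesimal symmetry generating an exact deformation can be modified by an infinitesimal gauge transformation to genuinely fix $A$), exponentiating shows $A$ is invariant under the full group $(G_2)_p$.

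The main obstacle is the last step: passing from the \emph{infinitesimal} statement $L(X)=0$ (which only says $\mathcal{L}_X A$ is $\diff_A$-exact, not zero) to genuine invariance under the group $(G_2)_p$. One resolves this by noting that for $X\in\mathrm{Ker}\,L$ there is $f_X\in\Omega^0(M,\Ad P)$ with $\mathcal{L}_X A=\diff_A f_X$, so the combined flow of $X$ and the gauge transformation generated by $-f_X$ fixes $A$; the assignment $X\mapsto f_X$ is linear and a Lie-algebra homomorphism (this is essentially the computation of $L([X,Y])$ in the previous proof), so these combined flows integrate to an action of the simply-connected cover of $(G_2)_p$ --- and since $G_2$ is simply connected, to an action of $(G_2)_p$ itself by bundle automorphisms covering the isometric action on $\R^7$, under which $A$ is invariant.
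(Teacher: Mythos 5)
Your proposal is correct and follows essentially the same route as the paper: the result is deduced from the preceding proposition (whose proof already contains the unobstructedness dimension count, the curvature-decay contradiction excluding translational symmetries, and the fact that the subgroups of $G_2\ltimes\R^7$ isomorphic to $G_2$ are exactly the isotropy groups $(G_2)_p$), giving $\mathrm{Ker}\,L=\mathrm{Lie}((G_2)_p)$. Your last paragraph merely fills in the integration step from $\mathcal{L}_XA=\diff_A f_X$ to genuine $(G_2)_p$-invariance, which the paper treats as immediate; note only that $H^0_{A,\mu}=\{0\}$ gives uniqueness of $f_X$ (so $f_{[X,Y]}=\mathcal{L}_Xf_Y-\mathcal{L}_Yf_X+[f_X,f_Y]$, rather than $X\mapsto f_X$ being a Lie algebra homomorphism), and it does not upgrade exactness to $\mathcal{L}_XA=0$ --- the combined flow argument you give is the correct mechanism.
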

\subsection{A Uniqueness Theorem}
The constructions covered in this section come from \cite{harland2010yang}. Here we review their work using the framework of Wang's theorem, as has been done to study the moduli space of invariant monopoles on the Bryant-Salamon manifolds $\Lambda^2_-(S^4)$ and $\Lambda^2_-(\C\mathbb{P}^2)$ in \cite{oliveira2014monopoles}.\par

 We say a connection $A$ is  $G$-invariant if its connection $1$-form  $A\in \Omega^1(Q,\mf{k})$ is  left invariant.  Recall the canonical connection $A_\text{can}$ lives on any bundle associated to $G\to G/H$ and clearly defines an invariant connection. Wang's theorem \cite{wang1958invariant} gives an algebraic description of $G$-invariant connections on homogeneous bundles.
\begin{theorem}[Wang \cite{wang1958invariant}]
Let $Q=G\times_{(H,\lambda)}K$ be a principal homogeneous $K$-bundle. Then $G$-invariant connections  on $P$ are in one-to-one correspondence with morphisms $\Phi$ of $H$ representations 
$$\Phi \colon (\mf{m},\text{\normalfont Ad}) \to (\mf{k}, \text{\normalfont Ad}\circ \lambda).$$
The connection $A_\Phi$ that corresponds to such a morphism $\Phi$ satisfies $(A_\Phi-A_\text{\normalfont can})([1])=\Phi$ where we identify linear maps $\mf{m}\to \mf{k}$ with elements of $ (T^*(G/H)\otimes \text{\normalfont Ad}Q)_{[1]}.$
\end{theorem}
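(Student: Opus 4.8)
The plan is to prove Wang's theorem by the standard argument identifying $G$-invariant connection $1$-forms on a homogeneous principal bundle with certain equivariant linear maps. I will work with the model $Q = G \times_{(H,\lambda)} K$ and the distinguished point $q_0 = [(e, e)]$ lying over the identity coset, and I will use the canonical connection $A_{\mathrm{can}}$ as a fixed reference so that the difference of two invariant connections is a tensorial object, namely an element of $\Omega^1(G/H, \mathrm{Ad}\, Q)$.

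First I would recall that on $Q = G \times_{(H,\lambda)} K$, the total space carries a left $G$-action, and the connection $1$-form $A \in \Omega^1(Q, \mf{k})$ is $G$-invariant precisely when it is determined by its restriction to the tangent space $T_{q_0} Q$ at the single point $q_0$; $G$-invariance and $K$-equivariance ($R_k^* A = \mathrm{Ad}(k^{-1}) A$) then propagate this data over all of $Q$. So the whole content is to describe, algebraically, which linear maps $A|_{q_0} \colon T_{q_0}Q \to \mf{k}$ arise. Using the reductive splitting $\mf{g} = \mf{h} \oplus \mf{m}$, the tangent space $T_{q_0}Q$ is identified with $\mf{g} \oplus \mf{k}$ — the $\mf{k}$-summand being the vertical space, on which any connection form must act as the identity, and the $\mf{h}$-summand mapping, via the differential of the isotropy homomorphism, as $\lambda_* = d\lambda$. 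The only remaining freedom is the value of $A|_{q_0}$ on $\mf{m}$, which is an arbitrary linear map $\Phi \colon \mf{m} \to \mf{k}$. The canonical connection corresponds to $\Phi = 0$, because its horizontal distribution is by construction the left-translate of $\mf{m}$.

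Next I would impose the $H$-equivariance constraint coming from $R_{\lambda(h)}^* A = \mathrm{Ad}(\lambda(h)^{-1}) A$ combined with the $L_h$-invariance (since $h \cdot q_0 = q_0 \cdot \lambda(h)$, acting by $h \in H$ sends $q_0$ to itself modulo the right $K$-action). Tracking how this relates $A|_{q_0}$ restricted to $\mathrm{Ad}(h)\mf{m} = \mf{m}$ to $\mathrm{Ad}(\lambda(h))$ acting on the target, one finds exactly the intertwining condition
\begin{equation}
\Phi \circ \mathrm{Ad}(h) = \mathrm{Ad}(\lambda(h)) \circ \Phi \qquad \text{for all } h \in H,
\end{equation}
i.e. $\Phi$ is a morphism of $H$-representations $(\mf{m}, \mathrm{Ad}) \to (\mf{k}, \mathrm{Ad} \circ \lambda)$. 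Conversely, given such a $\Phi$, one defines $A_\Phi$ on $T_{q_0}Q$ by the above prescription and checks that the $G$- and $K$-equivariant extension is well-defined (consistency on overlaps of the two local pictures is exactly the equivariance of $\Phi$) and is a genuine connection $1$-form. Finally I would verify the normalisation statement: since $A_{\mathrm{can}}$ corresponds to $\Phi = 0$ and the assignment $\Phi \mapsto A_\Phi$ is affine, $(A_\Phi - A_{\mathrm{can}})$ is the tensorial $1$-form whose value at the identity coset, under the identification $(T^*(G/H) \otimes \mathrm{Ad}\, Q)_{[1]} \cong \mathrm{Hom}(\mf{m}, \mf{k})$, is precisely $\Phi$.

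The main obstacle — really the only place requiring care — is the bookkeeping in the equivariance computation: one must be scrupulous about left versus right actions, about the distinction between the $G$-action lifted to $Q$ and the $K$-action used to form the associated bundle, and about the sign conventions in passing from $h \cdot q_0 = q_0 \cdot \lambda(h)$ to the transformation of the connection form, so that the intertwining relation comes out with $\mathrm{Ad}(\lambda(h))$ on the correct side. Everything else (that $A_\Phi$ is smooth, horizontal complement well-defined, and that distinct $\Phi$ give distinct connections) is routine once this identification is set up. Since this is a classical result I would keep the write-up brief, citing \cite{wang1958invariant} for the original argument and only indicating the reductive-structure adaptation needed in the present homogeneous-bundle setting.
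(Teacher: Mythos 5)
The paper does not prove this statement at all: it is quoted directly from Wang \cite{wang1958invariant} and used as a black box, so there is no internal proof to compare against. Your sketch is the standard argument and is essentially sound: transitivity of the combined $G$- and $K$-actions on $Q$ reduces an invariant connection form to its value at $q_0$, the vertical and isotropy directions are forced, the residual freedom is a linear map $\Phi\colon\mf{m}\to\mf{k}$, and the relation $h\cdot q_0=q_0\cdot\lambda(h)$ yields the intertwining condition $\Phi\circ\text{Ad}(h)=\text{Ad}(\lambda(h))\circ\Phi$, with $A_{\text{can}}$ corresponding to $\Phi=0$ and the affine normalisation then immediate. One small bookkeeping correction: $T_{q_0}Q$ is not $\mf{g}\oplus\mf{k}$ but the quotient of $\mf{g}\oplus\mf{k}$ by the copy of $\mf{h}$ embedded via $X\mapsto(X,\diff\lambda(X))$ (up to sign conventions); the invariant form is most cleanly described by pulling back to a map $\Lambda\colon\mf{g}\oplus\mf{k}\to\mf{k}$ which is the projection on $\mf{k}$, and well-definedness on this quotient is exactly what forces $\Lambda|_{\mf{h}}=\diff\lambda$, leaving $\Phi=\Lambda|_{\mf{m}}$ as the free, $H$-equivariant datum. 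With that adjustment the write-up you propose, citing Wang for the original argument, is appropriate.
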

If $Q\to G_2/ \text{SU}(3) $ is a homogeneous bundle  we shall use Wang's theorem to study 
$$\mathcal{M}_\text{inv}(G_2/ \text{SU}(3) , Q)$$ 
the space of $G_2$-invariant nearly K\"ahler instantons modulo invariant gauge transformations.  Principal homogeneous $G_2$ bundles  are determined by homomorphisms $\lambda \colon  \text{SU}(3) \to G_2.$  There are exactly two conjugacy classes of such a homomorphism; the class of the trivial homomorphism and the class of the inclusion homomorphism. Hence there are exactly two equivalence classes of principal homogeneous $G_2$ bundles over $S^6.$\par
In the first case, when $\lambda(h)=1$ for all $h\in  \text{SU}(3) ,$ Wang's theorem says to look for morphisms of $ \text{SU}(3) $ representations
$$\Phi \colon (\mf{m}, \text{Ad})\to \R^{14}$$
where $\R^{14}$ denotes 14 copies on the trivial representation. Since there are no such non-zero maps  the only invariant connection corresponds to $\Phi=0$ and this yields the flat connection. \par 
The other case to be considered is when $\lambda$ is the inclusion map $\iota \colon  \text{SU}(3)  \to G_2$ and we denote the associated bundle 
\begin{equation}\label{eq: qbundle}
    Q=G_2\times_{( \text{SU}(3) ,\iota )}G_2.
\end{equation}
In this case, Wang's theorem instructs us to look for morphism of $ \text{SU}(3) $ representations 
$$ \Phi \colon (\mf{m},\Ad ) \to (\mf{g}_2, \Ad).$$ Working with complexified representations we see that a basis for $\text{Hom}(\mf{m}_\C,(\mf{g}_2)_\C)_{ \text{SU}(3) }$ is given by the set $\{\text{Id},J\},$ the identity map and the complex structure. If we identify such a map $a=x\text{Id}+yJ$, where $x,y\in \R$, with the complex number $z=x+iy$, then the $G_2$ invariant connection with 
\begin{equation}\label{eq:invconnex}
A([1])=A_\text{can}([1])+ a
\end{equation}
is a nearly K\"ahler instanton precisely when $\overline{z}^2-z=0$ \cite{harland2010yang}.  Other than the canonical connection $z=0$ the solutions to this equation are precisely the cube roots of unity. Let us write  $A_0,A_1,A_2$ for  the connections  obtained from the solutions $\exp(\frac{2n\pi i}{3})$ for $n=0,1,2.$ These connections are in fact flat and and since $S^6$ is simply connected these connections must be gauge equivalent.
In fact, a little thought shows that these connections are  equivalent through the invariant gauge transformations.
\par
Now let $\pi \colon C(\Sigma) \to \Sigma$ denote projection from the cone to $\Sigma.$  The action of $G_2$ on $Q\coloneqq G_2\times_{(H,\iota)}G_2$ lifts to an action on $P=\R^7\times G_2 \to \R^7$ since the usual action of $G_2$ on $\R^7$ preserves length. Recall $A$ is said to be invariant if its connection $1$-form $A\in\Omega^1(P,\mf{g}_2)$ is left invariant under this action. It suffices to consider only connections $A$ on $P$ which are in radial gauge, i.e $\diff r\lrcorner A=0,$ since such a gauge may always be chosen.
Then an invariant connection is determined as before in \eqref{eq:invconnex} but now we identify $f_1(r)\text{Id}+f_2(r)J$ with the complex valued function $f(r)=f_1(r)+if_2(r).$ As demonstrated in \cite{harland2010yang} such a connection is a $G_2$ instanton if and only if $f$ satisfies the differential equation
\begin{equation}\label{eq:diffeqn}
    rf^\prime (r)=2\left(\bar{f}^2(r)-f(r)\right).
\end{equation}
Again we must remark that the coefficient of 2 is different from that found in \cite[Section 5.3]{harland2010yang} due to our normalisation of the metric and the constraint $\diff \omega = 3 \text{Im}\Omega.$ In \cite[Section 5.3]{harland2010yang} it is shown that this is in fact the gradient flow equation for a real superpotential ``superpotential'' $W\colon \C \to \R$ where $W(z)=\frac{1}{3}(z^3+\bar{z}^3)-|z|^2.$ Clearly \eqref{eq:diffeqn} is equivalent to 
$$ \frac{r}{2} \frac{\diff }{\diff r}f=\frac{\partial W}{\partial \bar{f}}.$$
If we view $W$ as a function of two real variables then a quick calculation shows its has exactly four critical points which are (viewed as complex numbers) 0 and the three cube roots of unity. The unique local maximum is 0 whilst the other critical points are all saddle points. To see the advantage of interpreting \eqref{eq:diffeqn}  as a gradient flow equation we must first determine the relevant boundary data.\par
Note that this construction only yields a connection $A$ on $\pi^*(Q)=(\R^7\setminus \{0\})\times G_2$. To get a connection on $P$ we require $A$ to extend over the origin.  For this to happen it is necessary and sufficient that the curvature be bounded at $r=0.$ The curvature of the connection satisfies 
$$ |F_A|^2(\sigma,r)= \frac{c_1}{r^2}(|\bar{f}^2-f|^2+|\bar{f}f-1|^2)+ \frac{c_2}{r}|f^\prime|$$
with  constants $c_1,c_2>0.$ Thus $f$ is required to satisfy
\begin{itemize}
    \item $|\bar{f}^2-f|=O(r)$ as $r\to 0$
    \item $|\bar{f}f-1|  =O(r)$ as $r\to 0$
    \item $|f^\prime |=O(r)$ as $r \to 0.$
\end{itemize}
As in the previous section we ask that the connection we obtain decays to the canonical connection, in other words we also impose the boundary condition 
$$\lim_{r\to \infty}f(r)=0.$$
Thus our boundary data requires that $f(r)$ tends to a cube root of unity as $r\to 0$  and that $f(r)\to 0$ as $r\to \infty.$
The space of invariant connections can therefore be identified with the space of solutions to \eqref{eq:diffeqn} satisfying the above boundary conditions. The solution to this system 
$$f_0(r)=\frac{1}{Cr^2+1}$$
is the unique one (up to the scaling parameter $C\in \R_+$) with $f(r)\to 1$ as $r\to 0$ whilst other solutions are obtain by applying the 3-symmetry:
$f_1(r)=\exp(\frac{2\pi i}{3})f_0(r)$ and $ f_2(r)=\exp(\frac{4\pi i}{3})f_0(r).$ Uniqueness follows from observing that the solutions are subject to gradient flow away from a critical point and towards the unique local maximum of $W.$
Note that $f_0$ is precisely the solution we have seen in \eqref{eq:diffeqn1} and that this solution yields the standard instanton
 $$A_\text{std}(r[1])=A_\text{can}([1])+f_0(r)\text{Id}.$$
Let the $G_2$ instantons defined by the functions $f_i$ be denoted $\tilde{A}_i,$ then the invariant gauge transformations that related the connections $A_i$ lift to the cone to relate the connections $\tilde{A}_i.$ Thus we have uniqueness of solutions for the given boundary data together with the fact that the 3 families of solutions are gauge equivalent. Therefore, the invariant moduli space is determined precisely by the paramater $C$:
\begin{proposition}
Let $A$ be a $G_2$-instanton on the homogeneous principal bundle $P$ which decays to the canonical connection of the nearly-K\"ahler $S^6$ at infinity. If $A$ is invariant under the action of $G_2$ on $P$, then $A=A_\text{std}$ is the standard $G_2$-instanton.
\end{proposition}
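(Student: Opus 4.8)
The plan is to combine Wang's theorem with a short dynamical‑systems analysis of the resulting ordinary differential equation. First, since any connection on $P$ can be brought into radial gauge $\diff r \lrcorner A = 0$ and the standard $G_2$‑action on $\R^7$ fixes the origin, it suffices to work with $A$ in radial gauge on $P|_{\R^7\setminus\{0\}}$, which is the pullback $\pi^*Q$ of a homogeneous $G_2$‑bundle $Q\to S^6$ under $\pi\colon C(S^6)\to S^6$. There are exactly two conjugacy classes of homomorphisms $SU(3)\to G_2$; the trivial one yields a bundle carrying no nonzero invariant connection other than a flat one, whose asymptotic limit is flat and hence is not $A_\text{can}$ (which has holonomy $SU(3)$). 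So the relevant bundle is $Q=G_2\times_{(SU(3),\iota)}G_2$, on which $A_\text{can}$ lives.

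Next, Wang's theorem identifies $G_2$‑invariant connections on $\pi^*Q$ in radial gauge with paths in $\mathrm{Hom}(\mf{m},\mf{g}_2)_{SU(3)}=\mathrm{span}_\R\{\mathrm{Id},J\}$: writing such a path as $f_1(r)\mathrm{Id}+f_2(r)J$ and $f=f_1+if_2$, we have $A(r[1])=A_\text{can}([1])+f_1(r)\mathrm{Id}+f_2(r)J$. As recorded in the excerpt, the $G_2$‑instanton equation for such $A$ is $rf'(r)=2(\bar f^2(r)-f(r))$, which in the variable $t=\log r$ becomes the gradient‑ascent flow $\dot f = 2\,\partial W/\partial\bar f$ of the real superpotential $W(z)=\tfrac13(z^3+\bar z^3)-|z|^2$, whose only critical points are the local maximum $0$ and the three cube roots of unity, all saddles.

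The two boundary conditions then pin down the trajectory. Decay of $A$ to $A_\text{can}$ forces $f(r)\to 0$ as $r\to\infty$, i.e.\ the forward limit of the flow line is the maximum $0$. Extendability of $A$ over the origin requires $|F_A|$ to remain bounded as $r\to 0$; from $|F_A|^2(\sigma,r)=\tfrac{c_1}{r^2}(|\bar f^2-f|^2+|\bar ff-1|^2)+\tfrac{c_2}{r}|f'|$ with $c_1,c_2>0$ this forces $|\bar f^2-f|,\ |\bar ff-1|,\ |f'|=O(r)$, so $\lim_{r\to 0}f$ is a critical point of $W$ with $|f|=1$, i.e.\ one of the three saddles (in particular $f\equiv 0$, namely $A_\text{can}$ itself, is excluded since its curvature blows up at the origin). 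Hence invariant $G_2$‑instantons on $P$ asymptotic to $A_\text{can}$ correspond exactly to flow lines of $\dot f = 2\,\partial W/\partial\bar f$ running from a saddle at $t=-\infty$ to $0$ at $t=+\infty$.

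Finally we classify these flow lines. The order‑three symmetry $z\mapsto e^{2\pi i/3}z$ preserves $W$ and cyclically permutes the three saddles; it is induced by the $3$‑symmetry of $S^6=G_2/SU(3)$ and lifts to an invariant gauge transformation of $P$, so it suffices to analyse flow lines with backward limit $z=1$. These lie on the one‑dimensional unstable manifold of that saddle, since the Hessian of $W$ at $1$ is $\mathrm{diag}(2,-6)$ in coordinates $z=1+x+iy$. Complex conjugation $z\mapsto\bar z$ also preserves $W$ and fixes $1$, so this unstable manifold lies in the real axis, where $W(x)=\tfrac23x^3-x^2$ and $W'(x)=2x(x-1)$: the ascent flow out of $x=1$ enters $(0,1)$ and converges to $0$, while the branch $x>1$ escapes to $+\infty$ and violates the decay condition. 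Thus there is a unique such flow line up to time translation $t\mapsto t+\mathrm{const}$ — precisely the scaling $r\mapsto\lambda r$ — namely $f_0(r)=1/(Cr^2+1)$, which gives $A=A_\text{std}$; and the connections attached to $f_1=e^{2\pi i/3}f_0$ and $f_2=e^{4\pi i/3}f_0$ are related to it by the same invariant gauge transformations. The main obstacle is this dynamical‑systems step: correctly matching the two boundary conditions to the stable/unstable‑manifold structure at the saddle and at the maximum, and using the $\Z_3$ and conjugation symmetries to collapse the analysis onto the one‑dimensional phase line.
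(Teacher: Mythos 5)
Your proposal is correct and follows essentially the same route as the paper: Wang's theorem reduces invariant connections to the complex function $f(r)$, the instanton equation becomes the gradient flow $rf'=2(\bar f^{2}-f)$ of the superpotential $W$, and the boundary conditions (curvature bounded at the origin, decay to $A_{\text{can}}$ at infinity) single out the trajectory $f_0(r)=1/(Cr^{2}+1)$ up to the $\mathbb{Z}_3$ of invariant gauge transformations. The only difference is that you spell out the saddle/unstable-manifold analysis that the paper summarises in one sentence, which is a welcome but not essentially different elaboration.
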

\begin{remark}
The constant $C$ can be interpreted as the ``size'' of the instanton, in other words how concentrated it is around the origin, and is related to the conformal invariance of the $G_2$-instanton equation \cite{charbonneau2016deformations}. It is worth noting that this parameter also arises from the dilation map, so the invariant moduli space coincides with the moduli space \eqref{eq:modulispacedefinition} for rates in the range $-2<\mu<-1.$
\end{remark}

The importance of this result is that is can be combined with the results of the previous subsection to prove a global uniqueness result for unobstructed instantons. Namely, \thref{invariance} says that any unobstructed instanton on $P$, AC to $A_\text{can}$, must be invariant under the action of $G_2$ on $\R^7$ which fixes some point, so uniqueness follows from the above result.
\begin{theorem}\thlabel{globaluniqueness}
Let $A$ be an AC $G_2$-instanton on $P$, converging to $A_\text{\normalfont can}$. Then either $A$ is obstructed or $A$ is the standard $G_2$-instanton. 
\end{theorem}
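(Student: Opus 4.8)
The argument is an assembly of results already established in this section, so the plan is short. Suppose $A$ is an AC $G_2$-instanton on $P=G_2\times\R^7$ converging to $A_{\mathrm{can}}$ and that $A$ is \emph{not} obstructed, so that the hypothesis of \thref{invariance} holds; I must show $A$ is the standard $G_2$-instanton. First I would invoke \thref{invariance} directly: an unobstructed such connection is invariant under the action of the subgroup $(G_2)_p\subset G_2\ltimes\R^7$ for some $p\in\R^7$, i.e.\ under the isotropy group of the point $p$ inside the group of motions generated by $G_2$-rotations and translations.

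Next I would recentre at $p$. The Euclidean translation $\tau_p\colon\R^7\to\R^7$ taking $p$ to $0$ is an isometry of $(\R^7,g_0)$; since $P$ is the trivial bundle it lifts to a bundle automorphism $\widetilde\tau_p$ covering $\tau_p$, and $\widetilde\tau_p$ conjugates $(G_2)_p$ to $(G_2)_0=G_2$ acting on $P$ in the standard homogeneous way. Hence $\widetilde\tau_p^{\,*}A$ is a $G_2$-invariant $G_2$-instanton on $P$. The pullback also respects the asymptotics: since $A_C=\pi^*A_{\mathrm{can}}$ is homogeneous of degree $-1$ and $\tau_p$ displaces points by a bounded amount, $\tau_p^{\,*}A_C-A_C=O(r^{-2})$, so $\widetilde\tau_p^{\,*}A$ still satisfies \eqref{eq:asympconnection} with limiting connection $A_{\mathrm{can}}$ on $Q=G_2\times_{(SU(3),\iota)}G_2$ (possibly with a weakened rate, which is immaterial for what follows). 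Now the classification of $G_2$-invariant instantons proved above applies: by Wang's theorem the invariant connections on $P$ in radial gauge correspond to maps $f_1\mathrm{Id}+f_2J\leftrightarrow f=f_1+if_2$, the $G_2$-instanton equation becomes the gradient-flow ODE $rf'=2(\bar f^2-f)$, and regularity at $r=0$ together with the decay condition $f(\infty)=0$ singles out $f_0(r)=1/(Cr^2+1)$ up to the $3$-symmetry, the three resulting families being related by invariant gauge transformations. Therefore $\widetilde\tau_p^{\,*}A=A_{\mathrm{std}}$, and undoing the translation shows that $A$ is the standard $G_2$-instanton up to the translation of $\R^7$ and the scale parameter $C$; for rates $\mu\in(-2,-1)$ a non-trivial translate of $A_{\mathrm{std}}$ decays too slowly to satisfy \eqref{eq:asympconnection}, which forces $p=0$ and hence $A=A_{\mathrm{std}}$ itself.

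The only step requiring genuine care — and the one I would write out in full — is the recentring: one must verify that $\widetilde\tau_p$ really does conjugate $(G_2)_p$-invariance to $G_2$-invariance on the trivial bundle $P$, and that it preserves condition \eqref{eq:asympconnection} together with its limit $A_{\mathrm{can}}$; the triviality of $P$ is used at this point. Granting this bookkeeping, the theorem follows immediately by chaining \thref{invariance} with the invariant classification of this subsection, and no further analysis is needed.
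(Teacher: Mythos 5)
Your proposal is correct and follows essentially the same route as the paper: the paper's proof is exactly the combination of \thref{invariance} with the classification of $G_2$-invariant instantons from the preceding subsection, as you do. The only difference is that you make explicit the recentring at $p$ and the rate bookkeeping for non-trivial translates, which the paper leaves implicit when it says uniqueness ``follows from the above result.''
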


\newpage 
\setcounter{secnumdepth}{0}
\begin{center}
\section{Appendices}
\end{center}
\setcounter{secnumdepth}{2}
\setcounter{section}{0}
\renewcommand\thesection{\Alph{section}}

\section{Calculation of Eigenvalues: \texorpdfstring{$\mathbb{CP}^3$}{CP3} Case}\label{sec:appendixcp3}
This appendix presents the calculations required to prove \thref{sp2evals}. The eigenvalues coming from two different irreducible representations are calculated.
\subsection{Eigenvalues from the Trivial Representation of Sp\texorpdfstring{$(2)$}{(2)}}

Thus we need only consider the 3 representations stated in \thref{boundssp2}. For notational convenience let us set $[[W_{(a,b)}]]\coloneqq W_{(a,b)}\oplus W_{(a,-b)}$ and recall the twisted spinor space splits 
$$S\otimes \mf{su}(2)_\C = S\oplus \left( S\otimes [[W_{(0,2)}]]\right). $$
As representations of Sp$(1)\times \text{U}(1)$ these spaces are 
\begin{align}
    S&=2W_{(0,0)}\oplus W_{(1,1)}\oplus W_{(1,-1)}\oplus W_{(0,2)}\oplus W_{(0,-2)} \label{eq:spinorspacesp2}\\
    S\otimes [[W_{(0,2)}]]&=[[W_{(1,3)}]]\oplus [[W_{(1,1)}]]\oplus [[W_{(0,4)}]]\oplus 2\left( [[W_{(0,2)}]]\oplus W_{(0,0)}\right)\label{eq:spinorspacesp2prime}. 
\end{align}
The space of sections admits a splitting \begin{align*}L^2(S\otimes \Ad Q)\cong &\left( \bigoplus_{\gamma \in \widehat{\text{Sp}(2)}}\text{Hom}(V_\gamma, S)_{\text{Sp}(1)\times \text{U}(1)}\otimes V_\gamma\right) \oplus \\ &\left( \bigoplus_{\gamma \in \widehat{\text{Sp}(2)}}\text{Hom}(V_\gamma, S\otimes [[W_{(0,2)}]])_{\text{Sp}(1)\times \text{U}(1)}\otimes V_\gamma\right)
\end{align*}
and this is preserved by the operators $(D^t_{A_\text{can}})_\gamma.$
For each irreducible representation $V_\gamma$ of Sp$(2)$ we will therefore consider the operators defined by the restriction of $(D^t_{A_\text{can}})_\gamma$ to the spaces Hom$(V_\gamma, S)_{\text{Sp}(1)\times \text{U}(1)}$ and Hom$(V_\gamma, S\otimes [[W_{(0,2)}]])_{\text{Sp}(1)\times \text{U}(1)}.$\par
The first case to consider is when $V_\gamma=V_{(0,0)}=\C$ is the trivial representation. The space $\text{Hom}(\C , S)_{\text{Sp}(1)\times \text{U}(1)}$ has dimension $2$ and a basis is given by the maps $q^{(0,0)}\colon \C \to \Lambda^{(0,0)}\subset S$ and $q^{(3,3)}\colon \C \to \Lambda^{(3,3)}\subset S$  defined in the obvious way. On this space $(D^1_{A_\text{can}})_\gamma=\text{cl}(I_a)\rho_{V_\gamma^*}(I_a)\equiv 0$ since the action is trivial. Now 
$$(D^0_{A_\text{can}})_\gamma = (D^1_{A_\text{can}})_\gamma -\frac{3}{4}\text{Re}\Omega$$ 
and so \thref{eigenvals} tells us that $q^{(0,0)}$ and $q^{(3,3)}$ are eigenvectors of $(D^0_{A_\text{can}})_\gamma$ with eigenvalues $-3$ and $3$ respectively. Note that the sections these maps define correspond to the Killing spinor $s_6$ and $\text{Vol}\cdot s_6.$\par
Consider now the space $\text{Hom}(V_\gamma, S\otimes [[W_{(0,2)}]])_{\text{Sp}(1)\times \text{U}(1)}$ when $V_\gamma$ is the trivial representation so that once again $(D^1_{A_\text{can}})_\gamma$ acts trivially. This space has dimension 2 since $S\otimes [[W_{(0,2)}]]$ contains two trivial components coming from the subspaces $W_{(0,2)}\otimes W_{(0,-2)}$ and $W_{(0,-2)}\otimes W_{(0,2)}.$ Since the spaces in question are actually subspaces of $\mf{m}^*_\C \otimes [[W_{(0,2)}]]\subset S\otimes [[W_{(0,2)}]]$ we find from \thref{eigenvals} that $\text{Re}\Omega$ acts trivially on Hom$(\C,S\otimes [[W_{(0,2)}]])_{\text{Sp}(1)\times \text{U}(1)}.$ Therefore $(D^t_{A_\text{can}})_\gamma =0$ on this space for all $t.$
We conclude  that the eigenvalues are as stated in \thref{sp2evals}.

\subsection{Eigenvalues from the Vector Representation of \texorpdfstring{$\text{Spin}(5)$}{Spin(5)}}
Throughout this section the representation of Sp$(2)$ under consideration is $V_\gamma=V_{(1,0)}$ the vector representation of Spin$(5).$ Consider first the restriction of $(D^t_{A_\text{can}})_\gamma$ to the space $\text{Hom}(V_{(1,0)}, S\otimes [[W_{(0,2)}]])_{\text{Sp}(1)\times \text{U}(1)}.$
The branching of $V_{(1,0)}$ as a representation of Sp$(1)\times \text{U}(1)$ is \cite{charbonneau2016deformations}
$V_{(1,0)}=[[W_{(1,1)}]]\oplus W_{(0,0)}$ and one therefore finds that $\text{Hom}(V_{(1,0)},S\otimes [[W_{(0,2)}]])_{\text{Sp}(1)\times \text{U}(1)}$ has dimension four and a basis is given by the Sp$(1)\times\text{U}(1)$ equivariant maps which factor
$$q^{(i,j)}_{(k,l)(m,n)}\colon V_{(1,0)}\to W_{(i,j)}\to W_{(k,l)}\otimes W_{(m,n)}\hookrightarrow S\otimes [[W_{(0,2)}]] $$
where the first map is a projection and the second map is an embedding of $W_{(i,j)}$ in $W_{(k,l)}\otimes W_{(m,n)}\subset S\otimes [[W_{(0,2)}]].$ The basis one obtains is $$\left\{q^{(1,1)}_{(1,-1)(0,2)},q^{(1,-1)}_{(1,1)(0,-2)},q^{(0,0)}_{(0,-2)(0,2)},q^{(0,0)}_{(0,2)(0,-2)}\right\},$$
observe that all of these maps factor through $\Lambda^1\subset S$ so that $\text{Re}\Omega $ acts trivially on this space. Using \thref{repeigenvalsofsquare} and \eqref{eq:sp2evals} we find that $(D^\frac{1}{3}_{A_\text{can}})^2_\gamma$ acts as  multiplication by $8$ on this space. The eigenvalues of $(D^\frac{1}{3}_{A_\text{can}})_\gamma$ are therefore $\pm \sqrt{ 8},$ each with multiplicity 2. Since $\text{Re}\Omega$ acts trivially on this space the spectrum is identical for each $(D^t_{A_\text{can}})_\gamma$ for each $t\in \R.$  
\par
Let us now consider the action of the operators  
$(D^t_{A_\text{can}})_\gamma|_{\text{Hom}(V_{(1,0)}, S)_{\text{Sp}(1)\times \text{U}(1)}}$. The space Hom$(V_{(1,0)},S)_{\text{Sp}(1)\times \text{U}(1)}$ four dimensional. A basis is provided by linearly independent  maps which factor $$q^{(i,j)}\colon V_{(1,0)}\to W_{(i,j)}\hookrightarrow S$$
with the first map being projection and the second being inclusion. This yields the basis \begin{equation}\label{eq:sp2calcbasis}
\left\{ q^{(1,1)},q^{(1,-1)}q^{(0,0)},\tilde{q}^{(0,0)}\right\}
\end{equation}
where $q^{(0,0)}$ maps into $\Lambda^{(0,0)}\subset S$ and $\tilde{q}^{(0,0)}$ maps into $\Lambda^{(3,3)}\subset S.$ The situation here is more complicated than those previously considered since an Sp$(1)\times \text{U}(1)$-equivariant map takes values in the entire spinor space $S$, not just one of the subspaces $(\Lambda^{(0,0}\oplus \Lambda^{(3,3)})$ or $\Lambda^1_\C.$ We know from \thref{repeigenvalsofsquare} that $(D^\frac{1}{3}_{A_\text{can}})_\gamma=12$ on this space and  the eigenvalues of $(D^\frac{1}{3}_{A_\text{can}})_\gamma$ are thus $\pm \sqrt{12}.$ The complication arises because the 3-form $\text{Re}\Omega$ acts non-trivially on this space. To calculate the eigenvalues of $(D^0_{A_\text{can}})_\gamma$ on this space we work with formula \eqref{eq:D^thomspaceformula}. We will seek to calculate the action $\rho_{V_{(1,0)}}(I_a)q^{(i,j)}$ where $I_a$ is an orthonormal basis of $\mf{m}$ and $q^{(i,j)}$ is one of the basis vectors from \eqref{eq:sp2calcbasis}, as well as understanding the action of Clifford multiplication and the almost complex structure in this basis. \par
The first job is to find an orthonormal basis for $\mf{sp}(2)$ with respect to the metric \eqref{eq:killingform}. We choose to view the Lie algebra $\mf{sp}(2)$ as a subspace of $\text{Mat}_4(\C)$ by viewing the quaternionic algebra as an algebra of $2\times 2$ complex matrices in the standard way. From this point of view the Killing form is  \cite{fulton2013representation} 
$$\text{Tr}(\ad_X\circ\ad_Y)=6\text{Tr}XY$$
for $X,Y\in \mf{sp}(2)\subset \text{Mat}_4(\C).$ \par 
One finds that 
$$ \begin{array}{cc}
     H_1=\begin{pmatrix}\bf{i}&0 \\
     0&0 \end{pmatrix}= \begin{pmatrix} 
     0&i & 0 & 0  \\
    i&0 &0&0 \\
    0& 0& 0& 0\\
    0& 0& 0& 0
    \end{pmatrix} & 
    H_2=\begin{pmatrix}\bf{j}&0 \\
     0&0 \end{pmatrix}= \begin{pmatrix}
    0 & -1 & 0 & 0 \\
    1 & 0 & 0 & 0\\
    0 & 0 & 0 & 0\\
    0 & 0 & 0& 0
    \end{pmatrix} \\ 
    H_3 =\begin{pmatrix}\bf{k}&0 \\
     0&0 \end{pmatrix}=\begin{pmatrix}
    i & 0 & 0 & 0\\
    0 & -i & 0 & 0\\
    0 & 0 & 0 & 0
    \end{pmatrix}
    &
    H_4=\begin{pmatrix}0&0 \\
     0&\bf{i} \end{pmatrix}=\begin{pmatrix}
    0 & 0 & 0&0\\
    0&0&0&0\\
    0&0&0&i\\
    0&0&i&0
    \end{pmatrix}
    \\
    M_1=\begin{pmatrix}0&0 \\
     0&\bf{j} \end{pmatrix} =\begin{pmatrix}
    0&0&0&0\\
    0&0&0&0\\
    0&0&0&-1\\
    0&0&1&0 \end{pmatrix}
    &
    M_2=\begin{pmatrix}0&0 \\
     0&\bf{k} \end{pmatrix} =\begin{pmatrix}
    0&0&0&0\\
    0&0&0&0\\
    0&0&i&0\\
    0&0&0&-i \end{pmatrix} \\
    M_3=\frac{1}{\sqrt{2}}\begin{pmatrix}
    0 & 1\\
    -1 & 0 \end{pmatrix}=\frac{1}{\sqrt{2}}\begin{pmatrix}
    0&0&1&0\\
    0&0&0&1\\
    -1&0&0&0\\
    0&-1&0&0\end{pmatrix}
    &
    M_4=\frac{1}{\sqrt{2}}\begin{pmatrix}
    0 & \bf{i}\\
    \bf{i} & 0 \end{pmatrix}= \frac{1}{\sqrt{2}}\begin{pmatrix}
    0&0&0&i\\
    0&0&i&0\\
    0&i&0&0\\
    i&0&0&0\end{pmatrix}
    \\
    M_5=\frac{1}{\sqrt{2}}\begin{pmatrix}
    0 & \bf{j}\\
    \bf{j} & 0 \end{pmatrix}=\frac{1}{\sqrt{2}}\begin{pmatrix}
    0&0&0&-1\\
    0&0&1&0\\
    0&-1&0&0\\
    1&0&0&0 \end{pmatrix} 
    &
    M_6=\frac{1}{\sqrt{2}}\begin{pmatrix}
    0 & \bf{k}\\
    \bf{k} & 0 \end{pmatrix}=\frac{1}{\sqrt{2}}\begin{pmatrix}
    0&0&i&0\\
    0&0&0&-i\\
    i&0&0&0\\
    0&-i&0&0\end{pmatrix}.
\end{array}
$$
is an orthonormal set of generators with the matrices $H_i$ generating the subalgebra $\mf{sp}(1)\oplus \mf{t}$ in \eqref{eq:sp1u1subalgebra}. The almost complex structure $J$ acts as follows:
$$\begin{array}{cc}
    J(M_1)=-M_2 & J(M_2)=M_1  \\
     J(M_3)=M_4& J(M_4)=-M_3 \\
     J(M_5)=-M_6 & J(M_6)=M_5
\end{array}$$
and by calculating the structure constants of the algebra and appealing to the formula \eqref{eq:holformincoords} one finds that 
\begin{equation}\label{eq:holvolsp2}
    \text{Re}\Omega =e^{135}+ e^{146} + e^{236}+e^{254}
\end{equation}
in the local frame $\{e^a\}$ of $T^*\C \mathbb{P}^3$ determined by the basis $\{M_a\}$ of $\mf{m}\cong \mf{m}^*.$ \par 
Note we have isomorphisms
$$\text{Hom}(V_{(1,0)},S)_{\text{Sp}(1)\times \text{U}(1)}\cong \left( S\otimes V_{(1,0)}^*\right)_{\text{Sp}(1)\times \text{U}(1)}\cong \left( S\otimes V_{(1,0)}\right)_{\text{Sp}(1)\times \text{U}(1)}$$
and this last space is the subspace of $S\otimes V_{(1,0)}$  consisting of vectors fixed by  the action of the $\text{Sp}(1)\times \text{U}(1)$ subgroup of Sp$(2).$ For the representation $V_{(1,0)}$ we choose basis vectors 
$$\begin{array}{ccc} 
I\coloneqq \begin{pmatrix}
1&0&0&0\\
0&1&0&0\\
0&0&-1&0\\
0&0&0&-1\end{pmatrix},
&X\coloneqq \begin{pmatrix}
0&0&1&0\\
0&0&0&1\\
1&0&0&0\\
0&1&0&0 \end{pmatrix},
&Y\coloneqq \begin{pmatrix}
0&0&0&i\\
0&0&i&0\\
0&-i&0&0\\
-i&0&0&0\end{pmatrix}\\
Z\coloneqq \begin{pmatrix}
0&0&0&1\\
0&0&-1&0\\
0&-1&0&0\\
1&0&0&0\end{pmatrix}, 
&W\coloneqq \begin{pmatrix}
0&0&i&0\\
0&0&0&-i\\
-i&0&0&0\\
0&i&0&0\end{pmatrix}
\end{array}$$
and the action is matrix commutation.\par
To determine what the maps $q^{(i,j)}$ from \eqref{eq:sp2calcbasis} look like in the space $\left( V_{(1,0)}\otimes S\right)_{\text{Sp}(1)\times\text{U}(1)}$ we note first that $W_{(i,j)}\otimes W_{(k,l)}$ contains a copy of the trivial representation of Sp$(1)\times \text{U}(1)$ if and only if $i=k$ and $j=-l.$ We will therefore determine explicit decompositions of the representation $V_{(1,0)}$ and the spinor space $S$ into irreducible representations of Sp$(1)\times \text{U}(1).$\par
By calculating the action $\rho_{V_{(1,0)}}(H_i)$ on the basis matrices of $V_{(1,0)}$ we can determine  explicitly the decomposition $V_{(1,0)}=[[W_{(1,1)}]]\oplus W_{(0,0)}.$ Let us set 
\begin{align*}
    \theta_1\coloneqq X+iY, \qquad \theta_2\coloneqq W-iZ\\
    \overline{\theta}_1\coloneqq X-iY, \qquad \overline{\theta}_2\coloneqq W+iZ
\end{align*}
then one finds that $W_{(0,0)}=\langle I \rangle_\C$ whilst $W_{(1,1)}=\langle \theta_1,\theta_2\rangle_\C$ and $W_{(1,-1)}=\langle \overline{\theta }_1, \overline{\theta}_2\rangle_\C.$ \\
A similar analysis works for the representation $\mf{m}.$ If we define 
\begin{align*}
    \phi_1\coloneqq M_3+iM_4, \qquad \phi_2\coloneqq M_6+iM_5 \\
    \overline{\phi}_1\coloneqq M_3-iM_4, \qquad \overline{\phi}_2\coloneqq M_6-iM_5
\end{align*}
then one finds that $W_{(1,1)}=\langle \phi_1,\phi_2\rangle_\C$ and $W_{(1,-1)}=\langle \overline{\phi}_1, \overline{\phi}_2\rangle_\C.$
With this in hand we can look for invariant vectors in $W_{(i,j)}\otimes W_{(i,-j)}\subset S\otimes V_{(1,0)}.$ One finds that
\begin{align*}
    q^{(0,0)}&= {\bf{1}}\otimes I\\
    \tilde{q}^{(0,0)}&=\text{Vol}\otimes I\\
    q^{(1,1)}&=\phi_1\otimes \overline{\theta_1} + \phi_2\otimes \overline{\theta}_2 \\
    q^{(1,-1)}&=\overline{\phi}_1\otimes \theta_1 + \overline{\phi}_2\otimes \theta_2
\end{align*}
form a basis of $\text{Hom}(V_{(0,1)},S\otimes \mf{su}(2)_\C)_{\text{Sp}(1)\times\text{U}(1)}$.
We can now proceed to calculate the matrix of the Dirac operator in this basis using the formula 
\begin{equation}\label{eq:reptheorydiracsp2}
(D^1_{A_\text{can}})_\gamma = \text{cl}(M_a)\rho_{V_{(1,0)}}(M_a).
\end{equation}
Inspection of \eqref{eq:nkclifford} reveals that $\text{cl}(M_a)(1\otimes v)=M_a\otimes v$ for $v\in V_{(1,0)}$ and we find that $\rho_{V_{(1,0)}}(M_1)I=\rho_{V_{(1,0)}}(M_2)I=0$ whilst 
\begin{align*}
    \rho_{V_{(1,0)}}(M_3)I=-\sqrt{2}X, \qquad     \rho_{V_{(1,0)}}(M_4)I=-\sqrt{2}Y\\
        \rho_{V_{(1,0)}}(M_5)I=\sqrt{2}Z, \qquad     \rho_{V_{(1,0)}}(M_6)I=-\sqrt{2}W.
\end{align*}
Applying these facts to \eqref{eq:reptheorydiracsp2} one finds 
\begin{align*}
    (D^1_{A_\text{can}})_\gamma q^{(0,0)}&= \sqrt{2}\left( -M_3\otimes X -M_4 \otimes Y + M_5\otimes Z -M_6\otimes W\right)\\
    &=-\frac{1}{\sqrt{2}}\left( q^{(1,1)}+q^{(1,-1)}\right). 
\end{align*}

One calculates the action of $(D^1_{A_\text{can}})_\gamma$ on the other basis vectors similarly, using the formula for Clifford multiplication \eqref{eq:nkclifford} and knowledge of the 3-form $\text{Re}\Omega $ \eqref{eq:holvolsp2} and the almost complex structure $J$. This yields the matrix $(D^1_{A_\text{can}})_\gamma $ in the basis $q^{(0,0)},\tilde{q}^{(0,0)},q^{(1,1)},q^{(1,-1)}$. Given that \thref{eigenvals} informs of the action of $\text{Re}\Omega$ in this basis we can appeal to \eqref{eq:diracholformaction} and find the matrix of $(D^t_{A_\text{can}})_\gamma$ to be 
\begin{equation}
(D^t_{A_\text{can}})_\gamma=
    \begin{pmatrix}
    3(t-1)&0&-4\sqrt{2}&-4\sqrt{2}\\
    0&-3(t-1)&4i\sqrt{2}&-4i\sqrt{2}\\
    -\frac{1}{\sqrt{2}}&-\frac{i}{\sqrt{2}}&0&2\\
    -\frac{1}{\sqrt{2}}&\frac{i}{\sqrt{2}}&2&0
    \end{pmatrix}
\end{equation}
We perform a consistency check by noting that $(D^\frac{1}{3}_{A_\text{can}})^2_\gamma=\text{diag}(12,12,12,12)$ as predicted by \eqref{eq:dthirdsquare}. Calculating the eigenvalues of $(D^0_{A_\text{can}})_\gamma$ one finds they are as stated in \thref{sp2evals}.

\section{Calculation of Eigenvalues: \texorpdfstring{$S^6$}{S6} Case}\label{sec:appendixs6}
In this section we present the details required to prove \thref{evalsstdinst}. We perform the calculation of eigenvalues for  the operators  $(D^0_{A_\text{can}})_\gamma$ in the cases when $V_\gamma$ is the trivial representation (Appendix~\ref{sec:g2trivrep}), the standard representation (Appendix~\ref{sec:stdinststdrep}) and the adjoint representation of $G_2$ (Appendix~\ref{sec:stdinstadrep}). \par

Let us first recall the setup. The twisted spinor bundle on which $D^0_{A_\text{can}}$ acts is $(E,\rho_E)=(S\otimes (\mf{g}_2)_\C,\rho_S\otimes \Ad).$ 
As a representation of $G_2$ this twisted spinor space is 
\begin{align*} 
E&=(V_{(1,0)}\oplus V_{(0,0)})\otimes V_{(0,1)}\\
&= V_{(1,1)}\oplus V_{(2,0)}\oplus V_{(0,1)}\oplus V_{(1,0)}.
\end{align*}
The adjoint representation splits $(\mf{g}_2)_\C=W_{(1,1)}\oplus[[W_{(1,0)}]]$ as an $ \text{SU}(3) $-module, so as a representation of $ \text{SU}(3) $ the twisted spinor space is 
\begin{align*}
   E&=([[W_{(1,0)}]]\oplus 2W_{(0,0)})\otimes (W_{(1,1)}\oplus [[W_{(1,0)}]])\\
    &=[[W_{(2,1)}]]\oplus 2[[W_{(2,0)}]] \oplus 4W_{(1,1)}\oplus 4[[W_{(1,0)}]]\oplus 2W_{(0,0)}.
\end{align*}

Since $S=V_{(1,0)}\oplus V_{(0,0)}$ as a representation of $G_2$ it will prove convenient to decompose 
$$ \text{Hom}(S\otimes V_\gamma,(\mf{g}_2)_\C)_{ \text{SU}(3) }\cong \text{Hom}(V_{(1,0)}\otimes V_\gamma, (\mf{g}_2)_\C)_{ \text{SU}(3) }\oplus \text{Hom}(V_{(0,0)}\otimes V_\gamma, (\mf{g}_2)_\C )_{ \text{SU}(3) }$$
as $G_2$-modules.
We have the following  models for irreducible representations of $G_2:$ 
\begin{itemize}
\item$ V_{(1,0)}=\C^7$
\item $V_{(0,1)}=(\mf{g}_2)_\C=\{ \alpha \in \C \otimes \Lambda^2(\R^7)^* \, ; \, \alpha \wedge \psi_0=0\}$
\item $V_{(2,0)}=\{ \eta \in \C \otimes \Lambda^3 (\R^7)^* \, ; \, \eta \wedge \varphi_0=\eta \wedge \psi_0=0\}.$
\end{itemize}
For irreducible representations of $ \text{SU}(3) $ we model 
\begin{itemize}
\item $W_{(1,0)}=\Lambda^{(1,0)}(\R^6)^*$ 
\item $W_{(0,1)}=\Lambda^{(0,1)}(\R^6)^*$
\item $W_{(1,1)}=\mf{su}(3)=\{\alpha \in \Lambda^2(\C^6)^* \, ; \, *(\alpha \wedge \omega)=0\}$
\end{itemize}
where $\omega$ is the standard K\"ahler form on $\R^6.$ Note that $W_{(1,1)}$ embeds into $V_{(0,1)}$ by inclusion and the embedding of $W_{(1,0)}$ into $V_{(0,1)}$ is given by extending the embedding $F$ of $(\R^6)^*$ into $\mf{g}_2$ given by
$$F(v)=v\wedge dt +\frac{1}{2}v\, \lrcorner \, \text{Re}\Omega$$
to a complex linear map and restricting to $W_{(1,0)}\subset (\R^6)^*\otimes \C.$

\subsection{Eigenvalues from the Trivial Representation}\label{sec:g2trivrep}
Let $V_\gamma=V_{(0,0)}$ be the trivial representation of $G_2.$ Then Schur's lemma tells us that Hom$(\C, S\otimes (\mf{g_2})_\C)_{ \text{SU}(3) }\cong\text{Hom}(S,(\mf{g_2})_\C)_{ \text{SU}(3) }$ is two-dimensional. A basis for this space is given by the maps that factor through projections
\begin{align*}
    q^{(1,0)}_{(1,0)}&\colon S \to W_{(1,0)} \to   (\mf{g}_2)_\C \\
    q^{(0,1)}_{(0,1)}&\colon S \to W_{(0,1)} \to  (\mf{g}_2)_\C.
\end{align*}

When $V_\gamma$ is the trivial representation the operators $\left( \rho_{S\otimes V_\gamma^*}(\text{Cas}_\mf{m}) -\rho_S(\text{Cas}_\mf{m}) \right)$ and  $\rho_{V_\gamma^*}(\text{Cas}_\mf{m}) $
both vanish. From  \eqref{eq:diracascasimirs} we know that
$(D^{\rho}_{A_\text{can}})_\gamma$ is built from precisely these operators and hence vanishes. Note also that \thref{eigenvals} ensures that $\text{Re}\Omega$ acts as $0$ on this space since  $q^{(1,0)}_{(1,0)}$ and $q^{(0,1)}_{(0,1)}$ factor through $\Lambda^1\subset S$. Overall then 
$(D^0_{A_\text{can}})_\gamma$  vanishes identically on this space.

\subsection{Calculation of Eigenvalues from the Standard Representation}\label{sec:stdinststdrep}

We now consider the operator $(D^0_{A_\text{can}})_\gamma$ in the case when $V_\gamma=V_{(1,0)}$ be the standard representation of $G_2.$ This acts on the space Hom$(S\otimes V_{(1,0)}, (\mf{g}_2)_\C)_{ \text{SU}(3) }$ which is ten dimensional. It is convenient to split this space as follows:
$$\text{Hom}(S\otimes V_{(1,0)}, (\mf{g}_2)_\C)_{ \text{SU}(3) }\cong \text{Hom}(V_{(1,0)}\otimes V_{(1,0)},(\mf{g}_2)_\C)_{ \text{SU}(3) }\oplus \text{Hom}(\C \otimes V_{(1,0)}, (\mf{g}_2)_\C)_{ \text{SU}(3) }.$$
 Recall the operator
 $(D^\rho_{A_\text{can}})_\gamma$ from \eqref{eq:diracascasimirs} is built from Casimir operators. The matrix of this operator is block diagonal with respect to this splitting, and acts trivially  on $\text{Hom}(\C\otimes V_{(1,0)}, (\mf{g}_2)_\C )_{ \text{SU}(3) }$ by the previous calculation. \par
Our task is thus to calculate $(D^\rho_{A_\text{can}})_\gamma$ on $ \text{Hom}(V_{(1,0)}^{(1)}\otimes V^{(2)}_{(1,0)}, (\mf{g}_2)_\C)_{ \text{SU}(3) }$ where $V_{(1,0)}^{(i)}$ are distinct copies of $V_{(1,0)}.$ Notice  $\rho_{V_{(1,0)}^{(1)}\otimes V_{(1,0)}^{(2)}}(\text{Cas}_{\mf{su}(3)})=\rho_{(\mf{g}_2)_\C}(\text{Cas}_{\mf{su}(3)})$ on this space, so we can write the operator $(D^{\rho}_{A_\text{can}})_\gamma$ as \begin{equation}\label{eq:diracterms}
 (D^{\rho}_{A_\text{can}})_\gamma=\frac{1}{2} \left( \rho_{V^{(1)}_{(1,0)}\otimes V^{(2)}_{(1,0)}}(\text{Cas}_{\mf{g}_2})-\rho_{(\mf{g}_2)_\C}(\text{Cas}_{\mf{su}(3)})- \rho_{V^{(1)}_{(1,0)}}(\text{Cas}_\mf{m})-\rho_{V^{(2)}_{(1,0)}}(\text{Cas}_\mf{m}) \right).
\end{equation}
We shall pick two bases of $\text{Hom}(V_{(1,0)},S\otimes (\mf{g}_2)_\C)_{\text{SU}(3)},$ one which diagonalises the first term in \eqref{eq:diracterms} and one which diagonalises the other terms. We will then relate the two bases to find the matrix of $(D^\rho_{A_\text{can}})_\gamma$  in one of the bases. Finally, by calculating the action of $\vol$ and $\text{Re}\Omega,$ we will find the matrix of $(D^0_{A_\text{can}})_\gamma.$\par
 
To first basis of $\text{Hom}(V^{(1)}_{(1,0)}\otimes V^{(2)}_{(1,0)}, (\mf{g}_2)_\C)_{ \text{SU}(3) }$ is obtained by finding non-zero $ \text{SU}(3) $-equivariant maps
$$q^{(i,j)(k,l)}_{(m,n)}\colon V^{(1)}_{(1,0)}\otimes V^{(2)}_{(1,0)} \to W_{(i,j)}\otimes W_{(k,l)} \to W_{(m,n)}\to   (\mf{g_2})_\C. $$ Observe that this basis diagonalises each term on the right hand side of \eqref{eq:diracterms}, except for the first term.
These maps are constructed from the following projection maps:

\begin{itemize}
    \item $V_{(1,0)} \to W_{(0,0)}, (u+adt) \mapsto adt$
    \item $V_{(1,0)} \to W_{(1,0)}, (u+adt) \mapsto \frac{1}{2}(1+iJ)u$
    \item $ V_{(1,0)} \to W_{(0,1)}, (u+adt) \mapsto \frac{1}{2}(1-iJ)u$
    \item $\Lambda^{1,1}(\R^6)\to W_{(1,1)}, \alpha \mapsto \alpha -\frac{1}{3} \langle \alpha, \omega \rangle \omega.$
\end{itemize}
The basis  we get is
\begin{center}
\begin{tabular}{|p{2.5cm}|p{14cm}|}
\hline 
Map & Factorisation and Formula \\
\hline 
$q_1=q^{(0,0)(1,0)}_{(1,0)}$ &$ V^{(1)}_{(1,0)}\otimes V^{(2)}_{(1,0)} \to W_{(0,0)} \otimes W_{(1,0)} \to W_{(1,0)} \to(\mf{g}_2)_\C$ \\
&$(u+adt) \otimes (v+bdt) \mapsto adt \otimes \frac{1}{2}(1+iJ)v \mapsto \frac{1}{2}a(1+iJ)v \mapsto F(\frac{1}{2}a(1+iJ)v)$\\
\hline
$q_2=q^{(0,0)(0,1)}_{(0,1)}$ &$ V^{(1)}_{(1,0)}\otimes V^{(2)}_{(1,0)} \to W_{(0,0)} \otimes W_{(0,1)} \to W_{(0,1)} \to (\mf{g}_2)_\C$ \\
&$(u+adt) \otimes (v+bdt) \mapsto adt \otimes \frac{1}{2}(1-iJ)v \mapsto \frac{1}{2}a(1-iJ)v \mapsto F(\frac{1}{2}a(1-iJ)v)$\\
\hline
$q_3=q^{(1,0)(0,0)}_{(1,0)}$& $ V^{(1)}_{(1,0)}\otimes V^{(2)}_{(1,0)} \to W_{(1,0)}\otimes W_{(0,0)} \to W_{(1,0)} \to (\mf{g}_2)_\C$ \\
&$(u+adt) \otimes (v+bdt) \mapsto \frac{1}{2}(1+iJ)u\otimes bdt \mapsto \frac{1}{2}b(1+iJ)u \mapsto F(\frac{1}{2}b(1+iJ)u)$\\
\hline 
$q_4=q^{(0,1)(0,0)}_{(0,1)}$& $V^{(1)}_{(1,0)}\otimes V^{(2)}_{(1,0)} \to W_{(0,1)}\otimes W_{(0,0)} \to W_{(0,1)} \to (\mf{g}_2)_\C$ \\
&$(u+adt) \otimes (v+bdt) \mapsto \frac{1}{2}(1-iJ)u\otimes bdt \mapsto \frac{1}{2}b(1-iJ)u \mapsto F(\frac{1}{2}b(1-iJ)u)$ \\
\hline 
$q_5=q^{(1,0)(1,0)}_{(0,1)}$& $ V^{(1)}_{(1,0)}\otimes V^{(2)}_{(1,0)} \to W_{(1,0)}\otimes W_{(1,0)} \to W_{(0,1)} \to (\mf{g}_2)_\C$\\
& $(u+adt) \otimes (v+bdt)\mapsto \frac{1}{2}(1+iJ)u\otimes \frac{1}{2}(1+iJ)v \mapsto \frac{1}{4} [((1+iJ)u)\wedge ((1+iJ)v)]\lrcorner \overline{\Omega} \mapsto F(\frac{1}{4} [((1+iJ)u)\wedge ((1+iJ)v)]\lrcorner \overline{\Omega})$\\
\hline
$q_6=q^{(0,1)(0,1)}_{(1,0)}$ & $ V^{(1)}_{(1,0)}\otimes V^{(2)}_{(1,0)} \to W_{(0,1)}\otimes W_{(0,1)} \to W_{(1,0)} \to (\mf{g}_2)_\C $\\
& $(u+adt) \otimes (v+bdt)\mapsto \frac{1}{2}(1-iJ)u\otimes \frac{1}{2}(1-iJ)v \mapsto \frac{1}{4} [((1-iJ)u)\wedge ((1-iJ)v)]\lrcorner \Omega \mapsto F(\frac{1}{4} [((1-iJ)u)\wedge ((1-iJ)v)]\lrcorner \Omega)$\\
\hline
$q_7=q^{(1,0)(0,1)}_{(1,1)}$& $ V^{(1)}_{(1,0)}\otimes V^{(2)}_{(1,0)} \to W_{(1,0)}\otimes W_{(0,1)} \to W_{(1,1)} \to (\mf{g}_2)_\C $\\
& $(u+adt) \otimes (v+bdt)\mapsto\frac{1}{2}(1+iJ)u\otimes \frac{1}{2}(1-iJ)v \to \frac{1}{4}[((1+iJ)u)\wedge((1-iJ)v)-\frac{1}{3}\langle ((1+iJ)u)\wedge((1-iJ)v),\omega \rangle \omega  ] $\\
\hline
$q_8=q^{(0,1)(1,0)}_{(1,1)}$& $V^{(1)}_{(1,0)}\otimes V^{(2)}_{(1,0)} \to W_{(0,1)}\otimes W_{(1,0)} \to W_{(1,1)} \to (\mf{g}_2)_\C $\\
& $(u+adt) \otimes (v+bdt)\mapsto\frac{1}{2}(1-iJ)u\otimes \frac{1}{2}(1+iJ)v \to \frac{1}{4}[((1-iJ)u)\wedge((1+iJ)v)-\frac{1}{3}\langle ((1-iJ)u)\wedge((1+iJ)v),\omega \rangle \omega ]  $\\
\hline
\end{tabular}
\end{center}
Furthermore we can extend this to a basis of Hom$(S\otimes V_{(1,0)}, (\mf{g}_2)_\C)_{ \text{SU}(3) }$  by adding the maps $q_9=\text{Vol}\cdot q^{(0,0)(1,0)}_{(1,0)}$ and $q_{10}=\text{Vol}\cdot q^{(0,0)(0,1)}_{(0,1)}.$

Next we choose a basis that diagonalises $\rho_{S\otimes V_{(1,0)}^*}(\text{Cas}_{\mf{g}_2})$.
We considering projections through the splitting of $V^{(1)}_{(1,0)}\otimes V^{(2)}_{(1,0)}$ into irreducible representations of $G_2$:
$$p^{(i,j)}_{(k,l)} \colon V^{(1)}_{(1,0)}\otimes V^{(2)}_{(1,0)} \to V_{(i,j)}\to W_{(k,l)} \to \mf{g}_2.$$ 
These maps are eigenvectors of $\rho_{S\otimes V_{(1,0)}^*}(\text{Cas}_{\mf{g}_2})$ with eigenvalue $c^{\mf{g}_2}_{(i,j)}.$ 
To relate the two bases it is necessary to understand each of the projection maps involved in the above construction, then by composition we will be able to understand how they are on an element of $V\otimes V_{(1,0)}. $
 
The  maps we choose are as follows:\\
\begin{center}
\begin{tabular}{|p{2.5cm}|p{14cm}|}
\hline
Map & Factorisation and Formula \\
\hline 
$p_1= p^{(1,0)}_{(1,0)}$&$V^{(1)}_{(1,0)}\otimes V^{(2)}_{(1,0)} \to V_{(1,0)} \to W_{(1,0)}\to (\mf{g}_2)_\C$\\
& $(u+adt) \otimes (v+bdt)\mapsto F\left(\frac{1}{2}(1+iJ)\left[ (u\wedge v) \lrcorner \text{Im}\Omega + bJu-aJv\right]\right)$ \\
\hline 
$p_2= p^{(1,0)}_{(0,1)}$&$V^{(1)}_{(1,0)}\otimes V^{(2)}_{(1,0)} \to V_{(1,0)} \to W_{(0,1)}\to (\mf{g}_2)_\C $\\
& $(u+adt) \otimes (v+bdt)\mapsto F\left(\frac{1}{2}(1-iJ)\left[ (u\wedge v) \lrcorner \text{Im}\Omega + bJu-aJv\right]\right)$ \\
\hline 
$p_3=p^{(0,1)}_{(1,0)}$& $ V^{(1)}_{(1,0)}\otimes V^{(2)}_{(1,0)}\to V_{(0,1)} \to W_{(1,0)}\to (\mf{g}_2)_\C $\\
& $(u+adt)\otimes (v+bdt) \mapsto F\left(\frac{1}{2}(1+iJ)\left[\frac{2}{3}(bu-av)-\frac{1}{3}\left[ (u\wedge v) \lrcorner \text{Re}\Omega\right] \right] \right) $ \\
\hline
$p_4=p^{(0,1)}_{(0,1)}$& $V^{(1)}_{(1,0)}\otimes V^{(2)}_{(1,0)} \to V_{(0,1)} \to W_{(0,1)}\to (\mf{g}_2)_\C $\\
& $(u+adt)\otimes (v+bdt) \mapsto F\left(\frac{1}{2}(1-iJ)\left[\frac{2}{3}(bu-av)-\frac{1}{3}\left[ (u\wedge v) \lrcorner \text{Re}\Omega\right] \right] \right) $ \\
\hline 
$p_5=p^{(0,1)}_{(1,1)}$& $V^{(1)}_{(1,0)}\otimes V^{(2)}_{(1,0)} \to V_{(0,1)} \to W_{(1,1)}\to (\mf{g}_2)_\C$ \\
& $(u+adt)\otimes (v+bdt) \mapsto  \frac{1}{3} u\wedge v -\frac{1}{2}*_6(\omega \wedge u \wedge v ) +\frac{1}{6}*_6(\omega \wedge *_6(\omega \wedge u \wedge v))$\\
\hline 
$p_6=p^{(2,0)}_{(1,0)}$& $V^{(1)}_{(1,0)}\otimes V^{(2)}_{(1,0)}\to V_{(2,0)} \to W_{(1,0)}\to (\mf{g}_2)_\C $\\
& $(u+adt)\otimes (v+bdt) \mapsto F\left( \frac{1}{2}(1+iJ)\left[ Ju \wedge b\omega+ a\omega\wedge Jv\right]\llcorner \omega \right) $ \\
\hline
$p_7=p^{(2,0)}_{(0,1)}$& $V^{(1)}_{(1,0)}\otimes V^{(2)}_{(1,0)} \to V_{(2,0)} \to W_{(0,1)}\to (\mf{g}_2)_\C $\\
& $(u+adt)\otimes (v+bdt) \mapsto F\left( \frac{1}{2}(1-iJ)\left[ Ju \wedge b\omega + a\omega  \wedge Jv\right]\llcorner \omega \right)$ \\
\hline
$p_8=p^{(2,0)}_{(1,1)}$& $ V^{(1)}_{(1,0)}\otimes V^{(2)}_{(1,0)} \to V_{(2,0)} \to W_{(1,1)}\to (\mf{g}_2)_\C $\\
& $(u+adt)\otimes (v+bdt) \mapsto *_6((u\lrcorner \text{Im}\Omega) \wedge (v\lrcorner \text{Im}\Omega))-\frac{1}{3}\langle *_6 (u\lrcorner \text{Im}\Omega) \wedge (v\lrcorner \text{Im}\Omega),\omega \rangle \omega $\\
\hline

\end{tabular}
\end{center}
Each map $p^{(i,j)}_{(k,l)}$ is an eigenvector of $\rho_{S\otimes V_{(1,0)}^*}(\text{Cas}_\mf{g_2})$ with eigenvalue $c_{(i,j)}^\mf{g_2}.$

One finds that the bases $p^{(i,j)}_{(k,l)}$ and $q^{(i,j)(k,l)}_{(m,n)}$ are related as follows:
\begin{align*}
p_1&=iq_1-iq_3-\frac{i}{2}q_6 \\
p_2&=-iq_2 + i q_4 + \frac{i}{2}q_5 \\
p_3&=-\frac{2}{3}q_1+\frac{2}{3}q_3-\frac{1}{6}q_6 \\
p_4&= -\frac{2}{3}q_2+\frac{2}{3}q_4-\frac{1}{6}q_5 \\
p_5&=\frac{1}{2}q_7 + \frac{1}{2}q_8 \\
p_6&= 2iq_1+2iq_3 \\
p_7&= -2iq_2 -2iq_4 \\
p_8&= \frac{1}{2}iq_7 -\frac{1}{2}iq_8.
\end{align*}

Recall the maps  $p^{(i,j)}_{(k,l)}$ are eigenvectors of  $\rho_{S\otimes V_{(0,1)}^*}(\text{Cas}_{\mf{g}_2})$ with eigenvalue $c^{\mf{g}_2}_{(i,j)}$ so that in the basis $p_1,\cdots, p_8$ we have
$$\rho_{S\otimes V_{(0,1)}^*}(\text{Cas}_{\mf{g}_2})=\text{diag}(-6,-6,-12,-12,-12,-14,-14,-14).$$
We have seen that the maps $q^{(i,j)(k,l)}_{(m,n)}$ are eigenvectors of $-\rho_{(\mf{g}_2)_\C}(\text{Cas}_{\mf{su}(3)})- \rho_{S}(\text{Cas}_\mf{m})-\rho_{V_\gamma^*}(\text{Cas}_\mf{m})$ with eigenvalue 
$ c^{\mf{su}(3)}_{(i,j)}+c^{\mf{su}(3)}_{(k,l)}-c^{\mf{su}(3)}_{(m,n)}-2c^{\mf{g}_2}_{(1,0)}$. In the basis $q_1,\ldots q_8$ we find 
$$-\rho_{\mf{g}_2}(\text{Cas}_{\mf{su}(3)})- \rho_{S}(\text{Cas}_\mf{m})-\rho_{V_\gamma^*}(\text{Cas}_\mf{m})=\text{diag}(12,12,12,12,8,8,13,13)
$$ 
and \eqref{eq:diracterms} says that $(D^\rho_{A_\text{can}})_\gamma$ is the sum of these two operators.

Furthermore $\text{Re}\Omega$ acts in the basis $q_1, \ldots ,q_{10}$ as the matrix $\text{diag}(4,4,0,0,0,0,0,0,-4,-4)$ and hence we obtain via \eqref{eq:D^thomspaceformula} the matrix of the twisted Levi-Civita Dirac operator in this basis:
\begin{equation}\label{eq:diracmatrixstdrep}
    (D^t_{A_\text{can}})_\gamma=
\begin{pmatrix}
3(t-1)&0&-1&0&0&-4&0&0&0&0\\
0&3(t-1)&0&-1&-4&0&0&0&0&0\\
-1&0&0&0&0&8&0&0&-i&0\\
0&-1&0&0&8&0&0&0&0&i\\
0&-\frac{1}{4}&0&\frac{1}{2}&0&0&0&0&0&-\frac{i}{4}\\
-\frac{1}{4}&0&\frac{1}{2}&0&0&0&0&0&\frac{i}{4}&0\\
0&0&0&0&0&0&0&1&0&0\\
0&0&0&0&0&0&1&0&0&0\\
0&0&i&0&0&-4i&0&0&-3(t-1)&0\\
0&0&0&-i&4i&0&0&0&0&-3(t-1)
\end{pmatrix}.
\end{equation}

A consistency check is obtained by observing that \eqref{eq:dthirdsquare} ensures the basis $q_1,\cdots q_{10}$ diagonalises $(D^{\frac{1}{3}}_{A_\text{can}})^2_\gamma$ and  
$$(D^{\frac{1}{3}}_{A_\text{can}})^2_\gamma q^{(i,j)(k,l)}_{(m,n)} =\left(-c^{\mf{g}_2}_{(1,0)}+c^{\mf{su}(3)}_{(m,n)}+4 \right)q^{(i,j)(k,l)}_{(m,n)}.$$
We find

$$(D^{\frac{1}{3}}_{A_\text{can}})^2_\gamma=\text{diag}(6,6,6,6,6,6,1,1,6,6)$$
as expected. By calculating the eigenvalues of \eqref{eq:diracmatrixstdrep} one finds they are as stated in \thref{evalsstdinst}.

\subsection{Calculation of Eigenvalues from the Adjoint Representation}\label{sec:stdinstadrep}

We now consider the case when $V_\gamma=V_{(0,1)}$ is the adjoint representation. The operator $(D^0_{A_\text{can}})_\gamma$  acts on the space $\text{Hom}(S\otimes V_{(0,1)}, (\mf{g}_2)_\C)_{ \text{SU}(3) }.$ A similar method to the previous case, of the standard representation, is applicable. 

 We have $V_{(1,0)}\otimes V_{(0,1)}=V_{(1,1)}\oplus V_{(2,0)}\oplus V_{(1,0)}$ and the summands split as reps of $ \text{SU}(3) $ as follows:
\begin{align*}
     V_{(1,1)}&=[[W_{(2,1)}]]\oplus [[W_{(2,0)}]]\oplus 2W_{(1,1)}\oplus [[W_{(1,0)}]] \\
V_{(2,0)}&=[[W_{(2,0)}]]\oplus W_{(1,1)}\oplus [[W_{(1,0)}]]\oplus W_{(0,0)} \\
V_{(1,0)}&=[[W_{(1,0)}]]\oplus W_{(0,0)}.
\end{align*}
We therefore see that $\text{Hom}(S\otimes V_{(0,1)}, (\mf{g}_2)_\C)_{ \text{SU}(3) }$ has dimension 12.  
As before we split $\text{Hom}(S\otimes V_{(0,1)}, (\mf{g}_2)_\C)_{ \text{SU}(3) }\cong \text{Hom}(V_{(1,0)}\otimes V_{(0,1)}, (\mf{g}_2)_\C)_{ \text{SU}(3) }\oplus \text{Hom}(\C \otimes V_{(0,1)},\mf{g}_2)_{ \text{SU}(3) }$ so  that the operator $(D^\rho_{A_\text{can}})_\gamma$ is block diagonal and its action on $\text{Hom}(\C \otimes V_{(0,1)},(\mf{g}_2)_\C)_{ \text{SU}(3) }$ is 0.\par
On $\text{Hom}(V_{(1,0)}\otimes V_{(0,1)}, (\mf{g}_2)_\C)_{ \text{SU}(3) }$ the operator $(D^{\rho}_{A_\text{can}})_\gamma$ takes the form
\begin{equation}\label{eq:diractermsadj}
(D^{\rho}_{A_\text{can}})_\gamma=\frac{1}{2}(\rho_{V_{(1,0)}\otimes V_{(0,1)}}(\text{Cas}_{\mf{g}_2})-\rho_{(\mf{g}_2)_\C}(\text{Cas}_{\mf{su}(3)})- \rho_{V_{(1,0)}}(\text{Cas}_\mf{m})-\rho_{V_{(0,1)}}(\text{Cas}_\mf{m})).
\end{equation}
To calculate the action of the operator on $\text{Hom}(V_{(1,0)}\otimes V_{(0,1)},(\mf{g}_2)_\C)_{ \text{SU}(3) }$ we  again  pick two bases for this space which diagonalise the various  Casimir operators from which \eqref{eq:diracterms} tells us $(D^\rho_{A_\text{can}})_\gamma$ is constructed. As before we first pick a basis consisting of maps that factor as follows:

$$   q^{(i,j)(k,l)}_{(m,n)}\colon V\otimes  V_{(0,1)}\to W_{(i,j)}\otimes W_{(k,l)}\to W_{(m,n)}\to (\mf{g}_2)_\C.$$
These maps diagonalise each term on the right hand side of \eqref{eq:diractermsadj} except for the first term, they are as follows:
\begin{center}
\begin{tabular}{|p{2.5cm}|p{14cm}|}
\hline
Map & Factorisation and Formula \\
\hline 
$q_1=q^{(0,0)(1,0)}_{(1,0)}$&$ V_{(1,0)}\otimes V_{(0,1)} \to W_{(0,0)}\otimes W_{(1,0)} \to W_{(1,0)}\to (\mf{g}_2)_\C $\\
& $(u+adt) \otimes (\alpha + v\wedge dt + \frac{1}{2}v\lrcorner \text{Re}\Omega)\mapsto F\left(a\frac{1}{2}(1+iJ)v\right)$ \\
\hline 
$q_2=q^{(0,0)(0,1)}_{(0,1)}$&$ V_{(1,0)}\otimes V_{(0,1)} \to W_{(0,0)}\otimes W_{(0,1)} \to W_{(0,1)}\to (\mf{g}_2)_\C $\\
& $(u+adt) \otimes (\alpha + v\wedge dt + \frac{1}{2}v\lrcorner \text{Re}\Omega)\mapsto F\left(a\frac{1}{2}(1-iJ)v\right)$ \\
\hline 
$q_3=q^{(0,0)(1,1)}_{(1,1)}$&$ V_{(1,0)}\otimes V_{(0,1)}\to W_{(0,0)}\otimes W_{(1,1)} \to W_{(1,1)}\to (\mf{g}_2)_\C $\\
& $(u+adt) \otimes (\alpha + v\wedge dt + \frac{1}{2}v\lrcorner \text{Re}\Omega)\mapsto a\alpha $ \\
\hline 
$q_4=q^{(1,0)(1,1)}_{(1,0)}$&$ V_{(1,0)}\otimes V_{(0,1)}\to W_{(1,0)}\otimes W_{(1,1)} \to W_{(1,0)}\to (\mf{g}_2)_\C $\\
& $(u+adt) \otimes (\alpha + v\wedge dt + \frac{1}{2}v\lrcorner \text{Re}\Omega)\mapsto F\left( (\frac{1}{2}(1+iJ)u)\lrcorner\alpha\right)$ \\
\hline
$q_5=q^{(0,1)(1,1)}_{(0,1)}$&$ V_{(1,0)}\otimes V_{(0,1)}\to W_{(0,1)}\otimes W_{(1,1)} \to W_{(0,1)}\to (\mf{g}_2)_\C $\\
& $(u+adt) \otimes (\alpha + v\wedge dt + \frac{1}{2}v\lrcorner \text{Re}\Omega)\mapsto F\left( (\frac{1}{2}(1-iJ)u)\lrcorner\alpha\right)$ \\
\hline
$q_6=q^{(1,0)(0,1)}_{(1,1)}$&$ V_{(1,0)}\otimes V_{(0,1)}\to W_{(1,0)}\otimes W_{(0,1)} \to W_{(1,1)}\to (\mf{g}_2)_\C $\\
& $(u+adt) \otimes (\alpha + v\wedge dt + \frac{1}{2}v\lrcorner \text{Re}\Omega)\mapsto \frac{1}{4}[((1+iJ)u)\wedge((1-iJ)v)-\frac{1}{3}\langle ((1+iJ)u)\wedge((1-iJ)v),\omega \rangle \omega  ] $ \\
\hline
$q_7=q^{(0,1)(0,1)}_{(1,0)}$&$ V_{(1,0)}\otimes V_{(0,1)}\to W_{(0,1)}\otimes W_{(0,1)} \to W_{(1,0)}\to (\mf{g}_2)_\C $\\
& $(u+adt) \otimes (\alpha + v\wedge dt + \frac{1}{2}v\lrcorner \text{Re}\Omega)\mapsto F\left(  \frac{1}{4} [((1-iJ)u)\wedge ((1-iJ)v)]\lrcorner \Omega \right)$ \\
\hline
$q_8=q^{(1,0)(1,0)}_{(0,1)}$&$ V_{(1,0)}\otimes V_{(0,1)} \to W_{(1,0)}\otimes W_{(1,0)} \to W_{(0,1)}\to (\mf{g}_2)_\C $\\
& $(u+adt) \otimes( \alpha + v\wedge dt + \frac{1}{2}v\lrcorner \text{Re}\Omega)\mapsto F\left(  \frac{1}{4} [((1+iJ)u)\wedge ((1+iJ)v)]\lrcorner \overline{\Omega} \right)$ \\
\hline
$q_9=q^{(0,1)(1,0)}_{(1,1)}$&$V_{(1,0)}\otimes V_{(0,1)}\to W_{(0,1)}\otimes W_{(1,0)} \to W_{(1,1)}\to (\mf{g}_2)_\C $\\
& $(u+adt) \otimes (\alpha + v\wedge dt + \frac{1}{2}v\lrcorner \text{Re}\Omega)\mapsto \frac{1}{4}[((1-iJ)u)\wedge((1+iJ)v)-\frac{1}{3}\langle ((1-iJ)u)\wedge((1+iJ)v),\omega \rangle \omega  ]. $ \\
\hline
\end{tabular}
\end{center}
We extend this set to a basis of $\text{Hom}(S\otimes V_{(0,1)}, (\mf{g}_2)_\C)_{ \text{SU}(3) }$ by adding the three maps 
\begin{align*}
q_{10}&=\text{Vol}\cdot q^{(0,0)(1,0)}_{(1,0)}\\
q_{11}&=\text{Vol}\cdot q^{(0,0)(0,1)}_{(0,1)}\\
q_{12}&=\text{Vol}\cdot q^{(0,0)(1,1)}_{(1,1)}.
\end{align*}

To diagonalise the operator $\rho_{S\otimes V_\gamma^*} (\text{Cas}_{\mf{g}_2})$ we choose maps that factor as follows:
$$ p^{(i,j)}_{(k,l)}: V_{(1,0)} \otimes V_{(0,1)} \to V_{(i,j)}\to W_{(k,l)} \to (\mf{g}_2)_\C.$$
These maps are eigenvectors of $\rho_{S\otimes V_\gamma^*} (\text{Cas}_{\mf{g}_2})$ with eigenvalue $c^{\mf{g}_2}_{(i,j)}.$

Let $w\in V$ and $\beta \in V_{(0,1)},$  applying Schur's lemma where necessary gives the required projection maps to be:
\begin{itemize}
    \item $V_{(1,0)}\otimes V_{(0,1)}\to V_{(2,0)}$, $w \otimes \alpha \mapsto \wedge \beta -\frac{1}{4}(w\lrcorner \beta)\lrcorner \psi$
    \item $V_{(1,0)}\otimes V_{(0,1)} \to V_{(1,0)}$, $w\otimes \beta \mapsto w\lrcorner \beta  $
    \item $V_{(2,0)}\to W_{(0,1)}$, $dt\wedge \kappa + \eta \mapsto \frac{1}{2}(1+iJ)(\kappa \lrcorner \text{Re}\Omega) $
    \item $V_{(2,0)}\to W_{(1,1)}$, $dt \wedge \kappa + \eta \mapsto  \frac{1}{3}\kappa -\frac{1}{2}*_6(\omega \wedge \kappa) +\frac{1}{6}*_6(\omega\wedge *_6(\omega \wedge \kappa))$
    \item $V_{(1,0)} \to W_{(1,0)}$, $u+a dt \mapsto \frac{1}{2}(1+iJ)u$
\end{itemize}

The difficulty in working with this basis is that the space $V_{(1,1)}$ cannot be modelled as a subspace of $\Lambda^*(\R^7)^*.$ Instead we work with maps that factor through $V_{(1,1)}$ by noticing that they must be orthogonal with respect to the natural inner product on the space given by $\langle X, Y\rangle=\text{Trace}(X^*Y)$, since they factor through orthogonal subspaces. We can therefore find expressions for the maps $p^{(1,1)}_{(i,j)}$ by ensuring the basis vectors are mutually orthogonal. The basis vectors can be described as follows:
\begin{center}
\begin{tabular}{|p{2.5cm}|p{14cm}|}
\hline
Map & Factorisation and Formula \\
\hline 
$p_1= p^{(1,0)}_{(1,0)}$&$ V_{(1,0)}\otimes V_{(0,1)} \to V_{(1,0)} \to W_{(1,0)}\to (\mf{g}_2)_\C $\\
& $(u+adt) \otimes( \alpha + v\wedge dt + \frac{1}{2}v\lrcorner \text{Re}\Omega)\mapsto \frac{1}{2}(1+iJ)[u\lrcorner \alpha -\frac{1}{2}(u\wedge v)\lrcorner \text{Re}\Omega -av]$ \\
\hline 

$p_2= p^{(2,0)}_{(1,0)}$&$ V_{(1,0)}\otimes V_{(0,1)}  \to V_{(2,0)} \to W_{(1,0)}\to (\mf{g}_2)_\C $\\
& $(u+adt) \otimes (\alpha + v\wedge dt + \frac{1}{2}v\lrcorner \text{Re}\Omega)\mapsto \frac{1}{2}(1+iJ)[u\wedge v + a\alpha -\frac{1}{4}(-(u\lrcorner\alpha)\lrcorner \text{Re} \Omega + av\lrcorner \text{Re} \Omega)]$ \\
\hline 
$p_3=p^{(1,1)}_{(1,0)}$&$ V_{(1,0)}\otimes V_{(0,1)}  \to V_{(1,1)} \to W_{(1,0)}\to (\mf{g}_2)_\C $\\
& orthogonal to $p_1$ and $p_2$\\
\hline 
$p_4= p^{(1,0)}_{(0,1)}$&$ V_{(1,0)}\otimes V_{(0,1)}  \to V_{(1,0)} \to W_{(0,1)}\to(\mf{g}_2)_\C $\\
& $(u+adt) \otimes (\alpha + v\wedge dt + \frac{1}{2}v\lrcorner \text{Re}\Omega)\mapsto \frac{1}{2}(1-iJ)[u\lrcorner \alpha -\frac{1}{2}(u\wedge v)\lrcorner \text{Re}\Omega  -av]$ \\
\hline 
$p_5= p^{(2,0)}_{(0,1)}$&$V_{(1,0)}\otimes V_{(0,1)} \to V_{(2,0)} \to W_{(0,1)}\to (\mf{g}_2)_\C $\\
& $(u+adt) \otimes (\alpha + v\wedge dt + \frac{1}{2}v\lrcorner \text{Re}\Omega)\mapsto \frac{1}{2}(1-iJ)[u\wedge v + a\alpha -\frac{1}{4}(-(u\lrcorner\alpha)\lrcorner \text{Re} \Omega + av\lrcorner \text{Re} \Omega)]$ \\
\hline 
$p_6=p^{(1,1)}_{(0,1)}$&$ V_{(1,0)}\otimes V_{(0,1)} \to V_{(1,1)} \to W_{(0,1)}\to (\mf{g}_2)_\C $\\
& orthogonal to $p_4$ and $p_5$\\
\hline 
$p_7=p^{(2,0)}_{(1,1)}$&$V_{(1,0)}\otimes V_{(0,1)} \to V_{(2,0)} \to W_{(1,1)} \to (\mf{g}_2)_\C$ \\
&$(u+adt) \otimes (\alpha + v\wedge dt + \frac{1}{2}v\lrcorner \text{Re}\Omega)\mapsto a\alpha +  \frac{1}{3}(u\wedge v) -\frac{1}{2}*_6(\omega \wedge u\wedge v) +\frac{1}{6}*_6(\omega\wedge *_6(\omega \wedge u\wedge v))$ \\
\hline 
$p_8=p^{(1,1)}_{(1,1)}$&$V_{(1,0)}\otimes V_{(0,1)} \to V_{(1,1)} \to W_{(1,1)} \to (\mf{g}_2)_\C$ \\
&orthogonal to $p_7$ and $p_9$ \\
\hline 
$p_9=\tilde{p}^{(1,1)}_{(1,1)}$&$V_{(1,0)}\otimes V_{(0,1)} \to V_{(1,1)} \to W_{(1,1)} \to (\mf{g}_2)_\C$ \\
&orthogonal to $p_7$ and $p_8$ \\
\hline 

\end{tabular}
\end{center}

Here $p^{(1,1)}_{(1,1)}$ and $\tilde{p}^{(1,1)}_{(1,1)}$ factor through two different copies of $W_{(1,1)}$ contained in $V_{(1,1)}.$\\
For the maps $p^{(i,j)}_{(k,l)}$ with $(i,j)\neq (1,1)$ we can determine their expression in terms of $q_i$ as before. The result is the following:
\begin{itemize}
    \item $p_1=-q_1+q_4-\frac{1}{4}q_7$
    \item $p_2=\frac{1}{2}q_1+\frac{1}{2}q_4+\frac{3}{}q_7$
    \item $p_4=-q_2+q_5-\frac{1}{4}q_8$
    \item $p_5=\frac{1}{2}q_2+\frac{1}{2}q_5+\frac{3}{8}q_8 $
    \item $p_7=q_3+q_6+q_9.$
\end{itemize}
By explicitly computing the matrices of the maps $q^{(i,j)(k,l)}_{(m,n)}$ we are able to calculate the norm of each map. They are as follows:
\begin{lemma}
Let $\norm{q}^2\coloneqq \text{Trace}(q^\dagger q),$ then the basis $q^{(ij)(kl)}_{(mn)}$ is orthogonal and 
\begin{itemize}
    \item $\norm{q^{(0,0)(1,0)}_{(1,0)}}^2=3$
    \item $\norm{q^{(0,1)(0,1)}_{(1,0)}}^2=\norm{q^{(1,0)(1,0)}_{(1,0)}}^2=48$
    \item $\norm{q^{(1,0)(1,1)}_{(1,0)}}^2=\norm{q^{(0,1)(1,1)}_{(0,1)}}^2=12$
    \item $\norm{q^{(0,0)(1,1)}_{(1,1)}}^2=8$
    \item $\norm{q^{(1,0)(0,1)}_{(1,1)}}^2=\norm{q^{(0,1)(1,0)}_{(1,1)}}=\frac{16}{3}.$
\end{itemize}
\end{lemma}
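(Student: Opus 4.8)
The plan is to deduce orthogonality from an isotypic-decomposition argument and then compute each norm by reducing, via Schur's lemma, to one explicit evaluation.

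For orthogonality, recall that each $q^{(i,j)(k,l)}_{(m,n)}$ is the composition of the $SU(3)$-equivariant projection of $V_{(1,0)}\otimes V_{(0,1)}$ onto the $W_{(m,n)}$-type summand sitting inside $W_{(i,j)}\otimes W_{(k,l)}$ — with $W_{(i,j)}$ a constituent of $V_{(1,0)}|_{SU(3)}=W_{(0,0)}\oplus W_{(1,0)}\oplus W_{(0,1)}$ and $W_{(k,l)}$ one of $V_{(0,1)}|_{SU(3)}=W_{(1,1)}\oplus W_{(1,0)}\oplus W_{(0,1)}$ — followed by a fixed equivariant embedding into the $W_{(m,n)}$-summand of $(\mf{g}_2)_\C=W_{(1,1)}\oplus W_{(1,0)}\oplus W_{(0,1)}$. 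For two maps with distinct labels I would argue that either they are supported on orthogonal summands of $V_{(1,0)}\otimes V_{(0,1)}$, so that each vanishes on the support of the other and $\mathrm{Trace}(q^\dagger q')=\sum_i\langle q'e_i,qe_i\rangle=0$ for an adapted orthonormal frame $\{e_i\}$, or they share a source summand but land in orthogonal summands of $(\mf{g}_2)_\C$, which again annihilates the trace pairing. The only cases that need a second look are those in which several maps target the same $W_{(1,0)}$ or $W_{(1,1)}$ inside $(\mf{g}_2)_\C$; there one invokes the $SU(3)$ Clebsch--Gordan rules ($3\otimes3$, $\bar3\otimes\bar3$, $3\otimes\bar3$, $3\otimes8$, $\bar3\otimes8$) to confirm that the sources are genuinely different summands of $V_{(1,0)}\otimes V_{(0,1)}$. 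The $\mathrm{Vol}$-twisted maps $q_{10},q_{11},q_{12}$ are supported on the $V_{(0,0)}\subset S$ factor rather than on $V_{(1,0)}\subset S$, so they are orthogonal to $q_1,\dots,q_9$ and to one another for the same reason. Since there are $12$ maps and $\dim\mathrm{Hom}(S\otimes V_{(0,1)},(\mf{g}_2)_\C)_{SU(3)}=12$, this gives an orthogonal basis.

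For the norms, fix $q=q^{(i,j)(k,l)}_{(m,n)}$, supported on an irreducible summand $A_0\cong W_{(m,n)}$. Then $q^\dagger q|_{A_0}$ is $SU(3)$-equivariant, so $q^\dagger q|_{A_0}=\lambda\,\mathrm{Id}$ by Schur with $\lambda=\|q(x)\|^2/\|x\|^2$ for any nonzero $x\in A_0$, whence $\|q\|^2=\mathrm{Trace}(q^\dagger q)=\lambda\dim_{\C}W_{(m,n)}$. This turns each norm into a single explicit evaluation, and the complex-conjugation symmetry $(i,j)\leftrightarrow(j,i)$ identifies the paired norms in the statement, so only five independent evaluations are needed. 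I would carry these out in the frame $e^1,\dots,e^6,dt$ of $V_{(1,0)}$ and the $\mf{su}(3)\oplus\mf{m}$-adapted frame of $V_{(0,1)}=\mf{g}_2$ from \thref{descriptionofm}, pushing a convenient test vector of $A_0$ through the displayed formula for $q$, while keeping careful track of: the Killing-form normalisation $B=-\tfrac1{12}\mathrm{Tr}$ (for which $F$ is an isometry by \thref{descriptionofm}, so introduces no factor), the round-metric conventions with $|\omega|^2=3$ and the nearly-K\"ahler normalisation $\diff\omega=3\,\mathrm{Im}\,\Omega$, the factors produced by the projectors $\tfrac12(1\pm iJ)$ and $\alpha\mapsto\alpha-\tfrac13\langle\alpha,\omega\rangle\omega$, and the norms of the contraction operations $v\mapsto v\lrcorner\alpha$, $(u\wedge v)\lrcorner\mathrm{Re}\,\Omega$, $(u\wedge v)\lrcorner\mathrm{Im}\,\Omega$ and $[(1\pm iJ)u\wedge(1\pm iJ)v]\lrcorner\Omega$ appearing in the construction. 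Multiplying by $\dim_{\C}W_{(m,n)}$ then produces the five stated values.

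The only real difficulty is this last step: it is routine linear algebra in the $SU(3)$-structure tensors $\omega,\Omega,\mathrm{Re}\,\Omega,\mathrm{Im}\,\Omega$ on $\R^6$, but delicate, because the metric normalisation on $\mf{g}_2$, the spinor identifications of Section~\ref{sec:stdinstsection}, and the nearly-K\"ahler conventions are all simultaneously in play and a slip in any one corrupts every norm. As a safeguard I would verify that, after rescaling each $q^{(i,j)(k,l)}_{(m,n)}$ to unit norm, the matrix of the self-adjoint operator $(D^0_{A_\text{can}})_\gamma$ assembled from these maps comes out Hermitian — the same sanity check that, together with the diagonalisation of $(D^{\frac13}_{A_\text{can}})^2_\gamma$ via \thref{eigenvalsofsquare}, was carried out in the standard-representation case.
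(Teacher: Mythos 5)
Your argument is sound, and it is organised differently from the paper's: the paper justifies this lemma only with the phrase ``by explicitly computing the matrices of the maps $q^{(i,j)(k,l)}_{(m,n)}$'', i.e.\ brute-force evaluation of each map in a frame and direct computation of the traces, whereas you first get orthogonality structurally (distinct basis maps are supported on mutually orthogonal blocks $W_{(i,j)}\otimes W_{(k,l)}$ of $S\otimes V_{(0,1)}$, or land in orthogonal summands of $(\mf{g}_2)_\C$, and the $\mathrm{Vol}$-twisted maps are supported on the $\Lambda^6$ factor of $S$, which is indeed the correct reading) and then use Schur's lemma to write $q^\dagger q$ on its support $A_0\cong W_{(m,n)}$ as $\lambda\,\mathrm{Id}$, so that $\norm{q}^2=\lambda\,\dim_{\C}W_{(m,n)}$ is pinned down by evaluating on one test vector. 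The Schur step is legitimate here because each target type occurs with multiplicity one in the relevant block (in fact, in this adjoint case every block admits only one target, so even the ``same source, different target'' case you allow for does not arise), and the conjugation symmetry correctly halves the remaining work. This buys a less error-prone and more conceptual derivation than the paper's; in the easy cases it is immediate, e.g.\ $q^{(0,0)(1,0)}_{(1,0)}$ restricted to its support is $a\,dt\otimes F(v)\mapsto F(av)$ and $q^{(0,0)(1,1)}_{(1,1)}$ is $a\,dt\otimes\alpha\mapsto a\alpha$, giving $\lambda=1$ and hence the values $3$ and $8$.

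The one caveat is that the substance of the lemma is the five constants, and your proposal stops exactly where they are produced: the evaluations of $\lambda$ for the remaining maps (where the $\tfrac12(1\pm iJ)$ projections, the contractions with $\Omega$ and $\mathrm{Re}\,\Omega$, the traceless projection $\alpha\mapsto\alpha-\tfrac13\langle\alpha,\omega\rangle\omega$, and the normalisations $B=-\tfrac1{12}\mathrm{Tr}$, $F$ an isometry, $\diff\omega=3\,\mathrm{Im}\,\Omega$ all interact) are described but not carried out, so the values $48$, $12$ and $\tfrac{16}{3}$ (equivalently $\lambda=16,4,\tfrac23$ against $\dim_\C W_{(m,n)}=3,3,8$) are asserted rather than verified. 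Since the paper itself supplies no more detail, this is not a wrong approach, but to count as a complete proof you would need to execute those remaining one-vector computations; your proposed Hermiticity check on the resulting matrix of $(D^0_{A_\text{can}})_\gamma$ is a sensible safeguard but not a substitute for them.
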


By ensuring the maps $p^{(1,1)}_{(k,l)}$ are orthogonal to the other basis maps, we obtain the following :
\begin{itemize}
    \item $p_3=-8q_1-q_4+q_7$
    \item $p_6=-8q_2-q_5+q_8$
    \item $p_8=q_6-q_9$
    \item $ p_9=-\frac{4}{3}q_3+q_6+q_9.$
\end{itemize}
We calculate the matrix of $(D^t_{A_\text{can}})_\gamma$ in the basis $q_1,\cdots ,q_{12}$ in the same way as we have done for the standard representation and find
$$
(D^t_{A_\text{can}})_\gamma = \left(\begin{array}{*{12}c}
3(t-1)&0&0&-4&0&0&8&0&0&0&0&0 \\
0&3(t-1)&0&0&-4&0&0&8&0&0&0&0 \\
0&0&3(t-1)&0&0&1&0&0&1&0&0&0 \\
-1&0&0&0&0&0&-4&0&0&-i&0&0 \\
0&-1&0&0&0&0&0&-4&0&0&i&0 \\
0&0&\frac{3}{2}&0&0&0&0&0&2&0&0&\frac{3i}{2} \\
\frac{1}{2}&0&0&-1&0&0&0&0&0&-\frac{i}{2}&0&0 \\
0&\frac{1}{2}&0&0&-1&0&0&0&0&0&\frac{i}{2}&0 \\
0&0&\frac{3}{2}&0&0&2&0&0&0&0&0&-\frac{3i}{2} \\
0&0&0&4i&0&0&8i&0&0&-3(t-1)&0&0 \\
0&0&0&0&-4i&0&0&-8i&0&0&-3(t-1)&0 \\
0&0&0&0&0&-i&0&0&i&0&0&-3(t-1)
\end{array}\right).
$$

Again a consistency check is carried out by calculating
$$ (D^{\frac{1}{3}}_{A_\text{can}})^2_\gamma=\text{diag}(12,12,7,12,12,7,12,12,7,12,12,7)$$
as expected. By calculating the eigenvalues of the above matrix for $(D^0_{A_\text{can}})_\gamma$ one finds they are as stated in \thref{evalsstdinst}.

\newpage
\bibliographystyle{plain}
\bibliography{research}

\end{document}